\documentclass[12pt]{amsart}
\usepackage{amsmath, amsthm, amssymb}
\usepackage{fullpage}
\usepackage{amssymb, amsmath}
\usepackage{cancel}
\usepackage{enumitem}
\usepackage{hyperref}
\usepackage[utf8]{inputenc}
\usepackage[normalem]{ulem}

\newtheorem{theorem}{Theorem}
\newtheorem{proposition}{Proposition}

\newtheorem{remark}[proposition]{Remark}
\newtheorem{lemma}[proposition]{Lemma}

\newtheorem{definition}[proposition]{Definition}
\numberwithin{proposition}{section}
\numberwithin{equation}{section}
\usepackage{mathtools}
\mathtoolsset{showonlyrefs}

\usepackage{xcolor}
\usepackage[style=numeric,doi=false,url=false,isbn=false,maxbibnames=99]{biblatex}
\newcommand{\newsection}[1] {\section{#1}\setcounter{theorem}{0}
 \setcounter{equation}{0}\par\noindent}

\newcommand{\N}{{\mathbb N}}
\newcommand{\R}{{\mathbb R}}

\newcommand{\T}{{\mathbb T}}
\newcommand{\jD}{\langle D \rangle }
\newcommand{\NF}{N\!F}
\newcommand{\A}{\mathcal A}
\newcommand{\bmo}{\text{bmo}}

\begin{document}

\title{Enhanced lifespan bounds for  1D quasilinear Klein-Gordon flows}

\thanks{}

\author{}

\address{}

\author{Hongjing Huang}
\author{Mihaela Ifrim}
\author{Daniel Tataru}
\address{Department of Mathematics, University of California, Berkeley, CA 94720}
\email{tataru@math.berkeley.edu}

\begin{abstract}
In this article we consider one-dimensional scalar quasilinear Klein--Gordon equations with general  nonlinearities, on both $\R$ and $\T$.
By employing a refined modified-energy framework of Ifrim and Tataru, we investigate long time lifespan bounds for small data solutions.
Our main result asserts  that solutions with small initial data of size $\epsilon$ persist on the improved cubic timescale $|t| \lesssim \epsilon^{-2}$ and satisfy sharp cubic energy estimates throughout this interval. We also establish difference bounds on the same time scale. In the case 
of $\R$, we are further able 
to use dispersion in order to extend the lifespan to $\epsilon^{-4}$. This generalizes earlier results 
obtained by Delort, \cite{Delort1997_KG1D}  in the semilinear case.

\end{abstract}

\subjclass{Primary:  	35L70   
Secondary: 35B35   
}
\keywords{quasilinear Klein-Gordon, enhanced lifespan, normal forms}

\maketitle 

\setcounter{tocdepth}{1} 

\tableofcontents

\newsection{Introduction}

We study the Cauchy problem for one-dimensional quasilinear Klein--Gordon equations on both $\R$ and $\T$.
The quasilinear Klein–Gordon equation arises naturally in relativistic scalar field theory, where it governs the dynamics of scalar fields subject to relativistic invariance and nonlinear self-interactions \cite{Klein1926, Gordon1926}.
We consider two broad classes of scalar
 quasilinear Klein–Gordon equations with a strictly positive mass, $m  > 0$:

\begin{equation}\label{kg0}
- g^{\alpha \beta}(u) \partial_\alpha
\partial_\beta u + m u=f(u,\partial u),
\end{equation}
and
\begin{equation}\label{kg1}
- g^{\alpha \beta}(u,\partial u) \partial_\alpha
\partial_\beta u + m u  =f(u,\partial u),
\end{equation}
where $\partial u$ includes both $x$ and $t$ derivatives.

We assume that the source term $f$ is smooth in its arguments and at least quadratic at $(0,0)$, and the metric $g$ is real-valued, smooth, and satisfies $g^{\alpha\beta}(0, 0)= \eta^{\alpha\beta} $. Here $\eta^{\alpha\beta}$ denotes the Minkowski metric.  We prescribe the initial data in Sobolev spaces 
\[
(u(0), u_t(0)) = (u_0,u_1):=u[0] \in H^{s} \times H^{s-1}.
\]

For the above problems, we consider solutions under a small-data assumption
\begin{equation}\label{small}
\| u[0]\|_{H^{s} \times H^{s-1}} \leq \epsilon \ll 1.
\end{equation}
In this context, we note that classical results
yield local well-posedness 
in $H^{s} \times H^{s-1}$ for 
$s > \frac32$ in the case of 
\eqref{kg0} and $s > \frac52$ 
in the case of \eqref{kg1}, see e.g. \cite{Hormander1997-Lectures},
\cite{Kato1975-ARMA}.
For $\epsilon$-small initial data, these results directly provide a lifespan bound of order $\epsilon^{-1}$. Our goals in this work are twofold:

\begin{enumerate}[label=(\roman*)]
\item To develop cubic energy estimates for the nonlinear Klein-Gordon flows \eqref{kg0} and \eqref{kg1}, including at the leading order  energy level.
    \item To improve the lifespan bound for the solutions from $T\approx \epsilon^{-1}$,  to cubic lifespan i.e. $T\approx\epsilon^{-2}$ in the case of $\T$, respectively 
    $T\approx \epsilon^{-4}$ in the case of $\R$.
\end{enumerate}

\subsection{ The main results} Our first result on the existence of an enhanced lifespan interval for the solutions of the evolutions  \eqref{kg0} and \eqref{kg1}
applies equally on $\T$ and on $\R$, and is stated in the following theorem:

\begin{theorem}\label{t:enhanced}
Consider the equations \eqref{kg0} and \eqref{kg1} 
on 
either $\mathbb{T}$ or $\R$. Then 
\begin{itemize}
\item[a)] The solutions to \eqref{kg0}  with initial data satisfying \eqref{small}, with $s >  \frac{7}{2}  $, exist on the time interval $I^{\epsilon}:=[-c\epsilon^{-2} ,c\epsilon^{-2} ]$ and obey  bounds similar to \eqref{small},  where $c>0$ is a universal constant.
\item[b)] The solutions to  \eqref{kg1}  with initial data satisfying \eqref{small}, with $s > \frac{9}{2} $, exist on the time interval $I^{\epsilon}$ and obey  bounds similar to \eqref{small}.
\end{itemize}
\end{theorem}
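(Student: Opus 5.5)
The plan is to use the modified energy method of Ifrim–Tataru in place of a direct normal form transformation, which would lose derivatives in the quasilinear setting. The key fact is that the quadratic interactions are nonresonant for 1D Klein–Gordon with $m>0$. Writing $\omega(\xi)=\sqrt{m+\xi^2}$, every three-wave phase $\pm\omega(\xi_1)\pm\omega(\xi_2)\pm\omega(\xi_3)$ with $\xi_1+\xi_2+\xi_3=0$ is bounded from below. More precisely, it is of size $\gtrsim \langle \xi_{\min}\rangle^{-1}$, and in the worst (high-high-low) case it is of order one on the low frequency. Consequently a formal quadratic normal form exists, and it is bounded, modulo derivative bookkeeping.

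We first reduce to a first-order system. Set $v=u_t$, or better the complex variable $w=\langle D\rangle_m u - i u_t$. Then expand $g^{\alpha\beta}$ and $f$ to isolate the quadratic part $Q(u,\partial u)$; the cubic and higher remainder is controlled with constants $\|u\|_{W^{k,\infty}}^2$.

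For part a) we build energies $E^{s}(u[t])$ satisfying two properties:
\[
E^{s}(u[t]) = \|u[t]\|_{H^s\times H^{s-1}}^2 + O(\|u\|_{W^{k,\infty}}\|u[t]\|^2_{H^s\times H^{s-1}}),
\]
\[
\frac{d}{dt}E^{s}(u[t]) \lesssim \|u\|_{W^{k,\infty}}^2\|u[t]\|_{H^s\times H^{s-1}}^2 .
\]
The construction runs as follows:
\begin{enumerate}
\item Start from the quasilinear linearized energy, which is quadratic, with coefficients built from $g(u)$. For example take $\int g^{00}(u)^{-1}$-weighted $|\partial_t \partial^{s-1}u|^2 + |\partial_x\partial^{s-1} u|^2+m|\partial^{s-1}u|^2$ expressions, or rather a paradifferential version of them. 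This energy already absorbs the leading derivative-losing quadratic terms, which are symmetric in the top order.
\item Add a cubic correction $B^{s}(u,u,u)$ whose time derivative along the linear flow cancels the remaining cubic terms in $\frac{d}{dt}$ of the quadratic energy. Its symbol is obtained by dividing the cubic energy-flux symbol by the nonzero three-wave phase.
\end{enumerate}
The symbol bounds must show two things. First, the corrected flux symbol is smooth and bounded after division. Second, in the high-high-low configuration the division costs no derivatives on the high frequencies, because the phase there is $\approx \omega(\xi_{\mathrm{low}})\pm(\omega(\xi)-\omega(\xi+\eta))$, which is bounded below by $c\,m^{1/2}\langle\eta\rangle^{-1}$ or better. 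This is where the quasilinear symmetry of the top order term matters: only the low frequency is differentiated, and the costs are placed on the $L^\infty$ factor.

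With the energy in hand we use Sobolev embedding $\|u\|_{W^{k,\infty}}\lesssim \|u\|_{H^s}$; the requirement $s>7/2$ (resp. $9/2$) should exactly come from the number $k$ of derivatives falling on the low-frequency factor in the cubic correction and quartic remainder. Then a bootstrap on $\|u[t]\|\le 2C\epsilon$ gives $\frac{d}{dt}E\lesssim \epsilon^4$. Integrating yields $E(t)\le E(0)+C\epsilon^4|t|\lesssim\epsilon^2$ for $|t|\le c\epsilon^{-2}$. Closing the bootstrap and applying the local well-posedness results quoted above gives existence on $I^\epsilon$.

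For part b), with $g=g(u,\partial u)$, the same scheme applies at one derivative higher. We work with the differentiated equation for $\partial u$, or equivalently treat $\partial u$ as the unknown in a quasilinear system, which explains the shift $s>9/2$.

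The main obstacle will be step 2: proving the cubic correction is bounded and does not lose derivatives. This amounts to verifying, via a careful paradifferential decomposition $T_{g(u)}$ plus smoother errors, that the resonance division is harmless. I would first treat the frequency-localized symbols $\xi_1,\xi_2\gg\xi_3$ explicitly. The cancellation there uses the symmetry of the top order part and the identity $\omega(\xi)-\omega(\xi+\eta)=O(|\eta|)$.
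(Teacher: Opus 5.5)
\paragraph{Gap: the quartic terms from differentiating your cubic correction.}
Your phase analysis is correct and matches the paper. In the low--high--high region the phase is $\gtrsim\langle\eta\rangle^{-1}$, which is the ellipticity of the denominator $\Delta$. The cubic correction obtained by dividing by it is therefore bounded on $H^s\times H^{s-1}$. After symmetrization its low--high part is a paracoefficient correction $\int T_{\kappa_0}w_t\cdot w_t+T_{\kappa_1}w_t\cdot w_x+T_{\kappa_2}w_x\cdot w_x$, with $w=\jD^{s-1}u$ and $\kappa_j$ linear in $u$ with up to three derivatives. So the obstacle you single out is not the real one.

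The step that fails is one you pass over: the quartic terms in $\frac{d}{dt}$ of this cubic correction. You choose the correction so that its derivative along the \emph{linear} flow cancels the cubic flux. Along the actual flow, however, $w_{tt}$ contains the quasilinear terms $T_{g^{11}-1}w_{xx}+2T_{g^{01}}w_{tx}$. Substituting them produces quartic terms of the form $\int T_{\kappa_0}T_{g^{11}-1}w_{xx}\cdot w_t$ and $\int T_{\kappa_1}T_{g^{01}}w_{tx}\cdot w_x$. These carry one more derivative on the high-frequency factors than $\|u[t]\|_{H^s\times H^{s-1}}^2$ controls, and they cannot be integrated by parts.

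Nothing in your energy cancels them:
\begin{itemize}
\item the $w_x\cdot w_x$ part of the correction is differentiated without using the equation, so nothing matches the first term;
\item the second term is, up to bounded terms, the time derivative of a quartic expression $\int T_{\kappa_1}w_x\cdot T_{g^{01}}w_x$ that is absent from your energy;
\item the $g(u)$-dependent coefficients of your step 1 multiply only the quadratic energy, not the cubic correction.
\end{itemize}
Hence $\frac{d}{dt}E^s\lesssim\|u\|_{W^{k,\infty}}^2\|u[t]\|_{H^s\times H^{s-1}}^2$ does not follow from your construction for any $k$. For the same reason, the thresholds $s>7/2$ and $s>9/2$ are not yet justified.

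\paragraph{The missing idea: quartic quasilinear completion.}
You need to complete the cubic energy at quartic order. Add quartic terms so that the energy becomes a paradifferential combination of the energy and momentum densities,
$\frac12\int w_t\cdot T_{1+\kappa_0}w_t+T_{1+\kappa_0}w_x\cdot T_{g^{11}}w_x+w_t\cdot T_{\kappa_1}w_x-T_{\kappa_1}w_x\cdot T_{g^{01}}w_x\,dx$, as in \eqref{epara}. For this form the derivative-losing quartic terms either cancel exactly or reduce to paraproduct commutators, which are bounded by Lemma~\ref{para-com}; this is Lemma~\ref{l:main}.

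This completion is possible only because the cubic coefficients satisfy the structural identity \eqref{coefficient}, which ties $\kappa_2-\kappa_0$ to $\Lambda_1(g^{11})$. That identity has to be verified from the explicit normal form symbols via \eqref{lead-symbol}. In your normalization, where the quadratic energy already carries $T_{g^{11}}$, it amounts to checking that the $w_t\cdot w_t$ and $w_x\cdot w_x$ coefficients of your correction agree at top order.

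Once this is in place, the quartic error is bounded by $\A_0\A_3$. That is where $s>9/2$ for \eqref{kg1} actually comes from, and $s>7/2$ for \eqref{kg0} after the one-derivative shift.

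Your reduction of \eqref{kg1} to a \eqref{kg0}-type system for $(u,\partial u)$ is consistent with the paper's remark. It does, however, require running part a) for principally diagonal systems rather than a scalar equation.
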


In addition to establishing lifespan bounds on the cubic   $\epsilon^{-2}$ 
 time scale, we are also able to prove weak-Lipschitz bounds over the same time  interval: 
\begin{theorem}\label{t:weak-Lip}
    Let $u^1,u^2 \in C(I^{\epsilon}; H^{s}) \cap C^1(I^{\epsilon}; H^{s-1})   $ be  solutions to the same equation (either \eqref{kg0} or \eqref{kg1} as in Theorem~\ref{t:enhanced}) with initial data \( (u_0^j, u_1^j) \), \(j =1,2\). Then we have the Lipschitz difference bounds 
    \[
    \begin{aligned}
        \Vert u^1 [\cdot]- u^2 [\cdot ] \Vert_{L^\infty (H^1\times L^2)}
        \lesssim
        \Vert u^1[0] -u^2[0]
        \Vert_{H^1\times L^2}
    \end{aligned}
    \]
  holding over the same time interval  $I^{\epsilon} = [-c \epsilon^{-2}, c \epsilon^{-2}]$.
\end{theorem}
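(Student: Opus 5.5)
We prove Theorem~\ref{t:weak-Lip} by a cubic energy estimate for the linearized equation, integrated along the segment between the two solutions. We do not estimate the difference equation directly. Set $u^h$ as the solution with data $u^h[0] = h u^1[0] + (1-h) u^2[0]$, $h\in[0,1]$. These data satisfy \eqref{small}, so by Theorem~\ref{t:enhanced} each $u^h$ exists on $I^\epsilon$ with uniform $H^s\times H^{s-1}$ bounds of size $\epsilon$. Its derivative $v = \partial_h u^h$ solves the linearized equation around $u^h$, with data $u^1[0]-u^2[0]$. We then write $u^1-u^2 = \int_0^1 v\, dh$. It therefore suffices to prove
\[
\|v[t]\|_{H^1\times L^2} \lesssim \|v[0]\|_{H^1\times L^2}, \qquad t\in I^\epsilon,
\]
for solutions $v$ of the linearized equation around any small solution $u$ given by Theorem~\ref{t:enhanced}. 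For \eqref{kg1} the regularity $s>\frac92$ gives enough derivatives on $u$ for all coefficients below. Differentiability in $h$ at the $H^1\times L^2$ level follows from standard local theory, or one approximates.

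The linearized equation has the form
\[
-g^{\alpha\beta}(u)\partial_\alpha\partial_\beta v + m v = B_1(u,\partial u,\partial^2 u)\, v + B_2^\gamma(u,\partial u)\,\partial_\gamma v,
\]
with coefficients at least linear in $u$. For \eqref{kg1} there is an extra term $\partial_{\partial u} g^{\alpha\beta}\,\partial_\alpha\partial_\beta u\,\partial v$, which has the same structure. The standard energy $E_0(v)=\int |v_t|^2 + |v_x|^2 + m v^2$, adapted to $g(u)$, satisfies $\frac{d}{dt}E_0 = \int Q(u,v,v)$, which is quadratic in $v$ and linear in $u$ to leading order. Following the modified energy method of Ifrim--Tataru used for Theorem~\ref{t:enhanced}, we add a cubic correction $E_1(u,v,v)$. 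It is obtained from the normal form for the quadratic interactions, written in terms of the diagonal variables $v_\pm = v_t \pm i\jD v$ with $\jD = (m - \partial_x^2)^{1/2}$. The correction must satisfy $\frac{d}{dt}E_1 = -\int Q + O(\text{quartic})$, where only the linear flow is used for $u$ and $v$. This is possible because the three-wave resonance function $\pm\jD(\xi_1)\pm\jD(\xi_2)\pm\jD(\xi_3)$ with $\xi_1+\xi_2+\xi_3=0$ never vanishes for $m>0$ in one dimension; in fact it is bounded below by $c\langle \xi_{\min}\rangle^{-1}$, up to an appropriate weight. The energy $E = E_0 + E_1$ then satisfies $|E_1|\lesssim \epsilon E_0$ and
\[
\frac{d}{dt}E \lesssim \epsilon^2 E_0 .
\]
Gronwall's inequality over $|t|\le c\epsilon^{-2}$ gives the bound above.

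The main obstacle is establishing the bounds for $E_1$. The first issue is derivative loss: the correction must be bounded in $H^1\times L^2$ for $v$ using only the control on $u$ in $H^s$. This requires the symbol of $E_1$, namely the quadratic symbol divided by the resonance function, to put all derivative losses on the $u$ factor. In the high-$v$ low-$u$ regime, the resonance function only gives $\langle\xi_u\rangle^{-1}$ decay, and a naive division would lose a derivative on $v$. The remedy is the quasilinear choice: absorb the leading paradifferential part $T_{g(u)-\eta}\partial^2 v$ into the energy $E_0$ itself, using $g(u)$-weighted quadratic forms rather than normal-form division. Normal forms are applied only to the balanced and low-$v$ parts. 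The second issue is that the quartic remainder, which comes from inserting the full nonlinear equations for $u$ and $v$ into $\frac{d}{dt}E_1$, must be bounded by $\epsilon^2\|v\|^2_{H^1\times L^2}$. Here we need $L^\infty$ control of several derivatives of $u$ and $u_t$, which the Sobolev embedding from $s>\frac72$ (respectively $s>\frac92$) supplies.
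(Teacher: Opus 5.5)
Your outer reduction matches the paper's: interpolate the data, use that $v=\partial_h u^h$ solves the linearized equation, bound $\A_0\A_3\lesssim\epsilon^2$ uniformly in $h$ on $I^\epsilon$ via Theorem~\ref{t:enhanced}, and close with Gronwall. At that point the paper simply invokes Theorem~\ref{t:eepropagation-lin}. Your own sketch of that linearized cubic energy, however, has a genuine gap in the low-frequency-$u$/high-frequency-$v$ regime.

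\textbf{The gap.} You treat this regime purely by $g(u)$-weighting, with normal forms only for the balanced and low-$v$ interactions. Weighting removes the derivative loss, but it does not remove the cubic terms. Take $g^{11}=1+u$, so the linearized equation is $v_{tt}-(1+u)v_{xx}-u_{xx}v+mv=0$. Then
\[
\frac{d}{dt}\,\frac12\int v_t^2+(1+u)v_x^2+mv^2\,dx=\int \frac12 u_t v_x^2-u_xv_xv_t+u_{xx}vv_t\,dx .
\]
These terms survive in the low-$u$/high-$v$ regime, for example $\frac12\int T_{u_t}v_x\cdot v_x\,dx$. Analogous terms come from the first-order coefficients $F^{\gamma,lin}\partial_\gamma v$. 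All of them have size $\A_1\|v[t]\|^2_{H^1\times L^2}\approx\epsilon\|v[t]\|^2_{H^1\times L^2}$. Gronwall then gives only $|t|\lesssim\epsilon^{-1}$, which contradicts your claimed bound $\frac{d}{dt}E\lesssim\epsilon^2E_0$.

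\textbf{The misplaced diagnosis.} The low--high normal form energy does not lose a derivative as an energy. By the expansions \eqref{nf-hl}, and after integrating by parts the antisymmetric top-order pieces, $\Lambda_{\le 3}E(w_{\NF})$ equals
\[
\frac12\int w_t^2+w_x^2+mw^2+T_{\kappa_0}w_t\cdot w_t+T_{\kappa_1}w_t\cdot w_x+T_{\kappa_2}w_x\cdot w_x\,dx
\]
plus lower-order terms. This is bounded on $H^1\times L^2$. The loss appears only in its time derivative, at quartic order.

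\textbf{The missing idea.} The low--high normal form correction and the quasilinear weights must be combined, not substituted for one another. By \eqref{lead-symbol}, $\kappa_2-\kappa_0=\frac12\Lambda_1(g^{11})$. Hence the cubic normal form energy coincides, up to cubic order, with the energy/momentum combination $(1+\kappa_0)T^{00}+\kappa_1T^{01}$. Its completion \eqref{epara} keeps the exact cubic cancellation. The three-derivative quartic terms in its time derivative either cancel or reduce to paraproduct commutators, so no derivatives are lost. Applying this energy to $v_{\NF}$, where the bounded high--low/high--high normal form is the part you do have, yields $E_{lin}$ and Theorem~\ref{t:eepropagation-lin}.
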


The only difference between the two parts of the above  results is that the equation \eqref{kg1} is considered in Sobolev spaces which are one derivative higher than for the equation \eqref{kg0}. In particular, \eqref{kg1} can be easily recast as a principally diagonal system of the form  \eqref{kg0} for $u$ and $\partial u$. Hence, in order to avoid repetitions, in this paper we only prove the results for the equation \eqref{kg1}.

 Of independent interest here are the energy estimates for \eqref{kg1}, which represent the main ingredient of the proof of the above theorems. To describe them it is useful to introduce appropriate notions of time dependent \emph{control parameters} as follows
 \begin{equation}
 \label{Ak}
\A_k := \|u\|_{L^\infty} + \|\partial u\|_{W^{k,\infty}}, \qquad k \geq 0.
 \end{equation}

We also note that the quantities \(\A_{k}\) control \( \|\partial^{\le {k+1}}u \|_{ L^\infty}\)  via the equations,
\begin{equation}
\|\partial^{\le {k+1}}u \|_{ L^\infty}
\lesssim_{\A_0} \A_k.
\end{equation}

Indeed, whenever \(u_{tt} \) appears, we use the evolution equation to eliminate it, rewriting \(u_{tt}\) in terms of spatial derivatives of \(u \) and nonlinearities, which are then estimated by \(\A_{k}\) (together with standard Moser-type bounds). For further details, we refer to Proposition~\ref{p:infty}.

Three of these parameters will play a role in our estimates, namely $\A_0$, $\A_2$ and $\A_3$. In terms of these, our typical \emph{cubic energy estimate }will have the form
\[
\frac{d}{dt} E \lesssim_{\A_2} \A_0 \A_3 E.
\]

 We state these  estimates in the next theorem:

 \begin{theorem}\label{t:eepropagation} For each $\sigma \geq 1$ there exists a modified cubic energy 
 $E^{\sigma} (u[t])$ 
 for the full equation \eqref{kg1}, so that we have
\begin{enumerate}[label=(\roman*)]
\item The norm  equivalence
\begin{equation}
 E^{\sigma} (u[t]) \approx_{\A_2}  \Vert u[t]\Vert _{H^{\sigma}\times H^{\sigma -1}}^2,
  \end{equation}
\item The cubic energy estimate
\begin{equation}
\left|\frac{d}{dt}   E^{\sigma} (u[t]) \right|
\lesssim_{\A_2} \A_0 \A_3
\,  E^{\sigma}(u[t]).
\end{equation}
\end{enumerate}
\end{theorem}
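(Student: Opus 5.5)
We construct $E^\sigma$ following the modified energy method of Ifrim--Tataru, where quasilinear modified energies replace the usual normal form transformation. First we write \eqref{kg1} in a form suited to energy estimates. Because we may divide by $g^{00}$, we normalize $g^{00}=-1$, which puts the equation in the form
\[
u_{tt} - 2g^{01}(u,\partial u)\partial_x u_t - g^{11}(u,\partial u)\partial_x^2 u + m u = f(u,\partial u),
\]
where $f$ is at least quadratic. Differentiating once, we pass to the principally diagonal quasilinear system for $(u,\partial u)$ noted after Theorem~\ref{t:weak-Lip}. In this form the top-order part of the equation has coefficients depending only on undifferentiated variables.

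Next we apply $\jD^{\sigma-1}$ to the differentiated equation and take the standard quasilinear energy
\[
E^{\sigma}_{quad} = \int \bigl( |\partial_t \jD^{\sigma-1} v|^2 + g^{11}|\partial_x \jD^{\sigma-1}v|^2 + m|\jD^{\sigma-1}v|^2 \bigr)\,dx, \qquad v=\partial u,
\]
together with lower-order terms for $u$ itself. A commutator and paraproduct analysis, using Moser estimates and Proposition~\ref{p:infty}, gives
\[
\frac{d}{dt}E^\sigma_{quad} = \Lambda^{3}(u) + O_{\A_2}(\A_0\A_3) E^\sigma,
\]
where $\Lambda^3$ is an explicit cubic trilinear form. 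This form is produced by the quadratic parts of $g-\eta$ and of $f$, and we cannot bound it by $\A_0\A_3E$; the best available bound is only $\A_1 E$.

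The heart of the argument is to remove $\Lambda^3$ with a cubic correction $E^{\sigma,3}$, so that $E^\sigma = E^\sigma_{quad}+E^{\sigma,3}$. The correction is obtained by formally solving $\frac{d}{dt}E^{\sigma,3}\big|_{lin} = -\Lambda^3$. In Fourier variables this amounts to dividing the symbol of $\Lambda^3$ by the three-wave resonance function
\[
\omega(\xi_1)\pm\omega(\xi_2)\pm\omega(\xi_3), \qquad \omega(\xi)=\sqrt{m+\xi^2},\quad \xi_1+\xi_2+\xi_3=0.
\]
For Klein--Gordon with $m>0$ there are no three-wave resonances. In the high--high--low interaction that determines the top-order behavior (frequencies $\xi_1 \approx -\xi_2$ large, $\xi_3$ small), the relevant combination $\omega(\xi_1)-\omega(\xi_2)\pm\omega(\xi_3)$ is bounded from below by a constant $\gtrsim \sqrt m$. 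In the other combinations it is comparable to $\langle \xi_{max}\rangle$.

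The main obstacle is this high--high--low case. The normal form symbol then does not gain a derivative; instead it behaves like a bounded paraproduct-type multiplier in the low frequency $\xi_3$. To keep quasilinear structure, we do not divide the full symbol. Instead we write $E^{\sigma,3}$ as a paradifferential expression,
\[
\int T_{B(u)}\,\jD^{\sigma-1}v\cdot \jD^{\sigma-1}v\,dx \quad \text{plus balanced terms},
\]
where $B$ is a smooth function whose low-frequency part is a bounded Fourier multiplier applied to $\partial u$ or $u$. This form requires $\A_2$ only in the equivalence of part (i), which gives $|E^{\sigma,3}|\lesssim \A_1 E$ and, for small $\A$, the norm equivalence.

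We then compute $\frac{d}{dt}E^{\sigma,3}$ using the full equation. By construction the linear part cancels $\Lambda^3$. The remainder is quartic and higher, plus commutator terms from the quasilinear coefficients. Each of these must be shown to contain two $L^\infty$ factors, one controlled by $\A_0$ and one by $\A_3$. This is why $\A_3$, rather than $\A_2$, appears in part (ii): time derivatives of $B$ fall on a term already carrying one derivative, and $u_{tt}$ is removed via the equation. The quartic terms are handled by Coifman--Meyer type bounds, and the balanced (non-top-order) parts of $\Lambda^3$ are handled by an ordinary bounded normal form. Altogether this yields
\[
\left|\frac{d}{dt}E^\sigma\right|\lesssim_{\A_2}\A_0\A_3 E^\sigma.
\]
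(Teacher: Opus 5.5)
There is a genuine gap. Your outline has the right shape: a quasilinear energy, a cubic normal-form correction, and an ordinary bounded normal form for the balanced part (the paper does the last via $u_{\NF}=u+A_{hh}(u,u)+B_{hh}(u_t,u_t)+C_{hh}(u_t,u)$). But the step you treat as routine is exactly where the quasilinear difficulty sits, and as written it fails.

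\textbf{Missing quartic correction.} Your $E^{\sigma,3}$ is purely cubic and quadratic in the top-order variables. With $w$ at the $H^1\times L^2$ level it has the form $\int T_{\kappa_0}w_t\cdot w_t+T_{\kappa_1}w_t\cdot w_x+T_{\kappa_2}w_x\cdot w_x\,dx$. It cancels $\Lambda^3$ only against the linear flow. Differentiate it along the actual flow, substituting $w_{tt}=T_{g^{11}}w_{xx}+2T_{g^{01}}w_{xt}+\dots$, and you get quartic terms of the form
\[
\int T_{\kappa_0}w_t\cdot T_{g^{11}-1}w_{xx}\,dx ,\qquad \int T_{\kappa_1}w_x\cdot T_{g^{01}}w_{xt}\,dx .
\]
After integration by parts and para-commutators these reduce to $\int T_c\,w_{xt}\cdot w_x\,dx$ with $c$ quadratic in $u$. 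Such terms lose a derivative relative to $\|w[t]\|^2_{H^1\times L^2}$. No Coifman--Meyer or commutator bound controls them, so your claim that every quartic remainder carries two $L^\infty$ factors and is $O(\A_0\A_3)E$ is false. They are exact time derivatives up to good terms, so a quartic energy correction is required. The paper supplies it as follows:
\begin{itemize}
\item Using $\kappa_2-\kappa_0=\frac12\Lambda_1(g^{11})$, see \eqref{coefficient}, it rewrites the normal form energy up to cubic order as the paradifferential combination $\frac12\int(1+\kappa_0)T^{00}+\kappa_1T^{01}$ of quasilinear energy and momentum densities.
\item It then removes the truncation $\Lambda_{\le 3}$, which gives \eqref{epara}.
\item The resulting quartic pieces $T_{\kappa_0}w_x\cdot T_{g^{11}-1}w_x$ and $-T_{\kappa_1}w_x\cdot T_{g^{01}}w_x$ are exactly what cancels the terms above (Lemma~\ref{l:main}).
\end{itemize}
Your proposal has no counterpart of this step, and without it part (ii) does not follow.

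\textbf{Wrong resonance lower bound.} In the paradifferential low--high region $|\xi_3|\ll|\xi_1|$ with $|\xi_3|\gg1$, one has $\omega(\xi_1)-\omega(\xi_1+\xi_3)\approx-\xi_3\,\mathrm{sgn}\,\xi_1$. One of your combinations is therefore $\approx\sqrt{m+\xi_3^2}-|\xi_3|\approx m/(2|\xi_3|)$. The interaction is near-resonant, bounded below only by $m\langle\xi_3\rangle^{-1}$, not by a constant; equivalently, $\Delta\approx-4m\xi_{high}^2$. Consequences:
\begin{itemize}
\item The correction symbol grows polynomially in the low frequency; compare $a_0,b_0,c_0^j$ in \eqref{taylor-0}.
\item The paracoefficients $\kappa_j$ contain $\partial^{\le3}u$, so $|E^{\sigma,3}|\lesssim\A_2E$, not $\A_1E$.
\item This derivative loss on the low-frequency factor is what forces $\A_3$ in (ii). It does not come from time derivatives of $B$ falling on a one-derivative term.
\end{itemize}
To handle this you need the expansion of the normal-form symbols in powers of the high frequency, as in \eqref{nf-hl}, with remainders of the form $L_{lh}(\partial_x^{\le3}\partial u,\cdot)$.

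A small indexing point: applying $\jD^{\sigma-1}$ to $v=\partial u$ measures $u$ in $H^{\sigma+1}\times H^{\sigma}$; you want $\jD^{\sigma-2}$.
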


We also prove cubic bounds for the associated linearized equation,
which are at the heart of the proof of Theorem~\ref{t:weak-Lip}:

\begin{theorem}\label{t:eepropagation-lin} Let $u$ be a solution to the Klein-Gordon flow \eqref{kg1}, and let $v$ be a solution to the corresponding linearized equation around the solution $u$. Then there exists a modified cubic energy $E_{lin} (v[t])$   so that we have
\begin{enumerate}[label=(\roman*)]
\item The norm  equivalence
\begin{equation}
  E_{lin}(v[t]) \approx_{\A_2}  \Vert v[t]\Vert _{H^{1}\times L^2}^2,
  \end{equation}
\item The cubic energy estimate
\begin{equation}
\label{cubic ee}
\left|\frac{d}{dt}   E_{lin}(v[t])\right|
\lesssim_{ \ \A_2} 
\A_0
\A_3 \,  E_{lin}(v[t]).
\end{equation}
\end{enumerate}
\end{theorem}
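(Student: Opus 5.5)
We will construct $E_{lin}$ as a quasilinear modification of the linear energy, following the modified-energy framework: start from the quadratic energy of the linearized equation and add a cubic correction that removes the quadratic (in $(u,v)$, i.e.\ linear in $u$) part of the energy flux, which is non-resonant for Klein--Gordon. First write the linearized equation for \eqref{kg1} in the form
\[
- g^{\alpha\beta}(u,\partial u)\partial_\alpha\partial_\beta v + m v = \partial_{\partial u} g^{\alpha\beta}\,\partial_\alpha\partial_\beta u\, \partial v + (\partial_u g^{\alpha\beta}\,\partial_\alpha\partial_\beta u)\, v + f_u v + f_{\partial u}\partial v ,
\]
so all coefficients depend on $u$ and at most two derivatives of $u$, and each is at least linear in $u$ (since $f$ is quadratic and $g-\eta = O(u,\partial u)$). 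The natural quasilinear energy is
\[
E_0(v[t]) = \int -g^{00} v_t^2 + g^{11} v_x^2 + m v^2 \,dx,
\]
with the $g^{01}$ cross term handled as usual. It satisfies the norm equivalence (i) with constants depending on $\A_0$, but $\frac{d}{dt}E_0$ contains cubic terms of the schematic form $\int \partial^{\le 2} u\cdot \partial v\cdot \partial v$ (with $\partial^{\le 3}u$ after commuting), which only yield the quadratic bound $\A_2 E_0$.

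The key step is to write these cubic terms as $\int L(u, v, v)$, multilinear in $u$ at leading order (freezing $g$ at $u=0$ modulo quartic terms), and to remove them by a normal form correction $E_0 + E_3$, with $E_3(u,v,v)$ a trilinear form. Diagonalizing with $\jD = (m - \partial_x^2)^{1/2}$ and $w = v_t \mp i\jD v$, the required symbol of $E_3$ is the cubic symbol divided by the phase $\pm\langle\xi_1\rangle\pm\langle\xi_2\rangle\pm\langle\xi_3\rangle$, where $\xi_1$ is the frequency of $u$. For Klein--Gordon in 1D with $m>0$ this phase never vanishes on $\xi_1+\xi_2+\xi_3=0$. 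In the high--high--low regime ($u$ at low frequency, $v$'s at high frequency) the worst case is $\langle\xi_2\rangle - \langle\xi_3\rangle$ with $|\xi_1|\lesssim 1$, where the phase is $\gtrsim \langle \xi_1\rangle^{-1}$ up to $|\xi_1|/\langle\xi_2\rangle$ corrections; the resulting loss of derivatives on the low-frequency $u$ is acceptable because the numerator already carries a $\xi_1$ factor from $\partial u$, or a derivative of $g$. This is why only $\A_2$ (resp.\ $\A_3$) appears. Since $E_3$ is bounded by $\A_2 \|v[t]\|_{H^1\times L^2}^2$, smallness of $\A_2$ preserves the equivalence (i); more precisely, we write $E_3$ in a form with quasilinear coefficients so that the equivalence holds for general $\A_2$, as the $\approx_{\A_2}$ notation allows.

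It remains to check that $\frac{d}{dt}(E_0+E_3)$ consists only of quartic terms bounded by $\A_0\A_3 E_{lin}$. Differentiating $E_3$ produces three kinds of terms. The linear evolution terms cancel the cubic part of $\frac{d}{dt}E_0$ by construction. The terms in which $\partial_t u$ or $\partial_t v$ is replaced by its nonlinear part are quartic. The genuinely quartic remainders of $\frac{d}{dt}E_0$ come from the $u$-dependence of $g$ beyond linear order. Each quartic term has at least two $u$ factors: one we place in $L^\infty$ via $\A_0$, the other with up to three derivatives via $\A_3$, and we use Proposition~\ref{p:infty} to eliminate $u_{tt}$.

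The main obstacle is the quasilinear loss in the time derivative of $E_3$. When $\partial_t$ falls on a $v$ factor and the nonlinear part $(g-\eta)\partial^2 v$ is substituted, two derivatives land on the high-frequency $v$, which naive normal forms cannot afford. We address this as in the Ifrim--Tataru approach. We only correct the non-quasilinear part: $E_3$ is chosen to be paraproduct-like, with symbol supported where $u$ is at lower frequency, and the high--high interactions are kept in the original quadratic form. Alternatively, we absorb the principal part into $E_0$ by using the variable-coefficient $g$ in the quadratic energy, so the dangerous top-order term becomes a commutator $[g,\partial]$ that is symmetric and can be integrated by parts. Verifying that the remaining high-frequency terms are symmetric, so that integration by parts moves the extra derivative onto $u$, is the delicate computation. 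Once it is done, the bound \eqref{cubic ee} follows.
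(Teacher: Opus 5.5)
\textbf{There is a genuine gap, at exactly the step you defer as ``the delicate computation''.} An energy $E_0+E_3$ with $E_3$ purely trilinear cannot satisfy \eqref{cubic ee}.

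Localize $E_3$ to the region where $u$ is at low frequency. Its principal part then contains $\frac12\int T_{\kappa_0}v_t\cdot v_t+T_{\kappa_1}v_t\cdot v_x\,dx$, with $\kappa_0,\kappa_1$ linear in $u$ and generically nonzero; these are the coefficients in \eqref{eq:para-coef}. Differentiate and substitute $v_{tt}=T_{g^{11}}\partial_x^2v+2T_{g^{01}}\partial_xv_t+\dots$. This produces the quartic terms $\int T_{\kappa_0}v_t\cdot T_{\Lambda_{\ge1}g^{11}}\partial_x^2v\,dx$ and $\int T_{\kappa_1}T_{g^{01}}\partial_xv_t\cdot v_x\,dx$, which carry three derivatives on $v$.

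These terms are not symmetric. Modulo terms bounded by $\A_0\A_3\|v[t]\|^2_{H^1\times L^2}$, they equal $-\frac{d}{dt}\frac12\int T_{\kappa_0}v_x\cdot T_{\Lambda_{\ge1}g^{11}}v_x\,dx$ and $\frac{d}{dt}\frac12\int T_{\kappa_1}v_x\cdot T_{g^{01}}v_x\,dx$ respectively. So no integration by parts removes them. They must be cancelled by adding \emph{quartic} terms to the energy, and choosing these is the key idea missing from your proposal.

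The paper obtains the quartic terms as follows. By the identity \eqref{coefficient}, verified through \eqref{lead-symbol}, the normal form energy agrees up to cubic order with $(1+\kappa_0)$ times the quasilinear energy density plus $\kappa_1$ times the quasilinear momentum density. Removing the $\Lambda_{\le 3}$ truncation then gives \eqref{epara}, which contains exactly the two quartic corrections above. After that, every three-derivative term in $\frac{d}{dt}E^{1,para}_{main}$ either cancels or is a paraproduct commutator $[T_{\kappa_j},T_{g^{\alpha\beta}}]$, controlled by Lemma~\ref{para-com}. Your alternative of ``absorbing the principal part into $E_0$'' points in this direction, but it falls short in three ways. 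It does not say what the correction is. It omits the momentum density, which is needed for the $\kappa_1$ term. And the commutators that actually arise are $[T_\kappa,T_g]$, not $[g,\partial]$.

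\textbf{Your treatment of the frequency regions is also confused.} The part of the cubic flux with $u$ at low frequency \emph{is} the quasilinear part; it is not the ``non-quasilinear part''. The interactions with $u$ at frequency comparable to the highest $v$ frequency (high--high and high--low) cannot be ``kept in the original quadratic form''. They contribute cubic terms of size $\A_1\|v[t]\|^2_{H^1\times L^2}$ to $\frac{d}{dt}E$, which are incompatible with \eqref{cubic ee}.

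The paper removes them with the bounded normal form $v_{\NF}=v+A^1(u,v)+B^1(u_t,v_t)+C^1(u_t,v)+D^1(u,v_t)$, built from the $hl$ and $hh$ parts of Proposition~\ref{nfl}. It then proves the invertibility bound \eqref{equiv-lin} and the source bound \eqref{nf-source-lin}; the source bound is where the expansions of $u_{ttt}$, $v_{tt}$ and $v_{ttt}$ from Proposition~\ref{p:infty} and the cubic bounds of Proposition~\ref{p:lin3} are needed. Finally it sets $E_{lin}(v)=E^{1,para}(v_{\NF},\partial_t v_{\NF})$, with $v_{tt}$ eliminated through the equation, and applies Theorem~\ref{paraE-inhom} with $s=1$.

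Putting these interactions into a trilinear energy correction instead, as your first paragraph suggests, might also work. You would then have to carry out the corresponding estimates yourself, which the proposal does not do. It is also the paradifferential form $L^{para}_{KG}v=N^{lin}_{bal}(u)v$ of the linearized equation that gives ``principal part'' a precise meaning in this construction.
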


We remark that the above theorems provide uniform propagation of energy estimates at every regularity level throughout the cubic time scale.  It is in the proof of these theorems that we employ the \emph{modified energy method}, originally introduced by Hunter, Ifrim, and Tataru in \cite{HunterIfrimTataruWong2015-PAMS}, and subsequently developed and extensively used by Ifrim and Tataru (often in collaboration) in several notable quasilinear settings, particularly in the analysis of water waves and minimal surface models \cite{HunterIfrimTataru2016CMP, IfrimTataru2016BSMF, IfrimTataru2017ARMA, HarropGriffithsIfrimTataru2017AnnPDE, AiIfrimTataru2024Invent, IfrimTataru2019AnalPDE}. This method can be seen as a robust adaptation of normal form methods in quasilinear contexts, where it asserts that 
it is more efficient to modify the energy rather than transform the equation(s).

\begin{remark}  In \eqref{cubic ee}, the regularity of the control norms varies substantially, forcing us to increase the regularity assumed on the initial data. One may naturally  ask whether this can be avoided by using the \emph{balanced energy method}, first introduced by Ai--Ifrim--Tataru in \cite{AiIfrimTataru2022}. However,
as it turns out, this method cannot be implemented in this setting; this is essentially because of the near resonant interactions which are present in the high frequency limit for Klein-Gordon, at least for generic nonlinearities which do not satisfy a null condition.
\end{remark}

With the cubic energy estimates in hand, we now return to the case of the real line, $\mathbb{R}$. There we have further 
dispersive tools at our disposal, 
which allow us to obtain a lifespan of order $\epsilon^{-4}$, at the expense of requiring  additional
regularity for the initial data:

\begin{theorem} \label{stri}
    Suppose that the initial data $u[0] \in H^{s}(\R)\times H^{s-1}(\R)$ 
    satisfy the following smallness bound
     \[
     \Vert
     u[0] \Vert_{H^{s} \times H^{s-1}}
     \leq \epsilon \ll 1.
     \]
    \begin{itemize}
        \item[(i)]  Assume that \( s \ge 6 \frac{1}{4}\) for equation~\eqref{kg0}, and \( s \ge 7 \frac{1}{4}\) for equation~\eqref{kg1}.
     Then there exists a solution $u \in C(I^{\epsilon}; H^{s}) \cap C^1(I^{\epsilon}; H^{s-1})  $
     defined on the time-interval 
     $I^{\epsilon} = [- c \epsilon^{-4}, c  \epsilon^{-4}]$, where $c>0$ is a universal constant.

\item[(ii)]
     Let $u^1,u^2 \in  C(I^{\epsilon}; H^{s}) \cap C^1(I^{\epsilon}; H^{s-1})   $ be  solutions to the same equation (either \eqref{kg0} or \eqref{kg1} as in (i)) with initial data \( u^j[0] \), \(j =1,2\). Then the solution map has Lipschitz difference bounds 
    \[
    \begin{aligned}
        \Vert u^1 [\cdot]- u^2[\cdot] \Vert_{L^\infty (H^1\times L^2)}
        \lesssim
        \Vert u^1[0]- u^2[0] 
        \Vert_{H^1\times L^2}
    \end{aligned}
    \]
  holding over the same time interval  $I^{\epsilon} = [-c \epsilon^{-4}, c \epsilon^{-4}]$.
    \end{itemize}
\end{theorem}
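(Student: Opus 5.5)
The plan is a bootstrap argument built on the cubic energy estimates of Theorem~\ref{t:eepropagation} and Theorem~\ref{t:eepropagation-lin}, with dispersion of the linear Klein--Gordon flow on $\R$ supplying decay of the control parameters.

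\textbf{Bootstrap assumptions.} On an interval $[0,T]$ with $T \le c\epsilon^{-4}$, I will assume
\[
\|u[t]\|_{H^s\times H^{s-1}} \le C\epsilon, \qquad \A_3(t) \le C\epsilon \langle t\rangle^{-1/4}.
\]
The second bound is a weak substitute for the sharp $t^{-1/2}$ decay. That sharp rate is not available here because of the quadratic terms, but one can hope for a Strichartz-type or $L^2_t$-averaged version.

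\textbf{Energy step.} Given these assumptions, Theorem~\ref{t:eepropagation} gives
\[
\frac{d}{dt}E^s \lesssim \A_0\A_3 E^s \lesssim C^2\epsilon^2 \langle t\rangle^{-1/2}E^s .
\]
Integrating, $\int_0^T \epsilon^2 \langle t\rangle^{-1/2}\,dt \approx \epsilon^2 T^{1/2}$, which is $\lesssim c^{1/2}$ for $T\le c\epsilon^{-4}$. This is precisely the origin of the $\epsilon^{-4}$ timescale. Gronwall then closes the energy bound with a constant independent of $C$ once $c$ is chosen small.

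\textbf{Decay step.} It remains to recover $\A_3\lesssim \epsilon\langle t\rangle^{-1/4}$. Since $\A_3$ is already at the quadratic-in-$u$ level, I will use a Duhamel argument.
\begin{itemize}
\item I write the equation as a linear Klein--Gordon equation with forcing $N(u)$, consisting of quadratic and higher terms with up to two derivatives.
\item I use the 1D dispersive estimate $\|e^{it\jD}P_k f\|_{L^\infty}\lesssim |t|^{-1/2}2^{3k/2}\|f\|_{L^1}$ together with the trivial Sobolev bound $\|e^{it\jD}f\|_{L^\infty}\lesssim \|f\|_{H^{1/2+}}$.
\item Interpolating between these two gives $t^{-1/4}$ decay at the cost of roughly an $H^{5/4}$-type or $L^{4/3}$-type norm, measured at the level of three derivatives.
\item That derivative count is what forces $s\ge 6\tfrac14$, respectively $7\tfrac14$.
\end{itemize}
The linear part of the Duhamel formula decays like $\epsilon t^{-1/4}$ provided the data also lie in the relevant dual space. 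Since the data are only in $H^s$, I would instead work with a decay norm such as $L^4_tW^{3,\infty}_x$-Strichartz, which requires no localization. I would also replace the pointwise assumption on $\A_3$ by $\|\A_3\|_{L^4_t}\lesssim \epsilon$ on $[0,T]$. Hölder then gives
\[
\int_0^T \A_0\A_3\,dt \le \|\A_0\|_{L^{4/3}_t}\|\A_3\|_{L^4_t},
\]
and with $\A_0\lesssim\A_3$ this yields
\[
\int_0^T \A_0\A_3\,dt \lesssim T^{1/2}\|\A_3\|_{L^4}^2\lesssim \epsilon^2T^{1/2},
\]
matching the count above. The 1D Klein--Gordon Strichartz estimate $L^4_tL^\infty_x$ holds, with a loss of derivatives, for $e^{it\jD}$.

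\textbf{Main obstacle.} The difficulty is controlling the Duhamel term in $L^4_tW^{3,\infty}$. A naive estimate of the quadratic nonlinearity in $L^1_tH^{\cdot}$ gives only $\epsilon^2 T$, which is too large. Two remedies are available.
\begin{itemize}
\item The inhomogeneous Strichartz estimate with dual exponents puts $N(u)$ in $L^{4/3}_tW^{\cdot,1}$-type spaces. Bounding $\|N\|\lesssim \|u\|_{L^\infty_t H}\,\|u\|_{L^{4/3}_t}$ still grows in $T$.
\item Alternatively, a normal form transformation removes the quadratic terms, which are nonresonant for Klein--Gordon. This reduces the nonlinearity to cubic order plus quasilinear boundary terms of size $\epsilon^2$. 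The cubic Duhamel contribution is bounded by
\[
T^{3/4}\epsilon\,\|\A\|_{L^4}^2 \lesssim \epsilon^3 T^{3/4}\lesssim\epsilon \quad\text{for } T\le\epsilon^{-4}\cdot\epsilon^{4/3}\ldots
\]
\end{itemize}
The arithmetic in the second remedy needs care, and I expect to use the high regularity, with derivative loss in the normal form, to absorb the worst terms.

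\textbf{Difference bounds and continuation.} Part (ii) then follows by feeding the same $L^4$ bound on $\A_3$ into Theorem~\ref{t:eepropagation-lin} and integrating along the one-parameter family of solutions joining $u^1$ and $u^2$. Local well-posedness and continuity in $T$ of the bootstrap quantities close the argument.
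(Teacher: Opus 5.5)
\textbf{Verdict: same approach as the paper, but with a genuine gap at the crucial step.}

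\textbf{Where you agree with the paper.} Your architecture is the paper's:
\begin{itemize}
\item a bootstrap on the energy and on the Strichartz norm $\|\partial^{\le 4}u\|_{L^4_tL^\infty_x}$ (your $\|\A_3\|_{L^4_t}$);
\item the H\"older bound $\int_0^T\A_0\A_3\,dt\lesssim T^{1/2}\|\partial^{\le4}u\|_{L^4_tL^\infty_x}^2$, fed into Gronwall for Theorems~\ref{t:eepropagation} and~\ref{t:eepropagation-lin};
\item the full normal form $\mathbf u=u+A(u,u)+B(u_t,u_t)+C(u_t,u)$, so that $L_{KG}\mathbf u$ is cubic.
\end{itemize}

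\textbf{The gap.} The step that actually fixes the time scale is closing the Strichartz half of the bootstrap, and you never carry it out. The estimate you do write for it is wrong. A cubic Duhamel contribution of size $\epsilon^3T^{3/4}$ stays below $\epsilon$ only for $T\lesssim\epsilon^{-8/3}$, as your own ``$\epsilon^{-4}\cdot\epsilon^{4/3}$'' shows. So as written, the argument does not reach $\epsilon^{-4}$. You also never check the normal form correction when passing back from $\mathbf u$ to $u$. Calling it a boundary term of size $\epsilon^2$ is not enough, because the bootstrap quantity is an $L^4_t$ norm over $[0,T]$.

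\textbf{How to close it.}
\begin{itemize}
\item The $L^4_tL^\infty_x$ Strichartz estimate loses $1/4$ derivative. Applied to $\partial^{\le3}\mathbf u$, it requires the cubic source $Q^3$ in $L^1_tH^{3\frac14}_x$ and $\|\mathbf u[0]\|_{H^{4\frac14}\times H^{3\frac14}}\lesssim\epsilon$.
\item Put one factor in $L^\infty_tH^s$ and two in $L^\infty_x$. Pointwise in time this gives $\|Q^3\|_{H^{3\frac14}}\lesssim\|\partial^{\le4}u\|_{L^\infty}^2\|u[t]\|_{H^s\times H^{s-1}}$.
\item The \emph{same} H\"older step as in your energy argument then gives $\|Q^3\|_{L^1_tH^{3\frac14}}\lesssim T^{1/2}\|\partial^{\le4}u\|_{L^4_tL^\infty_x}^2\sup_t\|u[t]\|_{H^s\times H^{s-1}}\lesssim C_1^3\epsilon^3T^{1/2}$. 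The power is $T^{1/2}$, not $T^{3/4}$.
\item This is also where $s\ge7\frac14$ comes from, not from your dispersive/Sobolev interpolation heuristic. The normal form is unbounded, and the worst term $L(\partial u,\partial_x\partial\Lambda_{\ge2}u_{tt})$ needs $\partial^4u\in H^{3\frac14}$.
\item For the correction, bound $\partial^{\le4}L(\partial u,\partial_x\partial u)$ pointwise in time by $\|\partial u\|_{W^{5,\infty}}^2\lesssim C_1^2\epsilon^2$, using Sobolev embedding and the energy bootstrap. This gives $C_1^2\epsilon^2T^{1/4}$ in $L^4_t$.
\end{itemize}
Both $\epsilon^3T^{1/2}$ and $\epsilon^2T^{1/4}$ are $\ll\epsilon$ exactly when $T\ll\epsilon^{-4}$, which closes the bootstrap. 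Part (ii) then follows as you describe, from Gronwall for the linearized energy with the same time integral.
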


This result extends Delort's earlier result in \cite{Delort1997_KG1D}, which only applies to semilinear Klein-Gordon problems satisfying a null condition. 
Our method of proof also differs. 
In contrast to Delort’s use of $X^{s,b}$-type analysis, our approach combines modified energy estimates with Strichartz estimates. The latter have to be applied with care, since our problem is quasilinear.

At a conceptual level, our results would be relatively straightforward to establish if the nonlinearities were all at least cubic. Thus, it is the quadratic part of the nonlinearity which 
plays the leading role in the proof, and which is in particular relevant for the normal form computations.  We also note that, after dividing by $g^{00}$ we may normalize $g^{00}= -1$.

\medskip

The paper is organized as follows: 
\begin{enumerate}[label=(\roman*)]
\item In Section~\ref{s:notations} we introduce notations and the
paradifferential calculus needed throughout the paper. \item  We then compute the full formal normal forms for our equations, which will be needed for the construction of the modified energy functionals; these computations are carried out in Section~\ref{s:normal-form}.  This prepares us to construct the cubic energy
functional via the modified energy method, for a succession of related equations described in Section~\ref{s:para}. 
\item  After some preparatory estimates
in Section~\ref{s:normal-forms extras},  cubic modified energy functionals are successively constructed in  Section~\ref{s:cubic para ee} for the paradifferential equation, in Section~\ref{s:cubic ee} for the full equation, and then in Section~\ref{s:cubic linearized} for the linearized equation. 
\item Once the cubic energy estimates are available, we use them to prove an enhanced lifespan for the full equation, as detailed in Section~\ref{s:cubic lifespan}. We also derive weak-Lipschitz bounds via modified energy estimates for the linearized equations. 
\item  Finally, for the equations on $\R$ we combine the cubic energy estimates with Strichartz estimates in Section~\ref{s:Strichartz},  in order to obtain a quintic lifespan, i.e.,   $T\approx \epsilon^{-4}$.
\end{enumerate}

\subsection{Previous work}

 Local well-posedness for quasilinear hyperbolic problems in Sobolev spaces is classical.
Following Kato's theory for symmetric hyperbolic systems \cite{Kato1975-ARMA}, and further references in 
\cite{Hormander1997-Lectures,Alinhac2009-HyperbolicPDE},\cite{OzawaNakumura2001}
the local well-posedness in Sobolev spaces $H^{s} \times H^{s-1}$ holds for the  Klein--Gordon equations \eqref{kg0}, \eqref{kg1} provided that $s > \frac32$, respectively $s > \frac52$. 
This naturally raises the question of whether and when one can extend this local result to either a long-time  result or a global one.

The study of long-time well-posedness for a model problem such as the Klein-Gordon flow in \eqref{kg1}, potentially depends on a number of structural features, notably the spatial dimension, the nature of the nonlinearity, and the localization of the initial data. For dispersive equations, the analysis typically becomes more delicate in lower spatial  dimensions, since there are fewer directions in which waves can disperse. Historically, problems with smooth, small and localized initial data in $\R^n$ were considered first,
as they exhibit better decay and lifespan properties. But in the present paper our interest is in the case of non-localized data.

Following the historical perspective, this section is organized as follows. We first review results in $\R^n$ that require  initial data localization, and also possibly null conditions, and where vector field methods play the leading role.
We then turn our attention to results where no initial data localization is assumed. This is where normal form methods are of the essence, and in this context we will also  discuss the development of the \emph{modified energy method } first introduced by Ifrim and Tataru in \cite{HunterIfrimTataruWong2015-PAMS}.\\

In dimensions $d \ge 3$, the picture in the case of localized data is by now fairly well understood:
for quasilinear Klein--Gordon equations with smooth, small, localized data, it is well-known that the global-in-time solution exists, as has been proved independently by Klainerman~\cite{Klainerman1985-CPAM} and Shatah~\cite{Shatah1985}. Thus, in what follows we restrict our attention to previous results in one and two space dimensions.

In H\"ormander’s  analysis in  \cite{Hormander1997-Lectures}, the inhomogeneous Klein--Gordon Duhamel term gains additional decay compared to the standard $t^{-d/2}$ rate—namely, a logarithmic improvement in two dimensions and a full extra power of $t^{-1}$ in one dimension; this gain comes from sharper control of the Duhamel term (a stationary phase argument) leading  to almost global lifespan bounds $\epsilon \log T_{\epsilon} \to \infty$ in $d=2$ and cubic bounds $\epsilon^{2} T_{\epsilon} \to \infty$ in $d=1$. H\"ormander further conjectured that the optimal lifespan bounds must be much stronger, yielding global existence in  $d=2$ and almost-global existence in $d=1$.

Ozawa, Tsutaya and Tsutsumi, in \cite{OzawaTsutayaTsutsumi1996-MathZ}, were the first to prove H\"ormander's  conjecture in $d=2$ for  quadratic semilinear nonlinearities. Their result completed the partial results of Georgiev–Popivanov~\cite{GeorgievPopivanov1991-CPDE}, Kosecki~\cite{Kosecki1992-JDE}, and Simon–Taflin~\cite{SimonTaflin1993-CMP}.  In \cite{OzawaTsutayaTsutsumi1997-NonlinearWaves}, Ozawa, Tsutaya, and Tsutsumi subsequently extended the analysis to the quasilinear case, establishing a global existence theory together with a classical scattering result.

Continuing this line of inquiry, Moriyama, Tonegawa, and Tsutsumi~\cite{MoriyamaTonegawaTsutsumi1997-FE} investigated the one-dimensional Klein–Gordon model, allowing either cubic (or higher) nonlinearities or quadratic nonlinearities of semilinear type. In this setting they established an almost-global well-posedness result together with a classical scattering theory. 

The sharpness of the almost-global well-posedness result in~\cite{MoriyamaTonegawaTsutsumi1997-FE} was established by Yordanov~\cite{Yordanov1996-preprint}, and independently by Keel and Tao~\cite{KeelTao1999-AJM}. On the other hand, for certain classes of quadratic  quasilinear nonlinearities, global existence can in fact be obtained; this was proved by Moriyama in~\cite{Moriyama1997-DIE}.

Completing the above line of one dimensional results, in \cite{Delort2001-ASENS}, Delort proved that, for compactly supported initial data, one can find a \emph{null condition}, under which global existence can be proved. This is matched by the construction in \cite{Delort1999-AIHP} of approximate solutions blowing up at time $e^{A/\epsilon^2}$,  if the null condition is violated. Recently, in \cite{Delort2024AIH} Delort  also showed norm inflation for semilinear equation with cubic nonlinearities violating the null conditions.


\medskip

We now turn to the case where no localization for initial data is imposed. The first result of this type in $\R$ is due to Delort \cite{Delort1997_KG1D}, who obtained the lifespan of  $\epsilon^{-4} \log \epsilon ^{-6}$ for one dimensional semilinear equations, assuming a null condition introduced earlier by Kosecki \cite{Kosecki1992-JDE}.  His arguments were perturbative, using Bourgain type spaces adapted to the  Klein-Gordon flows.  Using a similar strategy, he proves almost global existence in higher dimensions \( d \geq 2\) ; see  \cite{DelortAJM}.
On the other hand, in the periodic case  Delort \cite{Delort1999-AIHP} showed  that, for small \emph{periodic} initial data, the semilinear Klein-Gordon equation has lifespan of $\epsilon^{-2}$ with $r=2$, and $\epsilon^{-(r-1)} \log\epsilon^{-(r-3)}$ for $r \geq 3$ for nonlinearities which are at least of order $r$. 
In the same paper, he also constructed examples showing optimality of this result (up to the logarithmic factors).

We also point out another line of results
that still applies in the quasilinear case, but under the assumption that the mass is outside a measure zero set. 
Precisely, in  \cite{Delort2012-Quasi}, Delort showed that there exists almost global solutions on $\T$. Further models also including potentials were considered in \cite{Delort2009TAMS}.


As it was understood over the years, enhanced lifespan results
in the nonlocalized data case 
are closely related to normal form methods, as first pointed out by Simon\cite{Simon1983-LettMP} and Shatah~\cite{Shatah1985}. However,
while normal forms can be applied directly for Klein-Gordon in the semilinear case, 
quasilinear problems bring forth an added difficulty which is that the associated normal form transformations are in general unbounded. A robust 
way to bypass this difficulty is the \emph{modified energy method}, which originates in the work of Hunter–Ifrim–Tataru–Wong \cite{HunterIfrimTataruWong2015-PAMS}, where it was used to obtain enhanced lifespan estimates for small solutions of the Burgers–Hilbert equation. 
Their key observation was that the loss of derivatives in normal forms in the the quasilinear setting can be avoided  by encoding the normal form corrections at the level of the energy functional itself, for carefully chosen energy functionals.

 This point of view was first carried out in \cite{HunterIfrimTataru2016CMP} for water waves equations, where cubic modified energy estimates were established, and was later refined in \cite{AiIfrimTataru2022} into the so-called \emph{balanced energy estimates}, in which the derivatives on the control parameters are more evenly distributed. 
 In the context of water waves \cite{HunterIfrimTataru2016CMP,IfrimTataru2016BSMF}, the modified energy method provides a robust framework to prove global well-posedness for small, spatially localized data. The same methodology has since been adapted to several other quasilinear models: capillary water wave systems in \cite{IfrimTataru2017ARMA}, gravity water waves in \cite{HarropGriffithsIfrimTataru2017AnnPDE}, and the minimal surface equation in \cite{AiIfrimTataru2024Invent}. These works show that the modified (and balanced) energy method can be used systematically to obtain long time well-posedness result for a broad class of quasilinear dispersive equations.

\subsection{Acknowledgments} 
The first author was partially supported by the NSF grant DMS-2348908, as well as by a Vilas Associate Fellowship.  
The second author was supported by the NSF grant DMS-2348908, by a Miller Visiting Professorship at UC Berkeley during the Fall semester of 2023,  by the Simons Foundation through a Simons Fellowship in the Spring semester of 2024, and by a Vilas Associate Fellowship.
The third author was supported by NSF grant DMS-2054975 and by a Simons Fellowship from the Simons Foundation.

\section{Notations and Preliminaries}\label{s:notations}

\subsection{Notations}
In this section, we introduce  notations that we will use in this work. First, we denote the linear Klein-Gordon operator by
\[
L_{KG} : = \partial_t^2 -\partial_x^2 +m.
\]

We also let $\Lambda_n$ denote the truncation operator that discards all terms of homogeneity different from $n$ in multilinear expressions.


\subsection{Function spaces} Here we review some of the function spaces and linear estimates used later in the paper. We briefly recall the standard 
 Littlewood-Paley decomposition in frequency
\begin{equation*}
1=\sum_{k\in \mathbf{N}}P_{k},
\end{equation*}
where the multipliers $P_k$ have smooth symbols localized at frequency $2^k$, with $P_0$ selecting the frequencies $|\xi| \lesssim 1$. For the dyadic portions of a function $f$ we will  use the notation $f_k := P_k f$, $f_{<k} := P_{<k} f$, so that its Littlewood-Paley decomposition reads
\[
f = \sum_{k \geq 0} f_k.
\]

A good portion of our analysis happens at the level of inhomogeneous
Sobolev spaces ${H}^{s}$, whose norm is given by
\begin{equation*}
\Vert f\Vert_{{H}^{s}}\sim \Vert ( \sum_{k \in \N}\vert 2^{ks} P_{k}f\vert^2 )^{1/2}  \Vert _{L^2}=
\| 2^{ks} P_k f \|_{L^2_{\alpha} \ell^2_k}.
\end{equation*}
We will also use  the (inhomogeneous) Littlewood-Paley
square function and its restricted version,
\begin{equation*}
\displaystyle S(f)(\alpha):=\bigg( \sum_{k\in {\mathbf N}} |P_k(f)(\alpha)|^2\bigg)^\frac{1}{2}, \qquad S_{>k}(f)(\alpha) := \left( \sum_{j > k} |P_j f|^2 \right)^\frac12.
\end{equation*}
The Littlewood-Paley inequality  is recalled below
\begin{equation}\label{lp-square}
   \displaystyle \|S(f)\|_{L^p({\mathbb R})}\simeq_{p} \|f\|_{L^p({\mathbb R})}, \qquad  1<p<\infty.
   \end{equation}
By duality this also yields the estimate
\begin{equation}
\label{useful}
\Vert \sum_{k\in \mathbf{N}}P_{k}f_{k}\Vert_{L^p}\lesssim
\Vert \sum_{k\in \mathbf{N}}(\vert f_{k}\vert ^2)^{1/2}\Vert_{L^p}, \qquad 1 < p < \infty,
\end{equation}
for any sequence of functions $\left\{ f_k\right\}_k\in  L^p_{\alpha} l^2_k$.

We will also use the local BMO space, usually denoted by $\bmo$, which agrees with $L^\infty$ at low frequency and with $BMO$ at high frequency.
For real $s$ we define the inhomogeneous spaces\footnote{These are the same as the inhomogeneous Triebel-Lizorkin spaces $F^{s}_{\infty,2}$.} $\bmo^s$
with norm
\[
\| u\|_{\bmo^s}  = \| \jD^s u\|_{\bmo}.
\]
\subsection{Coifman-Meyer and and Moser-type estimates}
\label{ss:coifman} 
With the basic Littlewood-Paley theory in hand, we move on to bilinear estimates,
where we begin with the
paraproduct  decomposition  of the product of two functions, is essential
\[
 f g = \sum_{k > l+4} f_{l} g_k +  \sum_{k > l+4} f_{k} g_{l} + \sum_{|k-l| \leq 4}
f_k g_l := T_f g + T_g f + \Pi(f,g),
\]
where we recall that we are using the inhomogeneous setup, and in particular
$k,l \geq 0$ above.

By a slight abuse of notation, in the sequel we will omit the frequency separation 
from our notations in bilinear Littlewood-Paley decomposition; for instance instead of the 
above formula we will use the shorter expression
\[
  f g = \sum_{k \in \N} f_{<k} g_k +  \sum_{k \in \N} f_{k} g_{<k} + \sum_{k \in \N} f_k g_k.
\]
Paraproducts may also be thought of as 
 belonging to the larger class of translation invariant bilinear operators.
 Such operators 
 \[
 f,g \to B(f,g)
 \]
 may be described by their symbols $b(\eta,\xi)$ in the Fourier space, by
 \[
 \mathcal F B(u,v)(\zeta) = \int_{\xi+\eta = \zeta} b(\eta,\xi) \hat f(\eta) \hat g(\xi) \, d \xi.
 \]

For more rigorous definitions,
let $\chi_{lh}(\xi_1,\xi_2)$, $\chi_{hh}(\xi_1,\xi_{2})$ be two non-negative bump functions, smooth on the dyadic scale,
\begin{align}
    \chi_{lh}(\xi_1,\xi_2) := \begin{cases}  1 ,& \text{when }   \langle \xi_1\rangle \leq\frac{1}{20} \langle\xi_2\rangle \\
    0 ,& \text{when } \langle\xi_1\rangle \geq\frac{1}{10} \langle\xi_2\rangle,
    \end{cases}
\end{align}
\begin{align}
    \chi_{hh}(\xi_1,\xi_2) := \begin{cases}  1 ,& \text{when }   \frac{1}{10} \leq \frac{\langle\xi_1\rangle}{\langle\xi_2\rangle} \leq 10\\
    0 ,& \text{when } \langle\xi_1\rangle \leq \frac{1}{20} \langle\xi_2\rangle \text{ or  } \langle\xi_2\rangle\leq \frac{1}{20} \langle\xi_1\rangle,
    \end{cases}
\end{align}
and such that $\chi_{lh}(\xi_1,\xi_2) + \chi_{lh}(\xi_2,\xi_1) + \chi_{hh}(\xi_1,\xi_2) =1$. The $\chi_{lh}(\xi_1,\xi_2)$ symbol selects the low--high portions of the bilinear terms and the $\chi_{hh}(\xi_1,\xi_2)$ selects the high--high portions of the bilinear terms. Associated to these symbols, we introduce corresponding projections bilinear operators.  

In particular the paraproduct $B(f,g) = T_f g$ is  defined to have symbol
\[
b(\xi_1,\xi_2) = \chi_{lh}(\xi_1, \xi_2+\frac12\xi_1),
\]
where the entry $\xi_2+\xi_1/2$ is the average of the $g$ input frequency 
and the output frequency. This corresponds exactly to using the Weyl calculus, which is a convenient choice
of quantization for our energy estimates.
We also remark that the set-up in this paper corresponds to inhomogeneous spaces, and in particular the interactions of two low ($\lesssim 1$) frequencies are placed in the high-high box.

\smallskip

We now discuss bilinear paraproduct bounds. Many of these bounds are relatively standard, like the classical Coifman-Meyer bounds 
and some of their generalizations; here are some classical references for the interested reader \cite{CoifmanMeyer1975, CoifmanMeyer1978, Meyer1991, Calderon1965}. Several other bounds are 
more customized for the work we pursue here, and for that we cite the work in \cite{HunterIfrimTataru2016CMP}.

Away from the exponents $1$ and $\infty$ one has a full set of estimates
\begin{equation}\label{CM}
\| T_f g\|_{L^r} + \| \Pi(f,g)\|_{L^r} \lesssim \|f\|_{L^p} \|g\|_{L^q}, \qquad \frac{1}r = \frac{1}{p} +
\frac{1}{q}, \qquad 1 < p,q,r < \infty.
\end{equation}
Corresponding to $q = \infty$ one also  has a $\bmo$ estimate 
\begin{equation}\label{CM-BMO}
\| T_f g\|_{L^p} + \| \Pi(f,g)\|_{L^p} \lesssim \|f\|_{L^p} \|g\|_{\bmo},  \qquad 1 < p < \infty,
\end{equation}
while for the remaining product term we have the weaker bound
\begin{equation}\label{CM+}
\| T_g f\|_{L^p}  \lesssim \|f\|_{ W^{s,p}} \|g\|_{\bmo^{-s}},  \qquad 1 < p < \infty, s > 0.
\end{equation}

Next we consider some similar product type estimates involving $bmo$ and
$L^\infty$ norms.

\begin{proposition}[\cite{HunterIfrimTataru2016CMP}]\label{p:bmo}
a) The following estimates hold:
\begin{equation}\label{bmo-bmo}
\| \Pi(u,v) \|_{\bmo} \lesssim \| u\|_{\bmo} \|v\|_{\bmo},
\end{equation}

\begin{equation}\label{bmo-infty} 
\| T_u v\|_{\bmo} \lesssim \| u\|_{L^\infty} \|v\|_{\bmo},
\end{equation}
\begin{equation}\label{bmo>infty}
\| T_u v\|_{\bmo} \lesssim \| u\|_{\bmo^{-\sigma}} \|v\|_{\bmo^\sigma}, \quad 
\qquad \sigma > 0.
\end{equation}

b) For $s > 0$ the space $L^\infty \cap \bmo^{s}$ is an algebra,
\begin{equation}\label{bmo-alg}
\| uv\|_{\bmo^{s}} \lesssim \| u\|_{L^\infty} \|v\|_{\bmo^s}+
 \| v\|_{L^\infty} \|u\|_{\bmo^s}.
\end{equation}

c) In addition, the following Moser estimate holds for  smooth functions $F$ vanishing at $0$:
\begin{equation}\label{bmo-moser}
\| F(u)\|_{\bmo^{s}} \lesssim_{\|u\|_{L^\infty}} \|u\|_{\bmo^s}.
\end{equation}
\end{proposition}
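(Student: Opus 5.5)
The plan is to work dyadically and reduce each bound to a Littlewood--Paley statement. I will use the standard square-function characterization of $\bmo$ at high frequency,
\[
\|f\|_{\bmo} \approx \|P_0 f\|_{L^\infty} + \sup_{Q,\ |Q|\le 1} \Big(\frac{1}{|Q|}\int_Q \sum_{2^k \ge |Q|^{-1}} |P_k f|^2\,dx\Big)^{1/2},
\]
together with the fact that $\|P_k f\|_{L^\infty}\lesssim \|f\|_{\bmo}$ uniformly in $k$. For a unit-or-smaller interval $Q$ of length $2^{-j}$, the square-function average splits into frequencies $k\ge j$ and a low part. The low part is handled by Bernstein and the uniform bound on $P_k f$.

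For \eqref{bmo-infty}, the summand $P_k(T_u v)$ is essentially $u_{<k} v_k$ plus a commutator. Since $|u_{<k}|\le C\|u\|_{L^\infty}$, the localized square sum of $u_{<k}v_k$ over $Q$ is bounded by $\|u\|_{L^\infty}^2$ times the corresponding Carleson quantity for $v$. The frequency shift from the output projection is harmless because $u_{<k}v_k$ is already localized near $2^k$; the tails of the kernel outside $Q$ are controlled by $2Q$-type enlargements and Schwartz decay.

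For \eqref{bmo>infty}, I will instead use $|u_{<k}|\lesssim \sum_{l<k}2^{l\sigma}\|u\|_{\bmo^{-\sigma}}\lesssim 2^{k\sigma}\|u\|_{\bmo^{-\sigma}}$ (here $\sigma>0$ is essential for the geometric sum) and pair this with $2^{-k\sigma}$ absorbed by $v_k\in\bmo^\sigma$. The remaining Carleson sum is then that of $\jD^\sigma v$.

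The main obstacle is \eqref{bmo-bmo}: $\Pi(u,v)=\sum_k u_k v_k$ outputs all frequencies $\lesssim 2^k$, so there is no gain from output localization. My plan here is duality with $h^1$ (local Hardy space). I write $\langle \Pi(u,v),\phi\rangle=\sum_k\langle u_k, \tilde P_k(v_k\phi)\rangle$-type expressions, or alternatively estimate $P_j\Pi(u,v)=\sum_{k\ge j-4}P_j(u_kv_k)$ on an interval of length $2^{-j}$. For the latter, $|P_j(u_kv_k)|$ is controlled by a local average of $|u_k||v_k|$. Cauchy--Schwarz in $k$ then gives $\big(\sum_k|u_k|^2\big)^{1/2}\big(\sum_k|v_k|^2\big)^{1/2}$ averaged over a $2^{-j}$ neighborhood, which by the Carleson characterization is $\lesssim\|u\|_{\bmo}\|v\|_{\bmo}$ pointwise in $j$. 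To upgrade this to the square function in $j$, I will use that the multiplier $\sum_{j\le k}P_j$ is essentially a Carleson-measure-to-$\bmo$ map. Concretely, I show that $\sum_k |u_k|^2\,\delta_{t=2^{-k}}$ and its $v$ analogue are Carleson measures, so $\sum_k u_kv_k$ pairs with $\phi\in h^1$ through a tent-space duality (Coifman--Meyer--Stein). This is the step I expect to require the most care.

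Part b) follows from the decomposition $uv=T_uv+T_vu+\Pi(u,v)$. The term $T_u v$ in $\bmo^s$ is handled like \eqref{bmo-infty}, with $\jD^s$ falling on the high-frequency factor, giving $\|u\|_{L^\infty}\|v\|_{\bmo^s}$; symmetrically for $T_vu$. For $\Pi(u,v)$, since $s>0$, I bound $\jD^s\Pi(u,v)$ by $\sum_k 2^{ks}u_kv_k$-type terms and apply the $\Pi$ estimate with $\|u\|_{\bmo}\lesssim\|u\|_{L^\infty}$.

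For part c), I will use the paralinearization/telescoping $F(u)=\sum_k [F(u_{<k+1})-F(u_{<k})] + F(u_0)$. Each piece equals $u_k\,G_k$ with $G_k=\int_0^1F'(u_{<k}+\theta u_k)d\theta$, which is bounded by $C(\|u\|_{L^\infty})$ and has derivatives controlled at frequency $2^k$. The output of $u_kG_k$ at frequency $2^j$ comes from $j\approx k$ (giving $\bmo^s$ of $u$ directly) and from $j>k$, where $G_k$'s high frequencies decay like $2^{-N(j-k)}$ via Bernstein and the chain rule with $L^\infty$ bounds on $\partial^n u_{<k}\lesssim 2^{nk}\|u\|_{L^\infty}$. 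Summing with $s>0$ then closes the Moser bound.
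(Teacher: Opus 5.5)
Your argument is sound. For \eqref{bmo-infty}, \eqref{bmo>infty} and part b) it is the square-function (Carleson) route the paper points to; the paper gives no proof beyond saying everything follows from the square function characterization of $\bmo$.

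\textbf{Where you differ: \eqref{bmo-bmo}.} You go through $h^1$--$\bmo$ duality, the Carleson embedding and the nontangential maximal function characterization of $h^1$. This works: $u_kv_k$ has frequencies $\lesssim 2^k$, so you may insert $P_{\lesssim k}$ on the test function and apply Cauchy--Schwarz against the two Carleson measures. It is also insensitive to output localization, so it covers the weighted variants $\sum_k \tilde P_{k-n}(u_k\, 2^{ks}v_k)$ you need for $\jD^s\Pi(u,v)$ in part b). But it is heavier than necessary. With the mean-oscillation definition of $\bmo$, the estimate you already wrote down finishes the proof directly. Fix an interval $Q$ with $|Q|=2^{-J}\le 1$ and split $\Pi(u,v)$ into $k<J$ and $k\ge J$. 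The first piece has derivative
\[
\lesssim \sum_{k<J}2^k\|u_k\|_{L^\infty}\|v_k\|_{L^\infty}\lesssim 2^J\|u\|_{\bmo}\|v\|_{\bmo},
\]
hence oscillation $O(\|u\|_{\bmo}\|v\|_{\bmo})$ on $Q$. The second has $L^1(Q)$ average at most $\frac{1}{|Q|}\int_Q\sum_{k\ge J}|u_k||v_k|$, which Cauchy--Schwarz and the Carleson bounds for $u$ and $v$ control. For $|Q|\ge 1$ the same bound holds without splitting. No square function of the output is needed; this is the ``direct consequence'' the paper alludes to.

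\textbf{Omission in part c).} Your frequency bookkeeping for $u_kG_k$ lists only outputs at $2^j$ with $j\approx k$ and $j>k$. But $P_{\sim k}G_k$ is of size $O(1)$, not decaying, since $G_k$ contains $u_k$ and $u_{<k}$ near frequency $2^k$. Its high--high interaction with $u_k$ therefore produces output at every $j<k$ (for $F(u)=u^2$ this is just $\Pi(u,u)$). This is precisely where $s>0$ enters: write $2^{js}=2^{(j-k)s}2^{ks}$ and sum $\sum_{k>j}2^{(j-k)s}$. This reduces the contribution to the Carleson bound for $\jD^s u$, exactly as for the $\Pi$ term in part b). By contrast, the $j>k$ piece only needs $N>s$ in the $2^{-N(j-k)}$ decay, not $s>0$. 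So your closing remark attributes the role of $s>0$ to the wrong case.
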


This Proposition is a paradifferential reformulation of 
results from \cite{HunterIfrimTataru2016CMP}, with the minor adaptation to the local BMO spaces. Its proof is a direct consequence 
of the square function characterization of $\bmo$, and is left
for the reader.

A more standard algebra estimate and the corresponding Moser bound is as follows:
 \begin{lemma}
Let $\sigma > 0$. Then $ H^\sigma \cap L^\infty$ is an algebra, and
\begin{equation}
\| fg\|_{H^\sigma} \lesssim \| f\| _{ H^\sigma} \|g\|_{L^\infty} +
\|f\|_{L^\infty}  \| g\| _{H^\sigma}.
\end{equation}
In addition, the following Moser estimate holds for a smooth function $F$ vanishing at 0:
\begin{equation}\label{bmo-hs}
\| F(u)\|_{ H^\sigma } \lesssim_{\|u\|_{L^\infty}} \|u\|_{ H^\sigma }.
\end{equation}
\end{lemma}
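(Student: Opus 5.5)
We would prove the algebra estimate through the paraproduct decomposition of Section~\ref{ss:coifman}, writing
\[
fg = T_f g + T_g f + \Pi(f,g),
\]
and bounding each piece separately in $H^\sigma$ using the Littlewood--Paley characterization $\|h\|_{H^\sigma}\sim \|2^{k\sigma}P_k h\|_{L^2\ell^2_k}$.

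\textbf{Low--high pieces.} For $T_f g=\sum_k f_{<k}g_k$, each summand is frequency localized at $2^k$. Hence
\[
\|T_f g\|_{H^\sigma}^2 \lesssim \sum_k 2^{2k\sigma}\|f_{<k}\|_{L^\infty}^2\|g_k\|_{L^2}^2 \lesssim \|f\|_{L^\infty}^2\|g\|_{H^\sigma}^2,
\]
using $\|f_{<k}\|_{L^\infty}\lesssim\|f\|_{L^\infty}$, which holds uniformly since the kernels of $P_{<k}$ are bounded in $L^1$. The term $T_g f$ is handled symmetrically and gives $\|g\|_{L^\infty}\|f\|_{H^\sigma}$.

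\textbf{High--high piece.} For $\Pi(f,g)=\sum_k f_k g_k$, the summands are supported at frequencies $\lesssim 2^k$. So
\[
\|P_j\Pi(f,g)\|_{L^2}\lesssim \sum_{k\ge j-4}\|f_k\|_{L^2}\|g\|_{L^\infty}.
\]
Multiplying by $2^{j\sigma}$, the weights satisfy $2^{j\sigma}=2^{(j-k)\sigma}2^{k\sigma}$, and since $\sigma>0$ the kernel $2^{(j-k)\sigma}\mathbf 1_{k\ge j-4}$ is summable. Schur's test in $\ell^2$ then gives $\|\Pi(f,g)\|_{H^\sigma}\lesssim\|f\|_{H^\sigma}\|g\|_{L^\infty}$. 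This is the only place where $\sigma>0$ is needed.

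\textbf{Moser estimate.} We use the telescoping (Meyer) paralinearization. Write
\[
F(u)=F(u_0)+\sum_{k\ge1}\bigl(F(u_{<k+1})-F(u_{<k})\bigr)
=F(u_0)+\sum_k m_k\,u_k,
\qquad m_k=\int_0^1F'(u_{<k}+\theta u_k)\,d\theta .
\]
The $L^2$ norm is handled directly: since $F(0)=0$, we have $|F(u)|\lesssim_{\|u\|_{L^\infty}}|u|$. For the $H^\sigma$ seminorm we decompose each $m_k$ into its low-frequency part $P_{<k+C}m_k$ and its high-frequency remainder.

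The low-frequency part yields terms at frequency $\lesssim 2^k$ bounded by $\|u_k\|_{L^2}$. These are summed exactly as in the high--high estimate above, again relying on $\sigma>0$. For the remainder we use the bound
\[
\|P_j m_k\|_{L^\infty}\lesssim_{N,\|u\|_{L^\infty}} 2^{-N(j-k)}, \qquad j>k,
\]
which follows because the derivatives satisfy $\partial^N m_k=O(2^{kN})$, a consequence of the chain rule and Bernstein's inequality applied to $u_{<k+1}$. This decay lets us absorb the high-frequency contributions, again with geometric summation in $j-k$.

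\textbf{Main obstacle.} The delicate point is making this derivative bound on $m_k$ uniform with constants depending only on $\|u\|_{L^\infty}$, with $N>\sigma$ chosen to close the sum. Alternatively, for integer $\sigma$ we could argue via the Fa\`a di Bruno formula and Gagliardo--Nirenberg, and then interpolate to general $\sigma$. We would take the paradifferential route, since it parallels Proposition~\ref{p:bmo}.
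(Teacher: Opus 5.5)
The paper does not prove this lemma; it only refers to \cite{HunterIfrimTataru2016CMP}. So there is no in-paper argument to compare against, and your proposal is a correct, self-contained version of the standard proof. For the algebra bound you use the paraproduct trichotomy $fg = T_f g + T_g f + \Pi(f,g)$, with Schur's test on the high--high piece. For the Moser bound you use Meyer's telescoping paralinearization $F(u)=\sum_k m_k u_k$, split by the frequency of $m_k$. In both places $\sigma>0$ enters exactly where you say, and the choice $N>\sigma$ closes the sum for the high-frequency part of the coefficients.

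Three small remarks.

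\emph{The initial term.} You never bound the $H^\sigma$ norm of $F(u_0)$. The cleanest fix is to start the telescoping at $F(0)=0$, i.e.\ write $F(u)=\sum_{k\ge 0} m_k u_k$ with $u_{<0}=0$ and $m_0=\int_0^1 F'(\theta u_0)\,d\theta$. Then this term is handled like all the others. Alternatively, since $u_0$ is smooth, the chain rule gives $\|F(u_0)\|_{H^N}\lesssim_{\|u\|_{L^\infty}}\|u_0\|_{L^2}$ directly.

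\emph{The step you flag as the main obstacle is routine.} By the chain rule, $\partial^N m_k$ is a finite sum of terms $F^{(l+1)}(u_{<k}+\theta u_k)\prod_i \partial^{a_i}(u_{<k}+\theta u_k)$ with $\sum_i a_i = N$. Bernstein gives $\|\partial^{a}(u_{<k}+\theta u_k)\|_{L^\infty}\lesssim 2^{ka}\|u\|_{L^\infty}$. Also $\|u_{<k}+\theta u_k\|_{L^\infty}\lesssim \|u\|_{L^\infty}$ uniformly in $k$. Hence the constant depends only on $N$ and on bounds for $F^{(l)}$, $l\le N+1$, on a ball of radius $C\|u\|_{L^\infty}$.

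\emph{The interpolation alternative.} The route you mention (integer $\sigma$ via Fa\`a di Bruno, then interpolation) is not immediate. The map $u\mapsto F(u)$ is nonlinear, so linear interpolation does not apply to it directly. The paradifferential route you chose avoids this issue entirely.
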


We do not include the proof  of these lemmas as they can can be found in \cite{HunterIfrimTataru2016CMP}. 
Also, we need to have the following commutator and associativity bounds:
\begin{lemma}[Para-commutators] \label{para-com}
We have
\[
\| T_f T_g -T_g T_f \|_{H^s \rightarrow H^{s + 2 }}
\lesssim
\| \jD f \|_{L^{\infty}} \|  \jD  g\|_{L^{\infty}}
\]
\end{lemma}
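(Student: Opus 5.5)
We prove the bound
\[
\| T_f T_g -T_g T_f \|_{H^s \rightarrow H^{s + 2 }}
\lesssim
\| \jD f \|_{L^{\infty}} \|  \jD  g\|_{L^{\infty}}
\]
by a dyadic decomposition combined with a second order symbol expansion. Since $T_fT_g$ and $T_gT_f$ carry the same principal symbol $fg$, the first order terms of the Weyl calculus cancel in the commutator. This leaves a remainder with two derivatives, one falling on $f$ and one on $g$, which produces the gain of two derivatives in the output regularity.

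\textbf{Step 1: localize to a single dyadic frequency.} For an input $u_k=P_ku$ at frequency $2^k$, the paraproduct $T_gu_k$ involves only $g_{<k-4}$ (up to the smooth cutoffs $\chi_{lh}$), and its output stays at frequency $\approx 2^k$. The same holds when $T_f$ is applied afterwards. Hence
\[
[T_f,T_g]u_k = [T_{f_{<k-c}},T_{g_{<k-c}}]\tilde P_k u_k
\]
up to boundary terms where one coefficient sits at frequency $\approx 2^{k}$. In those boundary terms we simply estimate each factor separately: $\|f_{\approx k}\|_{L^\infty}\lesssim 2^{-k}\|\jD f\|_{L^\infty}$ and likewise for $g$. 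Each such term therefore already gains $2^{-2k}$. By almost orthogonality of the outputs, it then suffices to prove the uniform dyadic estimate
\[
\|[T_{f_{<k}},T_{g_{<k}}]u_k\|_{L^2}\lesssim 2^{-2k}\|\jD f\|_{L^\infty}\|\jD g\|_{L^\infty}\|u_k\|_{L^2}.
\]

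\textbf{Step 2: write the commutator as a trilinear form.} In Fourier variables, the commutator acting on $u_k$ is a trilinear expression in $(f,g,u)$ with frequencies $(\eta,\zeta,\xi)$ and symbol
\[
\chi_{lh}(\eta,\xi+\zeta+\tfrac{\eta}{2})\chi_{lh}(\zeta,\xi+\tfrac{\zeta}{2})-\chi_{lh}(\zeta,\xi+\eta+\tfrac{\zeta}{2})\chi_{lh}(\eta,\xi+\tfrac{\eta}{2}).
\]
On the region where $|\eta|,|\zeta|\ll 2^k\approx|\xi|$, both cutoffs equal $1$ identically, so the symbol vanishes there. It is nonzero only in the transition zones where $|\eta|$ or $|\zeta|$ is comparable to $2^k/20$. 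Under the Weyl quantization, the symmetric choice of the midpoint argument causes the first order terms of the Taylor expansion in $\eta,\zeta$ to cancel. Consequently the symbol is $2^{-2k}\eta\zeta$ times a smooth symbol $m_k$ satisfying the $k$-scaled Coifman--Meyer bounds.

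\textbf{Step 3: bound the trilinear operator.} Absorbing the factors $\eta$ and $\zeta$ into $\partial f$ and $\partial g$, with frequencies localized $\lesssim 2^k$, we expand $m_k$ in a rapidly convergent Fourier series in separated variables. This reduces the operator to products of the form
\[
2^{-2k}\,(\partial f)_{<k}\,(\partial g)_{<k}\,u_k
\]
with translated and modulated multipliers. Each such product is bounded in $L^2$ by $L^\infty\cdot L^\infty\cdot L^2$. Combined with $\|\partial f_{<k}\|_{L^\infty}\lesssim\|\jD f\|_{L^\infty}$, this gives the dyadic estimate from Step 1. Summing over $k$ with weights $2^{(s+2)k}$ then completes the argument.

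\textbf{Main obstacle.} The delicate point is verifying the exact second order factorization of the symbol, that is, showing that the commutator symbol is divisible by $\eta\zeta$ rather than merely by $\eta$ or by $\zeta$. If the exact cancellation in the transition zones fails, the plan is to handle those zones directly. There the relevant coefficient is at frequency $\approx 2^k$, and $\|f_k\|_{L^\infty}\lesssim 2^{-k}\|\jD f\|_{L^\infty}$ supplies one of the two powers of $2^{-k}$. The other power comes from a first order commutator expansion in the remaining variable.
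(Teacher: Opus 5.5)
Your architecture (dyadic reduction, the trilinear symbol $\sigma(\eta,\zeta,\xi)$ of Step 2, factoring out $\eta\zeta$, then a Fourier-series/Coifman--Meyer bound) is the standard one. The paper gives no proof of this lemma; it only cites Ai--Ifrim--Tataru. However, there is a genuine gap: both places where you locate the second power of $2^{-k}$ are wrong as written.

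\textbf{Step 1.} A boundary term has only \emph{one} coefficient at frequency $\approx 2^k$. The other coefficient sits at frequency $\ll 2^k$, and bounding it in $L^\infty$ gains nothing. So estimating the factors separately gives only $2^{-k}$, i.e.\ an $H^s\to H^{s+1}$ bound. Worse, once $c$ is large enough that all cutoffs equal $1$, you have $[T_{f_{<k-c}},T_{g_{<k-c}}]u_k=[f_{<k-c},g_{<k-c}]u_k=0$. So the terms you discard are the entire commutator.

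\textbf{Step 2.} The midpoint symmetry does not cancel first-order terms. Set $\bar\xi=\xi+\frac{\eta+\zeta}{2}$ and $H(r)=\chi_{lh}(\eta,\bar\xi+\frac{r\zeta}{2})\,\chi_{lh}(\zeta,\bar\xi-\frac{r\eta}{2})$; then $\sigma=H(1)-H(-1)$. Hence it is the even-order terms that cancel, while the first-order term
\[
\zeta\,\partial_2\chi_{lh}(\eta,\bar\xi)\,\chi_{lh}(\zeta,\bar\xi)-\eta\,\chi_{lh}(\eta,\bar\xi)\,\partial_2\chi_{lh}(\zeta,\bar\xi)
\]
survives. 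It is exactly the transition-zone contribution. A Taylor expansion at $\eta=\zeta=0$ is vacuous, since $\sigma\equiv 0$ near there. Your opening heuristic has the same mix-up: in the Weyl calculus a commutator keeps the odd-order terms.

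\textbf{The missing idea.} It is quantization-independent: $\sigma$ vanishes identically on $\{\zeta=0\}$ and on $\{\eta=0\}$. For instance,
\[
\sigma(\eta,0,\xi)=\chi_{lh}(\eta,\xi+\tfrac{\eta}{2})\bigl(\chi_{lh}(0,\xi)-\chi_{lh}(0,\xi+\eta)\bigr)=0
\]
on the support once $|\xi|\gg 1$, because $\chi_{lh}(0,\mu)=1$ for $\langle\mu\rangle\ge 20$. In other words, a paraproduct with a constant coefficient is the identity at high frequency. Applying Hadamard's lemma in $\zeta$ and then in $\eta$ gives $\sigma=\eta\zeta\int_0^1\!\int_0^1\partial_\eta\partial_\zeta\sigma(r\eta,t\zeta,\xi)\,dr\,dt$. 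Since $\sigma$ is smooth on scale $2^k$ in all variables where $|\xi|\approx 2^k$, the double integral obeys $|\partial^\alpha(\cdot)|\lesssim 2^{-k(2+|\alpha|)}$.

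This is precisely your Step 2 factorization, and Step 3 then goes through. It is also what makes your fallback in ``Main obstacle'' legitimate. In the zone $|\eta|\approx 2^k$ the factor $\eta$ is harmless, and the ``first order expansion in the remaining variable'' requires exactly this vanishing at $\zeta=0$, which you did not identify.

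\textbf{A smaller caveat.} The bound $\|\partial f_{<k}\|_{L^\infty}\lesssim\|\jD f\|_{L^\infty}$ in Step 3 is uniform in $k$ only if the norm is read as $\|f\|_{W^{1,\infty}}$. That is how the lemma is used in the paper. The reason is that $P_{<k}\partial\jD^{-1}$ has $L^\infty$ operator norm growing like $k$.
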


\begin{lemma}[Para-associativity]\label{l:para-assoc}
We have
\begin{equation}
\| T_f \Pi(v, u) - \Pi(v, T_f u)\|_{ H^{s +2 }} \lesssim 
\|\jD f \|_{L^{\infty}}\|\jD v\|_{L^{\infty}} \|u\|_{ H^{s}}
\end{equation}
\end{lemma}

The proofs can be found in \cite{AiIfrimTataru_AIHPC142}.

\smallskip

\subsection{Symbol classes and bilinear bounds}
 \begin{definition}
We say a bilinear symbol \( m(\xi_1,\xi_2) \)  belongs to the class \( S^n \)  if for every multi-index \( \alpha = (\alpha_1, \alpha_2 ) \), 
\[
|\partial^{\alpha_1}_{\xi_1} \partial^{\alpha_2}_{\xi_2} m(\xi_1,\xi_2)| 
\lesssim_{\alpha}
 \langle \xi \rangle^{n - |\alpha|}.
\]    
\end{definition}
 A special class of such operators, which we denote by $L_{lh}$, will play 
 an important role later in the paper:
 
 \begin{definition}\label{d:Llh}
 By $L_{lh}$ we denote translation invariant bilinear forms whose symbol
$\ell_{lh}(\xi_1,\xi_2)$ is  in $S^0$ and is supported in $\{\langle\xi_1\rangle \ll \langle \xi_2\rangle\}$.

\end{definition}

We remark that in particular the bilinear form $B(f,g) = T_f g$
is an operator of type $L_{lh}$, while $\Pi(f,g)$ is an operator of type $L_{hh}$.
The $L^p$ bounds and the commutator estimates for such bilinear form mirror exactly the similar bounds for paraproducts.

 \begin{definition}
    For \( m\geq 0 \) we say that a low--high symbol  $a(\xi_1,\xi_2) \in S^{m,n}$  if it is supported in the low--region \( |\xi_1| \le  \frac{1}{20} |\xi_2 | \) and admits a polyhomogeneous\footnote{This is a slightly imperfect use of the polyhomogeneous terminology, though less than immediately apparent since 
    in practice the symbols $b_j$ will always be also classically polyhomogeneous.}
    type expansion
   \[
a(\xi_1,\xi_2) = \sum_{j=0}^m 
\xi_1^j b_j(\xi_1,\xi_2), \qquad b_j \in S^{n}.
\]
 \end{definition}
A matching definition applies to high-low symbols.

\begin{lemma}
Let A(u,v) be a bilinear operator with symbol $a(\xi_1,\xi_2)$ in the anisotropic
Coifman–Meyer class $S^{m,n}$
and assume $a(\xi_1,\xi_2)$ is supported in the low--high region $\xi_1 \ll\xi_2 $.

Then for any $s > 0$, we have

\begin{equation} \label{B-CM}
    \Vert A(u,v) \Vert_{L^2} \lesssim \Vert u\Vert_{W^{m,\infty}} \Vert  v \Vert_{H^n},
\end{equation}

\begin{equation}\label{BMO-shift}
    \Vert A(u,v) \Vert_{L^2} \lesssim \Vert u\Vert_{\bmo^{m-s}} \Vert  v \Vert_{H^{n+s}}.
\end{equation}

\end{lemma}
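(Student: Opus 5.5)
The plan is to use the polyhomogeneous expansion of $a$ to reduce both bounds to estimates for operators of type $L_{lh}$. By the definition of $S^{m,n}$ we write
\[
a(\xi_1,\xi_2)=\sum_{j=0}^m \xi_1^j b_j(\xi_1,\xi_2), \qquad b_j\in S^n .
\]
Since $a$ is supported in $\{|\xi_1|\le \frac1{20}|\xi_2|\}$, we may insert a cutoff $\tilde\chi(\xi_1,\xi_2)$ that equals $1$ on this region and is supported in $\langle\xi_1\rangle\ll\langle\xi_2\rangle$, up to a harmless low frequency piece. We then replace $b_j$ by $\tilde\chi b_j$, which is still in $S^n$. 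Each term $\xi_1^j b_j$ corresponds to $B_j(\partial_x^j u, v)$, where $B_j$ has symbol $\tilde b_j := \tilde\chi b_j \langle\xi_2\rangle^{-n}$ acting on $(\partial^j u, \jD^n v)$. Because $\langle\xi\rangle\approx\langle\xi_2\rangle$ on the support, $\tilde b_j\in S^0$ with low--high support. It is therefore of type $L_{lh}$ in the sense of Definition~\ref{d:Llh}, and
\[
A(u,v)=\sum_{j=0}^m L^j_{lh}(\partial_x^j u, \jD^n v).
\]
For the $j=0$ term nothing further is needed. For $j\ge1$ the derivatives have been transferred onto the low frequency input, which is where the $W^{m,\infty}$ or $\bmo^{m-s}$ norm is placed.

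For \eqref{B-CM}, we apply to each term the $L^\infty\times L^2\to L^2$ bound for $L_{lh}$ forms. This mirrors the paraproduct bound $\|T_fg\|_{L^2}\lesssim\|f\|_{L^\infty}\|g\|_{L^2}$ and can be proved in the same way. Write the symbol as $\sum_k \tilde b_j(\xi_1,\xi_2)\chi_k(\xi_2)$ and expand each dyadic piece in a rapidly convergent series of products $\phi(\xi_1/2^k)\psi(\xi_2/2^k)$, using the $S^0$ symbol bounds on the dyadic scale. Each product gives a term $(P^{(1)}_{<k}\partial^ju)\,(P^{(2)}_k \jD^n v)$, whose output is frequency localized near $2^k$. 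Almost orthogonality in $k$ then yields $\lesssim\|\partial^j u\|_{L^\infty}\|\jD^n v\|_{L^2}$. Summing over $j\le m$ gives \eqref{B-CM}.

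For \eqref{BMO-shift}, we rewrite each term as $L_{lh}(\jD^{j-m+s}\partial^j u\,\cdot\,, \ldots)$, moving $\langle\xi_2\rangle^{s}$ onto $v$. The modified symbol $\tilde b_j\langle\xi_1\rangle^{m-s}\xi_1^j\ldots$ must be arranged so that the low input is $\jD^{m-s}u$ multiplied by $\xi_1^j\langle\xi_1\rangle^{-m}$, which is a symbol of order $\le 0$ in $\xi_1$, and the high input is $\jD^{n+s}v$, leaving a symbol factor $\langle\xi_1\rangle^{s}\langle\xi_2\rangle^{-s}$. On the low--high support this factor is $\lesssim 1$ with symbol bounds, but it is not simply $S^0$ uniformly in a way that yields a BMO estimate for free. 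This is the main obstacle: we need the analogue of \eqref{CM+}, namely $\|T_g f\|_{L^2}\lesssim \|g\|_{\bmo^{-s}}\|f\|_{H^s}$, for general $L_{lh}$ forms.

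To handle it, we decompose dyadically in the high frequency $k$. The piece at frequency $2^k$ is bounded by $\|P_{<k}w\|_{L^\infty}\|P_k \jD^{n}v\|_{L^2}$ with $w=\jD^{m}\ldots u$. We then use the standard estimate $\|P_{<k}w\|_{L^\infty}\lesssim 2^{ks}\|w\|_{\bmo^{-s}}$ for $s>0$; it follows by summing $\|P_l w\|_{L^\infty}\lesssim 2^{ls}\|w\|_{\bmo^{-s}}$ over $l<k$, and this geometric sum converges precisely because $s>0$. Square summing in $k$ then gives $\lesssim\|u\|_{\bmo^{m-s}}\|v\|_{H^{n+s}}$, which is \eqref{BMO-shift}. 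The only subtlety is in the symbol expansion, where we need uniform control of the separated-variable series at each dyadic scale.
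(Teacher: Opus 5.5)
Your proposal follows the paper's proof: the polyhomogeneous splitting $A(u,v)=\sum_{j\le m} C_j(\partial^j u,\jD^n v)$ with $c_j\in S^0$ of low--high type, the Coifman--Meyer $L^\infty\times L^2\to L^2$ bound for \eqref{B-CM}, and, for \eqref{BMO-shift}, a dyadic-in-$k$ estimate using $\|P_{<k}w\|_{L^\infty}\lesssim 2^{ks}\|w\|_{\bmo^{-s}}$ together with almost orthogonality of the outputs. The detour through the factor $\langle\xi_1\rangle^{s}\langle\xi_2\rangle^{-s}$ is unnecessary, and your definition of $w$ is left unclear; take $w=\partial^j u$ and note that $\|\partial^j u\|_{\bmo^{-s}}\lesssim\|u\|_{\bmo^{m-s}}$ for $j\le m$, since $\partial^j\jD^{-m}$ has order $\le 0$ and is bounded on $\bmo$.
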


\begin{proof}
For the symbol $a(\xi_1,\xi_2) \in S^{m,n}$, there exists a polyhomogeneous expansion $b_j(\xi_1,\xi_2) \in S^{n}$ such that 
\[
a(\xi_1,\xi_2) = \sum_{j=0}^m 
\xi_1^j b_j(\xi_1,\xi_2) ,  \qquad  b_j(\xi_1,\xi_2) \in S^{n}.
\]

 Consequently, the corresponding bilinear operator can be decomposed as
 \[
 A(u,v) = \sum_{j=0}^m B_j(\partial^j u,v) = \sum_{j=0}^m C_j(\partial^j u,\jD^{n}v),
 \]
 where  \( c_j \in S^0 \) for each \( j\).

 \smallskip
For \eqref{B-CM}, using the above decomposition and the standard Coifman--Meyer bound, we obtain
\[
\begin{aligned}
    \| A(u,v) \|_{L^2} \le \sum_{j=0}^m \| C_j(\partial^j u, \jD^{n} v) \|_{L^2} 
    \lesssim \| u \|_{W^{n,\infty}} \| v \|_{H^n}.
\end{aligned}
\]

\smallskip

Fix \( s>0\). By  Bernstein's inequality, we have
\[
\| P_{\le k} u  \|_{L^{\infty}} \lesssim  2^{ks}\| u\|_{\bmo^{-s}}.
\]
In particular, for  each dyadic piece \( k \ge 0\) we have
\[
\begin{aligned}
    \| \partial^j u_{\le k} \|_{L^{\infty}} \| \jD^{n-j} v_k \|_{L^2} 
    & \lesssim 2^{k(j+s)} \| u \|_{\bmo^{-s}}  2^{k(n-j)} \| v_k \|_{L^2} 
    \\
    & \lesssim 2^{k(n+s)} \| u \|_{\bmo^{-s}}  \| v_k \|_{L^2}.
\end{aligned}
\]
Summing in \( k\) and using almost orthogonality yields the desired estimate.

\end{proof}

\subsection{Multipliers and bilinear forms on real-valued functions}

\label{s:real}

For our work here, it is important to work with multipliers and bilinear forms which map real-valued functions to real-valued functions.
A multiplier $M(D)$ takes real-valued functions to real-valued functions
if 
\[
m(-\xi)= \overline{m(\xi)}.
\]
Equivalently, $m(\xi)$ must be the sum of a real, even function and a purely imaginary, odd function.

This carries over to bilinear translation invariant operators. 
A bilinear form $A$ takes real-valued pairs of functions to real-valued 
functions if its symbol satisfies 
\[
a(-\xi_1,-\xi_2) = \overline{a(\xi_1,\xi_2)},
\]
with a similar odd/even decomposition.
The class of these symbols forms an algebra with respect to addition and multiplication.

\section{Normal form analysis}
\label{s:normal-form}

Our objective in this section is to find a \emph{formal} normal form transformation of the following form
\begin{equation}\label{NFT}
 \mathbf{u} = u + A(u,u) + B(u_t, u_t) + C(u_t,u),
\end{equation}
and such that the corresponding $\mathbf{u}$-equation does not contain quadratic terms. Here we want the translation invariant bilinear forms $A$, $B$ and $C$ to map real-valued functions to real-valued functions, see the discussion in Section~\ref{s:real}.

For the normal form computation, only the quadratic terms are relevant. Accordingly,  we rewrite the equation \eqref{kg1} by
keeping the linear part on the left-hand side and moving the bilinear terms to the right-hand side, while temporarily neglecting the higher-order terms,
\begin{equation}\label{quadratic-KG}
\partial^2_t u - \partial_x^2 u + mu = Q_{00}(u_t,u_t) + Q_{01}(u_t,u) + Q_{11}(u,u).
\end{equation}

\noindent We assume that the symbols of $Q_{00}$ and $Q_{11}$ are symmetric in $(\xi_1,\xi_2)$, whereas no symmetry is imposed on the symbol of $Q_{01}$. 

More precisely, in view of the explicit structure of the equation~\eqref{kg1}, the associated bilinear operators $Q_{ij}$ have symbols that are polynomial in the frequency variables $(\xi_1, \xi_2)$ and belong to finite-dimensional spaces determined by the differential structure of the nonlinearities.

In particular, their symbols satisfy
\begin{equation} \label{eq:s-poly}
\left\{
    \begin{aligned}
q_{00} &\in \mathrm{span}\{1,\ i(\xi_1+\xi_2)\},\\
q_{01} &\in \mathrm{span}\{1,\ i\xi_1,\ i\xi_2,\ \xi_1\xi_2,\ \xi_2^2\},\\
q_{11} &\in \mathrm{span}\{1,\ i(\xi_1+\xi_2),\ \xi_1\xi_2,\ \xi_1^2+\xi_2^2,\ i(\xi_1^2\xi_2+\xi_1\xi_2^2)\}.
\end{aligned}
\right.
\end{equation}

Further, for  the analysis of  low--high interactions, we will need a precise  power series expansion expressed in powers of the high frequency.
For the symbols $q_{00}$ and $q_{11}$, which are symmetric with respect to the exchange
$(\xi_1,\xi_2)$, it suffices to consider the low--high frequency regime
$|\xi_1|\ll|\xi_2|$. The complementary high--low interactions then follow immediately
by symmetry, upon exchanging the roles of $\xi_1$ and $\xi_2$.
\[
q_{11}(\xi_1,\xi_2)=\sum_{k=0}^{2} q_{11}^{(k)}(\xi_1)\,\xi_2^{k},
\qquad
q_{00}(\xi_1,\xi_2)=\sum_{k=0}^{1} q_{00}^{(k)}(\xi_1)\,\xi_2^{k}.
\]
But since $q_{01}$ is not symmetric, we need to consider  expansion of the high-low and low--high separately. 
\[
q_{01}(\xi_1,\xi_2)=\sum_{k=0}^{2} q_{01}^{(k)}(\xi_1)\,\xi_2^{k},
\qquad
q_{01}(\xi_1,\xi_2)=\sum_{k=0}^{1} \Tilde{q}_{01}^{(k)}(\xi_2)\,\xi_1^{k}.
\]

Recalling that the linear Klein-Gordon operator is denoted by 
\[
L_{KG} = \partial^2_t - \partial_x^2 + m,
\]
we are now ready to state our main normal form result:

\begin{proposition}[Normal form transformation] \label{nfl}

 There exists a unique normal form transformation as in \eqref{NFT}
such that the quadratic nonlinearities in \eqref{quadratic-KG} are removed, i.e., 
    \[
   - \Lambda_2( L_{KG} ( A(u,u) + B(u_t,u_t) + C(u_t,u) ) )
   = 
Q_{00}(u_t,u_t) + Q_{01}(u_t,u) + Q_{11}(u,u).
    \]
  Furthermore, the symbols of the bilinear forms $A(u,u)$, $B(u_t,u_t)$
  and $C(u_t,u)$ can be described as follows:
\begin{enumerate}[label=(\roman*)]
\item   In the low--high case we have the following symbol expansions:
\begin{equation}\label{nf-hl}
\begin{aligned}
    a_{lh}(\xi_1,\xi_2) = & a_0(\xi_1) \xi_2  + a_1(\xi_1) + O((1+|\xi_1|^4) \xi_2^{-1}), \qquad \,  a_0 \in S^2, \ \ a_1 \in S^{3},
\\
    b_{lh}(\xi_1,\xi_2) = & b_0(\xi_1)  + b_1(\xi_1) \xi_2^{-1} + O( (1+|\xi_1|^3)\xi_2^{-2}), \qquad b_0 \in S^1, \ \ 
   b^1 \in S^{2},
\\
     c_{lh}(\xi_1,\xi_2) = & c_0^1(\xi_1) \xi_2  + c_1^1(\xi_1) + O( (1+|\xi_1|^3)\xi_2^{-1}), \qquad \  \, c_0^1 \in S^1, \ \ c_1^1 \in S^2,
\\
     c_{hl}(\xi_1,\xi_2) = & c_0^2(\xi_2)  + c_1^2(\xi_2) \xi_1^{-1} + O( (1+|\xi_2|^4)\xi_1^{-2}), \qquad c_0^2 \in S^2, \ \ c_1^2 \in S^3.
\end{aligned}
\end{equation}

\item    For the high--high interactions 
we have the structural decompositions
\begin{equation}\label{eq:hh-structure}
\begin{aligned}
a_{hh}(\xi_1,\xi_2) &= (\xi_1+\xi_2)\, \widetilde a_{hh}(\xi_1,\xi_2) + a_{hh}^{(0)}(\xi_1,\xi_2),\\
b_{hh}(\xi_1,\xi_2) &= (\xi_1+\xi_2)\, \widetilde b_{hh}(\xi_1,\xi_2) + b_{hh}^{(0)}(\xi_1,\xi_2),\\
c_{hh}(\xi_1,\xi_2) &= (\xi_1+\xi_2)\,  \widetilde c_{hh}(\xi_1,\xi_2) + c_{hh}^{(0)}(\xi_1,\xi_2),
\end{aligned}
\end{equation}
where
\[
 \widetilde a_{hh},\,a_{hh}^{(0)}\in S^{2} ,\qquad
 \widetilde b_{hh},\,b_{hh}^{(0)}\in S^{0},\qquad
 \widetilde c_{hh},\,c_{hh}^{(0)}\in S^{1}.
\]
In particular,
\[
a_{hh}\in (\xi_1+\xi_2)S^{2}+S^{2},\qquad
b_{hh}\in (\xi_1+\xi_2)S^{0}+S^{0},\qquad
c_{hh}\in (\xi_1+\xi_2)S^{1}+S^{1}.
\]

\end{enumerate}
\end{proposition}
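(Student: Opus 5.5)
The plan is to reduce the identity to a pointwise linear algebra problem for the symbols and then read off the expansions by explicit asymptotics. Write $\omega(\xi) = (\xi^2+m)^{1/2}$ and $\omega_j = \omega(\xi_j)$, $\omega_3 = \omega(\xi_1+\xi_2)$. Inside $\Lambda_2 L_{KG}(\cdot)$ only the linear flow matters, so whenever time derivatives fall on the arguments I replace $u_{tt}$ by $(\partial_x^2 - m)u$ (symbol $-\omega^2$) and $u_{ttt}$ by $-\omega^2 u_t$. The output $-\partial_x^2+m$ contributes $\omega_3^2$.

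Expanding $\partial_t^2$ by Leibniz on each form gives the following contributions:
\begin{itemize}
\item $A(u,u)$ contributes $a\,(\omega_3^2-\omega_1^2-\omega_2^2)$ to the $(u,u)$ channel and $2a$ to the $(u_t,u_t)$ channel.
\item $B(u_t,u_t)$ contributes $b\,(\omega_3^2-\omega_1^2-\omega_2^2)$ to $(u_t,u_t)$ and $2\omega_1^2\omega_2^2\, b$ to $(u,u)$.
\item $C(u_t,u)$ contributes $c\,(\omega_3^2-\omega_1^2-\omega_2^2)$ to $(u_t,u)$. It also contributes $-\omega_1^2 c$ (from $2C(u_{tt},u_t)$, with the slots exchanged) to $(u,u_t)$, and $-\omega_1^2 c$ to $(u,u)$ from $C(u_{ttt},u)$ (after symmetrization).
\end{itemize}
Since $(u,u)$ and $(u_t,u_t)$ are symmetric while $(u_t,u)$ is not, I treat $c(\xi_1,\xi_2)$ and $c(\xi_2,\xi_1)$ as two unknowns. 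Matching coefficients with $-q_{11}$, $-q_{00}$, $-q_{01}$ at each fixed $(\xi_1,\xi_2)$ then gives a $4\times 4$ linear system $M(\xi_1,\xi_2)X = -Q$ for $X = (a, b, c(\xi_1,\xi_2), c(\xi_2,\xi_1))$.

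The key structural point is that $M$ is exactly the matrix of $L_{KG}$ acting on bilinear forms in the $(u,u_t)$ variables. Its eigenvalues are therefore $\omega_3^2 - (\pm\omega_1\pm\omega_2)^2$, and
\[
\det M = \prod_{\pm,\pm} \bigl(\omega_3 \pm\omega_1\pm\omega_2\bigr)\quad\text{(up to a sign and a constant)}.
\]
Because $m>0$, every factor is nonvanishing by strict convexity of $\omega$; for instance $\omega_1+\omega_2-\omega_3 > 0$. This gives existence and uniqueness of the normal form pointwise. Reality is preserved because $M$ and $Q$ both satisfy the conjugation symmetry of Section~\ref{s:real}, so the unique solution does too. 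In practice I would first diagonalize using the half-wave variables $u_\pm = u_t \pm i\omega(D)u$, so that the solution becomes explicit: each coefficient is $q$ divided by one phase $\omega_3\pm\omega_1\pm\omega_2$, then recombined.

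For part (i) I will do explicit asymptotics in the low--high region $|\xi_1|\ll|\xi_2|$. There the worst phases are $\omega_3-\omega_2\pm\omega_1$. Taylor expansion gives $\omega_3-\omega_2 = \xi_1\,\xi_2/\omega_2 + O(\xi_1^2/\xi_2)$, so $\omega_1 - \operatorname{sgn}(\xi_2)\xi_1 + O(\cdot) \approx m/(2\langle\xi_1\rangle)$ is small but of size $\langle\xi_1\rangle^{-1}$. Dividing by it costs one power of $\langle\xi_1\rangle$, while the other phases are of size $\langle\xi_2\rangle$. Expanding each quotient in powers of $\xi_2^{-1}$, with remainders controlled by Taylor's theorem, and inserting the polynomial expansions $q_{ij}^{(k)}$ from \eqref{eq:s-poly}, should produce the stated leading orders: $a\sim \xi_2$, $b\sim 1$, $c_{lh}\sim\xi_2$, $c_{hl}\sim 1$. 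It should also give the claimed symbol classes of the coefficients, which I will check by counting the $\xi_1$ powers coming from $q$ and from the small divisor.

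For part (ii), in the high--high region all phases are at least of size $\langle\xi\rangle^{-1}$ and the quotients are smooth symbols. To get the decomposition I split off the part of the numerator proportional to the output frequency $\xi_1+\xi_2$. Concretely, I write each numerator polynomial as $(\xi_1+\xi_2)\tilde q + q^{(0)}$, exploiting that $\omega_3$ depends only on $\xi_1+\xi_2$.

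I expect the main obstacle to be the low--high and high-$\xi_1$ bookkeeping. Specifically, I must verify that the remainders are uniform $O((1+|\xi_1|^k)\xi_2^{-j})$ with the precise $k$ stated, handling carefully the sign of $\xi_2$ in the near-resonant phase. I must also confirm that the high--high symbols really land in $S^2$, $S^0$, $S^1$ and not one order worse.
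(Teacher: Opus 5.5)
Your existence and uniqueness argument (eigenvalues $\omega_3^2-(\pm\omega_1\pm\omega_2)^2$ of $\Lambda_2 L_{KG}$ on bilinear forms, nonvanishing for $m>0$) is sound. However, your explicit channel bookkeeping for $C$ is wrong, and it would give the wrong system if you used it.

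\emph{The $C$ bookkeeping.} Since $u_{ttt}$ linearizes to $(\partial_x^2-m)u_t$, the term $C(u_{ttt},u)$ stays in the $(u_t,u)$ channel; it is already the $-\omega_1^2$ inside $c\,(\omega_3^2-\omega_1^2-\omega_2^2)$. So $C$ contributes nothing to the $(u,u)$ channel. Also, $2C(u_{tt},u_t)$ contributes $-2\omega_2^2\,c(\xi_2,\xi_1)$ to the $(u_t,u)$ symbol, with the factor $2$. Done correctly, parity under $u_t\mapsto -u_t$ splits the problem into two $2\times 2$ systems: one for $(a,b)$ and one for $(c(\xi_1,\xi_2),c(\xi_2,\xi_1))$. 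Each has determinant $\Delta=(m-2\xi_1\xi_2)^2-4(\xi_1^2+m)(\xi_2^2+m)=-3m^2-4m(\xi_1^2+\xi_1\xi_2+\xi_2^2)$, which up to sign is your product of the four phases; the full $4\times 4$ determinant is $\Delta^2$.

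\emph{Part (i).} The $2\times 2$ route above is the paper's, and it reduces (i) to degree counting. Every symbol is a polynomial over the elliptic polynomial $\Delta$. Inserting $1/\Delta=-\frac{1}{4m}\xi_2^{-2}\bigl(1-\xi_1/\xi_2+O((1+\xi_1^2)\xi_2^{-2})\bigr)$ gives the expansions with polynomial coefficients and the stated remainders directly. Your half-wave route instead produces non-polynomial, $\mathrm{sgn}(\xi_2)$-dependent pieces that must be recombined. Moreover, your remainder $O(\xi_1^2/\xi_2)$ is too weak to show the near-resonant phase is $\approx m/(2\langle\xi_1\rangle)$ unless $|\xi_1|^3\ll m|\xi_2|$. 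You would need the sharp remainder $\frac12\omega''\xi_1^2=O(m\xi_1^2|\xi_2|^{-3})$, or the identity $\omega_1+\omega_2-\omega_3=\bigl(m+2(\omega_1\omega_2-\xi_1\xi_2)\bigr)/(\omega_1+\omega_2+\omega_3)$.

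\emph{Part (ii): the genuine missing idea.} Writing a numerator $N$ as $(\xi_1+\xi_2)\tilde q+q^{(0)}$ is always possible (take $q^{(0)}(\xi_1)=N(\xi_1,-\xi_1)$), so it proves nothing. What must be shown is that $q^{(0)}$ has degree one lower than $N$, i.e.\ that the top-degree homogeneous part of each numerator vanishes on $\{\xi_1+\xi_2=0\}$. Without this the claim fails. An order-three symbol such as $\xi_1^3$ cannot lie in $(\xi_1+\xi_2)S^2+S^2$: on the antidiagonal, which lies in the high--high region, the $S^2$ part would have to equal $\xi_1^3$.

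This vanishing has nothing to do with $\omega_3$ depending only on $\xi_1+\xi_2$. It is a structural property of the specific quadratic terms, so you must use the explicit form \eqref{eq:s-poly}.
\begin{itemize}
\item For $a^{\mathrm{num}}=(m-2\xi_1\xi_2)q_{11}+2(\xi_1^2+m)(\xi_2^2+m)q_{00}$ and $b^{\mathrm{num}}=2q_{11}+(m-2\xi_1\xi_2)q_{00}$, it holds because the top-order parts of $q_{11}$ and $q_{00}$ are multiples of $i\xi_1\xi_2(\xi_1+\xi_2)$ and $i(\xi_1+\xi_2)$. This reflects $u_xu_{xx}=\frac12\partial_x(u_x^2)$ and $u_tu_{tx}=\frac12\partial_x(u_t^2)$.
\item For $c$, the top part $\lambda_1\xi_1\xi_2+\lambda_2\xi_2^2$ of $q_{01}$ does not vanish on the antidiagonal. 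The factor appears only through a cancellation in $c^{\mathrm{num}}=(m-2\xi_1\xi_2)q_{01}(\xi_1,\xi_2)-2(\xi_2^2+m)q_{01}(\xi_2,\xi_1)$, whose degree-four part is $-2(\lambda_1+\lambda_2)\xi_1\xi_2^2(\xi_1+\xi_2)$.
\end{itemize}
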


As noted earlier, the bilinear forms $A$, $B$, $C$ map 
real-valued functions to real-valued functions. This property carries over to each of the terms in the expansions in \eqref{nf-hl}. 
The explicit Taylor coefficients in \eqref{nf-hl} play a significant role in our analysis, and are provided after  the proof of this proposition; see \eqref{taylor-0}  and \eqref{taylor-1}.  
The error terms in \eqref{nf-hl} can be placed in appropriate symbol classes, namely $S^{4,-1}$, $S^{3,-2}$, $S^{3,-1}$
respectively $S^{4,-2}$.

\begin{proof}

Using the definition of the formal normal form variable given in \eqref{NFT}, we expand $L_{KG} \mathbf{u}  $ as
\[
\begin{aligned}
L_{KG} \mathbf{u} 
= (\partial_t^2 - \partial_x^2 + m)\bigl(u + A(u,u) + B(u_t, u_t) + C(u_t, u)\bigr),
\end{aligned}
\]
with the goal of examining how the linear Klein--Gordon evolution acts on each component of the normal form.
Since $A,B,C$ are translation-invariant bilinear operators, derivatives distributes by Leibniz rule. 
In particular, we have
\[
\begin{aligned}
L_{KG}\Big(A(u,u)+B(u_t,u_t)+C(u_t,u)\Big)
&= 2A(L_{KG}u,u)+2A(u_t,u_t)-2A(u_x,u_x)- m A(u,u)\\
&\quad +2B(L_{KG}u_t,u_t)+2B(u_{tt},u_{tt})-2B(u_{tx},u_{tx})\\
&\quad -m B(u_t,u_t) +C(L_{KG}u_t,u)+C(u_t,L_{KG}u)\\
&\quad +2C(u_{tt},u_{t}) -2C(u_{tx},u_{x}) -m C(u_t,u).
\end{aligned}
\]

Substituting the equation for \( u\)  and extracting the quadratic part, we obtain the following partially decoupled algebraic $3\times 3$ system:
\[
\left\{
\begin{aligned}
  & Q_{11} (u,u)= 2A(u_x,u_x) -2B(u_{tt},u_{tt}) +m A(u,u) ,\\
 &  Q_{00}(u_t,u_t)  = 2B(u_{tx},u_{tx}) +mB(u_t,u_t)  -2A(u_t,u_t), \\
  &  Q_{01}(u_t,u)  = 2C(u_{tx}, u_x) -  2C(u_{xx}, u_t) +  m C(u_t,u) + 2m C(u,u_t).
\end{aligned}
\right.
\]
We now expand the term $B(u_{tt},u_{tt})$. 
Since we only need the bilinear contributions, which arise from products of linear terms, 
it suffices to retain the linear terms in $u_{tt}$:
\[
\Lambda_2(B(u_{tt},u_{tt})) = B((u_{xx} -mu) , (u_{xx} -mu)).
\]

At the level of symbols, this yields the following system for the symbols $a$ and $b$:
\[
\left\{
\begin{aligned}
q_{11}(\xi_1,\xi_2) 
&= (m-2\xi_1 \xi_2 )a(\xi_1,\xi_2) -2(\xi_1^2 + m)(\xi_2^2 +m)b(\xi_1,\xi_2), \\
  q_{00}(\xi_1,\xi_2) & = (m-2\xi_1\xi_2) b(\xi_1,\xi_2) -2a(\xi_1,\xi_2) .
\end{aligned}
\right.
\]
We proceed similarly for $C$, with the distinction that its symbol is not symmetric. Thus we obtain a system for the symbols $c(\xi_1,\xi_2)$ and $c(\xi_2,\xi_1)$, consisting of an equation and its symmetric counterpart
\[
\left\{
\begin{aligned}
     q_{01} (\xi_1,\xi_2) & = (-2\xi_1 \xi_2 + m  ) c(\xi_1,\xi_2) + 2 (\xi_2^2 + m) c(\xi_2,\xi_1), \\
        q_{01} (\xi_2,\xi_1) & = (-2\xi_1 \xi_2 + m ) c(\xi_2,\xi_1) + 2 ( \xi_1^2 + m) c (\xi_1,\xi_2) .
\end{aligned}
\right.
\]
Solving this system yields the following solution

\[
\left\{
\begin{aligned}
  a(\xi_1,\xi_2)&= \frac{(m-2\xi_1\xi_2) q_{11} + 2(\xi_1^2+m)(\xi_2^2 + m)q_{00}}{(m -2\xi_1\xi_2)^2  - 4(\xi_1^2 + m)(\xi_2^2 +m)} ,\\
  b(\xi_1,\xi_2) & =
  \frac{ 2q_{11} +(m-2\xi_1\xi_2) q_{00} }{  (m -2\xi_1\xi_2)^2  - 4(\xi_1^2 + m)(\xi_2^2 +m)},
  \\
  c(\xi_1,\xi_2) & = \frac{( m -2\xi_1 \xi_2   ) q_{01} 
    (\xi_1,\xi_2)  -  2 (\xi_2^2 + m) q_{01}  (\xi_2,\xi_1) 
    }{(m -2\xi_1\xi_2)^2  - 4(\xi_1^2 + m)(\xi_2^2 +m)}.
\end{aligned}
\right.
\]
Each of the symbols above has the same denominator; we denote the joint denominator by
\[
\Delta(\xi_1,\xi_2):=(m-2\xi_1\xi_2)^2-4(\xi_1^2+m)(\xi_2^2+m),
\]
and for convenience we rewrite as follows 
\[
\Delta(\xi_1,\xi_2):= -4m( \xi_1^1 + \xi_2^2 + \xi_1\xi_2) -3 m^2. 
\]
This form allows one to  easily observe that we are dealing with  an elliptic symbol of order two. We remark in 
particular that  $\Delta(\xi_1,\xi_2)$ is nowhere zero, which corresponds to the fact that this problem has no quadratic resonant interactions. 

\bigskip

\emph{a) The low--high symbol expansion.}
We now compute the expansions for the symbols with respect to the high frequency in the low--high case, i.e. in the region $|\xi_1|\le c|\xi_2|$ (with $c>0$ fixed sufficiently small). For this we consider $\frac{1}{\Delta}$, which admits an asymptotic expansion as follows:
\begin{align}
    \label{eq:invDelta_lh}
\frac{1}{\Delta} = - \frac{1}{4m} \xi_2^{-2} (1 - \frac{\xi_1}{\xi_2}+ O( (1 + |\xi_1|^2) \xi_2^{-2} ) ).
\end{align}
Now we successively consider the symbols $a,b,c$.

\medskip

\noindent\textbf{Expansion of $a$.}
Recalling that
\[
a(\xi_1,\xi_2)=\frac{(m-2\xi_1\xi_2)q_{11}(\xi_1,\xi_2)+2(\xi_1^2+m)(\xi_2^2+m)q_{00}(\xi_1,\xi_2)}{\Delta(\xi_1,\xi_2)},
\]
and using \eqref{eq:invDelta_lh}, we obtain the expansion
\[
a(\xi_1,\xi_2)=a_0(\xi_1)\,\xi_2+a_1(\xi_1)+O\!\left((1+|\xi_1|^{4})\xi_2^{-1}\right),
\]
where 
\begin{equation}\label{a-lh}
\left\{
\begin{aligned}
a_0(\xi_1):= & \ \frac{1}{2m} (  \xi_1 q_{11}^{(2)} - (\xi_1^2 +m) q_{00}^{(1)}),
\\
a_1(\xi_1):= & \ -\frac{1}{4m}\Bigl(m q_{11}^{(2)}(\xi_1)-2\xi_1 q_{11}^{(1)}(\xi_1)+2(\xi_1^2+m)q_{00}^{(0)}(\xi_1) \Bigr)
-\xi_1 a_0(\xi_1).
\end{aligned}
\right.
\end{equation}

\medskip

\noindent\textbf{Expansion of $b$.}
Recalling that
\[
b(\xi_1,\xi_2)=\frac{2q_{11}(\xi_1,\xi_2)+(m-2\xi_1\xi_2)q_{00}(\xi_1,\xi_2)}{\Delta(\xi_1,\xi_2)},
\]
and using \eqref{eq:invDelta_lh}, we obtain the expansion
\begin{equation*}
b(\xi_1,\xi_2)
=
b_0(\xi_1)
+ b_1(\xi_1)\xi_2^{-1}
+O\bigl((1+|\xi_1|^{3})\xi_2^{-2}\bigr),
\end{equation*}
where
\begin{equation}\label{b-lh}
\left\{
\begin{aligned}
b_0(\xi_1):= & \frac{1}{2m} (\xi_1 q_{00}^{(1)} -  q_{11}^{(2)} ) ,
\\
b_1(\xi_1):= & -\frac{1}{4m} (2q_{11}^{(1)}(\xi_1)+m q_{00}^{(1)}(\xi_1)-2\xi_1 q_{00}^{(0)}(\xi_1))
-\xi_1 b_0(\xi_1) .
\end{aligned}
\right.
\end{equation}


\medskip
\noindent\textbf{Expansion of $c$ in the low--high region.}
Recalling that
\[
c(\xi_1,\xi_2)=\frac{(m-2\xi_1\xi_2)q_{01}(\xi_1,\xi_2)-2(\xi_2^2+m)\,q_{01}(\xi_2,\xi_1)}{\Delta(\xi_1,\xi_2)},
\]
and using \eqref{eq:invDelta_lh}, we obtain the expansion
\[
c_{lh}(\xi_1,\xi_2)
= c_0^1(\xi_1) \xi_2  + c_1^1(\xi_1) + O( (1+|\xi_1|^3)\xi_2^{-1}),
\]
where
\begin{equation} \label{c-lh}
    \left\{
    \begin{aligned}
        c_0^1(\xi_1) 
        &:= 
        \frac{1}{2m} 
        ( q_{01}^{(2)}\xi_1 +  \Tilde{q}_{01}^{(1)}   ), \\
c_1^1(\xi_1) 
& :=
-\frac{1}{4m}\left( 
 m q_{01}^{(2)}(\xi_1)
-2\xi_1 q_{01}^{(1)}(\xi_1)
- 2 \Tilde{q}_{01}^{(0)}( \xi_1) 
\right)
-\xi_1 c_0^{\,1}(\xi_1).
    \end{aligned}
    \right.
\end{equation}

\medskip
\noindent\textbf{Expansion of $c$ in the high--low region.}
In the region $|\xi_2|\le c|\xi_1|$ we use the analogous expansion \eqref{eq:invDelta_lh}
 for \( c(\xi_1,\xi_2)\) and  obtain:
 \[
c_{hl}(\xi_1,\xi_2)= c_0^2(\xi_2)  + c_1^2(\xi_2) \xi_1^{-1} + O( (1+|\xi_2|^4)\xi_1^{-2}),
\]
where
\begin{equation}\label{c-hl}
\left\{ 
\begin{aligned}
     c_0^2(\xi_2) &:= 
     \frac{1}{2m} ( \Tilde{q}_{01}^{(1)}  \xi_2 + (\xi_2^2 + m) q_{01}^{(2)}), \\
c_1^2(\xi_2) &:=
-
\frac{1}{4m}\left(
m\Tilde{q}^{(1)}_{01}-2\xi_2 \Tilde{q}_{01}^{(0)}(\xi_2) -2(\xi_2^2+ m) q_{01}^{(1)}(\xi_2) 
\right)
-\xi_2 c_0^{\,2}(\xi_2).
\end{aligned}
\right.
\end{equation}

\bigskip

\emph{b) The high--high symbol properties:}

In the high--high region \( |\xi_1|\sim|\xi_2|\gg1 \), we have 
\[
|\Delta(\xi_1,\xi_2)|\sim m\,(|\xi_1|^2+|\xi_2|^2)\sim m\,\langle\xi\rangle^2.
\]
Since \( 1/\Delta\in S^{-2}  \)  on  \( \mathrm{supp}  \chi_{hh} \), it suffices to work at the numerator level, where the cancellation occurs. Accordingly, we introduce the notation
\[
\begin{aligned}
 a^{\mathrm{num}}(\xi_1,\xi_2):=a(\xi_1,\xi_2)\Delta(\xi_1,\xi_2),&\quad 
 b^{\mathrm{num}}(\xi_1,\xi_2):=b(\xi_1,\xi_2)\Delta(\xi_1,\xi_2),\\
 c^{\mathrm{num}}(\xi_1,\xi_2):=&c(\xi_1,\xi_2)\Delta(\xi_1,\xi_2).
 \end{aligned}
\]

It remains to show
\[
 a^{\mathrm{num}}\in(\xi_1+\xi_2)S^4+S^4,\qquad
 b^{\mathrm{num}}\in(\xi_1+\xi_2)S^2+S^2,\qquad
 c^{\mathrm{num}}\in(\xi_1+\xi_2)S^3+S^3.
\]
These are polynomials, so all that is needed is to factor out a $\xi_1+\xi_2$ in the highest degree term, where it suffices to simply examine the contribution of the highest degree term 
in \eqref{eq:s-poly}.


\medskip
\noindent \textbf{Factorization for \(a(\xi_1,\xi_2)\) in the high--high region.}
 Using the explicit formula,
\[
\begin{aligned}
     a^{\mathrm{num}}(\xi_1,\xi_2)
 & = a(\xi_1,\xi_2)\Delta(\xi_1,\xi_2) \\
 & = (m - 2\xi_1 \xi_2)\,q_{11}(\xi_1,\xi_2)
 +2(\xi_1^2 + m)(\xi_2^2+m)\,q_{00}(\xi_1,\xi_2).
\end{aligned}
\]
 We now examine the contribution of the highest degree term. By the definition in \eqref{eq:s-poly}, the highest degree term of \( q_{11}\) is a multiple of 
\( i(\xi_1\xi_2^2 + \xi_1^2 \xi_2) \), while the highest degree term of \(q_{00}\) is a multiple of  \( i(\xi_1 + \xi_2)\). Hence the highest degree contribution contain a factor \( i(\xi_1 + \xi_2)\)  with the remaining factors being of orders 2 and 0, respectively.
Since \((m-2\xi_1\xi_2)\in S^2\), \((\xi_1^2+m)(\xi_2^2+m)\in S^4 \), we obtain \(a^{\mathrm{num}}(\xi_1,\xi_2)\in (\xi_1 +\xi_2) S^4 + S^4\).

\medskip
\noindent \textbf{Factorization for \(b(\xi_1,\xi_2)\) in the high--high region.}
The argument is similar. 

 Using the explicit formula,
\[
\begin{aligned}
     b^{\mathrm{num}}(\xi_1,\xi_2)
 & = b(\xi_1,\xi_2)\Delta(\xi_1,\xi_2) \\
 & = 2 \,q_{11}(\xi_1,\xi_2)
 +(m- 2\xi_1\xi_2) \,q_{00}(\xi_1,\xi_2).
\end{aligned}
\]
 We now examine the contribution of the highest degree term. By the definition in \eqref{eq:s-poly}, the highest degree term of \( q_{11}\) is a multiple of 
\( i(\xi_1\xi_2^2 + \xi_1^2 \xi_2) \), while the highest degree term of \(q_{00}\) is a multiple of  \( i(\xi_1 + \xi_2)\). Hence the highest degree contribution contain a factor \( i(\xi_1 + \xi_2)\)  with the remaining factors being of orders 2 and 0, respectively.
Since \((m-2\xi_1\xi_2)\in S^2\), we obtain \(b^{\mathrm{num}}(\xi_1,\xi_2)\in (\xi_1 +\xi_2) S^2 + S^2\).

\medskip
\noindent \textbf{Factorization for \(c(\xi_1,\xi_2)\) in the high--high region.}

The argument is again analogous. Using the explicit formula,
\[
\begin{aligned}
     c^{\mathrm{num}}(\xi_1,\xi_2)
 & = c(\xi_1,\xi_2)\Delta(\xi_1,\xi_2) \\
 & = (m - 2\xi_1 \xi_2)\,q_{01}(\xi_1,\xi_2)
 -2(\xi_2^2 + m)\,q_{01}(\xi_2,\xi_1).
\end{aligned}
\]
 We now examine the contribution of the highest degree term. By the definition in \eqref{eq:s-poly}, there exists constants 
 \( \lambda_{1}\), \( \lambda_2\) such that the highest degree contribution of \( c^{\mathrm{num}}\) is
 \[
 \begin{aligned}
     & ( - 2\xi_1 \xi_2)(\lambda_1 \xi_1\xi_2 + \lambda_2 \xi_2^2 ) -2\xi_2^2 ( \lambda_1 \xi_1\xi_2 + \lambda_2 \xi_1^2) \\
     = & -2\lambda_1 \xi_1\xi_2^2(\xi_1 +\xi_2) - 2 \lambda_2 \xi_1 \xi_2^2 (\xi_1 +\xi_2).
 \end{aligned}
 \]
 From the above computations, we see that we can pull  out a factor \( \xi_1 + \xi_2\) from the highest degree contribution. Since \( \xi_1 \xi_2^2 \in S^3\), we obtain 
 \(c^{\mathrm{num}}(\xi_1,\xi_2)\in (\xi_1 +\xi_2) S^3 + S^3
 \).
\end{proof}

The purpose of the normal form transformation is to eliminate the quadratic terms in the equation.
However, the computations above show that, at leading order,
\[
a(\xi_1,\xi_2)\approx \xi_1\xi_2^2+\xi_1^2\xi_2,\qquad
b(\xi_1,\xi_2)\approx \xi_1+\xi_2,\qquad
c(\xi_1,\xi_2)\approx \xi_2^2+\xi_1\xi_2.
\]
Consequently the associated bilinear operators are
unbounded in the energy space, and one cannot expect the change of variables between $u$ and $\mathbf{u}$ to be a-priori invertible with uniform bounds.  This is precisely the point where we invoke the
modified energy method introduced by Ifrim and Tataru~\cite{HunterIfrimTataruWong2015-PAMS}.

Following this approach, we decompose the normal form operators into their low--high and high--high components, which play different roles
in our analysis of bilinear interactions.

\begin{description}
\item[(i)]\textbf{High--high interactions} can be thought of as perturbative,
and will be eliminated via a bounded normal form transformation.

\item[(ii)]\textbf{Low--high interactions} carry the main quasilinear structure and
are therefore absorbed into the modified energy functional.
\end{description}

\smallskip

\noindent  We begin with the first case \textbf{(i)}. For the high--high interactions it suffices to have the size and regularity of the normal form transformation, as described in the above proposition. However, for the low--high interactions, meaning case \textbf{(ii)}, this is no longer enough, and we need the full expressions of the coefficients in the 
expansions \eqref{nf-hl} for  $A_{lh}(u,w)$, $B_{lh}(u,w)$, \( C_{lh} (u_t,w)\), and \( C_{hl}(w_t,u)\).

We begin by computing the quadratic source terms of the full equation ~\eqref{kg1}.
More precisely, these bilinear forms can be expressed in terms of 
the derivatives at $(0,0)$ of $f$ and $g$ as follows:
\[
\left\{
\begin{aligned} 
   &  Q_{00} (u_t,u_t)
     = 2 g^{0 1 }_{u_t}  u_tu_{tx}
      + f_{u_t u_t} u_t u_t,
     \\ 
   &  Q_{01} (u_t,u)
      = g^{11}_{u_t} u_t u_{xx} 
      + 2 g^{01}_{u}  uu_{tx}
      + 2 g^{01}_{u_x}  u_x u_{tx}
      + f_{u_t u} u_t u,
  \\ 
   & 
     Q_{11} (u,u)
      =  g^{11}_u u u_{xx}
      + g^{11}_{u_x} u_x u_{xx}
      + f_{u u} u u  .
\end{aligned}
\right.
\]
This yields the symbols \(q_{00} \), \( q_{11} \), and \( q_{01} \). Note that \( q_{00} \) and \( q_{11} \) need to be symmetrized.
\begin{equation} \label{q-source}
    \left\{
\begin{aligned}
    q_{00}(\xi_1,\xi_2)
&= f_{u_tu_t}+ i\,g^{01}_{u_t}\,(\xi_1+\xi_2),
\\
q_{01}(\xi_1,\xi_2)
&= f_{u_tu}
+ 2 i\,g^{01}_{u}\,\xi_1
- 2\,g^{01}_{u_x}\,\xi_1\xi_2
- g^{11}_{u_t}\,\xi_2^{2},
\\
q_{11}(\xi_1,\xi_2)
&= f_{uu}
-\frac{g^{11}_{u}}{2}\,(\xi_1^{2}+\xi_2^{2})
-\frac{i\,g^{11}_{u_x}}{2}\,(\xi_1\xi_2^{2}+\xi_2\xi_1^{2}).
\end{aligned}
\right.
\end{equation}

 We then reorganize the symbols as follows:
 \begin{equation}\label{eq:q-coeffs}
\left\{
\begin{aligned}
q_{00}^{(0)}(\xi) 
& = f_{u_tu_t}+ i\,g^{01}_{u_t}\,\xi, 
&
\qquad 
q_{00}^{(1)}(\xi) 
&= i\,g^{01}_{u_t},\\
q_{11}^{(2)}(\xi) 
&= -\frac12 g^{11}_{u}-\frac{i}{2}g^{11}_{u_x}\,\xi, 
&\qquad q_{11}^{(1)}(\xi) 
&= -\frac{i}{2}g^{11}_{u_x}\,\xi^2,\\
q_{01}^{(2)}(\xi) 
&= -g^{11}_{u_t},
&\qquad q_{01}^{(1)}(\xi) 
&= -2g^{01}_{u_x}\,\xi,\\
\tilde q_{01}^{(1)}(\xi) 
&= 2i g^{01}_{u}-2g^{01}_{u_x}\,\xi,
&\qquad \tilde q_{01}^{(0)}(\xi)
&= f_{u_tu}-g^{11}_{u_t}\,\xi^2.
\end{aligned}
\right.
\end{equation}

Using the expansions \eqref{a-lh}–\eqref{c-hl} together with the coefficients in \eqref{eq:q-coeffs}, we obtain:

\begin{equation} \label{taylor-0}
    \left\{
\begin{aligned}
a_0(\xi) &= -\frac{i}{2}\,g^{01}_{u_t}
-\frac{g^{11}_{u}}{4m}\,\xi
-\frac{i}{2m}\Bigl(g^{01}_{u_t}+\frac12 g^{11}_{u_x}\Bigr)\xi^2,
\\
b_0(\xi) & = \frac{g^{11}_{u}}{4m}
+\frac{i\xi}{2m}\Bigl(g^{01}_{u_t}+\frac12 g^{11}_{u_x}\Bigr),
\\
c_0^1(\xi) & = \frac{i}{m}\,g^{01}_{u}
-\frac{g^{11}_{u_t}+2g^{01}_{u_x}}{2m}\,\xi,
\\
c_0^2 (\xi) & =-\frac{g^{11}_{u_t}}{2}
+\frac{i}{m}\,g^{01}_{u}\,\xi
-\frac{2g^{01}_{u_x}+g^{11}_{u_t}}{2m}\,\xi^2.
\end{aligned}
\right.
\end{equation}

We note that we have the following identities:

\begin{equation} \label{lead-symbol}
    \left\{
    \begin{aligned}
       &  c_0^1(\xi) \xi -c_0^2(\xi)  = \frac{1}{2}  g^{11}_{u_t},
\\
& a_0(\xi) \xi + b_0(\xi) (\xi^2 +m)  = \frac{1}{4} g_u^{11} + \frac{i}{4} g_{u_x}^{11} \xi .
    \end{aligned}
    \right.
\end{equation}

\smallskip

Now we compute \( a_1\), \( b_1\), \( c^{\, 1}_1\) and \( c_1^{\, 2}\). Using the symbol coefficient~\eqref{eq:q-coeffs}, we obtain:
\begin{equation}\label{taylor-1}
\left\{
\begin{aligned}
a_1(\xi) 
& = 
\frac{g^{11}_{u}}{8} - \frac{f_{u_tu_t}}{2}
+\frac{i}{8}\,g^{11}_{u_x}\,\xi
+\frac{\xi^2}{4m}\Bigl(g^{11}_{u}-2f_{u_tu_t}\Bigr),
\\
b_1(\xi) 
& =  -\frac{i}{4}\,g^{01}_{u_t}
+\frac{\xi}{4m}\Bigl(2f_{u_tu_t}-g^{11}_{u}\Bigr) ,
\\
c_1^{\,1}(\xi)
& = \frac{g^{11}_{u_t}}{4}+\frac{f_{u_tu}}{2m}
-\frac{i}{m}\,g^{01}_{u}\,\xi ,
\\
c_1^{\,2}(\xi)
& = -\frac{i}{2}g_u^{01}
+\frac12\Big(g_{u_t}^{11}-g_{u_x}^{01}+\frac{f_{u_tu}}{m}\Big)\xi
-\frac{i}{m}g_u^{01}\xi^2.
\end{aligned}
\right.    
\end{equation}

We note that the parity properties of \( a_0\), \( a_1\), \( b_0\), \( b_1\), \( c_0^1\), \( c_1^1\), \( c_0^2\), and \( c_1^2\) are inherited from those of \( a\), \(b\), and \(c\). 
In particular, the imaginary parts of
$a,b,$ and $c$ are even, while their real parts are odd.
Consequently, our expansions imply that \(a_0\) , \(b_1 \) , \( c_0^1 \), and \( c_1^2\) are imaginary even and real odd, whereas \( a_1 \), \(b_0\) , \(c_0^2\) , and \(c_1^1\) are imaginary odd and real even.

\section{The paradifferential expansion of the equation}
\label{s:para}

Our goal here is to develop the 
paradifferential expansion for the 
quasilinear Klein-Gordon model \eqref{kg1}, which, for convenience, we recall below:
\begin{equation}\label{eq-full}
- g^{\alpha \beta}(u,\partial u) \partial_\alpha
\partial_\beta u + m u  =f(u,\partial u).
\end{equation} 

To analyze the quasilinear Klein--Gordon equation, it is convenient to also consider the associated linearized equation around a given solution $u$.
Indeed, the infinitesimal difference of two nearby nonlinear solutions satisfies the linearized equation, which underlies the stability and uniqueness estimates. 
Moreover, the paradifferential expansions developed below will be carried out at the level of the corresponding linearized operator.
 Further, the energy estimates of the linearized equation are useful for stability estimates on the cubic time-scale.

We  begin with the associated linearized equation
\[
\begin{aligned}
    - g^{\alpha \beta}(u,\partial u ) \partial_\alpha
\partial_\beta v - ( g^{\alpha \beta}_u  v + g_{p_{\gamma}}^{\alpha \beta} \partial_{\gamma}v) \partial_{\alpha} \partial_{\beta} u  + m v = f_u v + f_{p_{\gamma}} \partial_{\gamma}v.
\end{aligned}
\]
It can be further rewritten in  a more convenient form, namely
\begin{equation} \label{eq-lin}
    - g^{\alpha \beta}(u,\partial u ) \partial_\alpha
\partial_\beta v  + m v
=
F^{\gamma,lin} \partial_\gamma v + F^{lin} v,
\end{equation}
where the coefficients on the right 
are given by
\[
\left\{
\begin{aligned}
& F^{\gamma,lin} :=  \ g^{\alpha \beta}_{p_{\gamma}}(u,\partial u) \partial_\alpha \partial_\beta u + f_{p_{\gamma}}(u,\partial u),
\\
& F^{lin} :=  \  g_{u}^{\alpha\beta}(u,\partial u) \partial_\alpha \partial_\beta  u + f_{u}(u,\partial u).
\end{aligned}
\right.
\]


At a more naive level, one may view both of these evolutions as nonlinear perturbations of the linear Klein-Gordon flow, and write them in the form
\begin{equation}
L_{KG} u = N(u),
\end{equation}
and, similarly, for the linearized equation
\begin{equation}
L_{KG} v = N^{lin}(u)v ,
\end{equation}
where the nonlinearity can be further 
split into a quadratic term and  cubic and higher terms,
\[
N(u) = N^{[2]}(u)+ N^{[3]}(u),
\]
and similarly for the linearized equation
\[
N^{lin}(u)v = N^{lin,[2]}(u)v+ N^{lin,[3]}(u)v.
\]
For our purposes, it will be useful to describe the quadratic 
components more explicitly; we will decompose  them in the form
\begin{equation}\label{N2}
N^{[2]}(u)= Q_{11}(u,u) + Q_{00}(u_t,u_t) + Q_{01}(u_t,u),
\end{equation}
and, correspondingly, we will do the same for $N^{lin,[2]}(u)v$: 
\[
N^{lin,[2]}(u)v = 2 Q_{11}(u,v) + 2Q_{00}(u_t,v_t) + Q_{01}(u_t,v) + Q_{01}(u,v_t).
\]

 However, a better strategy is to 
associate to both of these evolutions a corresponding linear 
paradifferential flow, capturing the principal quasilinear character, which has the form
\begin{equation}\label{para}
L_{KG}^{para} w = f,
\end{equation}
where $L_{KG}^{para}$ appears from rewriting  the linearized equation paradifferentially and extracting the main low--high parts.

Therefore,  we are now able to introduce the paradifferential operator  $L_{KG}^{para}$ as follows
\[
L^{para}_{KG} := -T_{g^{\alpha \beta}} \partial_\alpha \partial_\beta   - T_{F^{\gamma,lin}} \partial_{\gamma}  - T_{F^{lin} }  + m.
\]
For convenience, we use the shorthand notation
\[
T_{\Tilde{F}^{\gamma,lin}} \Tilde{\partial_{\gamma}}   := T_{F^{\gamma,lin}} \partial_{\gamma}  + T_{F^{lin} } ,
\] 
where the $T_ab$ notation is the standard paradifferential product. 
Consequently, $L^{para}_{KG} $  will then be expressed as 
\[
L^{para}_{KG} := -T_{g^{\alpha \beta}} \partial_\alpha \partial_\beta - T_{\Tilde{F}^{\gamma,lin}} \Tilde{\partial_{\gamma}}  + m .
\]

With this notation, the equation~\eqref{eq-full} can be reexpressed in paradifferential form as
\begin{equation}\label{para-KG}
  L^{\mathrm{para}}_{KG} u = N_{\mathrm{bal}}(u),
\end{equation}
where \( N_{bal}\) collects the balanced terms after subtracting the principal low--high part:
\[
N_{bal}(u) =: T_{\partial_\alpha \partial_\beta u  } g^{\alpha \beta}(u,\partial u) + \Pi( g^{\alpha \beta}(u,\partial u),\partial_\alpha \partial_\beta u ) + f(u,\partial u) -  T_{\Tilde{F}^{\gamma,lin}} \Tilde{\partial_{\gamma}} u.
\]

We can further separate the nonlinearity into a quadratic part and a cubic and higher order part, 
\begin{equation}\label{Nbal}
N_{bal}(u) = N_{bal}^{[2]}(u) + N_{bal}
^{[3]}(u),
\end{equation}
where the quadratic part plays the leading role and will be treated explicitly,
while the cubic and higher terms are perturbative. Precisely, the quadratic component can be described in terms of the expression in \eqref{N2} as 
\[
N_{bal}^{[2]}(u) := 
  Q^{hh}_{00}(u_t,u_t) 
+ Q^{hh}_{01}(u_t,u)
+ Q_{11}^{hh}(u,u).
\]

In a similar fashion, the linearized equation can be written as
\begin{equation}\label{para-KG-lin}
  L^{\mathrm{para}}_{KG} v = N^{lin}_{\mathrm{bal}}(u) v,
\end{equation}
where,
\[
\begin{aligned}
    N^{lin}_{\mathrm{bal}}(u) v
    = T_{ \partial_\alpha \partial_\beta v} g^{\alpha \beta }(u,\partial u ) 
    + \Pi( \partial_\alpha \partial_\beta v, g^{\alpha \beta }(u,\partial u ) )
    + T_{ \Tilde{\partial}_{\gamma} v} \Tilde{F}^{\gamma,lin}
    + \Pi(\Tilde{\partial}_{\gamma} v,\Tilde{F}^{\gamma,lin}  ).
\end{aligned}
\]
 We again separate the nonlinearity into a quadratic part and a cubic and higher order part, 
\[
N^{lin}_{bal}(u)v := N_{bal}^{lin,[2]}(u)v + N_{bal}
^{lin,[3]}(u)v.
\]
In contrast to the full equation, the quadratic part in the linearized equation also contains unbalanced terms, namely
\[
\begin{aligned}
    N_{bal}^{lin,[2]}(u)v  = & \; 2 Q^{hh}_{11}(u,v)
+ 2Q^{hh}_{00}(u_t,v_t) + Q^{hh}_{10}(u,v_t) 
+  Q^{hh}_{01}(u_t,v) 
\\
&
+  2Q^{hl}_{11}(u,v) + Q^{hl}_{10}(u,v_t)
+ Q^{hl}_{01}(u_t,v) +2Q^{hl}_{00}(u_t,v_t).
\end{aligned}
\]

We first record some preliminary estimates: \(L^{\infty}\) bounds for \(u_{tt}\)
 and \( H^{-1}\) estimates for \( v_{tt}\). These estimates arise when estimating the time derivative of the source terms in the analysis of the linearized equation.

 \begin{proposition}\label{p:infty}
     The following estimates hold for \( u_{tt}\):
\begin{equation} \label{eq:p2-utt}
\| \partial  u_{tt} \|_{W^{k-1,\infty}} \lesssim_{\A_0} \A_{k+1}  ,  
\end{equation}

\begin{equation} \label{eq:p2-utt-2}
\| \Lambda_{\ge 2}( \partial u_{tt}) \|_{W^{k-1,\infty}} \lesssim_{\A_0} \A_0 \A_{k+1} .   
\end{equation}
There also exists an expansion for \( v_{tt} = v_{tt}^{(-1)} + v_{tt}^{(0)}  \) such that

\begin{equation} \label{eq:vtt}
\begin{aligned}
     \| v_{tt}^{(-1)} \|_{H^{-1}} 
   &\lesssim_{\A_0}   \|v[t]\|_{H^1 \times L^2} 
   \\
   \| v^{(0)}_{tt} \|_{L^2}
    &\lesssim_{\A_0}  \A_1 \|v[t]\|_{H^1 \times L^2} .
\end{aligned}
\end{equation}
Similarly, there exists an expansion for \( v_{ttt} = v_{ttt}^{(-2)} + v_{ttt}^{(-1)} + v_{ttt}^{(0)}   \) such that 
\begin{equation} \label{eq:vttt}
\begin{aligned}
     \| \Lambda_{\geq 2 }v_{ttt}^{(-2)} \|_{H^{-2}}
   & 
   \lesssim_{\A_0}  \A_0 \|v[t]\|_{H^1 \times L^2} ,
   \\
   \|\Lambda_{\geq 2 } v^{(-1)}_{ttt} \|_{H^{-1}}
   &  \lesssim_{\A_0}  \A_1 \|v[t]\|_{H^1 \times L^2} ,
    \\
   \| \Lambda_{\geq 2 }v^{(0)}_{ttt} \|_{L^2}
   &  \lesssim_{\A_0}  \A_2 \|v[t]\|_{H^1 \times L^2}
   .
\end{aligned}
\end{equation}

 \end{proposition}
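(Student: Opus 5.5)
The plan is to solve each equation for the top time derivative and then differentiate. With $g^{00}=-1$, equation \eqref{kg1} reads
\[
u_{tt} = 2g^{01}(u,\partial u)\,u_{tx} + g^{11}(u,\partial u)\,u_{xx} - mu + f(u,\partial u) =: G(u,\partial u, u_{tx}, u_{xx}),
\]
where $G$ is smooth, $G$ is linear in the second derivatives, and $G$ is linear plus quadratic at the origin.

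\textbf{Bounds for $u_{tt}$.} To get \eqref{eq:p2-utt}, apply $\partial$ and then $\partial_x^{j}$ with $j\le k-1$. By the Leibniz/chain rule, $\partial^{j}_x\partial G$ is a sum of products of smooth functions of $(u,\partial u)$, bounded in terms of $\A_0$, times derivatives $\partial_x^{\le k+1}\partial u$, $\partial_x^{\le k}u_{tx}$ and $\partial_x^{\le k}u_{xx}$. Any $\partial_t$ falling on $\partial u$ produces $u_{tt}$ or its derivatives. These are removed recursively by substituting the equation again, so the argument is an induction on the number of time derivatives. All resulting factors are controlled in $L^\infty$ by $\A_{k+1}$ together with powers of $\A_0,\dots,\A_k\le\A_{k+1}$. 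Since $\A_{k+1}$ is bounded on the regime of interest, or by arranging the Moser structure so that at most one factor carries the top norm, this gives the bound $\lesssim_{\A_0}\A_{k+1}$.

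For \eqref{eq:p2-utt-2}, note that the linear part of $\partial u_{tt}$ is $\partial(u_{xx}-mu)$. The part of homogeneity $\ge2$ consists of terms that always contain a factor $u$ or $\partial u$ undifferentiated beyond $\A_0$ level, multiplying the rest. Putting that factor in $L^\infty$ gives the extra $\A_0$ gain. Writing $G-\Lambda_1 G=\int_0^1$-type Taylor remainders makes this precise.

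\textbf{Expansion of $v_{tt}$.} For the linearized equation \eqref{eq-lin},
\[
v_{tt} = 2g^{01}v_{tx} + g^{11}v_{xx} - mv + F^{\gamma,lin}\partial_\gamma v + F^{lin}v,
\]
and $v_t$ is substituted when $\gamma=0$. I set
\[
v_{tt}^{(-1)} := \partial_x\big(2g^{01}v_t + g^{11}v_x\big),
\]
which is bounded in $H^{-1}$ by $\|v[t]\|_{H^1\times L^2}$ with a constant depending on $\A_0$. Everything else goes into $v_{tt}^{(0)}$: the commutator terms $-2(\partial_xg^{01})v_t-(\partial_xg^{11})v_x$, the term $-mv$, and the $F$ terms. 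Since $\partial_x g$ and $F^{lin}$, $F^{\gamma,lin}$ involve $\partial^2u\lesssim \A_1$ (using the $u_{tt}$ bound above), they are bounded in $L^2$ by $\A_1\|v[t]\|$. The $-mv$ term is of size $\|v\|_{L^2}$ and cannot be bounded by $\A_1\|v[t]\|$; it must go into $v^{(-1)}_{tt}$ instead, and the same holds for the linear part of each coefficient.

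\textbf{Expansion of $v_{ttt}$.} Differentiate the $v_{tt}$ identity in time, put time derivatives onto coefficients, and replace each resulting $v_{tt}$ by its expansion. The top-order term $\partial_x^2(g^{11}\cdots)$, divergence-form twice, goes into $v_{ttt}^{(-2)}$. One derivative lost to a coefficient gives $v^{(-1)}_{ttt}$, with a loss of $\A_1$. Two derivatives on coefficients, including $\partial_t F$, which contains $\partial u_{tt}$ and is therefore $\lesssim\A_2$, give $v^{(0)}_{ttt}$. Applying $\Lambda_{\ge2}$ removes the linear Klein--Gordon part, namely $\partial_t(v_{xx}-mv)$ with constant coefficients. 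Every remaining term has a coefficient vanishing at $u=0$, and this supplies the $\A_0$ factor in the $H^{-2}$ bound. Here I use that $g-\eta$, $F$ vanish to first order, and for the $H^{-2}$ term I estimate $\|\partial_x^2(hw)\|_{H^{-2}}\le\|h\|_{L^\infty}\|w\|_{L^2}$. In the lower terms $\A_1$ and $\A_2$ already carry a derivative of $u$ and so are automatically of order $\ge1$.

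\textbf{Main obstacle.} The main difficulty is the bookkeeping needed to ensure that every coefficient is placed in $L^\infty$ and $v$ appears only through $\|v[t]\|_{H^1\times L^2}$ with the correct negative Sobolev index, and that the homogeneity truncation $\Lambda_{\ge2}$ is compatible with this divergence-form placement. I would handle this by writing each term explicitly in the form $\partial_x^{a}(h\,\partial^{b}v)$ with $b\le1$.
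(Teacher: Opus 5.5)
Your treatment of $v_{tt}$ and $v_{ttt}$ matches the paper's. You place $-mv$ in $v^{(-1)}_{tt}$, regroup $v_{ttt}$ by the number of derivatives on $v$, write it in divergence form, and use $\Lambda_{\ge 2}$ to remove the constant-coefficient part $\partial_t(v_{xx}-mv)$, so only $\Lambda_{\ge1}$ of the coefficients is estimated. Moving ``the linear part of each coefficient'' into $v^{(-1)}_{tt}$ is unnecessary: the coefficients in $v^{(0)}_{tt}$ vanish at $u=0$ and are bounded by $\A_1$.

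The gap is in the $u_{tt}$ bounds: getting constants that depend only on $\A_0$, and the extra factor $\A_0$ in \eqref{eq:p2-utt-2}.

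\begin{itemize}
\item Your claim that every term of homogeneity $\ge 2$ contains a factor $u$ or $\partial u$ at the $\A_0$ level fails once Leibniz distributes derivatives. The quadratic term $g^{11}_{u_x}u_xu_{xx}$ of $u_{tt}$ produces $g^{11}_{u_x}u_{xx}^2$ in $\partial_x u_{tt}$. Estimated directly, this is only $\lesssim \A_1^2$. In general you get products $\prod_i\|\partial_x^{\alpha_i}\partial u\|_{L^\infty}$ with several $\alpha_i\ge 1$ and $\sum_i\alpha_i\le k+1$.
\item Your fallback, that $\A_{k+1}$ is bounded, makes the constant depend on $\A_{k+1}$ rather than $\A_0$. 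In \eqref{eq:p2-utt-2} it produces $\A_{k+1}^2$ instead of $\A_0\A_{k+1}$.
\end{itemize}

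The missing ingredient, which is the core of the paper's argument, is the $L^\infty$ Gagliardo--Nirenberg (Landau--Kolmogorov) interpolation
\[
\A_j\lesssim \A_0+\A_0^{1-\frac{j}{k+1}}\A_{k+1}^{\frac{j}{k+1}},\qquad 0\le j\le k+1 .
\]
It gives $\prod_{i=1}^n\A_{\alpha_i}\lesssim \A_0^{n-1}\A_{k+1}$ whenever $\sum_i\alpha_i\le k+1$; when the inequality is strict, use $\A_0\le \A_{k+1}$. Taking $n\ge 1$ gives \eqref{eq:p2-utt} with an $\A_0$-dependent constant. Taking $n\ge 2$, which is exactly the homogeneity $\ge 2$ part, gives the factor $\A_0$ in \eqref{eq:p2-utt-2}. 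For example, $\|u_{xx}\|_{L^\infty}^2\lesssim \|u_x\|_{L^\infty}\|\partial_x^3 u\|_{L^\infty}\le \A_0\A_2$.

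The same interpolation is needed in your $v_{ttt}$ step. Coefficient products such as $F^{0,lin}F^{lin}$ in $v^{(0)}_{ttt}$ are of size $\A_1^2$, and you need $\A_1^2\lesssim\A_0\A_2$ to reach the stated $\lesssim_{\A_0}\A_2$.

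Your remark about ``arranging the Moser structure'' points the right way. However, the $L^\infty$-based product and Moser bounds are themselves consequences of this interpolation, so it must be stated and applied. As written, neither of your two justifications yields the claimed dependence on $\A_0$ alone.
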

\begin{proof}
Using the equation for \( u_{tt}\), we write
\begin{equation} \label{eq:uttt}
\begin{aligned}
    \partial^k u_{tt} &= \ \partial^k (2 g^{01} \partial_x u_t + g^{11} \partial_x^2 u + f  -mu )
    \\
    & 
= \ \sum_{k_1 + k_2 = k} C_{k_1,k_2} 
\big( 
2  \partial^{k_1} g^{01}  \partial^{k_2} \partial_x u_t
+ \partial^{k_1} g^{11} \partial^{k_2}\partial_x^{2} u
\big)
+ \partial^k f  - m\partial^k u,
\end{aligned}
\end{equation}
where \(C_{k_1,k_2}  \) are the corresponding binomial coefficients.
 Whenever \(u_{tt}\) appears, we use the equation for \(u\) to replace \( u_{tt}\) by \(
 2g^{01} \partial_x u_t +
 g^{11}  \partial_x^2 u + f  -mu \). Since the equation is quasilinear, using the equation will preserve the highest number of derivatives applied to a factor. Hence, the worst contribution comes from the most unbalanced term, which has all derivatives fall on the highest-frequency factor. Such terms involve expressions of the form \( \partial_x^{k+1} \partial u\), which requires the control parameter \(  \A_{k+1}\),
 with a coefficient depending on $u$ and $\partial u$, which generates the implicit dependence on $\A_0$.

Then it remains to consider the intermediate terms, i.e., the terms that have more balanced derivatives, for which one can use  interpolation. 
Recalling the definition of the control parameters
\[
\begin{aligned}
    \A_{j }  
    & = \ \| u \|_{L^{\infty}} + \| \partial  u \|_{W^{j,\infty}}  ,
\end{aligned}
\]
the one-dimensional Gagliardo–Nirenberg inequality implies that for \( 0 \le j \le k+1\),
\[
\begin{aligned}
    \A_j \lesssim \
     \A_0 + \A_0^{1 - \frac{j}{k+1}} \A_{k+1}^{\frac{j}{k+1}} . 
\end{aligned}
\]
Consequently, for any \( n \ge 2\) and any integers \( \alpha_i \geq 0 \) satisfying \(\alpha_1 + \cdots \alpha_n = k+1 \), we obtain the multilinear bound
\begin{equation} \label{eq:n-inter}
\begin{aligned}
    \sum_{\alpha_1 + \cdots \alpha_n = k+1} \prod_{ i =1 }^{n} \A_{\alpha_i}  
    \lesssim \
    \A_0^{n-1}  \A_{k+1}.
\end{aligned}    
\end{equation}
In particular, all terms in \eqref{eq:uttt} can be estimated using \eqref{eq:n-inter}.

For the second estimates we additionally apply Moser-type estimates, which require control parameters \( \A_0 \A_{k+1} \). This concludes the proof for the first and the second estimate.

Next we estimate \( v_{tt}\). For these estimates, the paradifferential expansion is not particularly helpful, so we work directly with the original linearized equation~\eqref{eq-lin}. 
Noting that \( g^{00} = -1\), we obtain
\[
v_{tt} = \ 2 g^{0 1}(u,\partial u )
\partial_x v_t +    g^{1 1}(u,\partial u )
\partial_x^2 v
+
\Tilde{F}^{\gamma,lin} \Tilde{\partial}_\gamma v -mv .
\]
We then define
\[
\begin{aligned}
    v_{tt}^{(-1)} 
    &:= \;  
    \partial_x( 2 g^{01}(u,\partial u)  v_t + g^{11}(u,\partial u)  \partial_x v ) -mv,
    \\
v_{tt}^{(0)} & := \;  -\partial_x( 2 g^{01}(u,\partial u)  )  v_t 
 -\partial_x(  g^{11}(u,\partial u)  )  v_x   
+\Tilde{F}^{\gamma,lin} \Tilde{\partial}_\gamma v .
\end{aligned}
\]
Estimating the $u$ dependent coefficients in $L^\infty$,  we obtain a bound for \( v_{tt}^{(0)}\) 
in \(H^{-1}\) 
\[
\begin{aligned}
    \|  v_{tt}^{(-1)} \|_{H^{-1}}
    & \lesssim 
    \| \partial_x( 2 g^{01}(u,\partial u)  v_t + g^{11}(u,\partial u)  \partial_x v )  \|_{H^{-1}}  + \| mv \|_{L^2}
\\
    &
    \lesssim 
     \|   g^{\alpha 1}(u,\partial u ) \partial_\alpha v   \|_{L^2} 
 + \| mv \|_{L^2}
\\
&
\lesssim_{\A_0} \| v[t] \|_{H^1 \times L^2} .
\end{aligned}
\]
Similarly, we can control \( v_{tt}^{(0)}  \) in \( L^2\)
\[
\begin{aligned}
    \|   v_{tt}^{(0)} \|_{L^2}
    & 
    \lesssim
    \| \partial_x(g^{\alpha1}(u,\partial u)  ) \partial_\alpha v \|_{L^2}
    + \| \Tilde{F}^{\gamma,lin} \Tilde{\partial}_\gamma v  \|_{L^2} 
    \\
    &
    \lesssim_{\A_0} \A_1 \| v[t] \|_{H^1 \times L^2}.
\end{aligned}
\]

\smallskip

Finally, we expand \(v_{ttt}\) as follows
\[
\begin{aligned}
    v_{ttt} & =\; 
    2 g^{0 1}(u,\partial u )  \partial_x v_{tt} 
    +g^{1 1}(u,\partial u )   \partial_x^2 v_t
    + \partial_t ( 2g^{01}(u,\partial u )  ) \partial_x v_t
\\
& \quad \ 
+   \partial_t ( g^{11}(u,\partial u )  ) \partial_x^2 v
+  \partial_t (\Tilde{F}^{\gamma,lin})  \Tilde{\partial}_\gamma v + \Tilde{F}^{\gamma,lin} \Tilde{\partial}_\gamma \partial_t v - mv_t
\\
&
= \;  
 2 g^{0 1}(u,\partial u ) \partial_x \bigr( 
2 g^{0 1}(u,\partial u ) \partial_x v_t  
+g^{1 1}(u,\partial u ) \partial_x^2 v
+
\Tilde{F}^{\gamma,lin} \Tilde{\partial}_\gamma v -mv\bigl)  
\\
& \quad \ 
+ g^{11}(u,\partial u ) \partial_x^2 v_t 
+ \partial_t ( 2 g^{0 1}(u,\partial u )  )  \partial_x v_t
+ \partial_t (g^{1 1}(u,\partial u )  )  \partial_x^2 v 
\\
&  \quad \
+  \partial_t (\Tilde{F}^{\gamma,lin})  \Tilde{\partial}_\gamma v 
+ F^{1,lin} \partial_x v_t + F^{lin} v_t
\\
&
\quad \ +  F^{0,lin} \bigr( 2 g^{0 1}(u,\partial u ) 
\partial_x v_t
+
 g^{11}(u,\partial u ) 
\partial_x^2 v
+
\Tilde{F}^{\gamma,lin} \Tilde{\partial}_\gamma v -mv \bigl) - mv_t.
\end{aligned}
\]

We first regroup the terms by the number of derivatives falling onto \(v\)
\[
\begin{aligned}
    v_{ttt} = \mathfrak{c}_1^{\alpha}(u,\partial u) \partial_x^2 \partial_\alpha v 
    + 
    \mathfrak{c}_2^{\alpha}(u,\partial^{\le 2} u ) \partial_\alpha \partial_x v 
    + 
    \mathfrak{c}_3^{\alpha}(u, \partial^{\le 3} u) \partial_\alpha v + \mathfrak{c}_4(u, \partial^{\le 3} u)  v
    ,
\end{aligned}
\]
where the coefficient functions
$\mathfrak c^{\alpha}_1,\mathfrak c^{\alpha}_2,\mathfrak c^{\alpha}_3, \mathfrak c_4$ are used only in this
proof and depend smoothly on their arguments.
Up to nonlinear coefficients depending on $u$ and $\partial u$, each term in the expansion of $v_{ttt}$
contains at most four derivatives in total, counting derivatives of both \(u\) and \(v\). Since our decomposition---splitting
terms according to the number of derivatives falling onto $v$ and rewriting in
divergence form---does not alter the total derivative count, the coefficients
must have the claimed structure. They can be explicitly computed, however, for our purposes it suffices to record the bound
\[
\| \Lambda_{\ge 1} \mathfrak{c}_i\|_{L^{\infty}} \lesssim_{\A_0} \A_{i-1},\quad i=\overline{1,4}.
\]

We then rewrite the terms in divergence form.
More precisely, we write
\begin{equation}
v_{ttt}
= v_{ttt}^{(-2)} + v_{ttt}^{(-1)} + v_{ttt}^{(0)},
\end{equation}
where the three components are determined successively starting from the highest--order
\begin{equation}
\begin{aligned}
v_{ttt}^{(-2)}
&:= \partial_x^2( \mathfrak{c}_1^{\alpha} (u,\partial u) \partial_{\alpha} v )  ,\\[2mm]
v_{ttt}^{(-1)}
&:= \partial_x( \mathfrak{c}_2^{\alpha} (u,\partial^{\le 2}u )  \partial_\alpha v - 2 \partial_x \mathfrak{c}_1^{\alpha} (u,\partial u) \partial_{\alpha} v  ),\\[2mm]
v_{ttt}^{(0)}
&:=  \mathfrak{c}_3^{\alpha}(u, \partial^{\le 3} u) \partial_\alpha v + \mathfrak{c}_4(u, \partial^{\le 3} u)  v - \partial_x^2( \mathfrak{c}_1^{\alpha} (u,\partial u) )\partial_{\alpha} v \\
& \quad \ -  \partial_x( \mathfrak{c}_2^{\alpha} (u,\partial^{\le 2}u ) - 2 \partial_x^2 \mathfrak{c}_1^{\alpha} (u,\partial u) )\partial_{\alpha} v     .
\end{aligned}
\end{equation}

We estimate each component at the corresponding regularity level. For the highest-order part, we have

\[
\begin{aligned}
    \| \Lambda_{\ge 2} v_{ttt}^{(-2)}  \|_{H^{-2}}
    &
    \lesssim 
    \| \partial_x^2( \Lambda_{\ge 1}\mathfrak{c}_1^{\alpha} (u,\partial u) \partial_\alpha v )  \|_{H^{-2}} 
\\
&
\lesssim \| \Lambda_{\ge 1}( \mathfrak{c}_1^{\alpha} (u,\partial u)) \partial_\alpha v \|_{L^2} 
\\
&
\lesssim_{\A_0}  \A_0 \| v[t] \|_{H^1\times L^2} .
\end{aligned}
\]
The same argument yields
\[
\begin{aligned}
    \| \Lambda_{\ge 2} v_{ttt}^{(-1)} \|_{H^{-1}}
    & \lesssim 
     \
    \|   \partial_x( \Lambda_{\ge 1 }\mathfrak{c}_2^{\alpha} (u,\partial^{\le 2}u )  \partial_\alpha v - 2\Lambda_{\ge 1 } \partial_x \mathfrak{c}_1^{\alpha} (u,\partial u) \partial_{\alpha} v  ) \|_{H^{-1}} 
    \\
    & \lesssim
     \
    \| \Lambda_{\ge 1 }\mathfrak{c}_2^{\alpha} (u,\partial^{\le 2}u )  \partial_\alpha v - 2\Lambda_{\ge 1 } \partial_x \mathfrak{c}_1^{\alpha} (u,\partial u) \partial_{\alpha} v   \|_{L^2} 
    \\
    & \lesssim_{\A_0}
     \
    \A_1 \| v[t] \|_{H^1\times L^2} .
\end{aligned}
\]
The remaining term $v_{ttt}^{(0)}$ is estimated in the same way, and we conclude the desired bound.
This completes the proof.
\end{proof}

We also record a preliminary \( L^2\) estimate for the quadratic part of the nonlinearity in the linearized equation.
\begin{proposition} \label{p:lin2}
    The quadratic part of the nonlinearity \( N^{lin}_{bal}(u)v \) 
    can be estimated at fixed time as
    \begin{equation} \label{eq:lin2}
         \| N^{lin,[2]}_{bal}(u)v \|_{L^2} 
    \lesssim  \A_1 \| v[t] \|_{H^1 \times L^2},
    \end{equation}
 and
    \begin{equation} \label{eq:lin2-t}
         \| \partial_t N^{lin,[2]}_{bal}(u)v \|_{H^{-1}} 
    \lesssim_{\A_0} \A_1 \| v[t] \|_{H^1 \times L^2}.
    \end{equation}
\end{proposition}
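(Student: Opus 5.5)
The plan is to estimate $N^{lin,[2]}_{bal}(u)v$ term by term, splitting it into its high--high part (the $Q^{hh}_{ij}$ terms) and its high--low part (the $Q^{hl}_{ij}$ terms, where $u$ carries the high frequency and $v$ the low one). The symbols are the polynomials listed in \eqref{eq:s-poly} and \eqref{q-source}. Each term is therefore a bilinear expression with at most three derivatives in total, distributed between $u$ and $v$, and with at most one derivative on $v$ beyond $\partial v$. The guiding principle is that $v$ is placed in $L^2$ at the level of $\partial v$ (or $v$ itself), while all remaining derivatives are moved onto $u$ and measured in $L^\infty$ via $\A_1 = \|u\|_{L^\infty}+\|\partial u\|_{W^{1,\infty}}$.

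\emph{High--low terms.} Here $u$ is at high frequency. For instance, $Q_{11}^{hl}(u,v)$ has symbol of the form $\xi_1^2$, $\xi_1\xi_2$, $\xi_1^2\xi_2$, $\ldots$ with $|\xi_2|\ll|\xi_1|$. Any factor of $\xi_2$ beyond the first can be transferred to $\xi_1$, since $|\xi_2|\lesssim|\xi_1|$, giving a symbol in $S^{0}$ acting on $(\partial^{\le 2}u, \partial^{\le1} v)$. The Coifman--Meyer bounds \eqref{CM}, \eqref{CM-BMO}, or equivalently \eqref{B-CM} with the roles of the inputs swapped, then give $\|\partial^{\le 2}u\|_{L^\infty}\|v[t]\|_{H^1\times L^2}\lesssim \A_1\|v[t]\|_{H^1\times L^2}$. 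Time derivatives on $u$ count as $\partial$; the term $\partial u_t$ is controlled by $\A_1$ directly.

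\emph{High--high terms.} Here the two frequencies are comparable. The key point is that the highest-order symbols factor as in the proof of Proposition~\ref{nfl}, e.g.\ $\xi_1\xi_2(\xi_1+\xi_2)$ for $q_{11}$. So one output derivative can be peeled off, and the remaining derivatives are split evenly: at most one falls on $v$, and the rest can be moved onto $u$ because $|\xi_1|\sim|\xi_2|$. The bound $\|\Pi(\partial^{\le 2}u,\partial^{\le1}v)\|_{L^2}\lesssim \A_1\|v[t]\|_{H^1\times L^2}$ follows from \eqref{CM-BMO}. The only care needed is to avoid an extra derivative on $v$; when the symbol is $\xi_2^2$ acting on $v$ (as in $Q_{01}(u,v_t)$-type terms), the hh localization allows writing $\xi_2^2\approx \xi_1\xi_2\cdot(\xi_2/\xi_1)$ with $\xi_2/\xi_1\in S^0$.

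\emph{Time derivative.} For \eqref{eq:lin2-t}, I would differentiate via the Leibniz rule. The terms where $\partial_t$ hits $u$ give $u_t$ or $u_{tt}$ factors. These are handled exactly as above, using Proposition~\ref{p:infty} (\eqref{eq:p2-utt}) to bound $\partial u_{tt}$ by $\A_{1}$ (or $\A_2$). Since we measure in $H^{-1}$, we can afford to place one derivative back on the output, which lowers the needed control to $\A_1$ with an $\A_0$-dependent constant. The terms where $\partial_t$ hits $v$ produce $v_t$ and $v_{tt}$. For $v_{tt}$ I would use the decomposition $v_{tt}=v_{tt}^{(-1)}+v_{tt}^{(0)}$ of \eqref{eq:vtt}. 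The $v_{tt}^{(0)}$ piece sits in $L^2$ and is treated like $\partial v$. The $v_{tt}^{(-1)}$ piece is the derivative of an $L^2$ function, so in the bilinear form it behaves like $\partial^2 v$; the loss is absorbed by the $H^{-1}$ norm of the output together with the hh/hl frequency structure, which lets one move the derivative onto the output or onto the higher-frequency $u$.

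The main obstacle I expect is this last step: a term of the form $Q^{hl}(u,v_{tt}^{(-1)})$ or $Q^{hh}_{00}(u_t,v_{tt})$, where $v$ is effectively at two derivatives. For the hl case, $v$ is at low frequency, so its extra derivative can be moved to $u$ at the cost of one more derivative on $u$. That cost must be compensated by the $H^{-1}$ output norm, since the output frequency equals the $u$ frequency. For the hh case I would use duality against $H^1$ and the factor $(\xi_1+\xi_2)$.
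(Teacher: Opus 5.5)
Your route is the same as the paper's: the high--high/high--low split, at most one derivative on $v$, Coifman--Meyer bounds, and Leibniz in time with \eqref{eq:p2-utt} for $u_{tt}$ and the splitting \eqref{eq:vtt} for $v_{tt}$. It does give \eqref{eq:lin2}. It also gives the high--low part of \eqref{eq:lin2-t}, where the output frequency is comparable to that of $u$ and $\jD^{-1}$ pays for the extra derivative.

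\textbf{The gap is the high--high part of \eqref{eq:lin2-t}}, which is the step you flag. When two frequencies $\sim 2^k$ combine into an output of size $\lesssim 1$, the weight $\jD^{-1}$ gains nothing. Duality against $H^1$ does not help either, because the test function is then at low frequency. After $\partial_t$ there are four derivatives to place, while $\A_1$ (two derivatives on $u$) and $\|v[t]\|_{H^1\times L^2}$ (one on $v$) absorb only three. So you need an exact factor $(\xi_1+\xi_2)$ in the symbol.

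That factor is present for $q_{00}=f_{u_tu_t}+ig^{01}_{u_t}(\xi_1+\xi_2)$ and for the top part of $q_{11}$. Note that $Q_{00}$ is the only term the paper's proof actually works out. The factor is absent for $q_{01}$, whose top part is $-2g^{01}_{u_x}\xi_1\xi_2-g^{11}_{u_t}\xi_2^2$. The factorization in the proof of Proposition~\ref{nfl} that you invoke is for $c^{\mathrm{num}}$, which combines $q_{01}(\xi_1,\xi_2)$ and $q_{01}(\xi_2,\xi_1)$ with the normal form weights; it does not apply to $q_{01}$ itself. Hence terms such as $\Pi(v_t,u_{txx})$, $\Pi(v_{xx},u_{tt})$ and $\Pi(v^{(-1)}_{tt},u_{xx})$, arising from $g^{11}_{u_t}\bigl(\Pi(v_{xx},u_t)+\Pi(v_t,u_{xx})\bigr)$, each need $\|\partial_x^2\partial u\|_{L^\infty}$, i.e.\ $\A_2$.

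These terms do not cancel, and in fact \eqref{eq:lin2-t} is false as stated. Take $u_{tt}-(1+u_t)u_{xx}+mu=0$ on $\T$, so that $N^{lin,[2]}_{bal}(u)v=T_{v_{xx}}u_t+T_{v_t}u_{xx}+\Pi(v_{xx},u_t)+\Pi(v_t,u_{xx})$. Use the data $u[0]=\epsilon(\cos kx,\omega\sin kx)$ and $v[0]=(\cos kx,-\omega\sin kx)$, with $\omega=\sqrt{k^2+m}$ and $\epsilon k\ll 1$.
\begin{itemize}
\item To leading order all interactions are high--high.
\item Using the equations for $u_{tt}$ and $v_{tt}$, at $t=0$ one finds $\partial_t N^{lin,[2]}_{bal}(u)v=2\epsilon\omega^2k^2+O(\epsilon^2k^5)$, a spatial constant to leading order.
\item Heuristically, $u\approx\epsilon\cos(kx-\omega t)$ and $v\approx\cos(kx+\omega t)$ give $N^{lin,[2]}_{bal}(u)v\approx\epsilon\omega k^2\sin 2\omega t$.
\item Its $H^{-1}$ norm is $\approx\epsilon k^4$, while $\A_1\|v[0]\|_{H^1\times L^2}\approx\epsilon k^3$.
\end{itemize}
On $\R$, wave packets produce the same effect. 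For general $g$ the leading coefficient is proportional to $g^{11}_{u_t}-2g^{01}_{u_x}$.

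So this step cannot be closed with $\A_1$. For the high--high part, your argument yields only the bound with $\A_2$ in place of $\A_1$. The paper's proof has the same hole, hidden in ``the remaining terms can be bounded in the same manner''.
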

\begin{proof}
   We recall that the quadratic contribution is given by
 \[
\begin{aligned}
    N_{bal}^{lin,[2]}(u)v  = & 2 Q^{hh}_{11}(u,v)
+ 2Q^{hh}_{00}(u_t,v_t) + Q^{hh}_{10}(u,v_t) 
+  Q^{hh}_{01}(u_t,v) 
\\
&
+  2Q^{hl}_{11}(u,v) + Q^{hl}_{10}(u,v_t)
+ Q^{hl}_{01}(u_t,v) +2Q^{hl}_{00}(u_t,v_t).
\end{aligned}
\]
It is clear that each term involves exactly three derivatives. Moreover, for both the high–high and high–low interactions we may arrange that at most one derivative falls onto  \(v\). Therefore, by standard paraproduct estimates~\eqref{CM}, we obtain \eqref{eq:lin2}.

To prove \eqref{eq:lin2-t}, it suffices to estimate the time derivative of the most delicate term, namely \(Q_{00}(u_t,v_t) \). 
We consider the high--high and high--low interactions separately. More precisely,
\[
\begin{aligned}
    \| \partial_t  Q_{00}(u_t,v_t) \|_{H^{-1}} 
    &
    \lesssim 
    \|   Q_{00}^{hh}(u_{tt},v_t) \|_{H^{-1}} + \|   Q^{hh}_{00}(u_t,v_{tt}) \|_{H^{-1}} 
    \\
    & \qquad  +  \|   Q_{00}^{hl}(u_{tt},v_t) \|_{H^{-1}} + \|   Q^{hl}_{00}(u_t,v_{tt}) \|_{H^{-1}}  .
\end{aligned}
\]
We first estimate the terms involving $u_{tt}$ using \eqref{eq:p2-utt}. Since $q^{00}\in S^{1}$, we obtain

\[
\begin{aligned}
     \|   Q_{00}^{hh}(u_{tt},v_t) \|_{H^{-1}} 
     + \|   Q_{00}^{hl}(u_{tt},v_t) \|_{H^{-1}} 
     \lesssim 
     \| u_{tt} \|_{L^{\infty}} \|v_t \|_{L^2}
     \lesssim \A_1 \| v[t] \|_{H^1\times L^2}.
\end{aligned}
\]
We then use the expansion for \( v_{tt}\) in \eqref{eq:vtt} in order to estimate the second term. 
\[
\begin{aligned}
   \|   Q^{hh}_{00}(u_{t},v_{tt}) \|_{H^{-1}} 
     \lesssim 
     \| u_t \|_{W^{1,\infty}} \| v_{tt}^{(-1)} \|_{H^{-1}} 
     + \| u_t \|_{L^{\infty}} \| v_{tt}^{(0)} \|_{L^2}
     \lesssim \A_1 \| v[t] \|_{H^1 \times L^2}.
\end{aligned}
\]
The high--low contributions are treated in the same way.

\smallskip

The remaining terms  can be bounded in the same manner.

\end{proof}

For the purposes of this article, it is sufficient to treat perturbatively the cubic nonlinearities arising in the paradifferential expansions of both the full and linearized equations:

\begin{proposition}\label{p:full3}
Let $s \geq 1$. Then the cubic part of the nonlinearity $N_{bal}$ in the equation 
\eqref{para-KG} can be estimated at fixed time as
\begin{equation}
\| N_{bal}^{[3]}(u)\|_{H^{s-1}}
\lesssim_{\A_0} 
\A_0
\A_1
\| u[t]\|_{H^{s} \times H^{s-1}}.
\end{equation}
\end{proposition}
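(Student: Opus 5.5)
We will write out $N_{bal}(u)$ explicitly, remove its quadratic part, and check that each remaining piece is a balanced (low--low or high--high) product with at least three factors. Recall
\[
N_{bal}(u) = T_{\partial_\alpha\partial_\beta u} g^{\alpha\beta}(u,\partial u) + \Pi(g^{\alpha\beta}(u,\partial u),\partial_\alpha\partial_\beta u) + f(u,\partial u) - T_{\tilde F^{\gamma,lin}}\tilde\partial_\gamma u .
\]
Since $g^{00}=-1$, only $g^{01}, g^{11}$ enter, and $u_{tt}$ never appears except through $g^{00}$. The key point is a paralinearization of the nonlinear functions $g^{\alpha\beta}(u,\partial u)$ and $f(u,\partial u)$. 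For a smooth $G$ we write
\[
G(u,\partial u) - G(0) = T_{G_u} u + T_{G_{p_\gamma}}\partial_\gamma u + R_G(u),
\]
where the remainder $R_G$ is balanced (every frequency interaction is high--high) and at least quadratic. By definition of $F^{\gamma,lin}$ and $F^{lin}$, the term $-T_{\tilde F^{\gamma,lin}}\tilde\partial_\gamma u$ cancels exactly the paraproducts in which a coefficient of the form $G_u\,\partial^2 u$ or $f_u$ multiplies a high-frequency copy of $u$ or $\partial u$. These paraproducts come from paralinearizing $f$ and from the low--high part of $(g^{\alpha\beta}-\eta^{\alpha\beta})\partial_\alpha\partial_\beta u$ taken with $g$ at high frequency.

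After this cancellation, $N_{bal}$ is a sum of three kinds of terms:
\begin{enumerate}[label=(\alph*)]
\item $T_{\partial^2 u}$ and $\Pi(\cdot,\partial^2 u)$ applied to the balanced remainders $R_{g^{\alpha\beta}}$;
\item $R_f$;
\item differences of paraproducts of the form $T_{\partial^2 u}T_{G_p}\partial u - T_{G_p\partial^2 u}\partial u$, and the analogous ones with $u$ in place of $\partial u$.
\end{enumerate}

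\textbf{Quadratic part.} We extract the quadratic part and check that it is exactly $N^{[2]}_{bal}$, namely $Q^{hh}_{00}+Q^{hh}_{01}+Q^{hh}_{11}$. This is consistent with the definitions, since at the quadratic level the low--high parts are precisely the ones removed by $T_{\tilde F}$. We take this identification as given and only need $\Lambda_{\ge3}$.

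\textbf{Cubic and higher terms.} Each term in $\Lambda_{\ge3}$ we estimate in $H^{s-1}$ by placing one factor in $L^2$-type norms and the rest in $L^\infty$:
\begin{itemize}
\item For the $\Pi$ and high--high remainders we use the Moser estimate \eqref{bmo-hs}, the algebra property, and \eqref{CM-BMO}. Because interactions are balanced, derivatives can be shifted onto whichever factor is placed in $H^{s-1}$ or $H^s$. In a typical term $\Pi(\Lambda_{\ge2}(g-\eta),\partial^2 u)$, we put $\Lambda_{\ge2}g$ in $H^{s}$ with bound $\lesssim_{\A_0}\A_0\|u[t]\|_{H^s\times H^{s-1}}$, which gains one derivative over $\partial^2u$. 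We put $\partial^2 u$ in $L^\infty$, bounded by $\A_1$ (via Proposition~\ref{p:infty} when time derivatives appear).
\item For the terms of type (c) we use the fact that $T_aT_b-T_{ab}$ is smoothing of order one with norm $\lesssim\|\partial a\|\,\|b\|+\|a\|\,\|\partial b\|$ in $L^\infty$. This is a standard paraproduct fact analogous to Lemma~\ref{para-com}. One factor is at least linear in $u$, so we obtain $\A_0\A_1$ times $\|\partial u\|_{H^{s-1}}$.
\item Time derivatives $\partial_t u$ that appear in the coefficients are kept as part of $\partial u$.
\end{itemize}

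\textbf{Main obstacle.} The delicate step is bookkeeping the cancellation between $T_{\tilde F^{\gamma,lin}}\tilde\partial_\gamma u$ and the low--high parts of the nonlinear terms beyond quadratic order. We must ensure that no unbalanced cubic term with all derivatives on the high-frequency $u$ survives, since such a term would cost $\|u\|_{H^{s+1}}$. We would handle this by comparing the paralinearization of $g^{\alpha\beta}(u,\partial u)\partial_\alpha\partial_\beta u$ and $f$ term by term with the definition of $F^{\gamma,lin}$ and $F^{lin}$. The distribution of control norms, $\A_0$ on one factor and $\A_1$ on another, follows because every surviving cubic term has at least two low-frequency or balanced factors, with at most two derivatives landing on them in total.
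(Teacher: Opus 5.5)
Your route can be made to work, but it detours around a problem that does not exist, and two of your concrete estimates are wrong as written.

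\textbf{Comparison with the paper.} The paper does not paralinearize $g$ or $f$ and uses no cancellation. It writes
\[
N^{[3]}_{bal}(u)=T_{\partial_\alpha\partial_\beta u}\Lambda_{\ge2}(g^{\alpha\beta})+\Pi(\Lambda_{\ge2}(g^{\alpha\beta}),\partial_\alpha\partial_\beta u)+\Lambda_{\ge3}(f)-T_{\Lambda_{\ge2}(\tilde F^{\gamma,lin})}\tilde\partial_\gamma u .
\]
It then bounds each of the four pieces separately. For this it uses an $L^\infty\times H^{s-1}$ Coifman--Meyer estimate, together with Moser applied to the Taylor expansions $g^I(U)U^I$ and $f^I(U)U^I$, where $U=(u,\partial u)$.

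This works because the only paraproduct with two derivatives on its high-frequency factor, $T_{g^{\alpha\beta}}\partial_\alpha\partial_\beta u$, has already been moved into $L^{para}_{KG}$. Every remaining piece of $N_{bal}$ has at most one derivative on its high-frequency factor, which is $g(u,\partial u)$, $f(u,\partial u)$ or $\tilde\partial_\gamma u$. So each piece is controlled on its own by $\|(u,\partial u)\|_{H^{s-1}}\lesssim\|u[t]\|_{H^s\times H^{s-1}}$.

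Hence the ``main obstacle'' you name, an unbalanced cubic term costing $\|u\|_{H^{s+1}}$, cannot arise. The cancellation against $T_{\tilde F^{\gamma,lin}}\tilde\partial_\gamma u$ is irrelevant at cubic order. It matters only at quadratic order, where it makes $N^{[2]}_{bal}$ purely high--high for the bounded normal form. Your bookkeeping of that cancellation is correct, provided you keep all of $\Pi(g^{\alpha\beta}-g^{\alpha\beta}(0),\partial_\alpha\partial_\beta u)$ and not just its $R_g$ part. But it buys nothing for this statement.

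\textbf{Two estimates to fix.}

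First, the bound for $\Lambda_{\ge2}g^{\alpha\beta}(u,\partial u)$. It is \emph{not} in $H^s$ with norm $\lesssim\A_0\|u[t]\|_{H^s\times H^{s-1}}$, because $g$ depends on $\partial u$, which lies only in $H^{s-1}$. You must put it in $H^{s-1}$ via Moser, and put $\partial_\alpha\partial_\beta u$ in $L^\infty$ (or $\bmo$). That placement is exactly what produces the factor $\A_0\A_1$.

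Second, your type (c) terms. Do not use the order-one smoothing of $T_aT_b-T_{ab}$ there. With $a=\partial^2u$, its constant involves $\|\partial^3u\|_{L^\infty}$, that is $\A_2$, which the statement does not allow. No smoothing is needed, since the crude bound
\[
\|T_aT_bw\|_{H^{s-1}}+\|T_{ab}w\|_{H^{s-1}}\lesssim\|a\|_{L^\infty}\|b\|_{L^\infty}\|w\|_{H^{s-1}}
\]
already suffices. Apply it with $a=\partial_\alpha\partial_\beta u$, with $b=\Lambda_{\ge1}(g^{\alpha\beta}_{p_\gamma})$ or $\Lambda_{\ge1}(g^{\alpha\beta}_u)$, and with $w=\partial_\gamma u$ or $u$. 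This gives $\A_0\A_1\|u[t]\|_{H^s\times H^{s-1}}$ directly.

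With these corrections your estimates collapse to the paper's, so the paralinearization step can simply be dropped.
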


A similar result can be proved in the case of the linearized equation:
\begin{proposition}\label{p:lin3}
The cubic part of the nonlinearity $N_{bal}^{lin}$ in the equation \eqref{para-KG-lin} can be estimated at fixed time as
\begin{equation} \label{eq:n-lin-3}
\| N_{bal}^{lin,[3]}(u)v\|_{L^2}
\lesssim_{\A_0} 
\A_0
\A_1
\| v[t]\|_{H^{1} \times L^2}.
\end{equation}

The time derivative of the cubic component of the nonlinearity  \(N_{bal}^{lin} (u)v\)  can be estimated at a fixed time by  
\begin{equation} \label{eq:n-lin-3-t}
    \| \partial_t ( N_{bal}^{lin,[3]}(u) v )\|_{H^{-1}} 
    \lesssim_{\A_0} 
\A_0
\A_2
\| v[t]\|_{H^{1} \times L^2}.
\end{equation}

\end{proposition}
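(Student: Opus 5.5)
We need to prove Proposition~\ref{p:lin3}: the cubic bound \eqref{eq:n-lin-3} and its time-derivative version \eqref{eq:n-lin-3-t}. The plan is to write $N^{lin}_{bal}(u)v$ explicitly and then isolate its cubic and higher part. This part is obtained by replacing each coefficient $g^{\alpha\beta}(u,\partial u)$ and $\tilde F^{\gamma,lin}$ by its Taylor remainder beyond linear order. Concretely, $g^{\alpha\beta}(u,\partial u)-\eta^{\alpha\beta}-g^{\alpha\beta}_{u}u-g^{\alpha\beta}_{p_\gamma}\partial_\gamma u$ is at least quadratic in $(u,\partial u)$. Likewise $\Lambda_{\ge 2}\tilde F^{\gamma,lin}$ is a sum of terms that are at least quadratic, built from $u,\partial u,\partial^2 u$ with $\partial^2 u$ appearing at most linearly.

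In this way $N^{lin,[3]}_{bal}(u)v$ becomes a finite sum of four types of expressions:
\[
T_{\partial_\alpha\partial_\beta v}G,\qquad \Pi(\partial_\alpha\partial_\beta v,G),\qquad T_{\tilde\partial_\gamma v}H,\qquad \Pi(\tilde\partial_\gamma v,H).
\]
Here $G=\Lambda_{\ge2}g^{\alpha\beta}$ and $H=\Lambda_{\ge2}\tilde F^{\gamma,lin}$.

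\textbf{Step 1: the $L^2$ bound \eqref{eq:n-lin-3}.} In the $T$ and $\Pi$ terms with two derivatives on $v$, we move one derivative off $v$: $v$ is at low or comparable frequency relative to $G$, so $\partial_\alpha\partial_\beta v$ can be traded for $\partial v$ against $\partial G$. For time derivatives, we use $\partial_t\partial_x v$ directly, or $v_{tt}=v^{(-1)}_{tt}+v^{(0)}_{tt}$ from Proposition~\ref{p:infty}, placing $v^{(-1)}_{tt}$ in $H^{-1}$ against one derivative of $G$. The Coifman--Meyer bounds \eqref{CM} and \eqref{CM-BMO}, together with $L^\infty$ Moser estimates, then give
\[
\|\partial G\|_{L^\infty}\lesssim_{\A_0}\A_0\A_1,\qquad \|H\|_{L^\infty}\lesssim_{\A_0}\A_0\A_1 .
\]
Each of these contains at least one undifferentiated-type factor bounded by $\A_0$, because the expression is quadratic, and the derivative count is matched using Proposition~\ref{p:infty}. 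This yields $\A_0\A_1\|v[t]\|_{H^1\times L^2}$.

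\textbf{Step 2: the $H^{-1}$ bound \eqref{eq:n-lin-3-t}.} We differentiate in time by the Leibniz rule. If $\partial_t$ falls on $G$ or $H$, we obtain coefficients involving $\partial_t\partial^{\le 2}u$ and $u_{tt},u_{ttt}$. These are controlled via \eqref{eq:p2-utt} and \eqref{eq:p2-utt-2} by $\A_0\A_2$, where $\Lambda_{\ge2}$ supplies the $\A_0$ factor. They are paired with $v$-factors estimated in $H^{-1}$ or $L^2$ using Proposition~\ref{p:infty}. If $\partial_t$ falls on $v$, we get $v_{tt}$ or $v_{ttt}$-type factors. For these we use the decompositions \eqref{eq:vtt} and \eqref{eq:vttt}: the $H^{-2}$ and $H^{-1}$ components are placed against the $H^{-1}$ output by transferring derivatives onto the coefficient, which is legitimate in the low--high and high--high configurations.

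\textbf{Main obstacle.} The main difficulty is the bookkeeping of derivatives in the term $\Pi(\partial_t\partial_\alpha\partial_\beta v,G)$ and in the high--low paraproducts. There $v_{ttt}^{(-2)}$ must be placed in $H^{-2}$ while the output is measured only in $H^{-1}$. I would handle it by writing $v_{ttt}^{(-2)}=\partial_x^2(\cdot)$ and integrating by parts in the bilinear symbol: in the balanced and high--low regions the output frequency is $\lesssim$ the $v$-frequency. This puts one derivative on the output and one on $G$, costing $\|\partial G\|_{W^{1,\infty}}\lesssim_{\A_0}\A_0\A_2$. That cost is exactly the allowed control parameter.
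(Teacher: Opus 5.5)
Your proposal is correct and follows essentially the same route as the paper. You Taylor-expand the coefficients to isolate $\Lambda_{\ge 2}g^{\alpha\beta}$ and $\Lambda_{\ge 2}\tilde F^{\gamma,lin}$. You move derivatives off the low-frequency (or balanced) factor $v$ onto the coefficient via the Coifman--Meyer and Moser bounds. For the time derivative, you use the splitting $v_{tt}=v_{tt}^{(-1)}+v_{tt}^{(0)}$ of Proposition~\ref{p:infty}.

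The only difference is that your ``main obstacle'' is vacuous. Since $g^{00}=-1$, we have $\Lambda_{\ge 2}g^{00}=0$, so every surviving $\partial_\alpha\partial_\beta$ contains a spatial derivative. Consequently $\partial_t\big(N_{bal}^{lin,[3]}(u)v\big)$ contains at most $\partial_x v_{tt}$ and $v_{tt}$, and never $v_{ttt}$. The paper uses exactly this observation. It makes your $H^{-2}$ bookkeeping with $v_{ttt}^{(-2)}$ unnecessary, though that bookkeeping would also work.
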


The two propositions above contain all the information that will be used later in the paper and that provide bounds for these cubic nonlinearities. The remainder of this section is devoted to their proof. For brevity we omit the dependence of all implicit constants on $\A_0$ in what follows.

\begin{proof}[Proof of Proposition~\ref{p:full3}]

We estimate each term in \(N_{\mathrm{bal}}^{[3]}(u)\) by first performing a Taylor expansion and then applying Moser-type estimates \eqref{bmo-hs}. We begin by expanding the cubic balanced terms
\[
\begin{aligned}
    N_{bal}^{[3]}(u) &= \Lambda_{\geq 3}( N_{bal}(u))\\
    &=  T_{\partial_\alpha \partial_\beta u  } \Lambda_{\geq 2} (g^{\alpha \beta}(u,\partial u)  )
+ \Pi(\Lambda_{\geq 2} ( g^{\alpha \beta}(u,\partial u)),\partial_\alpha \partial_\beta u )  \\
& \quad +\Lambda_{\geq 3 }( f(u,\partial u) )-  T_{ \Lambda_{\geq 2}(\Tilde{F}^{\gamma,lin} )} \Tilde{\partial_{\gamma}} u.
\end{aligned}
\]
Let \( U:=(u,\partial u)\) . Since \( g \) is smooth, there exist coefficient
functions \( g^I(U)\) , smooth and bounded near \( U=0\) , such that, by Taylor expansion and homogeneity cutoffs, for multi-indices \( I = (i_1,i_2,i_3)\), with  \( |I|: = i_1 +i_2 + i_3\),
we have
\[
\Lambda_{\geq 2} (g^{\alpha \beta}(u,\partial u ) ) 
=\sum_{|I| = 2 }  g^I(U) U^I.
\]
Therefore
\[
 T_{\partial_\alpha \partial_\beta u } \Lambda_{\geq2} ( g^{\alpha \beta}(u,\partial u))
 = \sum_{|I| = 2} T_{\partial_\alpha \partial_\beta u } g^{I} (U) U^{I}.
\]
First, we apply the paraproduct estimates~\eqref{CM}, and then we apply the Moser estimates~\eqref{bmo-hs}.
For the balanced paraproduct \( \Pi(\Lambda_{\ge 2}(g^{\alpha\beta}),\partial_\alpha\partial_\beta u )\), the same type of
estimates yields
\[
\begin{aligned}
 \Vert  \Pi( \Lambda_{\geq2} (g(u,\partial u)), \partial_\alpha \partial_\beta u ) \Vert_{H^{s-1}} 
 &\lesssim 
 \sum_{|I| =2}
 \| \partial^2 u \|_{L^{\infty}} 
 \|  g^I (U) U^I  \|_{H^{s-1} }  \\
& \lesssim
 \| \partial^{\leq 1} u \|_{L^{\infty}} 
  \| \partial^2 u  \|_{L^{\infty}} 
 \|  u[t]   \|_{H^{s} \times H^{s-1} }   .
 \end{aligned}
\]
Since \(f \) is smooth, there exists smooth coefficient functions \(f^{I}\) uniformly bounded near \( U=0\) for all  multi-indices \( |I| = 3\), such that :
\[
\Lambda_{\geq 3} (f) = \sum_{|I| =3} f^{I} (U)   U^{I}.
\]

Since $\Lambda_{\ge3}(f)$ is cubic in $U$,
repeated applications of Moser-type estimates give
\[
\bigl\|f^{I}(U)U^{I} \bigr\|_{H^{s-1}}
 \lesssim \|U\|_{L^\infty}^2\|U\|_{H^{s-1}}
 \lesssim \|\partial^{\le1}u\|_{L^\infty}^2
          \|u[t]\|_{H^{s}\times H^{s-1}}.
\]
Finally, recall that 
\[
\tilde F^{\gamma,lin}=g^{\alpha\beta}_{p_\gamma}(U)\partial_\alpha\partial_\beta u + f_{p_\gamma}(U).
\]
In particular, $\Lambda_{\ge2}(\tilde F^{\gamma,lin})$ is at least quadratic in $U$ and contains at most one factor of $\partial^2u$. Hence, using the paraproduct estimates~\eqref{CM} and the Moser estimates~\eqref{bmo-hs} once again,
\[
\begin{aligned}
    \|  T_{ \Lambda_{\geq2} ( \Tilde{F}^{\gamma,lin})} \Tilde{\partial}_{\gamma} u \|_{H^{s-1}}
    \lesssim
    & \ 
    \|   T_{  g^{\alpha \beta}_{p_{\gamma}}(U) \partial_\alpha \partial_\beta u  }\partial_{\gamma} u \|_{H^{s-1}} 
    + \|  T_{ F_{p_{\gamma}}^{I}  (U) U^{I} }\partial_{\gamma} u \|_{H^{s-1}} \\
    \lesssim
    &\  \|  g^{\alpha \beta}_{p_{\gamma}}(U) \partial_\alpha \partial_\beta u  \|_{L^{\infty}} 
    \| u[t] \|_{H^{s}\times H^{s-1}} 
    +  \|F_{\gamma}^{I}  (U) U^{I}\|_{L^{\infty}} \| u[t] \|_{H^{s}\times H^{s-1}} 
    \\
    \lesssim
    & \ 
    \A_0
    \A_1
    \| u [t] \|_{H^{s}\times H^{s-1}} .
\end{aligned}
\]
Combining the above bounds, we obtain the desired estimate for
$\|N^{[3]}_{bal}(u)\|_{H^s}$.

\end{proof}

\smallskip

\begin{proof}[Proof of Proposition~\ref{p:lin3}]
 \
 We argue as in the proof of Proposition~\ref{p:full3}. Recall that
\[
\begin{aligned}
  N^{lin}_{bal}(u)v
  &
=  T_{ \partial_\alpha \partial_\beta v} g^{\alpha \beta }(u,\partial u ) 
    + \Pi( \partial_\alpha \partial_\beta v, g^{\alpha \beta }(u,\partial u ) )
    + T_{ \Tilde{\partial}_{\gamma} v} \Tilde{F}^{\gamma,lin}
    + \Pi(\Tilde{\partial}_{\gamma} v,\Tilde{F}^{\gamma,lin}  ).
\end{aligned}
\]

Let $U:=(u,\partial u)$, and use the same Taylor expansions as before. By the Coifman--Meyer estimates~\eqref{CM+} together with the
Moser-type estimates~\eqref{bmo-hs}, we obtain for instance
\[
\begin{aligned}
    \| T_{\partial_\alpha \partial_\beta v} \Lambda_{\geq 2}( g^{\alpha\beta}(U)) \|_{L^2} 
    &
\lesssim 
\sum_{|I| =2}
\sum_{(\alpha,\beta) \neq (0,0)}
\| \partial_{\alpha}(  g^{I} (U) U^{I} ) \|_{L^{\infty}}
\| \partial_{\beta} v \|_{L^2}
\\
&
\lesssim 
 \A_0
 \A_1
\| v[t]\|_{H^{1} \times L^2}.
\end{aligned}
\]
Note that we are allowed to use Coifman--Meyer estimates~\eqref{CM} to move one derivative because at least one derivative of $\partial_\alpha \partial _\beta$ is spatial. 
The remaining terms are estimated in a similar fashion;  we omit the details.

We now estimate the time derivative of the cubic nonlinearity in \(H^{-1}\):
\[
\begin{aligned}
    \partial_t (  N^{lin}_{bal}(u)v) 
    & = T_{ \partial_t \partial_\alpha \partial_\beta v} g^{\alpha \beta }(u,\partial u ) 
    +
     T_{ \partial_\alpha \partial_\beta v} \partial_t g^{\alpha \beta }(u,\partial u ) 
    + \Pi(\partial_t \partial_\alpha \partial_\beta v, g^{\alpha \beta }(u,\partial u ) ) 
    \\
    &
   \quad  +  \Pi(\partial_\alpha \partial_\beta v, \partial_t g^{\alpha \beta }(u,\partial u ) ) 
    + T_{ \partial_t \Tilde{\partial}_{\gamma} v} \Tilde{F}^{\gamma,lin} 
    + T_{ \Tilde{\partial}_{\gamma} v} \partial_t \Tilde{F}^{\gamma,lin} 
    \\
    &
   \quad  + \Pi(\partial_t  \Tilde{\partial}_{\gamma} v,\Tilde{F}^{\gamma,lin}  )
     + \Pi(\Tilde{\partial}_{\gamma} v, \partial_t  \Tilde{F}^{\gamma,lin}  ).
\end{aligned}
\]
Arguing as above, we obtain 
\[
\begin{aligned}
    \|  T_{ \partial_t \partial_\alpha \partial_\beta v} \Lambda_{\geq 2}g^{\alpha \beta }(u,\partial u ) \|_{H^{-1}}
    &
    \lesssim
     \|  T_{ \partial_t \partial_\alpha \partial_\beta v} \Lambda_{\geq 2}g^{\alpha \beta }(u,\partial u ) \|_{L^2}
     \\
     &
     \lesssim 
     \| T_{ \partial_x v_{tt} }\Lambda_{\geq 2} g^{01}(u,\partial u ) \|_{L^2} 
     +   \| T_{ \partial_x^2 v_{t} } \Lambda_{\geq 2}g^{11}(u,\partial u ) \|_{L^2} 
     \\
     &
      \lesssim \| v_{tt} \|_{H^{-1}} \| 
      \Lambda_{\geq 2} g^{01}(u,\partial u )  \|_{W^{2,\infty}}
       +  \| v_t \|_{L^2}\|\Lambda_{\geq 2} g^{11}(u,\partial u ) \|_{W^{2,\infty}} .
\end{aligned}
\]
Using the \( H^{-1}\) control for \(v_{tt}\) in \eqref{eq:vtt} together with Moser estimate~\eqref{bmo-hs}, we conclude
\[
\begin{aligned}
    \|  T_{ \partial_t \partial_\alpha \partial_\beta v}
    \Lambda_{\ge 2 }g^{\alpha \beta }(u,\partial u ) \|_{H^{-1}} 
  & 
  \lesssim
  \A_0 \A_2 \| v[t] \|_{H^1 \times L^2} .
\end{aligned}
\]
Next, we estimate the second term. Since the term \((\alpha,\beta) = (0,0)\) is eliminated, at least one derivative is spatial:
\[
\begin{aligned}
      \| \Lambda_{\geq 3} ( T_{ \partial_\alpha \partial_\beta v} \partial_t  g^{\alpha \beta }(u,\partial u ) ) \|_{H^{-1}}
    &
     \lesssim 
     \|  T_{\partial_\alpha v} \partial_t \partial_x \Lambda_{\geq 2 }g^{ \alpha 1}(u,\partial u) \|_{L^2} 
     \\
     & 
     \lesssim
     \| \partial v  \|_{L^2 } \| \partial_t \partial_x \Lambda_{\geq 2 }g^{ \alpha 1}(u,\partial u) \|_{L^{\infty}}
     \\
     &
     \lesssim
     \A_0 \A_2 \| v[t] \|_{H^1 \times L^2}.
\end{aligned}
\]
The remaining terms are treated in the same manner.

\end{proof}

\section{Normal forms for related equations}
\label{s:normal-forms extras}
In this section we revisit the linearized equation in paradifferential form \eqref{para-KG-lin}. Starting from this equation, we introduce a more general class of paradifferential Klein--Gordon type equations of the form
 \begin{equation}\label{quadratic-para-lin}
      L_{KG}^{para}v = H_{00}(u_t,v_t) + H_{01}(u_t,v)+  H_{10}(u,v_t) + H_{11}(u,v)  + F ,
\end{equation}
whose quadratic component on the right  contains general quadratic
high--low and low--high interactions together with a generic quadratic (and higher) source term $F$ depending 
on $u,u_t, v,v_t$ and their derivatives. Equations in this class will reappear later, both in the analysis of the paradifferential linearized equation and in the construction of higher–order modified energies.

\smallskip

Our goal is to perform a bilinear normal form transformation that eliminates the quadratic terms on the right, and produces a new variable satisfying an equation with additional, perturbative cubic (and higher) source terms.

To carry out this program,  we establish three lemmas.
Lemma~\ref{l-nfl-exist} performs the same computation to construct the normal form variables $w$ as in Lemma~\eqref{nfl}, but stated for the more general source terms. A key point is that the microlocal structure of the correction---in particular, its frequency support and the associated derivative imbalance---is inherited from the quadratic source terms in the equation \eqref{quadratic-para-lin}.
Lemma~\ref{nfl-invert} shows that these normal form transformations are invertible under appropriate assumptions on the bilinear source terms. Lemma~\ref{source} provides
bounds for the resulting cubic source terms in the transformed equation, namely for the cubic and higher order components of \(L_{KG}^{\mathrm{para}} w\).

\medskip

In the next lemma, we view the paradifferential equation~\eqref{quadratic-para-lin} as evolving on a low--frequency background solution $u$ of \eqref{kg1}. 
For the bilinear terms we will impose two additional assumptions (i) each quadratic term carries a total of three\footnote{This is a natural assumption as these are usually given by the quasilinear terms, which have three derivatives in total.} derivatives (counting time derivatives as well), and 
(ii) the balance of high and low frequency derivatives is so that the resulting corrections and cubic remainders are perturbative.

We remark that the computation below is similar to the normal form computation for the quadratic Klein–Gordon equation~\eqref{quadratic-KG} in Section~\ref{s:normal-form}. However, in Lemma~\ref{nfl}, the symbols are assumed to be polynomials, whereas here we carry out the same computation for slightly larger symbol classes.
\begin{lemma}  \label{l-nfl-exist}
   Assume that $u$ satisfies  \eqref{kg1} i.e., 
    \[
- g^{\alpha \beta}(u,\partial u) \partial_\alpha
\partial_\beta u + m u  =f(u,\partial u),
    \]
and $v$ satisfies \eqref{quadratic-para-lin} with the quadratic source terms supported  in the low-high \emph{(LH)} and the high-low  \emph{(HL)} frequency region, with symbol regularity as follows:
\[
h_{00}\in S^{1,0},\qquad h_{01}\in S^{1,1},\qquad
h_{10}\in S^{2,0},\qquad h_{11}\in S^{2,1}.
\]

Then there exists a normal form variable
\[
w = v + A(u,v) + B(u_t,v_t) +C(u,v_t) + D(u_t,v),
\]
such that the quadratic part in \( L_{KG}^{para} w\) vanishes. 
Moreover,
$A(u,v)$, $B(u_t,v_t)$, $C(u_t,v)$, and $D(u,v_t)$ are bilinear operators, whose frequency support of the normal variables is inherited from the bilinear source terms and are satisfying the following regularity
\begin{itemize}
\item in the \emph{(LH)} case their symbols lie in
$S^{3,0},\,S^{2,-1},\,S^{2,0},\,S^{3,-1}$,\\

\item in the \emph{(HL)} case their symbols lie in
$S^{1,2},\,S^{0,1},\,S^{0,2},\,S^{1,2}$, respectively. 
\end{itemize}

\end{lemma}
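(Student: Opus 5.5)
We will repeat the algebraic computation of Proposition~\ref{nfl}, now for a bilinear correction acting on the pair $(u,v)$ rather than symmetrically on $(u,u)$. The ansatz is $w = v + A(u,v) + B(u_t,v_t) + C(u,v_t) + D(u_t,v)$. We apply $L_{KG}^{para}$ and keep only the quadratic part. To quadratic order $L_{KG}^{para}$ may be replaced by $L_{KG}$, since its paradifferential coefficients produce only cubic terms when they act on the quadratic corrections. Using the Leibniz rule for translation invariant bilinear forms, we substitute $L_{KG}u = \Lambda_{\geq 2}(\dots)$ and $L_{KG}v = H_{00}(u_t,v_t)+\dots$; both substitutions only generate cubic and higher terms. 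We also replace $u_{tt}$, $v_{tt}$ by their linear parts $u_{xx}-mu$, $v_{xx}-mv$. The quadratic part of $L_{KG}(w-v)$ is then again a sum of bilinear forms in $(u,v)$, $(u_t,v_t)$, $(u,v_t)$, $(u_t,v)$. Matching it against $-H_{11}$, $-H_{00}$, $-H_{10}$, $-H_{01}$ gives, at the symbol level, a linear system in $(a,b,c,d)$ with coefficients polynomial in $(\xi_1,\xi_2)$.

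The system decouples into two $2\times 2$ blocks. The first block is $(a,b)$, driven by $(h_{11},h_{00})$, with coefficient matrix $\begin{pmatrix} m-2\xi_1\xi_2 & -2(\xi_1^2+m)(\xi_2^2+m)\\ -2 & m-2\xi_1\xi_2\end{pmatrix}$. The second block is $(c,d)$, driven by $(h_{10},h_{01})$, with coefficient matrix $\begin{pmatrix} m-2\xi_1\xi_2 & -2(\xi_1^2+m)\\ -2(\xi_2^2+m) & m-2\xi_1\xi_2\end{pmatrix}$. The exact placement of the $\xi_j^2+m$ factors follows from which input carries the time derivative. In both cases the determinant is the same $\Delta(\xi_1,\xi_2) = -4m(\xi_1^2+\xi_1\xi_2+\xi_2^2)-3m^2$ as before, which never vanishes (nonresonance). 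Hence the system has a unique solution by Cramer's rule, for instance
\[
a = \frac{(m-2\xi_1\xi_2)h_{11} + 2(\xi_1^2+m)(\xi_2^2+m)h_{00}}{\Delta}.
\]
Since $1/\Delta$ is smooth, the frequency support of $a,b,c,d$ is contained in that of the $h$'s, so the LH/HL localization is inherited. The reality condition is also preserved, because $\Delta$ is real and even.

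The main work, and the step needing care, is checking the symbol classes. In the LH region $|\xi_1|\ll|\xi_2|$ we expand $1/\Delta$ as in \eqref{eq:invDelta_lh}. This gives $1/\Delta = \xi_2^{-2}\,r(\xi_1,\xi_2)$, where $r$ is a sum of powers of $\xi_1\xi_2^{-1}$ with $S^0$-type remainders. We then count orders term by term. For $a$: $(m-2\xi_1\xi_2)h_{11}/\Delta$ has $\xi_1\xi_2\cdot S^{2,1}\cdot\xi_2^{-2}$, which lies in $S^{3,0}$. The term $(\xi_1^2+m)(\xi_2^2+m)h_{00}/\Delta$ has $\xi_1^2\xi_2^2 S^{1,0}\xi_2^{-2}$, again in $S^{3,0}$. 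The same count gives $b\in S^{2,-1}$, $c\in S^{2,0}$, $d\in S^{3,-1}$. In the HL region we expand in $\xi_2/\xi_1$ instead, with $1/\Delta = \xi_1^{-2}\tilde r$. Here the high frequency sits on $u$, so $\xi_1^2+m$ costs two high powers, and the counts give $a\in S^{1,2}$, $b\in S^{0,1}$, $c\in S^{0,2}$, $d\in S^{1,2}$.

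The delicate point is keeping the polyhomogeneous form required by the definition of $S^{m,n}$ while also cancelling the top orders. The low-frequency powers $\xi_1^j$ must be factored out exactly, with $b_j\in S^n$, and the highest powers of the high frequency in the numerators must cancel against $\Delta$. To do this, we write $1/\Delta$ as a finite Taylor expansion in $\xi_1$ whose coefficients are smooth in $\xi_1/\xi_2$ on the support of $\chi_{lh}$. The remainder is then of the form $\xi_1^{j}\cdot S^{n}$. This is the same bookkeeping as in part (a) of Proposition~\ref{nfl}, carried out for general $S^{m,n}$ inputs rather than polynomials.
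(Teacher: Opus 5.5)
Your overall route is the paper's. You take $\Lambda_2$ of $L_{KG}^{para}(w-v)$ after replacing $u_{tt},v_{tt}$ by their linear parts, split the system into the $(a,b)$ block and the mixed block, and solve both by Cramer's rule with the common nonvanishing determinant $\Delta$. You then read off the classes from $1/\Delta\in S^{0,-2}$ (LH), resp.\ $S^{-2,0}$ (HL). Your $(a,b)$ block and your formula for $a$ agree with the paper. No Taylor expansion or cancellation of top orders is needed for the count: $1/\Delta$ is elliptic of order $-2$, so one only multiplies symbols, as the paper does. One minor slip in the mixed block: $u_{tt}\to u_{xx}-mu$ contributes a single factor $-(\xi_j^2+m)$ there, so the off-diagonal entries are $+2(\xi_1^2+m)$ and $+2(\xi_2^2+m)$, not $-2(\cdot)$. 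This does not change the determinant.

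The real problem is that you assert $c\in S^{2,0}$, $d\in S^{3,-1}$ (LH) and $c\in S^{0,2}$, $d\in S^{1,2}$ (HL) without doing the count. With your own labels $C(u,v_t)$, $D(u_t,v)$, the count assigns these classes the other way round. Cramer's rule gives
\[
c=\frac{(m-2\xi_1\xi_2)h_{10}-2(\xi_1^2+m)h_{01}}{\Delta},\qquad d=\frac{(m-2\xi_1\xi_2)h_{01}-2(\xi_2^2+m)h_{10}}{\Delta},
\]
so $c\in S^{3,-1}$, $d\in S^{2,0}$ in LH, and $c\in S^{1,1}\subset S^{1,2}$, $d\in S^{0,2}$ in HL. Two of your four claims are therefore only weaker than the truth, and two are false.

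Your $d\in S^{3,-1}$ fails: with $h_{01}=0$ and $h_{10}=\chi_{lh}(\xi_1,\xi_2)$, one gets $d\to 1/(2m)$ as $\xi_2\to\infty$ with $\xi_1$ fixed. Every element of $S^{3,-1}$ decays like $\xi_2^{-1}$ there. Likewise your HL claim $c\in S^{0,2}$ fails, since $(\xi_1^2+m)h_{01}/\Delta$ is of order one in the high frequency $\xi_1$.

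The classes attach to the argument structure, not to the letters. The operator acting on $(u_t,v)$ lies in $S^{2,0}$, resp.\ $S^{0,2}$, and the one acting on $(u,v_t)$ lies in $S^{3,-1}$, resp.\ $S^{1,2}$. This is what the paper proves, working with $C(u_t,v)$ and $D(u,v_t)$ as in the ``respectively'' list. The displayed ansatz in the statement swaps the two labels, which is likely what misled you.

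This is not cosmetic. The order $-1$ for the $(u,v_t)$ operator is exactly what allows $\|D(u,v_t)\|_{H^1}$ to be bounded by $\|v_t\|_{L^2}$ in Lemma~\ref{nfl-invert}. Your assignment would give only $S^{2,0}$ for that operator, which is not enough.
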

\begin{proof}
    For the normal form computation, it suffices to retain only the quadratic terms. 
Accordingly, we first insert the homogeneity cutoffs and  apply the normal form transformation to the quadratic part. 
Define
\[
  \begin{aligned}
  \Lambda_2 \bigr( L^{para}_{KG} v \bigl) \, :=  \Lambda_2 \bigr(L^{para}_{KG} v +  L^{para}_{KG} ( A(u, v) + B(u_t,v_t) + C(u_t, v ) +D(u,v_t)) 
  \bigl).
  \end{aligned}
\]
Matching the quadratic terms on the right-hand side yields a $4 \times 4$ system. 
Using the time symmetry, we may split this system into two \( 2 \times 2\) subsystems:
\[
\left\{
\begin{aligned}
 H_{00}(u_t,v_t) & =  2 A(u_x,v_x) - 2 \Lambda_2(B(u_{tt}, v_{tt})) +  m  A(u,v),\\
 H_{11}(u,v) & = 2B(u_{tx}, v_{tx}) + m B(u_t,v_t) - 2A(u_t,v_t),
\end{aligned}
\right.
\]
and
\[
\left\{
\begin{aligned}
H_{01}(u_t,v) & =  2D(u_t,v_{xx} - mv)-2 C(u_{tx},v_x) - m C(u_t,v),\\
H_{10}(u,v_t) & =  2C(u_{xx}-mu, v_t) -2 D(u_{x}, v_{tx}) -mD(u,v_t).
\end{aligned}
\right.
\]
Passing to symbols, we obtain the system for $a$ and $b$
\[
\left\{
\begin{aligned}
h_{11}(\xi_1,\xi_2) 
&= (m-2\xi_1 \xi_2 )a(\xi_1,\xi_2) -2(\xi_1^2 + m)(\xi_2^2 +m)b(\xi_1,\xi_2), \\
  h_{00}(\xi_1,\xi_2) & = (m-2\xi_1\xi_2) b(\xi_1,\xi_2) -2a(\xi_1,\xi_2) .
\end{aligned}
\right.
\]
Similarly, we obtain the system for $c$ and \( d\) : 
\[
\left\{
\begin{aligned}
     h_{01} (\xi_1,\xi_2) & = (-2\xi_1 \xi_2 + m  ) c(\xi_1,\xi_2) + 2 (\xi_2^2 + m)d(\xi_1,\xi_2) ,\\
     h_{10} (\xi_1,\xi_2) & = (-2\xi_1 \xi_2 + m ) d(\xi_1,\xi_2) + 2 ( \xi_1^2 + m) c (\xi_1,\xi_2) .
\end{aligned}
\right.
\]

Solving both systems yields the following expressions for $a$, $b$, $c$, $d$ 

\[
\left\{
\begin{aligned}
  a(\xi_1,\xi_2)&= \frac{(m-2\xi_1\xi_2) h_{11} + 2(\xi_1^2+m)(\xi_2^2 + m)h_{00}}{(m -2\xi_1\xi_2)^2  - 4(\xi_1^2 + m)(\xi_2^2 +m)} ,\\
  b(\xi_1,\xi_2) & =
  \frac{ 2h_{11} +(m-2\xi_1\xi_2) h_{00} }{  (m -2\xi_1\xi_2)^2  - 4(\xi_1^2 + m)(\xi_2^2 +m)},
  \\
  c(\xi_1,\xi_2) & = \frac{( m -2\xi_1 \xi_2   ) h_{01} 
     -  2 (\xi_2^2 + m) h_{10}   
    }{(m -2\xi_1\xi_2)^2  - 4(\xi_1^2 + m)(\xi_2^2 +m)} ,
    \\
  d(\xi_1,\xi_2) & = \frac{( m -2\xi_1 \xi_2   ) h_{10} 
    -  2 (\xi_1^2 + m) h_{01}  
    }{(m -2\xi_1\xi_2)^2  - 4(\xi_1^2 + m)(\xi_2^2 +m)} . 
\end{aligned}
\right.
\]
We now investigate the symbol regularity in each of the two cases described above 
 
 \smallskip
 \noindent  \emph{Low--high support}:
  we can use the expansion in Lemma~\ref{nfl}:
 \[
 \frac{1}{\Delta} = - \frac{1}{4m} \xi_2^{-2} (1 - \frac{\xi_1}{\xi_2}+ O( (1 + |\xi_1|^2) \xi_2^{-2} ) ) \in S^{0,-2}.
 \]
 
Since \( (m-2\xi_1 \xi_2) h_{11} \in S^{3,2}\) and \( (\xi_1^2+m)(\xi_2^2 + m) h_{00}\in S^{3,2}\), we have \( a \in S^{3,0}\). 
 
Since \( h_{11} \in S^{2,1}\) and  \( 2\xi_1\xi_2 \in S^{2,1}\), we have \( b \in S^{2,-1}\).

Since \( (m-2\xi_1\xi_2) h_{01} \in S^{2,2}\) and  \( 2(\xi_2^2 + m) h_{10} \in S^{2,2}\), we have \( c \in S^{2,0}\)

Since \( (m-2\xi_1\xi_2) h_{10} \in S^{3,1}\) and  \( 2(\xi_1^2 + m) h_{01} \in S^{3,1}\), we have \( d \in S^{3,-1}\).

\smallskip
\noindent  \emph{High--low support}: we use the analogous expansion,
\[
 \frac{1}{\Delta} = - \frac{1}{4m} \xi_1^{-2} (1 - \frac{\xi_2}{\xi_1}+ O( (1 + |\xi_2|^2) \xi_1^{-2} ) ) \in S^{-2,0} .
\]
Since \( (m-2\xi_1 \xi_2) h_{11} \in S^{3,2}\) 
and \( (\xi_1^2+m)(\xi_2^2 + m) h_{00}\in S^{3,2}\), we have \( a \in S^{1,2}\).

Since \( h_{11} \in S^{2,1}\) and  \( (m- 2\xi_1\xi_2)h_{00} \in S^{2,1}\), we have \( b \in S^{0,1}\).

Since \( (m-2\xi_1\xi_2) h_{01} \in S^{2,2}\) and  \( 2(\xi_2^2 + m) h_{10} \in S^{2,2}\), we have \( c \in S^{0,2}\).

Since \( (m-2\xi_1\xi_2) h_{10} \in S^{3,2}\) and  \( 2(\xi_1^2 + m) h_{01} \in S^{3,2}\), we have \( d \in S^{1,2}\).
In particular, these symbols admit polyhomogeneous expansions.

\end{proof}

We next consider invertibility type bounds for the normal form transformation.

\begin{lemma} \label{nfl-invert}
    Assume that $u$  satisfies 
    \[
        - g^{\alpha \beta}(u,\partial u) \partial_\alpha
\partial_\beta u + m u  =f(u,\partial u),
\]
and $v$ satisfies \eqref{quadratic-para-lin}. We assume that 
\begin{enumerate}[label=(\alph*)]
\item $A,B,C,D$ are bilinear operators of type $(LH)$ or $(HL)$ so that 
\begin{itemize}
\item In the \emph{(LH)} case their symbols lie in
$S^{3,0},\,S^{2,-1},\,S^{2,0},\,S^{3,-1}$, respectively;

\item In the \emph{(HL)} case their symbols lie in
$S^{1,2},\,S^{0,1},\,S^{0,2},\,S^{1,2}$, respectively.
\end{itemize}

\item The source term $F$ is linear in $v,v_t$
and satisfies the bound
\begin{equation} \label{F-sour-q}
    \|  F \|_{ L^2}
\lesssim 
\A_2
\| v[t] \|_{H^1 \times L^2} . 
\end{equation}
\end{enumerate}

Then, for the normal form variable
\[
w = v + A(u,v) + B(u_t,v_t) +C(u_t,v) + D(u,v_t),
\]
we have the fixed time invertibility estimates 
\[
\|(w,w_t)-(v,v_t)\|_{H^1\times L^2}
 \lesssim
 \A_2 \,\|(v,v_t)\|_{H^1\times L^2}.
\]
\end{lemma}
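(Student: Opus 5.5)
Since $w-v=A(u,v)+B(u_t,v_t)+C(u_t,v)+D(u,v_t)$, the estimate reduces to two bounds, one for each of the four bilinear corrections:
\[
\|X\|_{H^1}\lesssim \A_2\|v[t]\|_{H^1\times L^2}, \qquad \|\partial_t X\|_{L^2}\lesssim \A_2\|v[t]\|_{H^1\times L^2}.
\]
We treat the (LH) and (HL) supports separately. In each case we apply the anisotropic Coifman--Meyer bound \eqref{B-CM}, putting the factor depending on $u$ into $W^{k,\infty}$ and the factor depending on $v$ into $L^2$-based spaces.

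\emph{Step 1: the $H^1$ bound, (LH) case.} Here $u$ is at low frequency and $v$ at high frequency. For $A\in S^{3,0}$ we get $\|A(u,v)\|_{H^1}\lesssim \|u\|_{W^{3,\infty}}\|v\|_{H^1}$. The symbol $\langle\xi_1+\xi_2\rangle$ in $S^{0,1}$ only adds one power of the high frequency, so the low-frequency factor is unaffected. Since $\|u\|_{W^{3,\infty}}\lesssim \A_2$, this is acceptable. The same argument gives:
\begin{itemize}
\item $B\in S^{2,-1}$: $\|B(u_t,v_t)\|_{H^1}\lesssim \|u_t\|_{W^{2,\infty}}\|v_t\|_{L^2}$.
\item $C\in S^{2,0}$: $\|C(u_t,v)\|_{H^1}\lesssim \|u_t\|_{W^{2,\infty}}\|v\|_{H^1}$.
\item $D\in S^{3,-1}$: $\|D(u,v_t)\|_{H^1}\lesssim \|u\|_{W^{3,\infty}}\|v_t\|_{L^2}$.
\end{itemize}

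\emph{Step 2: the $H^1$ bound, (HL) case.} Now $v$ is at low frequency and the output frequency is comparable to $\xi_1$. A symbol in $S^{1,2}$ that is supported where $|\xi_2|\ll|\xi_1|$ costs $|\xi_1|\langle\xi_2\rangle^2\lesssim |\xi_1|^{3}\langle\xi_2\rangle^{0}$. So these terms are bounded by the mirror of \eqref{B-CM}: derivatives go on the high-frequency $u$ factor, measured in $L^\infty$, and $v$ is measured in $L^2$ or $L^\infty$. We do not need Sobolev embedding for this. We use the plain Coifman--Meyer estimate $\|T_v f\|_{L^2}\lesssim\|v\|_{L^2}\|f\|_{L^\infty}$, which is \eqref{CM} with $p=2,q=\infty$ for the low-frequency factor in $L^2$. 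For example, $A(u,v)$ in $H^1$ costs $\langle\xi_1\rangle^{2}\langle\xi_2\rangle^{2}$, which we redistribute as $|\xi_1|^{4}$ in total on $u$, plus possibly one derivative on $v$. This is controlled by $\|\partial^{\le 4}u\|_{L^\infty}\|v\|_{H^1}\lesssim\A_3$. To stay within $\A_2$ we use the $S^{1,2}$ structure more carefully: at most one derivative of the symbol must fall on $v$, and $\xi_2$-powers can be traded for $\xi_1$-powers. The count then gives at most three derivatives on $u$ (i.e.\ $\partial u\in W^{2,\infty}$) and one on $v$. The terms $B,C,D$ are handled the same way.

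\emph{Step 3: the time derivative.} Differentiating, $\partial_t A(u,v)=A(u_t,v)+A(u,v_t)$, and similarly for the other terms. This produces $u_{tt}$ and $v_{tt}$. We control $u_{tt}$ by Proposition~\ref{p:infty}, which bounds $\partial^{\le k}u_{tt}$ by $\A_{k+1}$ with the constant depending on $\A_0$. We control $v_{tt}$ by the decomposition $v_{tt}=v_{tt}^{(-1)}+v_{tt}^{(0)}$ in \eqref{eq:vtt}. Derivatives are counted the same way as before, now in $L^2$ rather than $H^1$: each time derivative on a $v$ factor replaces an $H^1$ factor by an $L^2$ factor, or replaces $L^2$ by $H^{-1}$, with no net loss. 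In the (LH) case $v_{tt}^{(-1)}\in H^{-1}$ is paired with a symbol of order $-1$ in $\xi_2$, so $B(u_t,v_{tt})$ and $D(u,v_{tt})$ land in $L^2$. The third time derivatives $u_{ttt}$ never appear.

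\emph{Main obstacle.} The delicate step is the derivative bookkeeping in the (HL) case and in the $v_{tt}$ terms. We must keep the total cost on $u$ at $\A_2$ and not $\A_3$, while using only $v[t]\in H^1\times L^2$. The danger is the $v_{tt}^{(-1)}$ contribution sitting at low frequency in the (HL) terms, where no high-frequency gain is available. For that case our first attempt is to write $v_{tt}^{(-1)}=\partial_x(\cdot)-mv$ and move the $\partial_x$ onto the symbol. This costs one extra $\xi_2$, which in the (HL) region is converted into a $\xi_1$ derivative on the high-frequency $u$ factor. We must then check that the resulting count stays within $\A_2$. If that count fails, we would fall back on the cruder parameter $\A_3$ at that point. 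Assumption (b) on $F$ enters only if $v_{tt}$ is expanded via \eqref{quadratic-para-lin} instead of \eqref{eq:vtt}, and it is consistent with the $\A_2$ bound.
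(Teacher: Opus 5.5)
Your overall scheme matches the paper's: bound each correction in $H^1$ and its time derivative in $L^2$ by anisotropic Coifman--Meyer, control $u_{tt}$ through Proposition~\ref{p:infty}, and split $v_{tt}$ into an $H^{-1}$ piece and an $L^2$ piece. But your control of $v_{tt}$ rests on the wrong equation. The decomposition \eqref{eq:vtt} is proved for solutions of the linearized equation \eqref{eq-lin}. In this lemma $v$ is only assumed to solve \eqref{quadratic-para-lin}, with general bilinear terms $H_{ij}$ and a source $F$. In Proposition~\ref{p:para-s}, for instance, the lemma is applied to $v=\jD^{s-1}w$, which does not solve \eqref{eq-lin}.

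Since $w_t$ contains $B(u_t,v_{tt})$ and $D(u,v_{tt})$, the only information you have about $v_{tt}$ is \eqref{quadratic-para-lin} itself. So hypothesis (b) is not optional: without a bound on $F$ these terms are not controlled by $\|v[t]\|_{H^1\times L^2}$ at all. Your closing remark that (b) ``enters only if'' one expands via \eqref{quadratic-para-lin} has this backwards.

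The missing step is to solve \eqref{quadratic-para-lin} for $v_{tt}$ and set
\[
v_{tt}^{(-1)}=2T_{g^{01}}\partial_x v_t+T_{g^{11}}\partial_x^2 v,
\]
\[
v_{tt}^{(0)}=T_{\tilde F^{\gamma,lin}}\tilde\partial_\gamma v-mv+H_{00}(u_t,v_t)+H_{01}(u_t,v)+H_{10}(u,v_t)+H_{11}(u,v)+F.
\]
You then bound each piece separately:
\begin{itemize}
\item $v_{tt}^{(-1)}$ in $H^{-1}$, by a paraproduct estimate plus a commutator with $\jD^{-1}$;
\item the $H_{ij}$ in $L^2$, using their symbol classes from Lemma~\ref{l-nfl-exist}, which your proposal never invokes;
\item $F$ exactly by \eqref{F-sour-q}.
\end{itemize}
With this split your pairing goes through, e.g.
\[
\|B(u_t,v_{tt})\|_{L^2}\lesssim\|u_t\|_{W^{2,\infty}}\|v_{tt}^{(-1)}\|_{H^{-1}}+\|u_t\|_{W^{1,\infty}}\|v_{tt}^{(0)}\|_{L^2}.
\]
In the (LH) case this uses $b\in S^{2,-1}$; in the (HL) case it uses $b\in S^{0,1}$ and shifts the low-frequency derivatives onto $u_t$.

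This same count settles your ``main obstacle'': the low-frequency $H^{-1}$ piece costs only $\|u_t\|_{W^{2,\infty}}\lesssim\A_2$. A fallback to $\A_3$ would not have proved the lemma as stated anyway. A minor point: $\|T_v f\|_{L^2}\lesssim\|v\|_{L^2}\|f\|_{L^\infty}$ is \eqref{CM-BMO}, not \eqref{CM}, since the latter excludes $q=\infty$.
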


\begin{proof} We consider separately the contributions from bilinear operators corresponding to the case (LH), respectively
(HL).

\medskip
\emph{The case \emph{(LH)}.} We start from

\begin{equation} \label{eq:ivt-exn}
    \begin{aligned}
     & \| ( w , w_t) - (v,v_t) \|_{H^1 \times L^2}
    \\
    &
    \leq  \| (A(u,v) \|_{H^1} + \| B(u_t,v_t)\|_{H^1} 
    +  \| C(u_t,v) \|_{H^1} + \| D(u,v_t) \|_{H^1} 
    \\
    &
    \quad  \, +
    \| \partial_t  A(u,v) \|_{L^2} 
    + \|\partial_t B(u_t,v_t) \|_{L^2} 
    + \| \partial_t C(u_t,v) \|_{L^2}
    + \| \partial_t D(u,v_t) \|_{L^2}.
\end{aligned}
\end{equation}

There are three cases to consider: 
\begin{enumerate}[label=(\alph*)]
\item  Expressions which depend only on $u$, $u_t$, $ v$, and $ v_t$, 
which are estimated directly.

\item  Expressions which also depend on $u_{tt}$, where
we need to first use the equation \eqref{kg1}.

\item  Expressions which also depend on $v_{tt}$, where
we need to first use the equation \eqref{quadratic-para-lin}.
\end{enumerate}

\smallskip
\noindent\emph{Case (a).}
Since $A(u,v)$ has symbol in $S^{3,0}$, the paraproduct estimates yield 
\[
\begin{aligned}
    \Vert \partial_x A_{lh}(u,v) \Vert_{L^2} &\lesssim 
      \Vert A_{lh}(u_x, v) \Vert_{L^2} 
    + \Vert A_{lh}(u,v_x) \Vert_{L^2}\\
    & \lesssim \Vert u \Vert_{W^{3,\infty}} \Vert( v,v_t) \Vert_{H^1 \times L^2}.
\end{aligned}
\]
Next, since the symbol of $B$ lies in $S^{2,-1}$, we use the \( -1\) order to place one spatial derivative on  \(v_t\):
\[
\begin{aligned}
    \Vert \partial_x B_{lh}(u_t,v_t) \Vert_{L^2} 
    &
    \lesssim 
        \Vert B_{lh}(\partial_x u_t, v_t) \Vert_{L^2} 
    + 
        \Vert B_{lh}( u_t, \partial_x v_t) \Vert_{L^2} 
      \\
     &  
     \lesssim \Vert u_t \Vert_{W^{2,\infty}}  \Vert (v,v_t) \Vert_{H^1\times L^2},    
\end{aligned}
\]
as needed. The contributions of the remaining terms in case (a) are treated in the same manner.
\smallskip

\noindent\emph{Case (b).}
For the terms involving $u_{tt}$, we consider for instance the bound for 
$\Vert B_{lh}(u_{tt},v_t) \Vert_{L^2}$.
For this we
use the bounds for \(u_{tt}\) in \eqref{eq:p2-utt}  and the bilinear estimate~\eqref{BMO-shift} to get
\[
\begin{aligned}
    \Vert B_{lh}(u_{tt},v_t) \Vert_{L^2} \lesssim 
    & \
     \Vert u_{tt} \|_{W^{1,\infty}} \| v_t \Vert_{L^2} \lesssim \A_2 \| v_t \Vert_{L^2}.
\end{aligned}
\]
The other terms in case (b) are estimated in the same way.

\smallskip

\noindent\emph{Case (c).} Using the paradifferential equation for $v$, we separate 
$v_{tt}$ into two terms,
\begin{equation}\label{eq:parvtt-split}
v_{tt}=v_{tt}^{(-1)}+v_{tt}^{(0)}.
\end{equation}
The principal part is given by
\begin{equation}
v_{tt}^{(-1)}:= 2T_{g^{01}}\partial_x v_t  + T_{g^{11}}\partial_x^2 v,
\end{equation}
and the remainder by
\[
\begin{aligned}
v_{tt}^{(0)} :=\;& T_{\Tilde F^{\gamma,\mathrm{lin}}}\,\Tilde\partial_\gamma v - m v + H_{00}(u_t,v_t) + H_{01}(u_t,v) + H_{10}(u,v_t) + H_{11}(u,v) + F.
\end{aligned}
\]

We estimate $v_{tt}^{(-1)}$ and $\Lambda_{\ge 2 }v_{tt}^{(-1)}$ in $H^{-1}$ and control $v_{tt}^{(0)}$ and $\Lambda_{\ge 2 }v_{tt}^{(0)}$ in $L^2$.

We illustrate the estimate for the first term; the second term is similar. Using paraproduct estimates~\eqref{CM}, the Moser estimate~\eqref{bmo-hs}, and a standard commutator bound, we obtain
\[
\begin{aligned}
     \| \jD^{-1} T_{g^{11}}\partial_x^2 v   \|_{L^2 }  
     \lesssim 
      \|  T_{g^{11}} \jD^{-1}\partial_x^2 v   \|_{L^2 } 
      + 
       \|  [T_{g^{11}}, \jD^{-1} ]\partial_x^2 v   \|_{L^2 } 
       \lesssim   \| v[t] \|_{H^1 \times L^2},
\end{aligned}
\]
respectively,
\[
\begin{aligned}
      \| \jD^{-1} T_{ \Lambda_{\ge 1 }g^{11}}\partial_x^2 v   \|_{L^2 }  
     \lesssim 
      \|  T_{ \Lambda_{\ge 1 }g^{11}} \jD^{-1}\partial_x^2 v   \|_{L^2 } 
      + 
       \|  [T_{ \Lambda_{\ge 1 }g^{11}}, \jD^{-1} ]\partial_x^2 v   \|_{L^2 } 
       \lesssim  \A_2 \| v[t] \|_{H^1 \times L^2} .
\end{aligned}
\]

To estimate the remainder $v_{tt}^{(0)}$ in $L^2$, it suffices to treat the representative
terms \( T_{\Tilde F^{\gamma,\mathrm{lin}}}\,\Tilde\partial_\gamma v \), \( H_{00}(u_t,v_t)\), and \(F\).
The paradifferential term is estimated by \eqref{CM},
\[
\begin{aligned}
    \| T_{\Tilde F^{\gamma,\mathrm{lin}}}\,\Tilde\partial_\gamma v \|_{L^2 }
    \lesssim 
    \| \Tilde F^{\gamma,\mathrm{lin}} \|_{L^{\infty}} 
    \| v[t] \|_{H^1\times L^2} 
    \lesssim 
    \A_2
    \| v[t] \|_{H^1\times L^2} .
\end{aligned}
\]
For the bilinear term, we note that the symbol $h_{00}$ belongs to $S^{1,0}$ on both the
low--high and the high--low frequency supports. Therefore,
\[
\| H_{00}^{lh}(u_t,v_t) \|_{L^2} + \| H_{00}^{hl}(u_t,v_t) \|_{L^2} 
\lesssim \A_1 \| v[t] \|_{H^1 \times L^2}.
\] 
The remaining contributions in $v_{tt}^{(0)}$ are estimated in the same way. Finally, the
perturbative cubic terms follow directly from \eqref{F-sour-q}.
Hence, 
\begin{equation} \label{eq:pa-vtt-est}
\left\{
\begin{aligned}
  \| v_{tt}^{(-1)} \|_{H^{-1}}
    & \lesssim 
    \| v[t] \|_{H^1\times L^2} ,
     \quad
     \| \Lambda_{\ge 2 } v_{tt}^{(-1)} \|_{H^{-1}} 
     \lesssim 
    \A_2 \| v[t] \|_{H^1\times L^2} 
    \\
     \| v_{tt}^{(0)} \|_{L^2} 
    & \lesssim 
    \| v[t] \|_{H^1\times L^2}  , 
    \quad
    \| \Lambda_{\ge 2 } v_{tt}^{(0)} \|_{L^2} 
     \lesssim 
    \A_2 \| v[t] \|_{H^1\times L^2}  . 
\end{aligned}
\right.
\end{equation}

Using the expansion~\eqref{eq:parvtt-split} together with the bounds~\eqref{eq:pa-vtt-est} and recalling that
$b_{lh}\in S^{2,-1}$, we obtain
\[
\begin{aligned}
    \| B_{lh}(u_t,v_{tt}) \|_{L^2} 
   & \lesssim 
     \| B_{lh}(u_t,v_{tt}^{(-1)}) \|_{L^2} 
     + \| B_{lh}(u_t,v_{tt}^{(0)}) \|_{L^2} 
     \\
     & 
     \lesssim
     \| u_t \|_{W^{2,\infty}} 
     \| v_{tt}^{(-1)} \|_{H^{-1}}
     + 
      \| u_t \|_{W^{1,\infty}} 
     \| v_{tt}^{(0)} \|_{L^2}
     \\
     &
     \lesssim
     \A_2 
      \| v[t] \|_{H^1\times L^2} ,
\end{aligned}
\]
where we used the paraproduct estimate~\eqref{BMO-shift} to move one derivative onto the \(v \) factor in the second term (equivalently, so that the $v_{tt}^{(0)}$--factor is measured
in $L^2$).

Collecting all the bounds yields the desired inequality. The estimates for \(D(u,v_{tt})\) follow similarly. 

\medskip

\emph{The case \emph{(HL)}.}
    To prove the invertibility estimate, we first expand the left-hand side exactly as in \eqref{eq:ivt-exn}.

As before, we split the analysis into the same three sub cases as in the proof of the preceding propositions. Moreover, we repeatedly apply the paraproduct estimate~\eqref{BMO-shift} to move one derivative from the low-frequency $v$ onto $u$ whenever necessary.

\medskip

\noindent\textit{Case (a).}
 For instance, since $a_{hl} \in S^{1,2}$, we obtain
\[
\begin{aligned}
     \| A_{hl}(u,v) \|_{H^1}  &
     \lesssim
    \| \partial u   \|_{W^{2,\infty}} 
    \| v \|_{H^1} 
    \lesssim
    \A_2
    \| v[t] \|_{H^1 \times L^2} .
\end{aligned}
\]
The remaining terms that do not involve $u_{tt}$ or $v_{tt}$ are treated in the same way.

\noindent\textit{Case (b).} 
For the contributions involving $u_{tt}$, we combine  the bounds~\eqref{eq:p2-utt}   for $u_{tt}$ with paraproduct estimates~\eqref{CM} and the symbol bound for \( b_{hl} \in S^{0,1}\). This yields, for example,
\[
\begin{aligned}
    \Vert B_{hl}(u_{tt},v_t) \Vert_{L^2} \lesssim 
    &
   \| u_{tt} \|_{W^{1,\infty}} \| v_t \|_{L^2} \lesssim \A_2  \| v[t] \|_{H^1 \times L^2}   .
\end{aligned}
\]

\noindent\textit{Case (c).} 
For the terms involving \( v_{tt}\), we use the expansion~\eqref{eq:parvtt-split} together with estimates~\eqref{eq:pa-vtt-est}. Recalling that \( b_{hl} \in S^{0,1}\), we obtain
\[
\begin{aligned}
   \| B_{hl}(u_t,v_{tt}) \|_{L^2} 
    & 
    \lesssim 
   \| B_{hl}(u_t,v_{tt}^{(-1)}) \|_{L^2} 
   +
    \| B_{hl}(u_t,v_{tt}^{(0)}) \|_{L^2} 
    \\
    & 
    \lesssim 
    \| u_t \|_{W^{2,\infty}} 
    \| v_{tt}^{(-1)} \|_{H^{-1}}
    + 
    \| u_t \|_{W^{1,\infty}} 
    \| v_{tt}^{(0)} \|_{L^2} 
    \\
    & 
    \lesssim \A_2 \| v[t] \|_{H^1 \times L^2}.
\end{aligned}
\]
The estimates for \(D(u,v_{tt})\) are proved in the same way.

\end{proof}

\begin{lemma} [source term bounds] \label{source}
 Let $u$, \(v\), \( w\),
$A(u,v)$, $B(u_t,v_t)$, $C(u_t,v)$, and $D(u,v_t)$ satisfy the same assumptions as in Lemma~\ref{nfl-invert}.  We further assume the source term \(F\) satisfies the following bounds:
\begin{equation} \label{F-sour-t}
\begin{aligned}
    \| F \|_{L^2} &\lesssim \A_1  \| v[t] \|_{H^1 \times L^2},
\\
\| \Lambda_2 \partial_t F \|_{H^{-1}} &\lesssim  \A_1 \| v[t] \|_{H^1 \times L^2},
\\
\| \Lambda_{\geq 3} \partial_t F \|_{H^{-1}} &\lesssim  \A_0 \A_2 \| v[t] \|_{H^1 \times L^2}.
\end{aligned}
\end{equation}

Then the cubic and higher source term generated by the normal form transformation satisfies the estimate
   \[
  \Vert \Lambda_{\geq3} \bigr(L_{KG}^{para}  (w -v)\bigl) \Vert_{L^2} \lesssim_{\A_0} 
  \A_0
  \A_3  
  \Vert v[t] \Vert_{H^1\times L^2}.
   \]

\end{lemma}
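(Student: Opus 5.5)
We compute $L_{KG}^{para}(w-v)$ term by term and use the Leibniz structure of the bilinear forms. For each correction $K\in\{A(u,v),B(u_t,v_t),C(u_t,v),D(u,v_t)\}$ we write
\[
L_{KG}^{para} K = L_{KG} K - \bigl(T_{g^{\alpha\beta}}-\eta^{\alpha\beta}\bigr)\partial_\alpha\partial_\beta K - T_{\Tilde F^{\gamma,lin}}\Tilde\partial_\gamma K .
\]
The last two pieces are already cubic, since $\Lambda_{\ge1}g$ and $\Tilde F^{\gamma,lin}$ are at least linear in $u$. They are bounded in $L^2$ by $\A_0$, respectively $\A_1$, times $\|\partial K\|_{H^{1}\times L^2}$-type norms. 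Those norms are controlled by the invertibility-type bounds of Lemma~\ref{nfl-invert}, applied one derivative higher where the paraproduct $T_g\partial^2$ needs it, which costs one more derivative on $u$ and produces $\A_0\A_3$.

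For $L_{KG}K$ we distribute $\partial_t^2-\partial_x^2+m$ over the two arguments. This yields terms of the form $K(L_{KG}u,\cdot)$ and $K(\cdot,L_{KG}v)$, together with the cross terms that were matched in the $4\times4$ system of Lemma~\ref{l-nfl-exist}. By the construction in that lemma, once $u_{tt}$ and $v_{tt}$ are replaced using their linear parts, the cross terms cancel the quadratic sources $H_{ij}$ exactly. What remains falls into three groups.

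\begin{enumerate}[label=(\roman*)]
\item Terms with $L_{KG}u=N(u)$, or $\Lambda_{\ge2}u_{tt}$, in the low-frequency slot. These are cubic. We bound them by the symbol classes $S^{3,0}$, $S^{2,-1}$, etc., through \eqref{B-CM}, with $\|\Lambda_{\ge2}\partial u_{tt}\|_{W^{k-1,\infty}}\lesssim \A_0\A_{k+1}$ from \eqref{eq:p2-utt-2}. Here $k=2$ in the worst (LH) case for $A\in S^{3,0}$, giving $\A_0\A_3$.
\item Terms with $L_{KG}v$ in the other slot. Here
\[
L_{KG}v=\bigl(T_{g^{\alpha\beta}}-\eta^{\alpha\beta}\bigr)\partial_\alpha\partial_\beta v+T_{\Tilde F}\Tilde\partial v+\textstyle\sum H_{ij}+F ,
\]
which is at least quadratic. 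It is estimated via the decomposition $v_{tt}=v_{tt}^{(-1)}+v_{tt}^{(0)}$ and \eqref{eq:pa-vtt-est}, placing $v_{tt}^{(-1)}$ in $H^{-1}$ and moving a derivative onto $u$ using \eqref{BMO-shift}. The bound on $F$ from \eqref{F-sour-t} enters here.
\item Terms where time derivatives of $K$ generate $u_{ttt}$, $v_{ttt}$ (from $B$, $C$, $D$). These are treated with \eqref{eq:vttt} and the bounds on $\Lambda_2\partial_tF$ and $\Lambda_{\ge3}\partial_tF$ in $H^{-1}$, since $\partial_t$ of the $v$-equation brings in $\partial_t F$.
\end{enumerate}

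In each case the (LH) contributions are handled by \eqref{B-CM}. The (HL) contributions, whose symbols are in $S^{1,2}$, $S^{0,1}$, etc., are handled by moving derivatives from the low-frequency $v$ onto $u$ via \eqref{BMO-shift}, as in Lemma~\ref{nfl-invert}.

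The main obstacle is the derivative accounting in group (ii) and in the quasilinear commutator terms for the (LH) correction $A\in S^{3,0}$ and $D\in S^{3,-1}$. There $T_{g}\partial_x^2$ acting on $A(u,v)$ can put two derivatives on the high-frequency $v$. The naive bound then loses a derivative relative to $\|v[t]\|_{H^1\times L^2}$.

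We plan to avoid this loss by noting that the top-order pieces of $L_{KG}^{para}A(u,v)$ and of $A(u,L_{KG}^{para}v-L_{KG}v)$ combine. Precisely, we commute $T_{g}\partial^2$ through the low-high bilinear form using Lemma~\ref{para-com} and Lemma~\ref{l:para-assoc}. The principal terms then cancel, and what remains are commutators gaining two derivatives at the cost of $\|\jD \Lambda_{\ge1}g\|_{L^\infty}\lesssim \A_1$ and $\|\jD u\|_{W^{2,\infty}}$-type norms. This produces the bound $\A_0\A_3\|v[t]\|_{H^1\times L^2}$. The same commutator argument, with $v_{ttt}^{(-2)}$ in $H^{-2}$, handles the $B$ and $D$ terms where $\partial_t^2$ hits $v_t$.
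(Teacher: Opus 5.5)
Your closing paragraph identifies the mechanism the paper actually uses. In the (LH) case, the quasilinear part of $L^{para}_{KG}$ applied to the correction is never estimated on its own. It is paired with the correction applied to the paradifferential part of the $v$-equation, so that only commutators such as $B_{lh}(u_t,T_{g^{01}}\partial_x v_{tt})-T_{g^{01}}\partial_x B_{lh}(u_t,v_{tt})$ and $B_{lh}(u_t,T_{g^{11}}\partial_x^2 v_t)-T_{g^{11}}\partial_x^2 B_{lh}(u_t,v_t)$ remain. This is the expansion \eqref{eq:sour-exp}, bounded via Lemma~\ref{para-com}. All other terms are estimated directly, and derivatives are shifted onto $u$ only in the (HL) case. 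However, the proposal as written also asserts bounds that fail in the (LH) case. These must be replaced by that grouping, not kept alongside it.

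First, in your opening paragraph, $(T_g-\eta)\partial^2 K$ cannot be controlled by the bounds of Lemma~\ref{nfl-invert} ``one derivative higher'' at the price of a derivative on $u$. In the low--high region any derivative on the output falls on the high-frequency factor $v$, so this costs $\|v[t]\|_{H^2\times H^1}$. Moreover, \eqref{BMO-shift} only trades derivatives on $u$ for derivatives on $v$, never the reverse. The same objection applies to your group (ii) as described. The top-order pieces $T_{\Lambda_{\ge1}g^{11}}\partial_x^2 v$ and $2T_{g^{01}}\partial_x v_t$ of $L_{KG}v$ lie only in $H^{-1}$. The form $A_{lh}\in S^{3,0}$ cannot absorb that, and no derivative can be moved onto $u$. (The term $T_{\tilde F^{\gamma,lin}}\tilde\partial_\gamma K$ is fine as you say, since it only needs $\|K\|_{H^1}+\|\partial_t K\|_{L^2}$.)

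Second, in group (iii) and in your last sentence, $B_{lh}(u_t,\Lambda_{\ge2}v_{ttt})$ and $D_{lh}(u,\Lambda_{\ge2}v_{ttt})$ cannot be bounded through \eqref{eq:vttt}. The piece $v_{ttt}^{(-2)}$ is only in $H^{-2}$, while $b_{lh}\in S^{2,-1}$ and $d_{lh}\in S^{3,-1}$ absorb only $H^{-1}$ on the high-frequency input. Also, \eqref{eq:vttt} is proved for \eqref{eq-lin}, not for solutions of \eqref{quadratic-para-lin}.

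What works, and what the paper does, is to write $v_{ttt}=\partial_t v_{tt}$ with $v_{tt}$ first replaced from \eqref{quadratic-para-lin}. The top-order pieces $2T_{g^{01}}\partial_x v_{tt}+T_{g^{11}}\partial_x^2 v_t$ then combine with $-2T_{g^{01}}\partial_x\partial_t K-T_{g^{11}}\partial_x^2 K$ into the commutators above. Those commutators need only $v_{tt}\in H^{-1}$, through the split in \eqref{eq:pa-vtt-est}. The leftover $\partial_t(H_{ij}+F)$ is handled by \eqref{F-sour-t}.

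A minor bookkeeping point: for $A_{lh}(\Lambda_{\ge2}u_{tt},v)$, \eqref{B-CM} gives $\A_0\A_4$. To reach $\A_0\A_3$, use \eqref{BMO-shift} with $s=1$ and $v\in H^1$.
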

\begin{proof} [Proof of case \emph{(LH)}]

We begin by expanding:
    \[
    \begin{aligned}
  \Lambda_{\geq 3}(L_{KG}^{para}( w -v))  
& =    
  \Lambda_{\geq 3} \bigr(L_{KG}^{para} A_{lh}(u,v)  \bigl)
+  \Lambda_{\geq 3} \bigr(L_{KG}^{para} B_{lh}(u_t,v_t)  \bigl)
+  \Lambda_{\geq 3} \bigr(L_{KG}^{para} C_{lh}(u_t,v)  \bigl) 
\\
& \quad \ +
  \Lambda_{\geq 3} \bigr(L_{KG}^{para} D_{lh}(u,v_t)  \bigl) .
\end{aligned}
      \]
The bounds for the four terms above are largely similar to each other, so for simplicity we take \(\Lambda_{\geq 3} \bigr(L_{KG}^{para} B_{lh}(u_t,v_t)  \bigl) \) as an example, since it potentially contains more time derivatives.

We begin by expanding the terms as follows
\begin{equation} \label{eq:sour-exp}
    \begin{aligned}
       \Lambda_{\ge 3} L^{para}_{KG} B_{lh}(u_t,v_t) 
       & =  \
        \Lambda_{\ge 3} \bigl(
        -T_{g^{\alpha \beta}} \partial_\alpha \partial_\beta B_{lh}(u_t,v_t) 
        - T_{ \Tilde{F}^{\gamma,lin}} \Tilde{\partial}_{\gamma} B_{lh} (u_t,v_t)  
        \bigr) 
        \\
        & = \ \Lambda_{\ge 3} \bigl(
        2\big(
         B_{lh}(u_t, T_{g^{01}}\partial_x   v_{tt} ) - T_{g^{01}} \partial_x   B_{lh}(u_t,v_{tt}) \big) 
         \\
         &
         \qquad \
         + 
         \big( 
         B_{lh}(u_t, T_{g^{11}} \partial_x^2 v_t)
         - T_{g^{11}} \partial_x^2 B_{lh}(u_t,v_t)
         \big)
        \\
        & \qquad \
       +  \big( 
       B_{lh}(u_t, \partial_t ( T_{ \Tilde{F}^{\gamma,lin}} \Tilde{\partial_{\gamma}} v  )  )  - 
     T_{ \Tilde{F}^{\gamma,lin}} \Tilde{\partial_{\gamma}} B_{lh}(u_t,v_t) 
     \big)
     \\
     &
     \qquad \
     +  2 B_{lh}(u_{tt}, v_{tt}) + B_{lh}(u_{ttt},v_t) 
     + 2 B_{lh}(u_t, T_{\partial_t g^{01}}\partial_x   v_{t} ) 
     \\
     & \qquad \ 
     - 2 T_{g^{01}} \partial_x  B_{lh}(u_{tt},v_t) 
     +  B_{lh}(u_t,  T_{\partial_t g^{11}} \partial_x^2 v) 
     \\
     & \qquad \ 
     - B_{lh}(u_t, \partial_t(L_{KG}^{para} v ))
     \bigr) .
    \end{aligned}
\end{equation}

There are four groups of terms to consider:

\begin{enumerate}[label=(\alph*)]

\item Commutator terms.
\begin{enumerate} [label=(\roman*)]
    \item A commutator structure acting on $v_{tt}$ (line 1).
    \item A commutator structure acting on $v_{t}$ (line 2).
\end{enumerate}
\item Terms containing \(v_{tt}\)  not in commutator form.
    \begin{enumerate}[label=(\roman*)] 
    \item  Expressions which depend on both $v_{tt}$ and \( u_{tt}\), where
we need to use the bounds in \eqref{eq:p2-utt} and \eqref{eq:pa-vtt-est}.
(first term on line 4)

    \item Expressions which also depend only on \( u_t \) (line 3) .
\end{enumerate}
\item Terms containing \(v_{t}\)  not in commutator form.
\begin{enumerate}[label=(\roman*)]
 \item  Expressions which also depend on $u_{tt}$ and \( u_{ttt}\), where
we need to use the bounds~\eqref{eq:p2-utt} (second term on line 4). 
\item  Expressions which depend only on $u$, $u_t$, $ v$, and $ v_t$, 
which are estimated directly (lines 4 and 5).
\end{enumerate}
\item Remainder terms.
\begin{enumerate}[label=(\roman*)]
    \item Lower-order contributions involving $\partial_t H_{ij}$ and $\partial_t F$ (line 6).

\end{enumerate}
\end{enumerate}

\medskip
\noindent \emph{Case (a)}:

\smallskip
 \emph{Sub-case (i)}: It can be estimated using the commutator bound in Lemma~\ref{para-com} and Moser estimate~\eqref{bmo-hs}. In particular, using the symbol bound \( b_{lh} \in S^{2,-1}\) together with the bounds~\eqref{eq:pa-vtt-est} for 
\( v_{tt} \), we obtain
\[
\begin{aligned}
  \|    B_{lh}(u_t, T_{g^{01}}\partial_x   v_{tt} ) - T_{g^{01}} \partial_x   B_{lh}(u_t,v_{tt})   \|_{L^2}  
  & 
  \lesssim \, 
  \| u_t \|_{W^{3,\infty} }
   \| \partial u \|_{L^{\infty}}
   \| v_{tt} \|_{H^{-1}}
   \\
   & \lesssim \,
   \| u_t \|_{W^{3,\infty} }
   \| \partial u \|_{L^{\infty}}
    \bigl( \| v_{tt}^{(-1)} \|_{H^{-1}} +  \| v_{tt}^{(0)} \|_{H^{-1}} \bigr) 
  \\
  & \, 
  \lesssim 
\A_0 \A_3
  \| v[t] \|_{H^1 \times L^2}.
\end{aligned}
\]

\smallskip
\emph{Sub-case (ii)}: We next estimate the the commutator applied to $v_{t}$ exactly as above but without using any expansion.
\[
\begin{aligned}
  \|  B_{lh}(u_t, T_{g^{11}} \partial_x^2 v_t)
         - T_{g^{11}} \partial_x^2 B_{lh}(u_t,v_t) \|_{L^2} 
         \lesssim \, \A_0 \A_3 \| v[t]\|_{H^1 \times L^2}.
\end{aligned}
\]

\medskip
\noindent \emph{Case (b)}:

\smallskip
\emph{Sub-case (i)}:
The expression $
\Lambda_{\geq3}( B_{lh}( u_{tt},v_{tt}))   $
is handled using the decomposition~\eqref{eq:parvtt-split} and the bounds~\eqref{eq:pa-vtt-est}.
In particular,
\[
\begin{aligned}
    \| \Lambda_{\geq3}( B_{lh}( u_{tt},v_{tt}))   \|_{L^2}
    \lesssim \
    \| B_{lh}( \Lambda_{\geq 2 }u_{tt}, v_{tt})  \|_{L^2}
    +
     \| B_{lh}( u_{tt}, \Lambda_{\geq 2 } v_{tt})  \|_{L^2}.
\end{aligned}
\]
For the first term, we combine the paraproduct bounds~\eqref{BMO-shift} and~\eqref{B-CM}
with the estimate for $u_{tt}$ in~\eqref{eq:p2-utt-2} and the decomposition/estimates for
$v_{tt}$ in~\eqref{eq:parvtt-split} and~\eqref{eq:pa-vtt-est}.
\[
\begin{aligned}
     \| B_{lh}( \Lambda_{\geq 2 }u_{tt}, v_{tt})  \|_{L^2} 
     & \lesssim \
     \| B_{lh}( \Lambda_{\geq 2 }u_{tt}, v_{tt}^{(-1)})  \|_{L^2} 
     +\| B_{lh}( \Lambda_{\geq 2 }u_{tt}, v_{tt}^{(0)})  \|_{L^2} 
     \\
     &
     \lesssim \
     \| \Lambda_{\geq 2 }u_{tt} \|_{W^{2,\infty}}
     \| v_{tt}^{(-1)} \|_{H^{-1}}
     + 
      \| \Lambda_{\geq 2 } u_{tt} \|_{W^{1,\infty}}
     \| v_{tt}^{(0)} \|_{L^2 }
     \\
     &
     \lesssim \ \A_0 \A_3 
     \| v[t] \|_{H^1 \times L^2}.
\end{aligned}
\]
The second term is estimated analogously. Using the decomposition
$v_{tt}=v_{tt}^{(-1)}+v_{tt}^{(0)}$, we obtain
\[
\begin{aligned}
      \| B_{lh}( u_{tt}, \Lambda_{\geq 2 } v_{tt})  \|_{L^2}
      & \lesssim 
       \| B_{lh}( u_{tt}, \Lambda_{\geq 2 } v_{tt}^{(-1)})  \|_{L^2}
       + \| B_{lh}( u_{tt}, \Lambda_{\geq 2 } v_{tt}^{(0)})  \|_{L^2}
       \\
        & \lesssim 
       \|  u_{tt}\|_{W^{2,\infty}} \| \Lambda_{\geq 2 } v_{tt}^{(-1)})  \|_{H^{-1}}
       +  \|  u_{tt}\|_{W^{3,\infty}} \| \Lambda_{\geq 2 } v_{tt}^{(0)})  \|_{L^2}
       \\
       & 
         \lesssim \A_0 \A_3 
     \| v[t] \|_{H^1 \times L^2}.
\end{aligned}
\]

\smallskip
\emph{Sub-case (ii)}:
Here we consider the expression
\[ 
B_{lh}(u_t, \partial_t ( T_{ \Tilde{F}^{\gamma,lin}} \Tilde{\partial_{\gamma}} v  )  )  - 
     T_{ \Tilde{F}^{\gamma,lin}} \Tilde{\partial_{\gamma}} B_{lh}(u_t,v_t),
\]
which has a commutator form. However, we do not need to exploit the commutator
structure, since at most one derivative falls on $v$. Instead, we estimate it directly using the bounds~\eqref{eq:pa-vtt-est} to control \( v_{tt}\)

     \[
     \begin{aligned}
      \|  B_{lh}(u_t,T_{ F^{1,lin}}   v_{tt}  )   \|_{L^2}
      & \lesssim \,
       \A_2 \| T_{ F^{1,lin}}   v_{tt}  \|_{L ^2} 
       \\
       & 
       \lesssim \,
       \A_2 \A_1 (\| v_{tt}^{(-1)} \|_{H^{-1}} + \| v_{tt}^{(0)} \|_{L^2} ) 
       \\
       & \lesssim \A_0 \A_3 
     \| v[t] \|_{H^1 \times L^2}.
     \end{aligned}
     \]
     
     We next consider the remaining terms arising from this expansion.
   The  expression \(  T_{F^{1,lin} } B_{lh}(u_t,v_{tt})\) can be handled similarly, while 
    \(T_{F^{1,lin} } B_{lh}(u_{tt},v_{t}) \) falls under Case (c)(i).
All remaining terms can be treated as in Case (c)(ii).

\medskip 
\noindent \emph{ Case (c)}:

\emph{Sub-case (i)}:
Here we estimate
\(
 B_{lh}( \Lambda_{\geq2}(\partial_t(u_{tt})),v_t)  .
\)
Using paraproduct bound~\eqref{CM+}, Moser estimate~\eqref{bmo-hs}, and the bound for \(u_{tt}\) in \eqref{eq:p2-utt-2}, we obtain
\[ 
\begin{aligned}
  \| B_{lh}( \Lambda_{\ge 2 }\partial_t u_{tt}  ,v_t) \|_{L^2}
   &
    \lesssim \,
    \| \Lambda_{\ge 2 }\partial_t u_{tt}  \|_{W^{1,\infty}}
    \|  v_t \|_{L^2} 
    \\
    &
    \lesssim \,
   \A_0 \A_3
      \| v[t] \|_{H^1 \times L^2}.
\end{aligned}
\]

\smallskip
 \emph{Sub-case (ii)}:
 We estimate these terms directly by using paraproduct estimates as well as the symbol bound \( b_{lh} \in S^{2,-1}\) and Moser estimates~\eqref{bmo-hs}:
\[
\begin{aligned}
    \|T_{ \Lambda_{\ge 1}g^{11}} B_{lh}(\partial_x u_t, \partial_x v_t) \|_{L^2}
    \lesssim 
    \A_0 \A_3
    \| v[t] \|_{H^1 \times L^2}   .
\end{aligned}
\]

The rest of the terms can be estimated similarly. 

\medskip 
\noindent \emph{Case (d)}

\emph{Sub-case (i)}:
Here  we estimate the the bilinear terms $H$ and $F$.
For the bilinear terms $H$, we treat $H_{00}(u_t,v_t)$ as a representative example, since it contains the largest number of time derivatives.
Recall also that $h_{00}\in S^{1,0}$.
Then 
\[
\begin{aligned}
     \|   B_{lh}(u_t, \partial_t  H_{00}(u_t,v_t) ) \|_{L^2} 
     &
     \lesssim \,
     \|   B_{lh}(u_t,   H_{00}(u_{tt},v_t) ) \|_{L^2} 
     +    \|   B_{lh}(u_t,   H_{00}(u_{t},v_{tt} ) \|_{L^2} 
     \\
     &
     \lesssim \,
     \| u_t \|_{W^{1,\infty}} \| H_{00} (u_{tt},v_t) ) \|_{L^2} 
     + 
      \| u_t \|_{W^{2,\infty}} \| H_{00} (u_{t},v_{tt}) ) \|_{H^{-1}} 
      \\
    & 
    \lesssim  \,
    \| u_t \|_{W^{1,\infty}} \| u_{tt} \|_{W^{1,\infty}} \| v_t  \|_{L^2} 
     + 
      \| u_t \|_{W^{2,\infty}} \| u_t\|_{W^{1,\infty}} \| v_{tt}^{(-1)} \|_{H^{-1}} 
      \\
      &
      \qquad \ +  \| u_t \|_{W^{2,\infty}} \| u_t\|_{L^{\infty}} \| v_{tt}^{(0)} \|_{L^2} 
      \\
      &
      \lesssim \
      \A_1 \A_2 \| v[t] \|_{H^1 \times L^2},
\end{aligned}
\]
where we have used \eqref{eq:pa-vtt-est} to control \( v_{tt}\) together with  the symbol bound for \( H_{00}\).

Finally, for the \( F\) term, we invoke the quadratic bound for \( \partial_t F\):
\[
\begin{aligned}
     \|  \Lambda_{\geq 3}  B_{lh}(u_t, \partial_t  F ) \|_{L^2} 
     & \lesssim \,
     \| u_t \|_{W^{2,\infty}} \| \Lambda_{2}\partial_t F  + \Lambda_{\geq 3}\partial_t F \|_{H^{-1}}
     \\
     &
     \lesssim \,
     \A_2 \A_1 \| v[t] \|_{H^1 \times L^2} + \A_2 \A_0 \A_2 \| v[t] \|_{H^1 \times L^2}.
\end{aligned}
\]

Hence the proof is complete.

\end{proof}
\begin{proof}[Proof of case \emph{(HL)}]

    We now estimate the source terms in the high--low case. The idea is similar to the previous case except that the high--low structure allows us to freely move derivatives from low-frequency \( v\) onto \( u\). As a representative (and worst) term, we consider $B^1_{hl}(u_t,v_t)$, which contain the most time derivatives.
       We can use the similar expansion in \eqref{eq:sour-exp}.
    This leads to four cases to consider:
    \begin{enumerate}[label=(\alph*)]
    \item  Expressions which also depend on $u_{ttt}$, where
we need to first use the equation \eqref{kg1}.

     \item  Expressions which also depend on $v_{ttt}$, where
we need to first use the equation \eqref{quadratic-para-lin}.
    \item  Expressions which also depend on $v_{tt}$ and \( u_{tt}\), where
we need to first use both of the equations \eqref{kg1} and \eqref{quadratic-para-lin}.
    \item  Expressions which depend only on $u$, $u_t$, $ v$, and $ v_t$, 
which are estimated directly.
\end{enumerate}

\medskip

\noindent \emph{Case (a)}:

\medskip

We estimate
\(
 B_{hl}( \Lambda_{\geq2}(\partial_t(u_{tt})),v_t)  
\)
using \eqref{eq:p2-utt} and the fact  \( b_{hl} \in S^{0,1}\):
\[
\begin{aligned}
  \| B_{hl}( \partial_t\bigr( u_{tt}) \bigl) ,v_t) \|_{L^2}
   &
    \lesssim
    \| \partial_t  u_{tt}  \|_{W^{1,\infty}}
    \|  v_t \|_{L^2} 
    \lesssim
    \A_0 \A_3
      \| v[t] \|_{H^1 \times L^2} .
\end{aligned}
\]

\medskip

\noindent \emph{Case (b)}:

We use paraproduct estimates~\eqref{CM} to move one derivative from the low-frequency factor \( v\) onto \(u\) and use Moser estimates~\eqref{bmo-hs}.
\[
\begin{aligned}
    \| B_{hl}( u_t , \partial_t\bigr( v_{tt}) \bigl)) \|_{L^2}
    & \lesssim 
    \|  B_{hl}( u_t , \partial_t( L^{para}_{KG}v -v_{tt})) \|_{L^2} 
    \\
    & 
    \quad \ 
    +  \|  B_{hl}( u_t , \partial_t( H_{00}(u_t,v_t) + H_{01}(u_t,v)+  H_{10}(u,v_t) + H_{11}(u,v)   ) \|_{L^2} 
    \\
    & \quad \ 
    + \| B_{hl}( u_t, \partial_t F) \|_{L^2} .
\end{aligned}
\]
For the terms in \( \partial_t( L^{para}_{KG}v -v_{tt})) \), we again estimate them using commutator structure:
\[
\begin{aligned}
   & \|   B_{hl}(u_t, \partial_t T_{\Lambda_{\geq 1} g^{\alpha\beta} } \partial_\alpha\partial_\beta v) -T_{ \Lambda_{\geq 1} g^{\alpha\beta} } \partial_\alpha\partial_\beta B_{hl}(u_t,v_t) \|_{L^2} 
    \\
    & \quad \
     + 
     \|  B_{hl}(u_t, \partial_t ( T_{ \Lambda_{\geq 1}\Tilde{F}^{\gamma,lin}} \Tilde{\partial_{\gamma}} v  )  )  - 
     T_{ \Lambda_{\geq 1}\Tilde{F}^{\gamma,lin}} \Tilde{\partial_{\gamma}} B_{hl}(u_t,v_t)\|_{L^2} 
     \\
    &\lesssim 
    \A_0 \A_3 \| v[t] \|_{H^1 \times L^2}.
\end{aligned}
\]
For the bilinear terms, we take \( H_{00}(u_t,v_t)\) as an example:
\[
\begin{aligned}
    \|  B_{hl}( u_t , \partial_t( H_{00}(u_t,v_t)  ) \|_{L^2}  
    & 
     \lesssim 
     \|  B_{hl}( u_t ,  H_{00}(u_{tt},v_t)   \|_{L^2}  
     +  \|  B_{hl}( u_t , H_{00}(u_t,v_{tt}) \|_{L^2}  
     \\
     &
     \lesssim 
      \|  \partial u \|_{W^{2,\infty}} \| H_{00}(u_{tt},v_t)   \|_{H^{-1}} 
      +
         \|  \partial u \|_{W^{2,\infty}} \| H_{00}(u_{t},v_{tt})   \|_{H^{-1}} 
         \\
    & 
    \lesssim \A_2 \A_1 \|v[t] \|_{H^{1} \times L^2},
\end{aligned}
\]
where, in the last line, we have used the estimates for \(v_{tt}\) in \(H^{-1}\) in \eqref{eq:vtt}.
Now we estimate the \(\partial_t F\):
\[
\begin{aligned}
    \| B_{hl}( u_t, \partial_t F) \|_{L^2}  
    &
    \lesssim 
     \| B_{hl}( u_t, \partial_t \Lambda_2 F) \|_{L^2}  
     +
     \| B_{hl}( u_t, \partial_t \Lambda_{\geq 3} F) \|_{L^2}  
     \\
     &
     \lesssim
     \A_1 \A_2  \|v[t] \|_{H^{1} \times L^2} + \A_2 \A_0 \A_2 \|v[t] \|_{H^{1} \times L^2}.
\end{aligned}
\]

\noindent \emph{Case (c)}:
The expression \(
\Lambda_{\geq3}( B_{hl}( u_{tt},v_{tt}))   
\)
is estimated using \eqref{eq:p2-utt} to control \(u_{tt}\) in \(L^{\infty}\) . For instance, they can be estimated as follows,
\[
\begin{aligned}
   \| \Lambda_{\geq3}( B_{hl}( u_{tt},v_{tt}))   \|_{L^2 } 
   \lesssim \,
   \| B_{lh}(\Lambda_{\ge 2} u_{tt}, v_{tt}) \|_{L^2 }
   +  \| B_{lh}( u_{tt}, \Lambda_{\ge 2} v_{tt}) \|_{L^2 }
\end{aligned}
\]
For the first term, we combine the paraproduct bounds~\eqref{BMO-shift} and~\eqref{B-CM}
with the estimate for $u_{tt}$ in~\eqref{eq:p2-utt-2} and the decomposition/estimates for
$v_{tt}$ in~\eqref{eq:parvtt-split} and~\eqref{eq:pa-vtt-est}.
\[
\begin{aligned}
     \| B_{hl}( \Lambda_{\geq 2 }u_{tt}, v_{tt})  \|_{L^2} 
     & \lesssim \
     \| B_{hl}( \Lambda_{\geq 2 }u_{tt}, v_{tt}^{(-1)})  \|_{L^2} 
     +\| B_{hl}( \Lambda_{\geq 2 }u_{tt}, v_{tt}^{(0)})  \|_{L^2} 
     \\
     &
     \lesssim \
     \| \Lambda_{\geq 2 } u_{tt} \|_{W^{2,\infty}}
     \| v_{tt}^{(-1)} \|_{H^{-1}}
     + 
      \| \Lambda_{\geq 2 } u_{tt} \|_{W^{1,\infty}}
     \| v_{tt}^{(0)} \|_{L^2 }
     \\
     &
     \lesssim \ \A_0 \A_3 
     \| v[t] \|_{H^1 \times L^2}.
\end{aligned}
\]
The second term is estimated analogously. Using the decomposition
$v_{tt}=v_{tt}^{(-1)}+v_{tt}^{(0)}$, we obtain
\[
\begin{aligned}
      \| B_{hl}( u_{tt}, \Lambda_{\geq 2 } v_{tt})  \|_{L^2}
      & \lesssim 
       \| B_{hl}( u_{tt}, \Lambda_{\geq 2 } v_{tt}^{(-1)})  \|_{L^2}
       + \| B_{hl}( u_{tt}, \Lambda_{\geq 2 } v_{tt}^{(0)})  \|_{L^2}
       \\
        & \lesssim 
       \|  u_{tt}\|_{W^{2,\infty}} \| \Lambda_{\geq 2 } v_{tt}^{(-1)})  \|_{H^{-1}}
       +  \|  u_{tt}\|_{W^{3,\infty}} \| \Lambda_{\geq 2 } v_{tt}^{(0)})  \|_{L^2}
       \\
       & 
         \lesssim \A_0 \A_3 
     \| v[t] \|_{H^1 \times L^2}.
\end{aligned}
\]

\smallskip

\noindent \emph{Case (d)}:

\medskip
We estimate these terms directly by using paraproduct estimates as well as the symbol bound \( B^1_{hl} \in S^{0,1}\) and Moser estimates~\eqref{bmo-hs}:
\[
\begin{aligned}
    \|T_{g^{11}} B_{hl}(\partial_x u_t, \partial_x v_t) \|_{L^2}
    \lesssim 
    \A_0 \A_3
    \| v[t] \|_{H^1 \times L^2}   .
\end{aligned}
\]

The rest of the terms can be estimated similarly. Hence the proof is complete.

\end{proof}

\section{Cubic energy estimates for the paradifferential problem}
\label{s:cubic para ee}

In the previous sections we constructed the normal form transformation
and the paradifferential reformulation of the Klein–Gordon equation.
The goal of the present section is to build modified cubic
energies for the paradifferential Klein–Gordon operator $L^{\mathrm{para}}_{KG}$
and to derive cubic estimates for solutions of the associated
linear problem. These estimates will later be applied to the 
study of both the full equation and the linearized equation.

More precisely, in this section we study the inhomogeneous linear paradifferential equation
\begin{equation}\label{paralin}
  L^{para}_{KG} w = f.
\end{equation}

To explain the construction of the energies, we begin with the case \(s=1\).
We first construct the \emph{normal form energy} by substituting the normal form variables of the linearized equation into the linear energy and retaining only the quadratic and cubic terms. The normal forms variables here can also be viewed as the linearization of the normal form transformation in Proposition~\ref{nfl}.
By construction, this energy is cubic, since the quadratic contributions are removed by the normal form transformations.
We then use the symbol expansions from Proposition~\ref{nfl} to rewrite the resulting cubic terms in a more explicit and convenient form.

Since the equation is quasilinear, the energy must also reflect the underlying quasilinear structure so as to cancel the top-order terms produced when differentiating in time.
The \emph{normal form energy} alone does not yet have this property.
We therefore introduce quasilinear corrections, choosing their coefficients by matching with the standard quasilinear energy and momentum densities.
This yields the desired \emph{modified energy} at the level \(s=1\), see \eqref{epara}.

Next, we extend the construction to an arbitrary regularity index \(s\), as carried out in Proposition~\ref{p:para-s}.
The key point is to conjugate the homogeneous paradifferential equation \(L_{KG}^{\mathrm{para}} w = 0\) by \(\jD^{\,s}\), which produces a new paradifferential equation together with lower-order commutator terms.
This falls into the class of paradifferential equations treated in Section~\ref{s:normal-forms extras}, and we use the results there to obtain the required invertibility statement and perturbative bounds.

Finally, we incorporate the source term \(f\) in the inhomogeneous paradifferential equations \(L^{para}_{KG} =f \).
The extra source terms appear when differentiating in time the general energy
at the level \(s\).
We carefully examine the resulting extra terms and thereby prove the main result of this section, Theorem~\ref{paraE-inhom}, which is stated next:

\begin{theorem}\label{paraE-inhom}
    For each  index $s \geq 1$, there exists a modified cubic energy 
    $E^{s,para}(w,w_t)$ 
for the 
    inhomogeneous  paradifferential equation~\eqref{paralin} such that we have
    \begin{enumerate}[label=(\roman*)]
\item the norm  equivalence
\begin{equation}
\label{t:e-equiv}
  E^{s,para}(w,w_t)  \approx_{\A_2}   \Vert (w,w_t)\Vert _{H^{s}\times H^{s-1}}^2,
  \end{equation}
  
\item the cubic energy estimate
\begin{equation}\label{dt-Es}
|\frac{d}{dt}   E^{s,para}(w,w_t)|\lesssim_{\A_2}  
\A_0
\A_3\,  E^{s,para}(w,w_t) + \|f\|_{H^{s-1}} \Vert (w,w_t)\Vert _{H^{s}\times H^{s-1}}.
\end{equation}

\end{enumerate}
\end{theorem}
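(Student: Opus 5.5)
I would follow the three-stage construction outlined just before the statement: first build the energy at $s=1$ for the homogeneous problem, then conjugate by $\jD^{s}$ to reach general $s$, and finally add the source term $f$.

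\emph{Step 1: the case $s=1$, $f=0$.} I start from the linear Klein--Gordon energy $E_0(w)=\frac12\int w_t^2+w_x^2+m w^2\,dx$. I take the normal form variable $\tilde w = w + A_{lh}(u,w)+B_{lh}(u_t,w_t)+C_{lh}(u_t,w)+C_{hl}(w_t,u)$, using the low--high parts of the transformation in Proposition~\ref{nfl}, linearized in the high-frequency entry. I then define $E^{1,nf}$ as the quadratic plus cubic part of $E_0(\tilde w)$, substituting the symbol expansions \eqref{nf-hl} with the explicit leading coefficients \eqref{taylor-0}, \eqref{taylor-1}.

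The terms of $E^{1,nf}$ of the form $\int T_{\partial^j u} w_x\cdot w_x$ and $\int T_{\partial^j u} w_t\cdot w_x$ carry the leading quasilinear content. I would replace them by the exact quasilinear energy and momentum densities, namely $\int -T_{g^{00}}w_t^2 + T_{g^{11}} w_x^2 + m w^2$ together with $T_{g^{01}}$ cross terms. The identities \eqref{lead-symbol} are exactly what guarantees that the coefficients match at quadratic order in $u$. This defines $E^{1,para}$.

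Norm equivalence \eqref{t:e-equiv} at $s=1$ then follows from the $S^{m,n}$ bounds \eqref{B-CM}, since the corrections are bounded by $\A_2\|w[t]\|^2_{H^1\times L^2}$.

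For the time derivative, I compute $\frac{d}{dt}E^{1,para}$ using $L^{para}_{KG}w=0$. The quadratic terms cancel by the normal form identity of Proposition~\ref{nfl}, restricted to low--high interactions. The top-order cubic terms cancel because the energy was built from the quasilinear densities, leaving commutators of the form $T_{\partial g}$, which are controlled by Lemma~\ref{para-com}. Whenever $u_{tt}$, $u_{ttt}$ or $w_{tt}$ appear, I eliminate them using Proposition~\ref{p:infty} and the split \eqref{eq:parvtt-split}, \eqref{eq:pa-vtt-est}. Every remaining cubic term is then bounded by $\A_0\A_3\|w\|^2_{H^1\times L^2}$, with the same estimates as in Lemma~\ref{source}.

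\emph{Step 2: general $s$.} For $s>1$ I set $v=\jD^{s-1}w$. It satisfies $L^{para}_{KG}v = [L^{para}_{KG},\jD^{s-1}]\jD^{1-s}v$. The quadratic part of this commutator consists of low--high paraproducts whose symbols lie in the classes $h_{00}\in S^{1,0}$, $h_{01}\in S^{1,1}$, $h_{10}\in S^{2,0}$, $h_{11}\in S^{2,1}$ required by Lemma~\ref{l-nfl-exist}, because the commutator gains one derivative on $v$. The cubic part serves as the source $F$ and satisfies \eqref{F-sour-q}, \eqref{F-sour-t}.

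Lemma~\ref{l-nfl-exist} then gives a variable $\tilde v$ with no quadratic source. Lemma~\ref{nfl-invert} gives $\|\tilde v[t]-v[t]\|\lesssim \A_2\|v[t]\|$, and Lemma~\ref{source} gives $\|L^{para}_{KG}\tilde v\|_{L^2}\lesssim \A_0\A_3\|v[t]\|$. I would set $E^{s,para}(w)=E^{1,para}(\tilde v)$ and apply the $s=1$ inhomogeneous bound, whose source term is cubic and acceptable. For large $\A_2$ the equivalence holds only with $\A_2$-dependent constants; to avoid needing smallness I would take $\tilde v$ only as a cubic correction inside the energy rather than as an exact change of variables.

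\emph{Step 3: the source $f$.} This adds to $\frac{d}{dt}E^{s,para}$ terms that are linear in $f$. The principal one is $\int \jD^{s-1}f\cdot \jD^{s-1}w_t$, which is bounded by $\|f\|_{H^{s-1}}\|w[t]\|$. The corrections have the form $\int B(u_t,\jD^{s-1}f)\,\jD^{s-1}w_t$, with normal form operators of order at most zero in the high frequency once one derivative is moved, so they are bounded by $\A_2\|f\|_{H^{s-1}}\|w[t]\|$. This gives \eqref{dt-Es}.

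I expect the main obstacle to be Step 1: checking that the quasilinear replacement exactly cancels all terms with two derivatives on $w$ at cubic order. This is where \eqref{lead-symbol} and the Weyl quantization matter. I would first verify it at the level of principal symbols, and then treat the subprincipal (the $a_1,b_1,c_1$) terms through the commutator lemmas.
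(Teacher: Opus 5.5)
Your three-stage architecture is the paper's, and Step 2 (conjugation by $\jD^{s-1}$, then Lemmas~\ref{l-nfl-exist}, \ref{nfl-invert}, \ref{source}) matches it. Two steps fail as written, however.

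\emph{Step 1: the quasilinear completion is misidentified.} After expanding $\Lambda_{\le 3}E(w_{\NF})$, the principal part is $\frac12\int w_t^2+w_x^2+mw^2+T_{\kappa_0}w_t\cdot w_t+T_{\kappa_1}w_t\cdot w_x+T_{\kappa_2}w_x\cdot w_x\,dx$. The three paracoefficients are dictated by the normal form. They are not the cubic coefficients of the plain quasilinear energy and momentum. Already in the semilinear case $g^{\alpha\beta}\equiv\eta^{\alpha\beta}$, those densities have no cubic part at all, yet $\kappa_0=\kappa_2$ and $\kappa_1$ are nonzero as soon as $f_{u_tu_t}$ or $f_{u_tu}$ is (through $a_1,c_1^1,b_1,c_1^2$). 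The identity \eqref{lead-symbol} does not make the coefficients equal to those of $g$. It only fixes the \emph{difference} $\kappa_2-\kappa_0$ in terms of $\Lambda_1(g^{11})$, see \eqref{coefficient}. Suppose you replace the $w_x\cdot w_x$ and $w_t\cdot w_x$ terms as you propose while keeping $T_{\kappa_0}w_t\cdot w_t$. Then the cubic part of the energy changes, and $\Lambda_{\le 3}\frac{d}{dt}E$ no longer vanishes. Worse, it contains the unbounded term $\int T_{\kappa_0}w_t\cdot w_{xx}\,dx$; in the normal form energy this term is cancelled by the $\kappa_0$ portion of $T_{\kappa_2}w_x\cdot w_x$. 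The correct reading of \eqref{coefficient} is that the principal normal form energy is the cubic truncation of the $u$-dependent combination $(1+\kappa_0)T^{00}+\kappa_1T^{01}$. Removing the truncation gives \eqref{epara}, $\frac12\int w_t\cdot T_{1+\kappa_0}w_t+T_{1+\kappa_0}w_x\cdot T_{g^{11}}w_x+w_t\cdot T_{\kappa_1}w_x-T_{\kappa_1}w_x\cdot T_{g^{01}}w_x\,dx$. It is for this energy that your commutator argument for the quartic top-order terms goes through.

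\emph{Step 3: the energy still contains $w_{tt}$.} Your normal form variable $\tilde v$ contains $B_1(u_t,\jD^{s-1}w_t)$ and $D_1(u,\jD^{s-1}w_t)$. Hence $\partial_t\tilde v$, and with it $E^{1,para}(\tilde v,\partial_t\tilde v)$, contains $B_1(u_t,\jD^{s-1}w_{tt})$ and $D_1(u,\jD^{s-1}w_{tt})$. For solutions of $L^{para}_{KG}w=f$ this puts $f$ into the energy itself, so it is no longer a functional of $(w,w_t)$ alone. Its time derivative then contains $B_1(u_t,\jD^{s-1}\partial_t f)$, which is not controlled by $\|f\|_{H^{s-1}}$. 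Equivalently, if you apply the $s=1$ inhomogeneous bound to $\tilde v$, the source for $\tilde v$ contains $\partial_t f$. Your accounting, in which only terms linear in $f$ such as $\int B(u_t,\jD^{s-1}f)\,\jD^{s-1}w_t$ appear, misses this.

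The missing step is \eqref{epara-s}. In the energy, replace $w_{tt}$ by the expression $w_{[tt]}$ dictated by the homogeneous equation. Then $E^{s,para}(w,w_t)=E^{1,para}(\tilde w^{s-1},\hat{\tilde w}^{s-1}_t)$ is a genuine bilinear form in $(w,w_t)$, and it coincides with your energy on homogeneous solutions. The source now enters only through $\partial_t w_t$, wherever $w_t$ appears in $\hat{\tilde w}^{s-1}_t=K_0^{s-1}w_t+K_1^{s-1}w$. This includes the occurrence inside $w_{[tt]}$ via $2T_{g^{01}}\partial_x w_t$. You must then check $\|K_0^{s-1}f\|_{L^2}\lesssim\|f\|_{H^{s-1}}$; the extra derivative in $B_1(u_t,\jD^{s-1}T_{2g^{01}}\partial_x f)$ is absorbed using the symbol classes from Lemma~\ref{l-nfl-exist}.

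Finally, your closing remark in Step 2 about using $\tilde v$ only as a cubic correction is unnecessary. The norm equivalence is only used for $\A_2\ll 1$, which is the regime of the bootstrap.
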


\begin{remark}
Here, we view the energy as a bilinear form acting on \((w, w_t) \). It then becomes clear  that the energy depends on the full Cauchy data at a given time, a point that will be important later. Moreover, the bilinear formulation makes it easier to modify the energy estimates in order to incorporate the effect of source terms.
\end{remark}

The main step in the proof of the above theorem
is to construct these energies for the homogeneous paradifferential equation without the source terms, for \( s \ge 1\). This is separately stated below in
Proposition~\ref{paraE}. We will separate its proof into two parts. First, we construct the modified energy for the case $s=1$, see Proposition~\ref{p:para-1}; this represents 
the principal part. We then extend it to all $s$ in Proposition~\ref{p:para-s} by a conjugation argument.
The last step is to incorporate the contribution of the source term $f$. This is carried out in the proof of Proposition~\ref{para-source} below.

\begin{proposition} \label{paraE}
For each regularity index $s \geq 1$, there exists a modified cubic energy 
$E^{s,para}(w,w_t)$ for the homogeneous paradifferential equation \eqref{paralin} (with source term $f =0 $) so that we have
\begin{enumerate}[label=(\roman*)]
\item  the norm  equivalence

\begin{equation}\label{ne}
  E^{s,para}(w,w_t)  \approx_{\A_2}   \Vert (w,w_t)\Vert _{H^{s}\times H^{s-1}}^2,
  \end{equation}

\item the cubic energy estimate
\begin{equation}\label{cee}
|\frac{d}{dt}   E^{s,para}(w,w_t)|\lesssim_{\A_2}  \A_0 \A_3\,  E^{s,para}(w,w_t).
\end{equation}
\end{enumerate}

\end{proposition}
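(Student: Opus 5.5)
The plan is to follow the two-step scheme announced before the statement: first build the energy at $s=1$, then reach general $s$ by conjugation.

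\emph{Step 1: the case $s=1$ (Proposition~\ref{p:para-1}).} Start from the linear energy
\[
E_0(w,w_t)=\tfrac12\int w_t^2+w_x^2+m w^2\,dx .
\]
Linearize the normal form transformation of Proposition~\ref{nfl} around $u$, keeping only the low--high part with $w$ at high frequency:
\[
\tilde w = w + 2A_{lh}(u,w)+2B_{lh}(u_t,w_t)+C_{lh}(u_t,w)+C_{hl}(w_t,u).
\]
Substitute this into $E_0$ and keep the quadratic and cubic terms. This gives the normal form energy $E^{nf}=E_0+E^{[3]}$, whose time derivative has vanishing cubic part at the level of the linear flow. Expand $E^{[3]}$ with the symbol expansions \eqref{nf-hl} and the coefficients \eqref{taylor-0}, \eqref{taylor-1}. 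The leading terms have the form $\int T_{a_0(D)u}w_x\,w_x$ plus terms with one derivative less; the unbounded pieces $a_0\xi_2$ and $c_0^1\xi_2$ are paired against $w$ so that each density is quadratic in $(w_x,w_t,w)$ with coefficients that are derivatives of $u$ up to order $2$. This is where the norm equivalence $\approx_{\A_2}$ comes from.

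Because the flow is quasilinear, $E^{nf}$ alone does not cancel the top-order terms $T_{g^{\alpha\beta}}\partial_\alpha\partial_\beta$ when differentiated. We therefore replace the leading quadratic-in-$w$ densities by the quasilinear energy/momentum densities, namely $-\tfrac12 T_{g^{00}}w_t^2+\tfrac12 T_{g^{11}}w_x^2$ and the like. The coefficients are fixed by matching with the identities \eqref{lead-symbol}. Those identities show that the linearized normal-form corrections $a_0\xi+b_0(\xi^2+m)$ and $c_0^1\xi-c_0^2$ reproduce exactly $\tfrac14 g^{11}_u+\tfrac i4 g^{11}_{u_x}\xi$ and $\tfrac12 g^{11}_{u_t}$, the linear parts of the metric coefficients. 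The resulting $E^{1,para}$ agrees with $E^{nf}$ up to quartic terms. It is also an exact quasilinear energy at top order, up to cubic errors.

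\emph{Step 2: the energy estimate at $s=1$.} Differentiate $E^{1,para}$ in time using $L^{para}_{KG}w=0$. The quadratic part of $dE/dt$ vanishes because $\Delta\neq 0$, i.e.\ there are no quadratic resonances. The top-order cubic terms cancel by the quasilinear matching, through commutator bounds (Lemma~\ref{para-com}) and symmetry of Weyl-quantized paraproducts. The remaining cubic terms contain $u_{tt}$, $u_{ttt}$ and $w_{tt}$. Eliminate these with Proposition~\ref{p:infty} and the $H^{-1}\oplus L^2$ splitting of $w_{tt}$ as in \eqref{eq:pa-vtt-est}, and bound them by \eqref{B-CM}, \eqref{BMO-shift} and Moser estimates by $\A_0\A_3\|w[t]\|^2_{H^1\times L^2}$. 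Quartic and higher terms are handled the same way.

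\emph{Step 3: general $s$ (Proposition~\ref{p:para-s}).} Set $v=\jD^{s-1}w$. Conjugating gives $L^{para}_{KG}v=[L^{para}_{KG},\jD^{s-1}]\jD^{1-s}v$. The commutator has principal part $T_{\partial g}\partial^2$, which produces quadratic low--high terms with symbols in the classes $h_{00}\in S^{1,0},\dots,h_{11}\in S^{2,1}$, plus a perturbative $F$. Apply Lemma~\ref{l-nfl-exist} to obtain a normal form variable $\tilde v$. Lemma~\ref{nfl-invert} gives $\tilde v\approx v$ in $H^1\times L^2$ up to $\A_2$, and Lemma~\ref{source} shows that $L^{para}_{KG}\tilde v$ is a cubic source of size $\A_0\A_3$. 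Define
\[
E^{s,para}(w,w_t)=E^{1,para}(\tilde v,\tilde v_t).
\]
Then \eqref{ne} and \eqref{cee} follow from Step 2, in its inhomogeneous form, with $f$ the cubic source.

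\emph{Main obstacle.} The hardest point is the quasilinear matching in Step 1. One has to verify that the cubic terms of $E^{nf}$ with unbounded symbols coincide, up to bounded cubic corrections, with the paradifferential quasilinear densities, so that no top-order cubic term survives in $dE/dt$. I would check this first at the symbol level using \eqref{lead-symbol}, then make the computation robust via Lemma~\ref{l:para-assoc}. A secondary difficulty is that some lower-order cubic terms need $\A_3$; this determines the threshold regularity.
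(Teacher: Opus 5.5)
Your proposal follows the paper's argument essentially step by step. The steps are: the low--high linearized normal form energy, rewritten with paracoefficients $\kappa_0,\kappa_1,\kappa_2$; its correction to the paradifferential combination $(1+\kappa_0)T^{00}+\kappa_1T^{01}$, justified by \eqref{lead-symbol} (i.e.\ $\kappa_2-\kappa_0=\frac12\Lambda_1(g^{11})$), so that the top-order quartic terms of $\frac{d}{dt}E$ have commutator structure; and then conjugation by $\jD^{s-1}$ together with Lemmas~\ref{l-nfl-exist}, \ref{nfl-invert}, \ref{source} and the inhomogeneous $s=1$ bound.

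There is one bookkeeping slip in Step 2, where your homogeneity count shifts relative to Step 1. What $\Delta\neq 0$ kills is the cubic part of $\frac{d}{dt}E$, as you correctly say in Step 1. Everything you then bound by $\A_0\A_3\|w[t]\|^2_{H^1\times L^2}$ must be quartic, since a surviving cubic term would only be controlled by a single factor $\A_3$.
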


For some intuition we now briefly review the energy-momentum tensor 
associated to linear wave equations of the form 
\[
g^{\alpha\beta} \partial_\alpha \partial_\beta w = 0.
\]
In the constant-coefficient case one has the local conservation laws
\[
\partial_\alpha T^{\alpha\beta}(w)=0,
\]
with  the energy-momentum tensor
$T^{\alpha\beta}$ defined by  
\begin{equation}\label{eq:stress-energy}
T_{\alpha\beta} [w]
:= \partial_{\alpha}w\,\partial_{\beta}w
-\frac12\,g_{\alpha\beta}\Big(g^{\mu\nu}\partial_{\mu}w\,\partial_{\nu}w\Big),
\end{equation}
where the indices are raised with respect to the metric $g$, and in particular $\partial^{\alpha}:=g^{\alpha\gamma}\partial_{\gamma}$.
In the variable-coefficient case, this is no longer an exact conservation law, instead we have additional (bounded) source terms
involving derivatives of the coefficients  \( \nabla g\).

In particular, recalling  that we assume $g^{00}=-1$, the energy type density is given by:
\begin{align} \label{eq:e00}
    T^{00} (w) := g^{0\alpha} T_{\alpha 0 } = & \partial_0 w\partial^0 w - \frac{1}{2} \delta^0_0(g^{\alpha\beta}\partial_\alpha\partial_\beta w) 
    \\ \notag
    = & - \frac{1}{2}  w_t^2 - \frac{1}{2}g^{11}w_x^2,
\end{align}
and the momentum type density is 
\begin{align}\label{eq:e01}
    T^{01}(w) : = g^{0\alpha} T_{\alpha 1} = & \partial^0 w\partial_1 w - \frac{1}{2} \delta^0_1(g^{\alpha\beta}\partial_\alpha\partial_\beta w) 
    \\ \notag
    = & w_t w_x -g^{01} w_x^2.
\end{align}

Then it is natural that a general choice for a linear energy density should be 
a linear combination of the two, of the form
\begin{align*}
    E^{1} (w) 
    = &  \int (1+c_0) T^{00} + c_1 T^{01} \, dx,
\end{align*}
with reasonably regular coefficients $c_0$ and $c_1$.
In the context of the paradifferential equation, the above coefficients
$c_0$ and $c_1$, as well as all the coefficients in 
$T^{00}$ and $T^{01}$, should be considered with
the usual paradifferential interpretation. 
Hence, in what follows, when constructing quasilinear corrections to the normal form energy, we will use these densities as a guide for matching the appropriate coefficients.

\smallskip

We begin our analysis with the base case $s = 1$:

\begin{proposition}\label{p:para-1}
    There exists a modified cubic energy $E^{1,para}(w,w_t)$ such that the norm equivalence \eqref{ne} and the cubic energy estimate \eqref{cee} hold.
\end{proposition}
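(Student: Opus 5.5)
We will construct $E^{1,para}$ in three stages: a normal form energy, quasilinear corrections, and then the cubic estimate. Throughout, the degree-two part of $L^{para}_{KG}$ is $-T_{\Lambda_1 g^{\alpha\beta}}\partial_\alpha\partial_\beta - T_{\Lambda_1 \tilde F^{\gamma,lin}}\tilde\partial_\gamma$.

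\emph{Stage 1: the normal form energy.} We view the paradifferential equation $L^{para}_{KG}w=0$ as $L_{KG}w = \Lambda_{\ge 2}$ terms, whose quadratic part is of low--high type. Applying Lemma~\ref{l-nfl-exist}, or equivalently linearizing the transformation of Proposition~\ref{nfl} and keeping only its low--high part, gives
\[
\tilde w = w + A_{lh}(u,w) + B_{lh}(u_t,w_t) + C_{lh}(u_t,w) + C_{hl}(w_t,u).
\]
We insert this into the linear energy $\frac12\int \tilde w_t^2 + \tilde w_x^2 + m\tilde w^2\,dx$ and keep only the quadratic and cubic terms. This gives
\[
E_{NF}(w,w_t) = E_0(w,w_t) + E_3(u;w,w_t).
\]
Here $E_3$ is a trilinear form whose time derivative, computed with the linear flow, exactly cancels the quadratic-in-$u$ part of $\frac{d}{dt}E_0$. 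We then use the expansions \eqref{nf-hl} with the explicit coefficients \eqref{taylor-0}, \eqref{taylor-1} to rewrite $E_3$. Its leading part involves $\int T_{a_0(D)u}\, w_x\, w$-type and $\int T_{b_0(D)u_t}\, w_t\, w_t$-type expressions, which after integration by parts carry one unbalanced derivative on $w$. The identities \eqref{lead-symbol} show that these leading pieces combine into $\frac14\int T_{\Lambda_1 g^{11}} w_x^2$ and $\frac12\int T_{\Lambda_1 g^{01}} w_t w_x$-like terms. These are precisely the linear-in-$u$ parts of $T^{00}$ and $T^{01}$ from \eqref{eq:e00}, \eqref{eq:e01}.

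\emph{Stage 2: quasilinear corrections.} We define
\[
E^{1,para} = \int T_{1+c_0} T^{00}(w) + T_{c_1}T^{01}(w)\,dx \;+\; E_3^{\flat},
\]
all in Weyl paradifferential quantization. The coefficients $c_0,c_1$ are chosen as functions of $u,u_t$, matching the linear part of $E_3$'s leading terms, and $E_3^\flat$ is the remaining lower-order part of $E_3$. The point is that the full nonlinear coefficients $g^{\alpha\beta}(u,\partial u)$ replace their linearizations. Norm equivalence \eqref{ne} then follows from smallness of $\A_0$ for the leading terms and from \eqref{B-CM} for $E_3^\flat$: the latter is bounded by $\A_2\|w[t]\|^2_{H^1\times L^2}$, since its symbols lie in $S^{2,0}$-type classes once the extra derivative is paired off.

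\emph{Stage 3: the cubic estimate.} We differentiate in time and split $\frac{d}{dt}E^{1,para}$ into three groups.
\begin{enumerate}[label=(\roman*)]
\item \emph{Quadratic terms.} These vanish by the normal form construction together with the matching of Stage 2.
\item \emph{Top-order cubic terms with two derivatives on $w$.} These come from $T_{g}\partial^2 w$ acting on the quasilinear energy. They cancel up to commutators via the usual energy-momentum calculation; the errors are $T_{\partial g}$-terms handled by Lemma~\ref{para-com} and the Weyl symmetry. Their cubic parts carry a factor $\Lambda_{\ge1}$, which gives $\A_0\A_3$.
\item \emph{Remaining cubic terms.} These are the time derivatives of $E_3^\flat$ together with the $\Lambda_{\ge 2}$ parts of the equation fed into $E_3$. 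We estimate them exactly as in Lemma~\ref{source}: substitute $u_{tt}$, $u_{ttt}$ via Proposition~\ref{p:infty}, split $w_{tt}=w_{tt}^{(-1)}+w_{tt}^{(0)}$ as in \eqref{eq:pa-vtt-est}, and use \eqref{BMO-shift} to move one derivative to the low-frequency factor.
\end{enumerate}

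The main obstacle is Stage 2: verifying that the unbounded cubic terms in $E_3$, after symmetrization, exactly coincide with quasilinear energy/momentum corrections, so that the leftover has no derivative loss on $w$. This relies on the identities \eqref{lead-symbol} and on the reality/parity properties of $a_0,b_0,c_0^{1},c_0^{2}$. I would first verify this symbol-by-symbol at the level of the $\xi_2$-leading coefficient, then check that the next-order terms $a_1,b_1,c_1^{1},c_1^{2}$ produce only expressions of the form $\int T_{\partial^{\le 3}u}\, w\,\partial w$. The time derivative of such terms costs at most $\A_3$, giving \eqref{cee}.
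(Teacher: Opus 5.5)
Your proposal follows the paper's proof essentially step for step: the low--high linearized normal form energy, its splitting into the paracoefficient part $\int T_{\kappa_0}w_t w_t + T_{\kappa_1}w_t w_x + T_{\kappa_2}w_x w_x$ plus lower-order $L_{lhh}$ terms (Lemma~\ref{l:lot}), the quartic completion $\frac12\int w_t T_{1+\kappa_0}w_t + T_{1+\kappa_0}w_x T_{g^{11}}w_x + w_t T_{\kappa_1}w_x - T_{\kappa_1}w_x T_{g^{01}}w_x$ obtained by matching to $T^{00}$ and $T^{01}$, and the top-order cancellation via integration by parts and Lemma~\ref{para-com} (Lemma~\ref{l:main}). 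Two small corrections are needed. First, \eqref{lead-symbol} does not make the cubic leading terms equal to the linear parts of $T^{00},T^{01}$ as your Stage 1 says; it only fixes the difference $\kappa_2-\kappa_0=\frac12\Lambda_1(g^{11})$, while $\kappa_0,\kappa_1$ stay generic and become your coefficients $c_0,c_1$, exactly as in your Stage 2. Second, these coefficients involve $\partial^{\le 3}u$, so the norm equivalence requires smallness of $\A_2$ rather than $\A_0$.
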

\begin{proof}

The computation below is based on the normal form for the linearized equation~\eqref{eq-lin}, which can be obtained by linearizing the normal form transformation for the full quadratic equation in Lemma~\ref{nfl}.
Concretely, the linearized normal form transform can be written
\[
w_{\NF} : = w+ A_{lh}(u,w) + B_{lh} (u_t,w_t) + C_{lh}(u_t,w) +C_{hl}(w_t,u).
\]
Substituting the normal form transformation into the linear energy and keeping only the quadratic and cubic contributions, we arrive at the paradifferential normal form energy that serves as our starting point
\begin{equation}\label{NF energy}
\begin{aligned} 
    E^{1,para}_{\NF} (w,w_t) := & \ \Lambda_{\leq 3} E(w_{\NF})
 \\   
    =  & \ \Lambda_{\leq 3} \biggr( E( w+ A_{lh}(u,w) + B_{lh} (u_t,w_t) + C_{lh}(u_t,w) +C_{hl}(w_t,u)) \biggl)
    \\ 
    = & \  \frac{1}{2} \!
    \int  \! w_t^2 + w_x^2 + m w^2 + 4 w_t \partial_t A_{lh}(u,w) + 4w_x \partial_x A_{lh}(u,w) + 4m w A_{lh}(u,w)  \!\!\!\!
    \\ 
    & \ + 4w_t \partial_tB_{lh}(u_t,w_t) 
     + 4 w_x \partial_xB_{lh}(u_t,w_t) + 4 m w B_{lh}(u_t,w_t) 
\\ 
& + 2 w_t \partial_t C_{lh}(u_t,w) + 2 w_x \partial_x C_{lh}(u_t,w) 
+ 2m w C_{lh}(u_t,w)  
 \\ 
 &+ 2 w_t \partial_t C_{hl}(w_t,u) + 2 w_x \partial_x C_{hl}(w_t,u) 
+ 2m w C_{hl}(w_t,u)
   \, dx    .
\end{aligned}
\end{equation}

By construction, the time derivative of the above normal form energy contains no cubic contributions,
\begin{equation}
\label{dt-enf}
\begin{aligned}
    \Lambda_{\leq 3} \frac{d}{dt}E^{1,para}_{\NF}(w,w_t)  
    &\  = 
     \ \Lambda_{\leq 3} \frac{d}{dt}E(w_{\NF}) \\
    &\ = \ \Lambda_{\leq 3} \int w_{\NF} L_{KG} w_{\NF} \,dx \\
    & \ = \int w_{\NF} \Lambda_{\leq 2}( L_{KG} w_{\NF} ) \,dx
      =0.
\end{aligned}
\end{equation}

Next, we further expand the energy. Using Lemma~\ref{nfl}, we decompose
$A_{lh}(u,w)$, $B_{lh}(u_t,w_t)$, $C_{lh}(u_t,w)$, and $C_{hl}(w_t,u)$ as follows: 

\[
\left\{
\begin{aligned}
    &A_{lh}(u,w) =  T_{a_0(D) u} Dw + T_{a_1(D) u} w + \widetilde{A}_{lh}(u,w), \\
   & B_{lh}(u_t,w_t) =  T_{b_0(D) u_t} w_t + T_{b_1(D)u_t} D^{-1} w_t + \widetilde{B}_{lh}(u_t,w_t) , \\
  &  C_{lh} (u_t,w) =  T_{c_0^1(D) u_t} D w + T_{c_1^1(D) u_t} w 
    + \widetilde{C}_{lh}(u_t,w) , 
    \\
  &  C_{hl} (w_t,u) =  T_{c_0^2(D) u} w_t + T_{c_1^2(D) u} D^{-1} w_t 
    + \widetilde{C}_{hl}(w_t,u) , 
\end{aligned}
\right.
\]
where we carefully note that all terms in the above expansions are real valued functions. Here $\Tilde A_{lh}(u,w)$, $\Tilde B_{lh}(u_t,w_t)$, $\Tilde C_{lh}(u_t,w)$, and
$\Tilde C_{hl}(w_t,u)$ denote remainder operators whose bilinear symbols satisfy, respectively, in the low--high regions,
\[
\begin{aligned}
\widetilde a_{lh} &= O\big((1+|\xi_1|^4)\xi_2^{-1}\big),\qquad
\widetilde b_{lh} &= O\big((1+|\xi_1|^3)\xi_2^{-2}\big),\\
\widetilde c_{lh} &= O\big((1+|\xi_1|^3)\xi_2^{-1}\big),\qquad
\widetilde c_{hl} &= O\big((1+|\xi_2|^4)\xi_1^{-2}\big),
\end{aligned}
\]
or in terms of symbol classes,
\[
\ \widetilde a_{lh} \in S^{4,-1},
\quad \widetilde b_{lh} \in S^{3,-2},
\quad \widetilde a_{lh} \in S^{3,-1},
\quad \widetilde b_{lh} \in S^{4,-2}.
\]
     We begin by extracting the paradifferential coefficient of the $w_t\cdot w_t$ terms. We start  with the term
 
 \[
4\int \Big( w_t A_{lh}(u,w_t) + w_t B_{lh}(u_{xx}-mu,w_t) \Big)\,dx .
\]
Using the decompositions in Lemma~\ref{nfl}  and substituting $D=-i \partial_x$ we obtain
\[
\begin{aligned}
&4\int \Big( -T_{ia_0(D)u}\partial_x w_t \cdot w_t + T_{a_1(D)u}w_t\cdot w_t
+ T_{b_0(D)(u_{xx}-mu)}w_t\cdot w_t 
\\
&\qquad \ + T_{i b_1(D)(u_{xx}-mu)}\partial_x^{-1}w_t\cdot w_t \Big)\,dx 
 + 4\int \Big( w_t \widetilde A_{lh}(u,w_t) + w_t \widetilde B_{lh}(u_{xx}-mu,w_t) \Big)\,dx .
\end{aligned}
\]
Here the para-coefficient $ia_0(D)u$ is real
valued, so  by integrating by parts (or equivalently, using the self-adjoint property of $T_{ia_0(D)u}$) we may rewrite
\[
\int - T_{ia_0(D)u}\partial_x w_t\cdot w_t\,dx
= \frac12\int T_{i \partial_x a_0(D)u}w_t\cdot w_t\,dx
= - \frac12\int T_{D a_0(D)u}w_t\cdot w_t\,dx,
\]
and hence the above contribution can be expressed as a bounded bilinear form in $(w_t,w_t)$ plus lower-order remainders.

Next we extract the $w_t\cdot w_t$ contribution coming from $C_{lh}(u_t,w)$ and $C_{hl}(w_t,u)$:

\[
2\int \Big( w_t C_{lh}(u_t,w_t) + w_t C_{hl}(w_t,u_t) \Big)\,dx .
\]
Expanding again by Lemma~\ref{nfl} yields
\[
\begin{aligned}
&2\int w_t\Big( -T_{ic_0^1(D)u_t}\partial_x w_t + T_{c_1^1(D)u_t}w_t + \widetilde C_{lh}(u_t,w_t)\Big)\,dx 
\\
&
+2\int w_t\Big( T_{c_0^2(D)u_t}w_t + T_{i c_1^2(D)u_t}\partial_x^{-1}w_t + \widetilde C_{hl}(w_t,u_t)\Big)\,dx .
\end{aligned}
\]
In the first term the paracoefficient $ic_0^1(D)u_t$ is real valued and the corresponding paraproduct is selfadjoint, therefore we can 
integrate by parts as before, obtaining
\[
2\int - w_t T_{ic_0^1(D)u_t}\partial_x w_t \, dx
= - \int w_t T_{Dc_0^1(D)u_t} w_t \, dx.
\]

Collecting  all the  $w_t\cdot w_t$ contributions, we may summarize the above computations as
\[
\int T_{\kappa_0(u)}w_t\cdot w_t\,dx + I_1,
\]
where the real paracoefficient $\kappa_0(u)$ is given by
\[
\begin{aligned}
    \kappa_0(u)
&:= 4\Big(-\frac12 a_0(D)Du_t + a_1(D)u + b_0(D)(u_{xx}-mu)\Big)
\\
&\quad \ + 2\Big(-\frac{1}{2} Dc_0^1(D)u_{t} + c_1^1(D)u_t + c_0^2(D)u_t\Big),
\end{aligned}
\]
and $I_1$ collects the remainder terms, namely
\[
\begin{aligned}
I_1 :=\;& 4\int \Big( w_t\widetilde A_{lh}(u,w_t) + w_t\widetilde B_{lh}(u_{xx}-mu,w_t)
+ T_{b_1(D)(u_{xx}-mu)}D^{-1}w_t\cdot w_t \Big)\,dx \\
&\quad + 2\int \Big( w_t\widetilde C_{lh}(u_t,w_t) + w_t\big(T_{c_1^2(D)u_t}D^{-1}w_t + \widetilde C_{hl}(w_t,u_t)\big)\Big)\,dx,
\end{aligned}
\]
which has the form 
\[
I_1 = \int w_t L_{lh}( \partial^{\le 3}_x \partial u, \partial_x^{-1} w_t)\, dx.
\]

\bigskip

Next, we extract the paradifferential coefficient of the $w_t\cdot w_x$ terms. 
To this end, we consider the contribution
\[
\begin{aligned}
  &  4\int \Big(
w_t A_{lh}(u_t,w)
+ w_t B_{lh}(u_t,w_{xx}-mw)
+ w_x B_{lh}(u_{tx},w_t) 
\\
&
\quad \quad + w_x B_{lh}(u_t,w_{tx}) 
+ m w\, B_{lh}(u_t,w_t)
\Big)\,dx .
\end{aligned}
\]
Applying the expansions from Lemma~\ref{nfl} yields
 
  \[
 \begin{aligned}
  = & 
     4 \int T_{a_0(D) u_t} Dw \cdot w_t 
     + T_{a_1(D) u_t} w \cdot w_t 
     + T_{b_0(D) u_t} (w_{xx}-mw) \cdot w_t \\
     &
    \qquad  + T_{b_1(D) u_t}D^{-1}(w_{xx} -mw) \cdot w_t  
     + T_{b_0(D) u_{tx}} w_t \cdot w_x  
      + T_{b_1(D) u_{tx}}D^{-1} w_t \cdot w_x  
      \\
      &
      \qquad   + T_{b_0(D) u_t} w_{tx} \cdot w_x 
      + T_{b_1(D) u_t}D^{-1} w_{tx} \cdot w_x  \\
    &
     \qquad   + mT_{b_0(D) u_t} w_t \cdot w 
      + mT_{b_1(D) u_t} D^{-1} w_t \cdot w
    \, dx\\
    & \qquad   + 4 \int  w_t (\widetilde{A}_{lh}(u_t,w) 
      + \widetilde{B}_{lh}(u_t,w_{xx}-mw) )
      + w_x (\widetilde{B}_{lh}(u_{tx},w_t) 
      \\
      &
      \qquad   + \widetilde{B}_{lh} (u_t,w_{tx})) 
      + m w \widetilde{B}_{lh}(u_t,w_t)
    \, dx . \\
 \end{aligned}
 \]
 
 We now compute the contribution arising from the \(C_{lh}(u_t,w)\) and \( C_{hl}(w_t,u)\), namely
\[
2\int \Big(
w_t C_{lh}(u_{xx}-mu,w)
+ w_t C_{hl}(w_{xx}-mu,u)
+ w_x\partial_x C_{hl}(w_t,u)
+ m w\, C_{hl}(w_t,u)
\Big)\,dx .
\]
Expanding $C_{lh}$ and $C_{hl}$ according to Lemma~\ref{nfl}, we obtain
 \[
 \begin{aligned}
 & 2 \int w_t \bigr( T_{c_0^1(D)(u_{xx} - mu)} D w + T_{c_1^1(D) (u_{xx} -mu)} w 
    + \widetilde{C}_{lh}(u_{xx} -mu ,w)  \bigl)  
    \\
    &
    \qquad   + w_t \bigr( T_{c_0^2(D) u} (w_{xx} - mw) + T_{c_1^2(D) u} D^{-1} (w_{xx} - mw)
    + \widetilde{C}_{hl}(w_{xx} - mw,u)  \bigl) 
     \\
     & 
     \qquad   + w_x \partial_x \bigr( T_{c_0^2(D) u} w_t + T_{c_1^2(D) u} D^{-1} w_t 
    + \widetilde{C}_{hl}(w_t,u)    \bigl)
    \\
    & 
     \qquad   + 
      m  w \bigr( T_{c_0^2(D) u} w_t + T_{c_1^2(D) u} D^{-1} w_t 
    + \widetilde{C}_{hl}(w_t,u)  \bigl) \, dx.
 \end{aligned}
 \]
Collecting the principal $w_t\cdot w_x$ terms, we can write the above contribution as
 \[
     \int T_{\kappa_1(u)} w_t \cdot w_x \, dx 
     + I_2,
 \]
 where
 \[
 \begin{aligned}
     \kappa_1(u) := &  4\bigr(- i a_0(D)u_t 
     + 2 i b_1(D)u_t \bigl) 
     + 2\bigr( 
     - i c_0^1(D) (u_{xx}-mu)
     + 2i c_1^2(D) u 
     \bigl) ,
 \end{aligned}
 \]
 and $I_2$ collects the remaining terms displayed above:
 \[
 \begin{aligned}
   I_2 : = & \
   4 \int T_{a_1(D) u_t} w \cdot w_t -m T_{b_0(D) u_t} w \cdot w_t
   -m T_{b_1(D) u_t} D^{-1}(w) \cdot w_t
   \\
   &
   \qquad  + T_{b_1(D) u_{tx}}D^{-1} w_t \cdot w_x  
    + mT_{b_0(D) u_t} w_t \cdot w 
    + mT_{b_1(D) u_t} D^{-1} w_t \cdot w \, dx \\
   &
    \qquad   + 2\int  w_t (\widetilde{A}_{lh}(u_t,w) 
      + \widetilde{B}_{lh}(u_t,w_{xx}-mw) )
      + w_x (\widetilde{B}_{lh}(u_{tx},w_t) 
      + \widetilde{B}_{lh} (u_t,w_{tx}))  
      \\
      &
     \qquad   + m w \widetilde{B}_{lh}(u_t,w_t)  \, dx  
    +  2 \int w_t \bigr(  T_{c_1^1(D) (u_{xx} -mu)} w 
    + \widetilde{C}_{lh}(u_{xx} -mu ,w)  \bigl) 
    \\
    &
     \qquad   + w_t \bigr( T_{c_0^2(D) u} ( - mw) 
     + T_{c_1^2(D) u} D^{-1} ( - mw)
    + \widetilde{C}_{hl}(w_{xx} - mw,u)  \bigl) 
    \\
    &
    \qquad    + w_x \partial_x  \widetilde{C}_{hl}(w_t,u)   
      + m  w \bigr( T_{c_0^2(D) u} w_t + T_{c_1^2(D) u} D^{-1} w_t 
    + \widetilde{C}_{hl}(w_t,u)  \bigl) \, dx,
 \end{aligned}
 \]
  which has the form
 \[
 I_2 = \int w_t L_{lh} ( \partial^{\le 3}_x \partial u, w)  \, dx.
 \]
Finally, we extract the paradifferential coefficient of the $w_x\cdot w_x$ terms. We begin with 
\[
4\int \Big( w_x\partial_x A_{lh}(u,w) + mw\,A_{lh}(u,w)\Big)\,dx,
\]
and expand $A_{lh}$ using Lemma~\ref{nfl}, which yields the contributions displayed above,
together with remainder terms involving $\widetilde A_{lh}$:
  \[
  \begin{aligned}
      & 4 \int   T_{a_0(D) u_x} Dw\cdot w_x 
     + T_{a_0(D) u} D w_x \cdot w_x 
     + T_{a_1(D)u_x} w_x \cdot w 
     + T_{a_1(D)u} w_x \cdot w_x \\
     &
    \qquad  \qquad    + m T_{a_0(D) u} Dw \cdot w 
     + m T_{a_1(D)u} w \cdot w \, dx \\
     & 
      + 4 \int w_x \widetilde{A}_{lh}(u_x,w) 
      + w_x \widetilde{A}_{lh}(u,w_x)
      + m w \widetilde{A}_{lh}(u,w) \, dx .\\ 
  \end{aligned}
  \]
     We also include the contributions coming from $C_{lh}(u_t,w)$:
   \[
\begin{aligned}
   2  \int w_x \partial_x C_{lh}(u_t,w) + m w C_{lh}(u_t,w)  \, dx 
  &
  = 2 \int w_x \partial_x \bigr(  T_{c_0^1(D) u_t} D w + T_{c_1^1(D) u_t} w 
    + \widetilde{C}_{lh}(u_t,w) \bigl)   \\
    &
    \qquad + m w \bigr(   T_{c_0^1(D) u_t} D w + T_{c_1^1(D) u_t} w 
    + \widetilde{C}_{lh}(u_t,w)\bigl) 
    \, dx.
\end{aligned}
\]
Collecting the principal $w_x\cdot w_x$ terms, we may summarize the above computations as 
\[
\int T_{\kappa_2(u)} w_x \cdot w_x \, dx 
      + I_3 ,
\]
  where 
  \[
  \begin{aligned}
    \kappa_2(u) :& =  4\bigr(-\frac{i}{2} a_0(D) u_x +a_1(D) u \bigl) 
    + 2 \bigr( - \frac{i}{2} c_0^1(D) \partial_x u_t + c_1^1(D)  u_t
    \bigl) ,
\end{aligned}
   \]
and $I_3$ collects the remaining lower-order terms listed above.
\begin{align*}
    I_3 : = & 4 \int  w_x \widetilde{A}_{lh}(u_x,w) 
      + w_x \widetilde{A}_{lh}(u,w_x)
      + m w \widetilde{A}_{lh}(u,w)  \\
      & 
     \qquad   + m T_{a_0(D) u} Dw \cdot w 
      + m T_{a_1(D)u} w \cdot w   \, dx 
      \\
      &
      + 2 \int w_x \partial_x \widetilde{C}_{lh}(u_t,w)  + m w \bigr(   T_{c_0^1(D) u_t} D w + T_{c_1^1(D) u_t} w 
    + \widetilde{C}_{lh}(u_t,w)\bigl)  \,dx,
\end{align*}
which has the form 
\[
I_3 = \int \partial_x^{\leq 1}w L_{lh} (\partial^{\le 3}_x \partial u, w) \, dx.
\]
At this stage, we collect all lower-order contributions involving at most one derivative of $w$ and define
\[
E^{1,para}_{\NF,lot} :=I_1+I_2+I_3,
\]
and the real-valued paracoefficients
\begin{equation} \label{eq:para-coef}
    \begin{aligned}
        \kappa_0(u) 
     = & \ 4( -\frac{1}{2}a_0(D) D u +a_1(D) u + b_0(D)( u_{xx} -mu) ) \\
       &
       \qquad   + 2 \bigr(  -\frac{1}{2} c_0^1(D) D u_t + c_1^1(D)u_t +c_0^2(D) u_t \bigl),
   \\
   \kappa_1(u) 
   = &  \ 4\bigr(- i a_0(D)u_t 
     + 2 i b_1(D)u_t \bigl) 
     + 2\bigr( 
     - i c_0^1(D) (u_{xx}-mu)
     + 2i c_1^2(D) u 
     \bigl),
    \\
   \kappa_2(u)
    = & \ 4\bigr(-\frac{1}{2} a_0(D)D u +a_1(D) u \bigl) 
    + 2 \bigr( - \frac{1}{2} c_0^1(D) D u_t + c_1^1(D)  u_t
    \bigl) .
    \end{aligned}
\end{equation}

Combining the above computations, we obtain the following representation for the paradifferential normal form energy:
\begin{equation}
  E^{1,para}_{N\!F}(w,w_t) =   E^{1,para}_{N\!F,main}(w,w_t) +   E^{1,para}_{\NF,lot}(w,w_t) ,
\end{equation}
where the leading part is
\begin{equation}
\begin{aligned}
  E^{1,para}_{\NF,main}(w,w_t)  :=  \frac{1}{2} \! \int\! w_t^2 + w_x^2 + m w^2 
    +   T_{\kappa_0(u)}  w_t \cdot w_t 
    + T_{\kappa_1(u)}  w_t \cdot w_x +  T_{\kappa_2(u)}  w_x \cdot w_x 
   \,  dx\! ,
\end{aligned}
\end{equation}
while the lower-order terms arise from low--high paraproducts and involve at most one derivative on $w$, i.e. they are cubic expressions which have the form
\begin{equation}\label{E-para-lot}
\begin{aligned}
E^{1,para}_{\NF,lot}(w,w_t) = \int &L_{lhh}( \partial^{\le 3}_x \partial u ,w_t,w)
+ L_{lhh}( \partial^{\le 3}_x \partial u ,w_t, \partial^{-1}w_t) \\
&+ L_{lhh}(\partial^{\le 3}_x \partial u ,\partial^{\leq 1}_xw,w)\, dx.
\end{aligned}
\end{equation}
Ideally, at this point one might hope to show that the norm equivalence 
\eqref{ne} and the cubic energy estimate \eqref{cee} hold for 
$E^{1,para}_{\NF}$. While the norm equivalence is not a problem, 
the cubic energy estimate \eqref{cee} does not hold in general; this can be seen as a consequence of the
fact that our problem is quasilinear.
To divide and conquer, we take advantage of \eqref{dt-enf} to write
\[
 \frac{d}{dt} E^{1,para}_{\NF}(w,w_t) = \Lambda_{\geq 4} \frac{d}{dt} E^{1,para}_{\NF}(w,w_t) = 
 \Lambda_{\geq 4} \frac{d}{dt}
 E^{1,para}_{\NF,main}(w,w_t) +  \Lambda_{\geq 4} \frac{d}{dt} E^{1,para}_{\NF,lot}(w,w_t).
\]
Here the last term can be estimated directly, so we can leave 
$E^{1,para}_{\NF,lot}$ as is. 
However, differentiating \( E^{1,para}_{\NF}\) in time produces unbounded quasilinear terms coming from both energy type density $w_t^2$ and momentum type density $w_t w_x$. These terms will carry two derivatives on \( w\) and cannot be eliminated by integration by parts. The remaining task will then be 
to find a quartic and higher correction to $E^{1,para}_{\NF,main}(w)$
so that the next to last term above is also favorable.

To dispense with the contribution of the lower order terms it suffices to show that
\begin{lemma}
\label{l:lot}
The lower order part  $E^{1,para}_{\NF,lot}$ of the energy satisfies the 
following bounds:
\begin{equation} \label{eq:e-lot-norm}
  |E^{1,para}_{\NF,lot}(w,w_t)|  \lesssim 
  \A_2
  \Vert (w,w_t)\Vert _{H^{1}\times L^2}^2,
  \end{equation}
respectively
  \begin{equation} \label{eq:e-lot-time}
|\Lambda_{\geq 4} \frac{d}{dt}   E^{1,para}_{\NF,lot}(w,w_t)|\lesssim 
\A_0 \A_3
\,  \| (w,w_t)\|_{H^1 \times L^2}.
\end{equation}

\end{lemma}

\begin{proof}
First, we prove the estimate~\eqref{eq:e-lot-norm}. By the definition in \eqref{E-para-lot}, it suffices to verify one of the terms; for instance,
\[
\begin{aligned}
    \left| \int  L_{lhh}(\partial^{\le 3}_x \partial u ,w_t, \partial^{-1}w_t)\, dx \right| 
    \lesssim
    \A_2 \Vert (w,w_t)\Vert _{H^{1}\times L^2}^2.
\end{aligned}
\]
 First, we use the  H\"older's inequality:
\[
\begin{aligned}
     \left|\int  L_{lhh}( \partial^{\le 3}_x \partial u ,w_t, \partial^{-1}w_t)\, dx \right| 
     & = \bigg| \int w_t L_{lh} (\partial^{\le 3}_x \partial u , \partial^{-1}w_t) \, dx \bigg| 
     \\
     & \lesssim 
     \| w_t \|_{L^2}  
     \|L_{lh} (\partial^{\le 3}_x \partial u , \partial^{-1}w_t) \|_{L^2}
     \\
     &
     \lesssim \A_2 \Vert (w,w_t)\Vert _{H^{1}\times L^2}^2.
\end{aligned}
\]
As these are the lower order terms, we have room in \( w\) to move one derivative from the low-frequency factor \( u\) to \(w\), using the paraproduct estimates~\eqref{CM}.

\bigskip

Then we estimates the time derivative of the lower order terms to prove inequality~\eqref{eq:e-lot-time}. Again, we take \(L_{lhh}(\partial_x^{\leq 3} \partial u,w_t, \partial^{-1}w_t)\) as an example,
\[
\begin{aligned}
  \Lambda_{\geq 4} \frac{d}{dt} L_{lhh}( \partial^{\le 3}_x \partial u,w_t, \partial^{-1}w_t)  =
    &  \ L_{lhh}(  \partial^{\le 3}_x  \Lambda_{\ge 2}u_{tt},w_t, \partial^{-1}w_t)  
    + L_{lhh}(  \partial^{\le 3}_x  \partial u , \Lambda_{\ge 2}w_{tt}, \partial^{-1}w_{t})  
    \\
    &\quad \ + L_{lhh}(  \partial^{\le 3}_x  \partial u ,w_t, \partial^{-1}\Lambda_{\ge 2}w_{tt})  .  
\end{aligned}
\]

We first use H\"older's inequality together with the paraproduct estimates~\eqref{CM} and the Moser estimates~\eqref{bmo-hs}. As before, this allows us to transfer one derivative from the low-frequency factor u to w. We also invoke~\eqref{eq:p2-utt-2} to control \(\Lambda_{\ge 2} u_{tt}\) in \( W^{2,\infty}\).
\[
\begin{aligned}
\left| \int L_{lhh}(\partial^{\le 3}_x \Lambda_{\ge 2}u_{tt},w_t, \partial^{-1}w_t) \, dx  \right| 
& 
= \ \bigg| \int w_t L_{lh} (\partial^{\le 3}_x \Lambda_{\ge 2}u_{tt},\partial^{-1}w_t)\, dx\bigg| \\
&
 \lesssim \
 \| w_t \|_{L^2}  \| L_{lh} (\partial^{\le 3}_x\Lambda_{\ge 2}u_{tt},\partial^{-1}w_t) \|_{L^2}
 \\
 &
 \lesssim \
  \| w_t \|_{L^2}  \| \Lambda_{\ge 2}u_{tt}\|_{W^{2,\infty }}
  \| w_t \|_{L^2}
     \\ 
    &
    \lesssim \
\A_0
\A_3
 \| w[t] \|_{H^1 \times L^2}^2.  
\end{aligned}
\]

\smallskip  

For the second term, we begin by introducing a decomposition for \(w_{tt}\) into two components $w_{tt}^{(-1)}$ and $w_{tt}^{(0)}$,
where the former will be estimated in $H^{-1}$ and the latter in $L^2$:
\[
\begin{aligned}
    w_{tt}^{(-1)} & := \  \partial_x \big( 2 T_{g^{01}}   w_t 
+
T_{g^{11}} \partial_x w \big) ,
\\
w_{tt}^{(0)}  & : = \  T_{\Tilde{F}^{\gamma,lin}} \Tilde{\partial}_{\gamma} w - mw - 2 T_{\partial_x  g^{01}}   w_t 
- T_{\partial_x g^{11}} \partial_x w .
\end{aligned}
\] 
In particular, by the paraproduct estimates~\eqref{CM}, we have
\[
\begin{aligned}
    \| \Lambda_{\ge 2} w_{tt}^{(-1)} \|_{H^{-1}}
    & 
    \lesssim \ 
    \A_0 \| w[t] \|_{H^1 \times L^2 },
    \\
    \| \Lambda_{\ge 2}w_{tt}^{(0)} \|_{L^2 }
     & 
    \lesssim \ 
    \A_1  \| w[t] \|_{H^1 \times L^2 }.
\end{aligned}
\]

Using the above decomposition and duality, we obtain
\[
\begin{aligned}
    & \bigg| \int \Lambda_{\ge 2}w_{tt}  L_{lh} ( \partial^{\le 3}_x  \partial   u,\partial^{-1}w_t)\, dx\bigg|  
    \\
    &\lesssim  \
       \bigg| \int \Lambda_{\ge 2}w_{tt}^{(-1)}  L_{lh} ( \partial^{\le 3}_x  \partial   u,\partial^{-1}w_t)\, dx\bigg| 
       + 
          \bigg| \int \Lambda_{\ge 2}w_{tt}^{(0)}  L_{lh} ( \partial^{\le 3}_x  \partial   u,\partial^{-1}w_t)\, dx\bigg| 
    \\
    & 
    \lesssim  \ 
    \| \Lambda_{\ge 2}w_{tt}^{(-1)} \|_{H^{-1}} \| L_{lh} ( \partial^{\le 3}_x  \partial   u,\partial^{-1}w_t) \|_{H^1}
    + 
    \|\Lambda_{\ge 2}w_{tt}^{(0)}  \|_{L^2 }
    \| L_{lh} ( \partial^{\le 3}_x  \partial   u,\partial^{-1}w_t) \|_{L^2}
    \\
    & 
    \lesssim \ 
    \A_0 \A_3 \|  w[t] \|_{H^1 \times L^2}^2
    + \A_1 \A_2 \|  w[t] \|_{H^1 \times L^2}^2.
\end{aligned}
\]
 The rest of the terms are similar; we omit the details.
\end{proof}

It remains to resolve the problem of modifying the leading 
part $E^{s,para}_{\NF,main}$, by adding a correction which is at least quartic.

We choose the corrections so as to match the coefficients with the quasilinear energy-momentum tensor densities in \eqref{eq:e00} and \eqref{eq:e01}, which we recall here in paradifferential form:

\[
T^{00} = \frac{1}{2} w_t^2 
+ \frac{1}{2} (w_x^2 + 
w_x \cdot T_{g^{11}} w_x ), \qquad 
T^{01} =  
  w_t w_x 
   -  w_x \cdot T_{g^{01}} w_x.
\]
To compare $E^{s,para}_{\NF,main}$ with the above expressions, it is useful to compare 
the paracoefficients $\kappa_0$ and $\kappa_2$. Recalling \eqref{lead-symbol}, we obtain
\begin{equation} \label{coefficient}
    \kappa_2 - \kappa_0 = \frac{1}{2} g^{11}_{p_\gamma} \partial_\gamma u + \frac{1}{2} g_u^{11} u 
 = \frac{1}{2}\Lambda_1 (g^{11}).
\end{equation}

This allows us to view the main term of the paradifferential normal form energy as a linear combination, up to cubic level, of the above densities for the energy and momentum, 
\begin{align*}
    E^{1,para}_{\NF} (w, w_t)  &  = \frac{1}{2}\int \Lambda_{\leq 3} 
    \bigr(
      w_t \cdot T_{1+ \kappa_0} w_t 
      + T_{1+ \kappa_0} w_x \cdot T_{g^{11}} w_ x 
    \bigl)  
    \\
    &
    +
   \Lambda_{\leq 3} \bigr( w_t \cdot T_{\kappa_1} w_x 
    - T_{\kappa_1} w_x \cdot T_{g^{01}} w_x
    \bigl)
    \, dx 
    + l.o.t. \\  
\end{align*}
The first bracket corresponds to the energy-type part together with its quasilinear corrections, whereas the second bracket corresponds to the momentum-type part and its corrections.

This suggests that the corrected energy can be obtained simply by suppressing the homogeneity cutoffs $\Lambda_{\le 3}$ in the last formula above. Hence we define the corrected leading energy as
\begin{equation}
\begin{aligned} \label{epara}
    E^{1,para}_{main} (w, w_t ) 
    &  = \frac{1}{2}\!\int \!
      w_t \cdot T_{1+ \kappa_0} w_t 
      + T_{1+ \kappa_0} w_x \cdot T_{g^{11}} w_ x
     +  w_t \cdot T_{\kappa_1} w_x 
    - T_{\kappa_1} w_x \cdot T_{g^{01}} w_x
    \, dx , \!\!\!\!\!\!\!\!
\end{aligned}
\end{equation}
and the full corrected energy as 
\begin{equation}
E^{1,para} (w,w_t) := E^{1,para}_{main} (w,w_t) + E^{1,para}_{l.o.t.} (w,w_t) .
\end{equation}

Since our quasilinear corrections only contribute terms of quartic (and higher) order, the corrected energy has, by construction, no cubic contribution in its time derivative; in particular, we still have
\[
\Lambda_{\le 3}\Big(\frac{d}{dt}E^{1,para}\Big)=0.
\]
Hence, it remains to show that $E^{1,para}_{main}$ has the following properties:
\begin{lemma}
\label{l:main}
The leading  part  $E^{1,para}_{main}$ of the energy satisfies the 
following bounds:
\begin{equation}
\label{l:e-equiv}
  |E^{1,para}_{main}(w,w_t) - E^1(w) |  \lesssim 
 \A_2
  \Vert (w,w_t)\Vert _{H^{1}\times L^2}^2,
  \end{equation}
respectively
  \begin{equation}
  \label{l:e-est}
|\Lambda_{\geq 4} \frac{d}{dt}   E^{1,para}_{main}(w,w_t)|\lesssim  \A_0 \A_3\,  E^{1,para}(w).
\end{equation}
\end{lemma}

\begin{proof}

We recall that \(E^1\) represents energy  associated with the corresponding  constant coefficient linear equation.
For the first bound \eqref{l:e-equiv}, by the definition in \eqref{epara}, 
\[
\begin{aligned}
   E^{1,para}_{main}(w, w_t) - E^1(w, w_t)  
  & = \frac{1}{2} \int 
    \bigr( w_t \cdot T_{1+ \kappa_0} w_t  - w_t^2 \bigl)
      + \bigr(  T_{1+ \kappa_0} w_x \cdot T_{g^{11}} w_ x 
      - \Lambda_0(g^{11}) w_x^2
      \bigl)
      \\
      & \qquad \qquad 
     +  w_t \cdot T_{\kappa_1} w_x 
    - T_{\kappa_1} w_x \cdot T_{g^{01}} w_x\, dx .
\end{aligned}
\]
After the subtraction, all the remaining terms contain at least one low-frequency factor depending on \((u,\partial u )\), hence each term is at least cubic. 
We first apply H\"older's inequality, for instance,
\[
\begin{aligned}
  \big| \int  w_t \cdot T_{\kappa_0} w_t \, dx \big|  
  &
  \lesssim \| w_t\|_{L^2} \|  T_{\kappa_0} w_t \|_{L^2} \lesssim \| w_t\|_{L^2}^2 \|  \kappa_0  \|_{L^\infty}
  \\
  &
  \lesssim \ 
   \A_2 \Vert (w,w_t)\Vert _{H^{1}\times L^2}^2.
\end{aligned}
\]

The last estimate follows from using paraproduct estimates~\eqref{CM} together with Moser estimates~\eqref{bmo-hs}
and the properties of \( \kappa_0 \) defined in \eqref{eq:para-coef}.
The remaining terms are treated similarly and are omitted.

For the second bound \eqref{l:e-est}, we first estimate the contribution of the energy-type term. As usual, using the equation for \( w\), we replace \( w_{tt}\) by \( T_{g^{\alpha\beta}} \partial_{\alpha} \partial_{\beta} w\)
with \( (\alpha,\beta) \neq (0,0)\),
 plus  lower-order terms 
 \( T_{\Tilde{F}^{\gamma,lin}}\Tilde{\partial}_{\gamma}w + mw\). We arrive at
 \[
\begin{aligned}
    & 
    \Lambda_{\geq 4}
   \int   
  \partial_t \bigr(  w_t \cdot T_{1+ \kappa_0} w_t 
      + T_{1+ \kappa_0} w_x \cdot T_{g^{11}} w_ x
       \bigl) \, dx
        \\
        &=  \Lambda_{\geq 4} \int 
        2\bigr( \sum_{(\alpha,\beta)\neq (0,0)}  T_{g^{\alpha \beta}}\partial_\alpha\partial_\beta w 
        \bigl)
        \cdot  T_{1+ \kappa_0} w_t 
        +  T_{1+ \kappa_0} w_{tx} \cdot T_{g^{11}} w_{x} + T_{1+ \kappa_0} w_{x} \cdot T_{g^{11}} w_{tx}
        \, dx  
        \\
        &\quad   + \Lambda_{\geq 4} \int   (    2 T_{\Tilde{F}^{\gamma,lin}}\Tilde{\partial}_{\gamma}w + mw) T_{1+\kappa_0} w_t + w_t T_{\partial_t \kappa_0} w_t  + T_{\partial_t \kappa_0} w_x \cdot T_{g^{11}} w_ x  + T_{1+\kappa_0} w_x \cdot T_{\partial_t g^{11}} w_ x \,  dx  .     \end{aligned}
\]
Here we have separated the contributions into those with three derivatives on \(w\), on the second line, and those with at most two, on the third line. We refer to the latter terms, in which at most two derivatives fall on $w$, as \emph{good terms}. These good terms can be estimated directly via  H\"older's inequality and Moser estimates~\eqref{bmo-hs}. For example,
\[
\begin{aligned}
    \left| \int  w_t \cdot T_{
 \Lambda_{\geq 2}(\partial_t \kappa_1 ) } w_t  \, dx \right| 
 \leq \
 &
 \|   T_{\Lambda_{\geq 2}(\partial_t \kappa_1 ) } w_t \|_{L^2}
 \| w_t \|_{L^2} 
 \lesssim \
\A_0
\A_3 .
\end{aligned}
\]

For the terms with three derivatives on \(w\), 
on the other hand, we take advantage of its structure to integrate by parts in order to shift one extra derivative onto the low-frequency factor.
For the $(w_{tx},w_t)$ terms we get 
\[
\Lambda_{\geq 4}\int 2 T_{g^{01}} w_{xt} T_{1+\kappa_0} w_t \, dx 
= \ \Lambda_{\geq 4}\int -  T_{\partial_x g^{01}} w_{t} T_{1+\kappa_0} w_t - T_{g^{01}} w_{t} T_{\partial_x \kappa_0} w_t + [ T_{\kappa_0},T_{g^{01}}] w_{xt} w_t
\, dx,
\]
where the first two terms are good terms 
while commuting paraproducts yields good terms by 
\eqref{para-com}.

Now we compute the \((w_{tx},w_x)\) terms, for which the leading terms cancel with each other by construction.
\[
\begin{aligned}
  & \Lambda_{\geq 4} \int 2 T_{g^{11}} \partial_x^2 w\cdot T_{1+ \kappa_0}w_t + 2 T_{1+ \kappa_0} w_{tx} \cdot T_{g^{11}} w_{x} \, dx \\
  & = \ \Lambda_{\geq 4} \int - 2 T_{\partial_x g^{11}} w_x\cdot T_{1+ \kappa_0}w_t 
  -2 T_{ g^{11}} w_x\cdot T_{\partial_x \kappa_0}w_t  \, dx .
\end{aligned}
\]
Both of these terms are good terms and so they can be treated perturbatively.

\bigskip

We next estimate the momentum-type term. Again, using the equation for \(w\), we replace \( w_{tt}\) by \( T_{g^{\alpha\beta}} \partial_{\alpha} \partial_{\beta} w\)
with \( (\alpha,\beta) \neq (0,0)\),
 plus  lower-order terms 
 \( T_{\Tilde{F}^{\gamma,lin}}\Tilde{\partial}_{\gamma}w + mw\). By the argument above, all lower-order terms in which at most two derivatives fall on \( w\) are good terms. Hence,
\[
\begin{aligned}
   & \Lambda_{\geq 4}   
   \int   
  \partial_t \bigr( w_t \cdot T_{\kappa_1} w_x 
    - T_{\kappa_1} w_x \cdot T_{g^{01}} w_x \bigl)
        \, dx
        \\
        & \quad = \Lambda_{\geq 4}\int
        \sum_{(\alpha,\beta)\neq (0,0)} T_{g^{\alpha\beta}} \partial_\alpha \partial_\beta w \cdot T_{\kappa_1} w_x 
        -   T_{\kappa_1} w_{tx} \cdot T_{g^{01}} w_x 
        - T_{\kappa_1} w_{x} \cdot T_{g^{01}} w_{tx} 
        \bigl)
        \, dx + \text{ good terms} \\
        & \quad = \Lambda_{\geq 4}\int T_{g^{11}} w_{xx} \cdot T_{\kappa_1} w_x  \,dx
        +  \int T_{g^{01}} w_{tx} \cdot  T_{\kappa_1} w_x 
        -   T_{\kappa_1} w_{tx} \cdot T_{g^{01}} w_x  \, dx  + \text{ good terms} .
\end{aligned}
\]
For the terms with three derivatives on \( w\), we exploit the structure and integrate by parts to move the extra derivative onto the low-frequency factor. For \( (w_{xx},w_x)\) terms we obtain:
\[
\begin{aligned}
\Lambda_{\geq 4}\!\!\int T_{g^{11}} w_{xx} \cdot T_{\kappa_1} w_x  \, dx
= \frac{1}{2} \Lambda_{\geq 4} \!\!\int - T_{\partial_x g^{11}} w_{x} \cdot T_{\kappa_1} w_x - T_{g^{11}} w_{x} \cdot T_{\partial_x \kappa_1} w_x + [  T_{g^{11}} , T_{\kappa_1} ] w_{xx}w_x\, dx,
\end{aligned}
\]
where the first two terms are good terms 
while commuting paraproducts yields good terms by \eqref{para-com}.

For the \( (w_{tx},w_x)\) terms, we obtain a commutator structure, 
\[
\begin{aligned}
    & \int T_{g^{01}} w_{tx} \cdot  T_{\kappa_1} w_x 
        -  T_{\kappa_1} w_{tx} \cdot T_{g^{01}} w_x  \, dx 
         =   \int [T_{\kappa_1}  , T_{g^{01}}] w_{tx} w_x  \, dx ,
\end{aligned}
\]
which can be bounded using \eqref{para-com}.
This completes the proof of the lemma.

\end{proof}
The proof of Proposition~\ref{p:para-1} is now concluded.
\end{proof}

To extend the energy estimates to all $s \ge 1$, we will employ a conjugation argument. Carrying out this argument requires the \(s=1 \) case of the
inhomogeneous energy estimates in Theorem~\ref{paraE-inhom}. We therefore establish that case next.
\begin{proof}[Proof of Theorem~\ref{paraE-inhom}, case $s=1$] 
Adding the source terms does not affect the norm equivalence, so it suffices to track the additional contribution coming from the source terms in the time derivative of the energy.
Recall the definition of the energy:
\[
\begin{aligned}
    \frac{d}{dt} E^{1,para}(w,w_t)  = \ \frac{d}{dt} E^{1,para}_{main} (w,w_t) + \frac{d}{dt} E^{1,para}_{lot} (w,w_t).
\end{aligned}
\]
For the main part, using the definition in \eqref{epara}, the new contributions arising from the source term in \( \frac{d}{dt} E^{1,para}_{main} (w,w_t)  \) are
\[
\begin{aligned}  
    \int  w_t \cdot T_{1+\kappa_0} f+ f \cdot T_{\kappa_1} w_x \, dx.
\end{aligned}
\]
By H\"older's inequality and paraproduct estimates~\eqref{CM}, we obtain 
\[
\begin{aligned}
    \big|  \int  w_t \cdot T_{1+\kappa_0} f+ f \cdot T_{\kappa_1} w_x \, dx \big| 
    &
    \lesssim \ \| w_t \|_{L^2} \|  T_{1+\kappa_0} f\|_{L^2} + \| f \|_{L^2} \|  T_{\kappa_1} w_x \|_{L^2} 
    \\
   & 
   \lesssim \ \|f \|_{L^2} \| (w,w_t) \|_{H^1 \times L^2}.
\end{aligned}
\]
Next, we consider the time derivative of the lower-order part in the energy in \( \frac{d}{dt} E^{1,para}_{lot} (w,w_t)  \). Again, we record  the additional contributions coming from the source term $f$:
\[
\begin{aligned}
\int L_{lhh}( \partial^{\le 3}_x \partial  u ,f,w)
+ L_{lhh}(\partial^{\le 3}_x \partial  u,f , \partial^{-1}w_t) 
+ L_{lhh}( \partial^{\le 3}_x \partial  u,w_t , \partial^{-1}f) 
\, dx.
\end{aligned}
\]
Using H\"older's inequality together with paraproduct estimates~\eqref{CM}, these terms satisfy the bound:
\[
\lesssim \ \A_2 \| f\|_{L^2} \| (w,w_t)\|_{H^1 \times L^2}.
\]
This closes the \( s=1\) case.

\end{proof}

Now that we have a cubic $H^1\times L^2$ energy for the paradifferential equation,  our next objective is to identify a suitable energy functional which is equivalent to the $H^{s} \times H^{s-1}$ norm.
One advantage of working at the paradifferential level is that the precise choice of Sobolev exponent $s$ is not essential: the same arguments apply uniformly to $H^{s}\times H^{s-1}$ for all $s$. The result is as follows:

\begin{proposition} \label{p:para-s}
    Let $s \in \mathbb{R}$. Given $w$ solving the homogeneous paradifferential equation \eqref{para-KG}, there exist normalized variables $\Tilde{w}^{s-1}$ solving 
    \[
    L_{KG}^{para} \Tilde{w}^{s-1} = \ \tilde G^{s-1},
    \] 
    such that 
    \begin{equation}\label{bdd-nf}
    \Vert
    (\Tilde{w}^{s-1}, \Tilde{w}_t^{s-1} ) - (\jD^{s-1} w, 
    \jD^{s-1}w_t ) 
    \Vert_{H^{1} \times L^2} 
    \lesssim \
   \A_2
    \Vert w[t] \Vert_{H^{1} \times L^2},
    \end{equation}
    \begin{equation}\label{good-source}
    \Vert \tilde G^{s-1 }\Vert_{L^2}   
    \lesssim \ 
\A_0 \A_3
    \Vert w[t] \Vert_{H^{s} \times H^{s-1}}.
    \end{equation}
  
\end{proposition}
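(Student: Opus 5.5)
We plan to conjugate the homogeneous paradifferential equation $L^{para}_{KG} w = 0$ by $\jD^{s-1}$, and then remove the quadratic part of the resulting commutator source by a normal form correction from Section~\ref{s:normal-forms extras}. Set $v := \jD^{s-1} w$. Since $\jD^{s-1}$ commutes with $\partial_t$, $\partial_x$ and $m$, we obtain
\[
L^{para}_{KG} v = [\jD^{s-1}, T_{g^{\alpha\beta}}]\partial_\alpha\partial_\beta w + [\jD^{s-1}, T_{\Tilde F^{\gamma,lin}}]\Tilde\partial_\gamma w =: G^{s-1}.
\]
In the first term $(\alpha,\beta)\ne(0,0)$, because $g^{00}=-1$ is constant. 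By the paradifferential symbolic calculus, the principal part of $[\jD^{s-1},T_a]$ is $-i\,T_{\partial_x a}\,\partial_\xi\langle\xi\rangle^{s-1}(D)$, which has order $s-2$ plus a remainder of order $s-3$ with two derivatives on $a$. We then split $G^{s-1}$ by homogeneity.

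\textbf{Quadratic part.} $\Lambda_2 G^{s-1}$ is obtained by replacing $g^{\alpha\beta}$ and $\Tilde F$ with their linear parts $\Lambda_1$. Rewriting $\jD^{s-2}\partial_\alpha\partial_\beta w$ in terms of $v$, the term becomes low--high bilinear forms in $(u,v)$ and $(u_t,v_t)$ of the form $H_{11}(u,v)+H_{01}(u_t,v)+H_{10}(u,v_t)+H_{00}(u_t,v_t)$. Each carries three derivatives in total, with one derivative moved from $w$ to the low-frequency coefficient. We check that their symbols lie in the classes $S^{2,1},S^{1,1},S^{2,0},S^{1,0}$ required by Lemma~\ref{l-nfl-exist}. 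For example, $T_{\partial_x g^{11}_{u_x} u_x}\jD^{s-2}\partial_x^2 w$ gives symbol $\xi_1^2\,\xi_2^{2}\langle\xi_2\rangle^{-1}\in S^{2,1}$.

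\textbf{Cubic part.} Everything else in $G^{s-1}$, namely $\Lambda_{\ge3}$ of the commutators and the lower-order commutator remainders, is collected into $F$. Using Lemma~\ref{para-com}-type commutator bounds and Moser estimates, we verify the hypotheses \eqref{F-sour-q} and \eqref{F-sour-t}, measured in terms of $\|v[t]\|_{H^1\times L^2}\approx\|w[t]\|_{H^s\times H^{s-1}}$. The time derivative bounds use $\partial_t$ of the coefficients, Proposition~\ref{p:infty} for $u_{tt}$, and the $H^{-1}$ control of $v_{tt}$ from the equation. The order-$(s-3)$ remainders put two derivatives on $u$ but gain one on $w$, so they fit within $\A_2$.

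Next we apply Lemma~\ref{l-nfl-exist} to obtain
\[
\tilde w^{s-1} := v + A(u,v)+B(u_t,v_t)+C(u_t,v)+D(u,v_t),
\]
with symbols in the stated classes, so that $\Lambda_2 L^{para}_{KG}\tilde w^{s-1}=0$. We then define $\tilde G^{s-1} := L^{para}_{KG}\tilde w^{s-1} = F + \Lambda_{\ge3}L^{para}_{KG}(\tilde w^{s-1}-v)$. The bound \eqref{bdd-nf} follows directly from Lemma~\ref{nfl-invert}. The bound \eqref{good-source} follows from Lemma~\ref{source} for the second term, and for the first term from the $L^2$ bound on $\Lambda_{\ge3}G^{s-1}$, which is of size $\A_0\A_1\|w[t]\|_{H^s\times H^{s-1}}$.

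The main obstacle we expect is the bookkeeping in the quadratic step. We must check that the conjugation commutators, once expanded to the required order, really produce bilinear symbols in exactly the classes $h_{00}\in S^{1,0}$, $h_{01}\in S^{1,1}$, $h_{10}\in S^{2,0}$, $h_{11}\in S^{2,1}$. This is delicate for the $g^{01}$ term, where $\partial_x v_t$ appears and one derivative must be shifted consistently. We must also verify that the order-$(s-3)$ remainder of the commutator expansion does not secretly remain quadratic with too many derivatives on $u$. If it does, we will include it among the $H_{ij}$ rather than in $F$, since its symbol lies in a better class. A secondary point is that $F$ must be linear in $(v,v_t)$ with the $\partial_t F$ bounds required by Lemma~\ref{source}; this requires differentiating the cubic commutator terms in time and using the $v_{tt}$ decomposition.
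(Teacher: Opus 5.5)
Your plan matches the paper's proof. You conjugate by $\jD^{s-1}$, remove $\Lambda_2$ of the commutator source with Lemma~\ref{l-nfl-exist}, and conclude with Lemma~\ref{nfl-invert}, Lemma~\ref{source}, and a direct commutator/Moser bound for $\Lambda_{\ge 3}$ of the commutator.

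The only difference is bookkeeping. The paper does not split the commutator into a principal part plus an order-$(s-3)$ remainder. Instead it writes the full quadratic commutator as $Q^\sigma(f_x,w)$ with the exact $S^0$ symbol \eqref{eq:s-com}. Hence all of $\Lambda_2 G^{s-1}$ lands in the $H_{ij}$, and $F=\Lambda_{\ge 3}G^{s-1}$ from the start.

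You should adopt that bookkeeping unconditionally, which also fixes an inconsistency in your write-up. The part of your remainder with $g^{\alpha\beta}$ and $\tilde F^{\gamma,lin}$ replaced by their linear parts is always quadratic. You placed it in $F$. Anything quadratic left in $F$ passes unchanged into $\tilde G^{s-1}=F+\Lambda_{\ge3}L^{para}_{KG}(\tilde w^{s-1}-v)$. There, being linear in $u$, it cannot be bounded by $\A_0\A_3\|w[t]\|_{H^s\times H^{s-1}}$. Your final step, which bounds $F$ by the $L^2$ norm of $\Lambda_{\ge 3}G^{s-1}$, tacitly assumes this piece is absent.

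So the criterion for moving the remainder into the $H_{ij}$ is homogeneity, not ``too many derivatives on $u$''. It does fit the required classes, as you note, since in the low--high region powers of $\xi_1$ can be traded for powers of $\xi_2$. With this change your argument coincides with the paper's.
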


\begin{proof}
To obtain the desired normalized variables we first conjugate the equation with $\jD^{s-1}$. For convenience, we set \( \sigma := s-1\).

We begin with $w^{\sigma} : = \jD^{\sigma} w$, for which we compute a corresponding paradifferential equation, 
\[
L_{KG}^{para} w^{\sigma} =\  G^{\sigma},
\]
where the source term is given by
\[
\begin{aligned}
    G^{\sigma} & = \ L_{KG}^{para} \jD^{\sigma} w \\
    & =\ \jD^{\sigma} L_{KG}^{para}w + [L_{KG}^{para} ,\jD^{\sigma} ] w \\
    & = \ [L_{KG}^{para} ,\jD^{\sigma} ] w \\
    & = \ -
   \jD^{\sigma} [T_{g^{\alpha \beta}} \partial_\alpha \partial_\beta
    +T_{\Tilde{F}^{\gamma,lin}} \Tilde{\partial_{\gamma}}
    ,\jD^{-\sigma}] w^{\sigma} .
\end{aligned}
\] 
We now need to carry out normal form computations in order to remove the quadratic terms, so we further separate the source term $G^{\sigma}$ into quadratic parts and cubic and higher parts
\[
\begin{aligned}
      G^{\sigma,[2]}  = & \ 
      - \Lambda_2(
    \jD^{\sigma} [T_{g^{\alpha \beta}} \partial_\alpha \partial_\beta
    +
    T_{\Tilde{F}^{\gamma,lin}} \Tilde{\partial_{\gamma}}
    ,\jD^{-\sigma}] w^{\sigma} ) ,
    \\
 G^{\sigma,[3]} =&  \ 
    - \Lambda_{\ge 3} \bigr(  
    \jD^{\sigma} [T_{g^{\alpha \beta}} \partial_\alpha \partial_\beta
    + 
    T_{\Tilde{F}^{\gamma,lin}} \Tilde{\partial_{\gamma}}
    ,\jD^{-\sigma}] w^{\sigma} \bigl).
\end{aligned}
\]
We claim that the cubic and higher parts are perturbative using a commutator bound and Moser estimates~\eqref{bmo-hs}, noting that the 
normalization $g^{00}= -1$ eliminates the $(\alpha,\beta) = (0,0)$ term:
\[
\begin{aligned}
\| G^{\sigma,[3]} \|_{L^2} =    & \  \|  \bigr(  
    \jD^{\sigma} [T_{\Lambda_{\geq 2} g^{\alpha \beta}} \partial_\alpha \partial_\beta
    + 
    T_{\Lambda_{\geq 2}\Tilde{F}^{\gamma,lin}} \Tilde{\partial_{\gamma}}
    ,\jD^{-\sigma}] w^{\sigma} \bigl) \|_{L^2}
    \\
    \lesssim & \
    \| \Lambda_{\geq2} (\partial_x g^{\alpha \beta}
    + 
    \partial_x \Tilde{F}^{\gamma,lin}  ) 
    \|_{L^{\infty}}
    \| w[t]\|_{H^{s} \times H^{s-1} }
    \\
    \lesssim & \
  \A_0
  \A_2
    \| w[t]\|_{H^{s} \times H^{s-1} }.
\end{aligned}
\]
 Then it satisfies the requirement for the \(F\)  in the Lemma~\eqref{nfl-invert}.
We now consider the quadratic part. In what follows, all derivatives of \( f(u,\partial u)\) and \( g^{\alpha\beta}(u,\partial u)\) are evaluated at  \((0,0) \).
To facilitate the application of the results in Section~\ref{s:normal-forms extras}, we express it in the form
\[
\begin{aligned}
G^{\sigma,[2]}  = & \ 
      - 
    \jD^{\sigma} [T_{\Lambda_1 g^{\alpha \beta}} \partial_\alpha \partial_\beta
    +
    T_{\Lambda_1 \Tilde{F}^{\gamma,lin}} \Tilde{\partial_{\gamma}}
    ,\jD^{-\sigma}] w^{\sigma} ) 
    \\
    := & \  H_{00}(u_t,w_t) + H_{00}(u_t,w)+
    H_{10}(u,w_t)+ H_{11}(u,w).
\end{aligned}
\]
To describe the four bilinear forms in the last 
expression, for simplicity we consider first the bilinear form
$\jD^{\sigma} [T_{f},\jD^{-\sigma} ] \partial_x w$,
which can be expressed as 
\[
  \jD^{\sigma} [T_{f},\jD^{-\sigma} ] \partial_x w
:= \ Q^{\sigma}(f_x,w),
\]
where the bilinear form $Q^{\sigma}$ has symbol
\begin{equation}\label{eq:s-com}
    q^\sigma(\xi_1,\xi_2) =\ (\langle \xi_1+\xi_2 \rangle^\sigma  \langle \xi_2\rangle^{-\sigma} - 1) \chi_{lh}(\xi_1,\xi_2+\frac{\xi_1}2) \frac{\xi_2}{\xi_1} \in S^0.
\end{equation}

Using the notation introduced above, we may write these four bilinear forms as follows:
\[
\begin{aligned}
    H_{00}(u_t,w_t) 
    &:= \ Q^{\sigma}(  \partial_x ( g^{11}_{u_t} u_t ) ,w_t) 
    + Q^{\sigma} (  \partial_x \bigr( 2 g^{01}_{u_x} \partial_x u_t + f_{u_x u_t} u_t \bigl) , \partial^{-1}w_t),
    \\
    H_{10}(u,w_t) 
    &:=  \ Q^{\sigma}(  \partial_x (2  (g^{01}_{u} u + g^{01}_{u_x} u_x) ) ,w_t) 
    +  Q^{\sigma}(  \partial_x (g^{11}_{u_t}\partial_x^2u + f_{u_t u} u ) , \partial^{-1}w_t) ,
    \\
    H_{01}(u_t,w)
    &:= \ Q^{\sigma}(  \partial_x ( g^{11}_{u_t} u_t ) , w_x) 
    +  Q^{\sigma}(  \partial_x ( 2 g^{01}_{u_x} \partial_x u_t + f_{u_x u_t} u_t) , w) ,
    \\
     & \quad \ + Q^{\sigma}(  \partial_x ( 2 g^{01}_{u} \partial_x u_t + f_{u u_t} u_t) , \partial^{-1}w) ,
    \\
    H_{11}(u,w) 
    &:= \ Q^{\sigma}(  \partial_x (g^{11}_{u} u +g^{11}_{u_x} u_x ) , w_x)  
    +Q^{\sigma}(  \partial_x ( g_{u_x}^{11}\partial_x^2 u+ f_{u_x u_x} u_x+ f_{u_x u} u ) , w) ,
    \\
    &  \quad \
     +  Q^{\sigma}(  \partial_x (g_{u}^{11} \partial_x^2u+ f_{uu}u+ f_{u_x u} u_x ) , \partial^{-1}w).
\end{aligned}
\]
Combining the above expansions with the commutator symbol computation in \eqref{eq:s-com}, we conclude that the associated bilinear symbols satisfy
\[
\begin{aligned}
    & h_{00} \in S^{1,0} , \qquad h_{01} \in S^{1,1}.\\
    & h_{10} \in S^{2,0} , \qquad h_{11} \in S^{2,1}.
\end{aligned}
\]
In particular, these symbols admit polyhomogeneous expansions.
Consequently, the resulting paradifferential equation fits into the class of equations considered in Section~\ref{s:normal-forms extras}.

To eliminate these quadratic terms, we invoke the normal form in Lemma~\ref{l-nfl-exist} 
which guarantees the existence of a corresponding normal form transformation 
\[
\Tilde{w}^{\sigma}: = \ w^{\sigma} + A_1(u,w^{\sigma}) + B_1(u_t,w_t^{\sigma}) + C_1(u_t,w^{\sigma}) + D_1(u,w_t^{\sigma}).
\]
 The normal form variable $\Tilde{w}^{\sigma}$  solves a linear paradifferential equation with source term, 
\[
L_{KG}^{para} \Tilde{w}^{\sigma} = \ \tilde{G}^{\sigma}
\]
with the source term given by:
\[
\begin{aligned}
    \tilde{G}^{\sigma} &:= \Lambda_{\ge 3} \bigg(L_{KG}^{para} w^{\sigma} +  L_{KG}^{para} \bigr( A_1(u,w^{\sigma}) + B(u_t,w_t^{\sigma}) + C(u_t,w^{\sigma}) + D(u,w_t^{\sigma}) \bigl) \bigg) .
\end{aligned} 
\]
Now Lemma~\ref{nfl-invert} yields the invertibility of the normal form transformation:
\[
\Vert (\Tilde{w}^{\sigma},\Tilde{w}^{\sigma}_t ) - (w^{\sigma}, w^{\sigma}_t) \Vert_{H^1 \times L^2}
\lesssim \
\A_2 \Vert w^{\sigma}[t] \|_{H^1 \times L^2}.
\]
Here we emphasize that this situation is different from the one in Section~\ref{s:normal-form}, where we treat the quasilinear equation. In that setting, a direct normal form correction would lead to unbounded terms. In the present case, however, the source terms may be viewed as perturbative: they involve at most one derivative on \(w\), and the associated normal form variable likewise contains at most one derivative on \(w\).

 Finally, Lemma~\ref{source} provides the source term bounds:
\[
\Vert \tilde G^{\sigma} \Vert_{L^2}
= \ \Vert L^{para}_{KG} \Tilde{w}^{\sigma} \Vert_{L^2}
\lesssim \
\A_0
\A_3
\Vert \Tilde{w}^{\sigma}[t] \Vert_{H^1 \times L^2}.
\]
This concludes the proof of the proposition.
\end{proof}

At this point, we  have an explicit expression for  the $H^s \times H^{s-1}$ energy functional $E^{s,para} (w)$ in Proposition~\ref{p:para-s},
namely 
\begin{equation}\label{para-energy}
\begin{aligned}
     E^{s,para} (w,w_t) & = \ E^{1,para} (\Tilde{w}^{s-1},\Tilde{w}^{s-1}_t) .
    \\ \notag
     \end{aligned} 
\end{equation} 
This works well for solutions to the homogeneous paradifferential equation, but applying it directly to the corresponding inhomogeneous 
problem is less than ideal, because the expression in the second line above also involves $w_{tt}$, so its time derivative would require also bounds for 
$\partial_t f$. The way out of this difficulty is to remark that for solutions to the homogeneous paradifferential equation we can directly substitute
$w_{tt}$ from the equation in terms of $w$ and $w_t$. In other words, if we think of $E^{1,para}$
defined above as a bilinear form  $E^{1,para}(\Tilde{w}^{s-1}, \Tilde{w}^{s-1}_t)$ acting on the 
pair $(\Tilde{w}^{s-1}, \Tilde{w}^{s-1}_t)$, 
then for solutions to the homogeneous equation this is equal to  $E^{1,para}(\Tilde{w}^{s-1}, \hat{\Tilde{w}}^{s-1}_t)$,
where the modified second variable is 
\[
\begin{aligned}
\hat{\Tilde{w}}^{s-1}_t = & \ \jD^{s-1} w_t + \partial_t A_1(  u, \jD^{s-1} w) + B_1( u_{tt},  \jD^{s-1} w_t ) 
    \\
    & \quad +\partial_t C_1(u_t, \jD^{s-1}w) +D_1(u_t, \jD^{s-1}w_t)
\\
& \quad + B_1( u_{t},  \jD^{s-1} w_{[tt]} )+ D_1(u, \jD^{s-1}w_{[tt]}).
\end{aligned}
\]
Here by $w_{[tt]}$ we denote the expression 
for $w_{tt}$ from the equation $L_{KG}^{para} w = 0$,
\[
w_{[tt]} = \sum_{(\alpha,\beta) \neq (0,0)} T_{g^{\alpha\beta}}\partial_{\alpha} \partial_{\beta} w + T_{\tilde F^{\gamma,lin}} \tilde \partial_{\gamma}   - m .
\]

So we redefine
\begin{equation} \label{epara-s}
 E^{s,para} (w,w_t) := \ E^{1,para}(\Tilde{w}^{s-1}, \hat{\Tilde{w}}^{s-1}_t),  
\end{equation}
and use this expression for solutions to the 
inhomogeneous equation $L_{KG}^{para} w = f$.
Now $E^{s,para}$ is defined as a bilinear form 
in $(w,w_t)$, for which we will prove the following:

\begin{proposition} \label{para-source}
For each regularity index \( s \ge 1\), the modified cubic energy \( E^{s,para} (w,w_t)\) constructed above satisfies
   the following cubic energy estimate
   for solutions $w$ to the inhomogeneous paradifferential equation \( L_{KG}^{para}w = f\):
    \[
    \big|\frac{d}{dt} E^{s,para} (w,w_t) \big|
    \lesssim \
     \A_0
    \A_3
     E^{s,para}(w,w_t)
     + \| f\|_{H^{s-1}} \| (w,w_t) \|_{H^s \times H^{s-1}}.
    \]
\end{proposition}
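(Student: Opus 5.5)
The plan is to reduce to the $s=1$ inhomogeneous estimate, already established in the proof of Theorem~\ref{paraE-inhom} for $s=1$. We apply it to the pair $(\Tilde w^{s-1},\hat{\Tilde w}^{s-1}_t)$ and then account for the discrepancies created by the source $f$. Set $\sigma=s-1$.

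\textbf{Step 1: the equation for the normal form variable.} When $L^{para}_{KG}w=f$, the variable $w^\sigma=\jD^\sigma w$ satisfies
\[
L^{para}_{KG}w^\sigma = G^\sigma + \jD^\sigma f .
\]
Correspondingly, $\Tilde w^\sigma$ satisfies
\[
L^{para}_{KG}\Tilde w^\sigma=\tilde G^\sigma + \jD^\sigma f + L^{para}_{KG}\bigl(B_1(u_t,\jD^\sigma f\text{-terms})+\dots\bigr),
\]
where the extra terms come from $w_{tt}=w_{[tt]}+f$ entering through $B_1,D_1$. Rather than tracking these, we work directly with the bilinear form $E^{1,para}(\Tilde w^\sigma,\hat{\Tilde w}^\sigma_t)$.

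\textbf{Step 2: differentiate the energy.} Differentiating in time gives
\[
\frac{d}{dt}E^{1,para}(X,Y)=\partial_1E^{1,para}(X,Y)[X_t]+\partial_2E^{1,para}(X,Y)[Y_t]+(\partial_t\text{ of coefficients}).
\]
We compare this with the homogeneous computation. In the homogeneous case $X_t=\hat{\Tilde w}^\sigma_t$, and $Y_t$ is given by the equation with source $\tilde G^\sigma$. There, Proposition~\ref{paraE} (via Lemmas~\ref{l:lot} and~\ref{l:main} and Proposition~\ref{p:para-s}) yields the bound $\A_0\A_3 E$. Hence only the differences need to be estimated:
\begin{enumerate}[label=(\roman*)]
\item $X_t-Y=B_1(u_t,\jD^\sigma f)+D_1(u,\jD^\sigma f)$ plus the $\partial_t A_1,\partial_tC_1$ pieces, which have no $f$. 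The symbol classes $S^{2,-1}$, $S^{3,-1}$ (LH) and $S^{0,1}$, $S^{1,2}$ (HL) give $\|X_t-Y\|_{H^1}\lesssim \A_2\|f\|_{H^{\sigma}}$. The $-1$ order in the high variable absorbs the $H^1$ derivative, exactly as in Lemma~\ref{nfl-invert}, case (c).
\item $Y_t-Y_t^{hom}$ consists of $\jD^\sigma f$, terms $B_1(u_t,\jD^\sigma f_t)$ and $D_1(u,\jD^\sigma f_t)$, and contributions in $w_{[tt]}$ with time derivatives now landing on $w_t$, so that $w_{tt}=w_{[tt]}+f$ reappears.
\end{enumerate}
The terms in (i) pair against $\partial_1E^{1,para}$, which is bounded on $H^1\times L^2$ by Lemma~\ref{l:main} and \eqref{eq:e-lot-norm}. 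This gives $\|f\|_{H^{s-1}}\|w[t]\|_{H^s\times H^{s-1}}$ after invoking \eqref{bdd-nf}.

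\textbf{Step 3: the main obstacle.} The key point is that $f_t$ must not appear. This is precisely why $w_{tt}$ was replaced by $w_{[tt]}$ in $\hat{\Tilde w}^\sigma_t$. We therefore recompute $Y_t$ directly from the definition of $\hat{\Tilde w}^\sigma_t$. Differentiating $B_1(u_t,\jD^\sigma w_{[tt]})$ yields $\partial_t w_{[tt]}$, which involves $T_{g^{\alpha\beta}}\partial_\alpha\partial_\beta w_t$ with $(\alpha,\beta)\neq(0,0)$ and hence only $w_{tt}=w_{[tt]}+f$, never $f_t$. The $f$ contributions thus take the form $B_1(u_t,\jD^\sigma T_{g^{01}}\partial_x f)$ and $D_1(u,\jD^\sigma T_{g^{01}}\partial_xf)$. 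These carry one derivative on $f$, but they enter $E^{1,para}$ only in the $L^2$ slot paired with $\hat{\Tilde w}^\sigma_t$, and the $-1$ order of $b_1,d_1$ (LH case) compensates the derivative. The result is a bound $\A_2\|f\|_{H^\sigma}$ in $L^2$.

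The remaining $f$ terms are $\jD^\sigma f$ itself, handled exactly as in the $s=1$ proof, and lower order pieces in $\partial_tC_1$ and $\partial_tA_1$ where $u_{tt}$ appears. The latter do not involve $f$ at all, since $u$ solves the full equation. All these are bounded by $\|f\|_{H^{s-1}}\|(w,w_t)\|_{H^s\times H^{s-1}}$, with constants depending on $\A_2$.

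Combining these bounds with the norm equivalence \eqref{bdd-nf} and \eqref{ne} gives the claimed estimate. The delicate bookkeeping is in Step 3: one must check, for each of the LH and HL symbol classes, that the derivative falling on $f$ is absorbed without losing regularity on $f$.
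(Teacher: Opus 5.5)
Your route is the paper's. The energy $E^{1,para}(\Tilde w^{\sigma},\hat{\Tilde w}^{\sigma}_t)$ is a bilinear form in $(w,w_t)$. Its time derivative splits into two parts:
\begin{itemize}
\item the homogeneous expression, which is a fixed-time quantity in $(w,w_t)$ and is already controlled by Proposition~\ref{paraE};
\item terms linear in $f$, which arise only from substituting $w_{tt}=w_{[tt]}+f$ and are bounded through the symbol classes of $A_1,\dots,D_1$.
\end{itemize}
The paper packages the $f$-terms of $\partial_t\hat{\Tilde w}^{\sigma}_t$ as $K_0^{\sigma}f$ and proves $\|K_0^{\sigma}f\|_{L^2}\lesssim\|f\|_{H^{\sigma}}$. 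Your Step 3 treatment of $B_1(u_t,\jD^{\sigma}T_{g^{01}}\partial_x f)$ and $D_1(u,\jD^{\sigma}T_{g^{01}}\partial_x f)$ matches the paper's representative estimate.

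Two bookkeeping points follow. Neither affects the method.

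\emph{First}, your list of $f$-terms in $\partial_t\hat{\Tilde w}^{\sigma}_t$ is incomplete. The sentence saying the $\partial_tA_1$, $\partial_tC_1$ pieces ``do not involve $f$ at all'' is false. The variable $w_t$ enters $\hat{\Tilde w}^{\sigma}_t$ not only through $\jD^{\sigma}w_t$ and $w_{[tt]}$, but also through:
\begin{itemize}
\item $A_1(u,\jD^{\sigma}w_t)$, coming from $\partial_tA_1(u,\jD^{\sigma}w)$;
\item $C_1(u_t,\jD^{\sigma}w_t)$, coming from $\partial_tC_1(u_t,\jD^{\sigma}w)$;
\item $B_1(u_{tt},\jD^{\sigma}w_t)$ and $D_1(u_t,\jD^{\sigma}w_t)$.
\end{itemize}
Differentiating these therefore produces $A_1(u,\jD^{\sigma}f)$, $C_1(u_t,\jD^{\sigma}f)$, $B_1(u_{tt},\jD^{\sigma}f)$ and $D_1(u_t,\jD^{\sigma}f)$. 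The term $T_{F^{0,lin}}f$ inside $B_1$ and $D_1$ also appears. These are exactly the extra entries in the paper's $K_0^{s-1}$. They are bounded in $L^2$ by $\A_2\|f\|_{H^{\sigma}}$ directly from the symbol classes, so your conclusion stands. You should also delete the $f_t$ terms listed in your item (ii): as your own Step 3 explains, they never occur.

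\emph{Second}, your item (i) is a point in your favour. Since
\[
\partial_t\Tilde w^{\sigma}=\hat{\Tilde w}^{\sigma}_t+B_1(u_t,\jD^{\sigma}f)+D_1(u,\jD^{\sigma}f),
\]
$f$ also enters through the first slot of $E^{1,para}$, and that contribution has to be estimated in $H^1$. The paper's proof asserts that $f$-contributions arise only from $\partial_t\hat{\Tilde w}^{s-1}_t$, and so it omits this term. Your $H^1$ bound for it, using the symbol classes as in Lemma~\ref{nfl-invert}, correctly fills that omission.
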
 
\begin{proof}
The terms independent of \( f\)  were already treated in Proposition~\ref{p:para-s}. Here we only need to estimate the contributions involving \(f\). 
 Such contributions can only arise from the time derivative  $ \partial_t \hat{\Tilde{w}}^{s-1}_t$. 
 Here $\hat{\Tilde{w}}^{s-1}_t$ depends linearly on $w,w_t$ with coefficients depending on $u$,
\[
\hat{\Tilde{w}}^{s-1}_t = \ K^{s-1}_0 w_t + K^{s-1}_1 w.
\]
From the definition, we collect the operator applied to \(w_t\) as:
\[
\begin{aligned}
     K^{s-1}_0 w_t := & \ \jD^{s-1} w_t 
     + A_1(  u, \jD^{s-1} w_t) 
     + B_1( u_{tt},  \jD^{s-1} w_t ) 
     +  C_1(u_t, \jD^{s-1} w_t ) 
     \\
     &
     + D_1(u_t, \jD^{s-1}w_t)
  + B_1( u_{t},  \jD^{s-1} ( T_{2g^{01}} \partial_x w_t + T_{F^{0,lin} }w_t  )) \\
  & + D_1(u, \jD^{s-1}( T_{2g^{01}} \partial_x w_t + T_{F^{0,lin} }w_t  )).
\end{aligned}
\]
Then the $f$ term in $ \partial_t \hat{\Tilde{w}}^{s-1}_t$ is $K^{s-1}_0 f$.

By the definition of the modified energy, 
\[
\begin{aligned}
    E^{s,para}(w,w_t)  = & \ E^{1,para} (\tilde w^{s-1}, \hat{ \tilde w }^{s-1}_t) 
\end{aligned}
\]
the additional contribution from the source terms in \( \frac{d}{dt} E^{s,para}(w,w_t)\) is:
\[
\begin{aligned}
   \int  K_0^{s-1} w_t \cdot T_{1+\kappa_0}  K_0^{s-1} f \, dx.
\end{aligned}
\]
Thus it suffices to prove the bound
\[
\| K^{s-1}_0 f\|_{L^2} \lesssim \| f\|_{H^{s-1}}.
\]
We recall that the symbols of \( A_1\), \( B_1\), \( C_1\) , and \( D_1\) are in \(S^{3,0}\), \(S^{2,-1}\), \(S^{3,-1}\), \(S^{2,0}\), respectively (see Lemma~\ref{l-nfl-exist}).
The leading term \( \jD^{s-1} f\) is immediate. We take the following two representative terms as example,
\[
\begin{aligned}
    \| A_1(u, \jD^{s-1} f) \|_{L^2}
    &
    \lesssim \ \A_2 \| f \|_{H^{s-1}}
\end{aligned}
\]
The second example is the one using the expression for \(w_{[tt]}\):
\[
\begin{aligned}
    &\| B_1(u_t, \jD^{s-1}  T_{2g^{01}} \partial_x f + T_{F^{0,lin} }f) \|_{L^2} \\
    &
    \lesssim \ 
    \| \partial^{2} u_t \|_{L^{\infty}}  \| \partial^{-1} \jD^{s-1} \bigr( T_{2g^{01}} \partial_x f + T_{F^{0,lin} }f) \bigl)  \|_{L^2} 
    \\
    & \lesssim \
   \A_2  \| f \|_{H^{s-1}}.
\end{aligned}
\]
The estimates follow from paraproduct estimates~\eqref{CM} and commutator estimates. The rest of the terms will be bounded similarly; we omit the details.

\end{proof}

\section{Cubic energy estimates for the full equations}
\label{s:cubic ee}
\par

In the previous sections we have constructed a normal form transformation for
the nonlinear Klein–Gordon equation, developed a paradifferential
reduction, and built modified cubic energies for the
paradifferential problem together with bounds for the source terms. The goal of the present section is to use these ideas in the study of the full nonlinear equation in order to construct a cubic 
energy functional in $H^s \times H^{s-1}$, which 
will prove Theorem~\ref{t:eepropagation}. 

\begin{proof}[Proof of Theorem~\ref{t:eepropagation}]
We start with the full equation \eqref{kg1} written in paradifferential form \eqref{para-KG}, which
we recall here:
\begin{equation}
  L^{\mathrm{para}}_{KG} u = \ N_{\mathrm{bal}}(u),
\end{equation}
where the paradifferential operators are
\[
L^{para}_{KG} :=\ -T_{g^{\alpha \beta}} \partial_\alpha \partial_\beta   - T_{F^{\gamma,lin}} \partial_{\gamma}  - T_{F^{lin} }  + m.
\]
Using the following notation 
\[
T_{\Tilde{F}^{\gamma,lin}} \Tilde{\partial_{\gamma}} v  
:=  
\ T_{F^{\gamma,lin}} \partial_{\gamma} v  + T_{F^{lin} } v,
\] 
the balanced nonlinearity has the form
\[
N_{bal}(u) = T_{\partial_\alpha \partial_\beta u  } g^{\alpha \beta}(u,\partial u) + \Pi( g^{\alpha \beta}(u,\partial u),\partial_\alpha \partial_\beta u ) + f(u,\partial u) -  T_{\Tilde{F}^{\gamma,lin}} \Tilde{\partial_{\gamma}} u,
\]
which we have separated into a quadratic part and a cubic and higher order part, 
\[
N_{bal}(u) = \ N_{bal}^{[2]}(u) + N_{bal}
^{[3]}(u).
\]
The former plays the leading role 
while the latter will be treated perturbatively,
using Proposition~\ref{p:full3}.
The quadratic component can be described as in \eqref{quadratic-KG}, 
\[
N_{bal}^{[2]}(u) = \ Q_{11}^{hh}(u,u)
+ Q_{00}^{hh}(u_t,u_t) + Q^{hh}_{01}(u_t,u).
\]

Our goal is to apply Proposition~\ref{p:para-s} to a 
normal form correction of the solution $u$.
For the correction, we contend  that the high--high portions of the normal form in Lemma \ref{nfl} would satisfy the requirements. 
We define
\begin{equation}
u_{\NF} := \ u + A_{hh}(u,u) + B_{hh}(u_t,u_t) + C_{hh}(u_t,u) .
\end{equation}
We recall the properties of normal forms here:
for the high--high interactions, we have
\begin{equation} \label{hh-symbol}
    a_{hh}\in (\xi_1+\xi_2)S^{2}+S^{2},\qquad
b_{hh}\in (\xi_1+\xi_2)S^{0}+S^{0},\qquad
c_{hh}\in (\xi_1+\xi_2)S^{1}+S^{1}.
\end{equation}

Then we rewrite the equation for $u_{\NF}$ as a paradifferential equation with bounded, cubic and higher source terms:
\begin{equation}
       L^{para}_{KG} u_{\NF} = \ N_{bal}^{[3]}(u) + G,
\end{equation}  
where $G$ is given by
\begin{equation}\label{what-G}
G= \ \Lambda_{\geq 3} (L^{para}_{KG}(u_{\NF}-u)).
\end{equation}

\smallskip
We claim that the following two properties hold:

\begin{enumerate}
    \item[(i)] Invertibility: 
     \begin{equation}
     \label{e:invert}
     \Vert
     (u_{\NF}-u, \partial_t (u_{\NF}-u)) 
     \Vert_{H^{s} \times H^{s-1}}
     \lesssim \
   \A_2
     \Vert  u[t] \Vert_{H^{s} \times H^{s-1}},
     \end{equation}
    \item[(ii)] Perturbative source terms: 
     \begin{equation}
     \label{e:pert}
     \Vert 
      G
     \Vert_{H^{s-1}}
     \lesssim  \
     \A_0
     \A_3
     \Vert  u[t] \Vert_{H^{s} \times H^{s-1}}.
     \end{equation}
\end{enumerate}

Once these two properties are established, Proposition~\eqref{para-source} yields the
desired cubic energy estimates for the energy functional
\begin{equation}
E^s(u,u_t) = \ E^{s,para}(u_{\NF}, \Tilde{u}_{NF}),
\end{equation}
where
\[
\Tilde{u}_{NF} := \  u_t + 2 A_{hh} (u_t,u_t) + 2 B_{hh}(u_{[tt]}, u_t)
+ C_{hh}(u_t,u_t) + C_{hh}(u_{[tt]},u_t),
\]
and
\[
u_{[tt]} : = \ \sum_{(\alpha,\beta) \neq (0,0)} g^{\alpha\beta} \partial_\alpha \partial_\beta u  -m u+  f(u,\partial u) .
\]
To motivate the $u_{[tt]}$ substitution above  we note that, without it, the expression for $E^s(u,u_t)$ would also contain second time derivatives of $u$.
Eliminating these using the equation \eqref{kg1} allows us to view our energies as nonlinear functionals on the state space $H^s \times H^{s-1}$, without reference to the 
evolution equation.
The same comment  applies in the context of the
invertibility bound \eqref{e:invert} and in the expression for $G$.

\bigskip

\noindent (i) \underline{ Invertibility:} for the difference we have
\[
\begin{aligned}
     \Vert 
    (u_{\NF}, \partial_t u_{\NF}) - (u,u_t)
    \Vert_{H^{s} \times H^{s-1}}
    \leq & \
      \Vert A_{hh}(u,u) \Vert_{H^{s}}
    + \Vert B_{hh} (u_t,u_t) \Vert_{H^{s}}
    + \Vert C_{hh} (u_t,u) \Vert_{H^{s}}
 \\
       &+  \Vert \partial_t A_{hh}(u,u) \Vert_{H^{s-1}}+ \Vert \partial_t B_{hh} (u_t,u_t)  \Vert_{H^{s-1}}\\
       & +
     \Vert \partial_t C_{hh} (u_t,u) \Vert_{H^{s-1}}.
\end{aligned}
\]
The first four terms are estimated using the paraproduct estimates~\eqref{CM} and the high-high symbol properties~\eqref{hh-symbol}. 
For high-high bilinear forms we  freely redistribute the derivatives between the two inputs, so for instance for $A_{hh}$, which has order three, we have
\[
\begin{aligned}
\| A_{hh}(u,u) \|_{H^s} 
& 
\lesssim \
\| \partial_x \widetilde{A}_{hh}(u,u) \|_{H^s} 
+ \| A^{(0)}_{hh}(u,u) \|_{H^s}
\\
&
\lesssim \
\| \partial_x^{\leq3} u\|_{L^{\infty}} \|u \|_{H^s} 
+\| \partial_x^{\leq2} u\|_{L^{\infty}} \|u \|_{H^s}
\\
&
     \lesssim  \
     \A_2
     \Vert   u[t]  \Vert_{H^{s} \times H^{s-1}}.
\end{aligned}
\]

The last two terms can be estimated using the symbol bound~\eqref{hh-symbol} and the bounds for \( u_{tt}\) in \eqref{eq:p2-utt}. For instance,
treating the two inputs of \( B\) symmetrically, we obtain:
\[
\begin{aligned}
   \Vert  B_{hh} ( u_{tt} ,u_t) \Vert_{H^{s-1}}  
   & 
   \lesssim \
   \| u_{tt} \|_{W^{1,\infty}} \| u_t \|_{H^{s-1}}
   \lesssim \ 
   \A_2
     \Vert  u[t]  \Vert_{H^{s} \times H^{s-1}}.
\end{aligned}
\]

The bounds for the other terms follow via similar arguments. 
\medskip

\noindent (ii) \underline{The source term bound:}
By the definition of $u_{\NF}$ we have
\[
  \Lambda_2\bigl(L^{\mathrm{para}}_{KG}u_{\NF}\bigr)=0,
  \qquad
  \Lambda_{\ge3}\bigl(L^{\mathrm{para}}_{KG}u_{\NF}\bigr)=G.
\]
Indeed
\[
\begin{aligned}
    \Lambda_2( L_{KG}^{para} u_{\NF}   )  
    = & 
     L_{KG} (u + A_{hh}(u,u) +B_{hh}(u_t,u_t) + C_{hh}(u_t,u))
     \\
     &
    - Q^{hh}_{00}(u_t,u_t) 
- Q^{hh}_{10}(u_t,u)
- Q_{11}^{hh}(u,u),
\end{aligned}
\] 
which vanishes by construction.

By \eqref{para-KG} and \eqref{Nbal} we have 
\[
\Lambda_{\geq 3} ( L_{KG}^{para} u) = N_{bal}
^{[3]}(u),
\]
therefore we are left with 
\[
G= \Lambda_{\geq 3} (L^{para}_{KG}(u_{\NF}-u)),
\]
justifying \eqref{def:G}. 

\smallskip

Then we expand $G$ as follows:
\begin{equation}
\label{def:G}
\begin{aligned}
    G =
    &\ 
    \Lambda_{\geq 3} \bigr(L_{KG}^{para} \bigr(A_{hh}(u,u) + B_{hh}(u_t,u_t) + C_{hh}(u_t,u)\bigl)  \bigl)
    \\ 
    = &  \
     \Lambda_{\geq 3} \bigr[
     -T_{g^{\alpha \beta}} \partial_\alpha \partial_\beta A_{hh}(u,u)  
    - T_{\Tilde{F}^{\gamma,lin}}  \Tilde{\partial_{\gamma}} A_{hh}(u,u) + m  A_{hh}(u,u)  \\
    &
   \ \qquad  -T_{g^{\alpha \beta}} \partial_\alpha \partial_\beta B_{hh}(u_t,u_t)   - T_{\Tilde{F}^{\gamma,lin}}  \Tilde{\partial_{\gamma}} B_{hh}(u_t,u_t) + m  B_{hh}(u_t,u_t)  \\
     &
  \  \qquad   -T_{g^{\alpha \beta}} \partial_\alpha \partial_\beta C_{hh}(u_t,u)   - T_{\Tilde{F}^{\gamma,lin}} \Tilde{\partial_{\gamma}} C_{hh}(u_t,u)  + m  C_{hh}(u_t,u)
     \bigl].
\end{aligned}
\end{equation}
The $m$ terms are quadratic and can be removed. For 
the remaining terms, distributing derivatives 
by Leibniz rule, we have two scenarios:

\begin{enumerate}[label=(\alph*)]
\item  Expressions which depend only on $u$ and $u_t$,
which are estimated directly.

\item  Expressions which also depend on $u_{tt}$, where
we need to first use the equation \eqref{kg1}.
\item  Expressions which also depend on $u_{ttt}$, where
we also need to first use the equation \eqref{kg1}, possibly twice.
\end{enumerate}

This can easily generates a large number of cases, but they are all similar so for simplicity we 
consider the worst term, $T_{g^{\alpha \beta}} \partial_\alpha \partial_\beta B_{hh}(u_t,u_t)$,
which potentially contains the largest number of time derivatives:
\[
\begin{aligned}
     \Lambda_{\ge 3}\bigr( T_{g^{\alpha \beta}} \partial_\alpha \partial_\beta B_{hh}(u_t,u_t) \bigl) =
   & \
   \Lambda_{\ge 3}\bigr( T_{g^{11}} \partial_x^2 B_{hh}(u_t,u_t) \bigl) 
   +
    \Lambda_{\ge 3}\bigr( T_{g^{0 1}}  \partial_x B_{hh}(u_t,u_{tt}) \bigl) 
   \\ & \qquad \ +
     \Lambda_{\ge 3}  B_{hh}(u_{tt},  u_{tt}) 
     \ + \Lambda_{\ge 3}  B_{hh}(u_{t},  u_{ttt}) .
\end{aligned}
\]
Here the first term corresponds to case (a), the next two to case (b),
and the last to case (c).

Using the symbol bound~\eqref{hh-symbol},
 we obtain the desired estimates for case (a):
 \[
 \begin{aligned}
       \| \Lambda_{\ge 3}\bigr( T_{g^{11}} \partial_x^2 B_{hh}(u_t,u_t) \bigl) \|_{H^{s-1}} 
       &
       \lesssim \
       \|   \Lambda_{\geq 1}(g^{11} ) \|_{L^{\infty}}
       \|  \partial_x^2 B_{hh}(u_t,u_t)  \|_{H^{s-1}} 
       \\
       &
       \lesssim \
      \A_0 \A_3
       \|  u_t  \|_{H^{s-1}} .
 \end{aligned}
 \]

 For case (b), we use  the high--high localization of the bilinear form as well as the control for \( u_{tt}\) in \(L^{\infty}\) in \eqref{eq:p2-utt-2}.
 In the second term we can freely remove the 
 $\Lambda_{\geq 3}$ truncation, so we have
 \[
 \begin{aligned}
       \| \Lambda_{\ge 3}\bigr( T_{g^{0 1}}  \partial_x B_{hh}(u_t,u_{tt}) \bigl) \|_{H^{s-1}}  
       & \lesssim \
       \| g^{01} \|_{L^{\infty}} 
       \| u_t \|_{W^{3,\infty}}
       \| u_{tt} \|_{H^{s-2}} 
       \\
       &  \lesssim \ \A_0 \A_3   \|u[t]\|_{H^{s}\times H^{s-1}}.
 \end{aligned}
 \]
 For the third term, we estimate by \eqref{eq:p2-utt-2} 
 \[
 \begin{aligned}
      \| \Lambda_{\ge 3}  B_{hh}(u_{tt},  u_{tt}) \|_{H^{s-1}} 
      &
    \lesssim  \
    \| B_{hh}( \Lambda_{\ge 2}(u_{tt}), u_{tt}) 
    \|_{H^{s-1}} 
    \\
    &
    \lesssim \
    \| \Lambda_{\ge 2} u_{tt} \|_{W^{2,\infty}}
    \| u_{tt}
    \|_{H^{s-2}} 
    \\
    &\lesssim \
    \A_0 \A_3
            \|u[t]\|_{H^{s}\times H^{s-1}}.
 \end{aligned}
 \]

 We now consider the case (c) by using the high--high symbol localization property and the control for \(\partial^2 u_{tt}\) in \eqref{eq:p2-utt-2}
\[
\begin{aligned}
    \| \Lambda_{\ge 3}  B_{hh}(u_{t},  u_{ttt}) \|_{H^{s-1}}  
    &
    \lesssim \
    \|  B_{hh}(u_{t}, \Lambda_{\ge 2} u_{ttt}) \|_{H^{s-1}}
    \\
    &
    \lesssim \
    \|  \Lambda_{\ge 2} u_{ttt} \|_{W^{1,\infty}} \| u_t \|_{H^{s-1}}
    \\
    &\lesssim \ \A_0 \A_3  \|u[t]\|_{H^{s}\times H^{s-1}} .
\end{aligned}
\]

Thus, we conclude that
   \[
\begin{aligned}
    \| \Lambda_{\ge 3}\bigr( T_{g^{\alpha \beta}} \partial_\alpha \partial_\beta B_{hh}(u_t,u_t) \bigl) \|_{H^{s-1}} 
   & \lesssim  \
   \A_0 \A_3
            \|u[t]\|_{H^{s}\times H^{s-1}}.
\end{aligned}
\]

   \
Finally, estimating each of the above terms using the bounds for $A_{hh}$, $B_{hh}$,  and \( C_{hh}\) from Lemma~\ref{nfl} together with the Coifman–Meyer type estimates,
we arrive at
\[
  \|G\|_{H^{s-1}}
   \lesssim \
   \A_0
   \A_3
    \|u[t]\|_{H^{s}\times H^{s-1}},
\]
which proves the desired source term bound for \( G\).

This completes the proof.

\end{proof}

\section{Cubic energy estimates for the linearized equations}
\label{s:cubic linearized}

In the previous sections we derived cubic energy estimates for the full
nonlinear equation.  In this section we turn to the paradifferential
linearized equation~\eqref{para-KG-lin} and obtain corresponding estimates for the linearized
flow.  These will be used to prove Theorem~\ref{t:eepropagation-lin}.

\begin{proof}[Proof of Theorem~\ref{t:eepropagation-lin}]
We use the  paradifferential form of the linearized equations in \eqref{para-KG-lin}, which we recall here for convenience:
\begin{equation}\label{para-KG-lin-re}
  L^{\mathrm{para}}_{KG} v = N^{lin}_{\mathrm{bal}}(u) v,
\end{equation}
where the source term is
\[
\begin{aligned}
    N^{lin}_{\mathrm{bal}}(u) v
    = \ T_{ \partial_\alpha \partial_\beta v} g^{\alpha \beta }(u,\partial u ) 
    + \Pi( \partial_\alpha \partial_\beta v, g^{\alpha \beta }(u,\partial u ) )
    + T_{ \Tilde{\partial}_{\gamma} v} \Tilde{F}^{\gamma,lin}
    + \Pi(\Tilde{\partial}_{\gamma} v,\Tilde{F}^{\gamma,lin}  ).
\end{aligned}
\]
 We again separate the nonlinearity into a quadratic part and a cubic and higher order part, 
\[
N^{lin}_{bal}(u)v = \ N_{bal}^{lin,[2]}(u)v + N_{bal}
^{lin,[3]}(u)v.
\]
For the cubic part we already have the favourable estimates in Proposition~\ref{p:lin3}. As in the case of the full equation in the previous section,  our strategy will be to eliminate the quadratic 
terms using a normal form transformation.
However, unlike in the case of the full equation, now the quadratic part contains also unbalanced terms, namely
\[
\begin{aligned}
    N_{bal}^{lin,[2]}(u)v  = & \ 2 Q^{hh}_{11}(u,v)
+ 2Q^{hh}_{00}(u_t,v_t) + Q^{hh}_{10}(u,v_t) 
+  Q^{hh}_{01}(u_t,v) 
\\
& 
\quad \
+  2Q^{hl}_{11}(u,v) + Q^{hl}_{10}(u,v_t)
+ Q^{hl}_{01}(u_t,v) +2Q^{hl}_{00}(u_t,v_t).
\end{aligned}
\]

To obtain the cubic energy estimate, we first apply a normal form
transformation in order to remove the quadratic terms on the right–hand
side of  \eqref{para-KG-lin}.
Let 
\[
\begin{aligned}
    & v_{\NF}  := \ v + A^1(u,v) + B^1(u_t,v_t) + C^1(u_t,v) + D^1(u,v_t),  \\
    & G_v     : = \ L_{KG}^{para} v_{\NF}.
\end{aligned}
\]

  We determine the bilinear expressions $A^1(u,v)$, $B^1(u_t,v_t)$,
  \(C^1(u_t,v)\), and \( D^1(u,v_t)\) by applying Lemma~\ref{nfl}.
  Here, these normal form variables may be viewed as the linearization of the normal form transformation constructed in Lemma~\ref{nfl}.
  In that lemma, we denote the corresponding normal form variables by \(A(u,u)\), \( B(u_t,u_t)\), and \(C(u_t,u)\).
  Since the linearized normal form variables arises from the normal form variables for the full equation, we can describe  \(A^1\), \(B^1\), \(C^1\), \(D^1\) in terms of the normal form variables introduced in Section~\ref{s:normal-form}, which simplifies the computations.
 \begin{equation} \label{eq:nfl-linear}
     \left\{
 \begin{aligned}
    & A^1(u,v)     := \ 2 A_{hl}(u,v) + 2 A_{hh}(u,v), \\
    & B^1(u_t,v_t)  := \ 2 B_{hl}(u_t,v_t) +2 B_{hh}(u_t,v_t), \\
    & C^1(u_t,v)     := \ C_{hl}(u_t,v)  +C_{hh}(u_t,v),\\
    & D^1(u,v_t)    := \ C_{hl}(u,v_t) + C_{hh}(u,v_t). 
 \end{aligned}
 \right.
 \end{equation}
 
  We then define the cubic energy for the linearized variables by
  \[
  E_{lin} (v,v_t) := E^{1,para} (v_{NF}, v_{NF,t}).
  \]
A-priori the expression for $E_{lin} (v,v_t)$
also involves $v_{tt}$. However, as $v$ is assumed to solve the linearized equation, we can replace every instance of $v_{tt}$ with a linear expression in $v$ and $v_t$, with coefficients depending on $u$ and its derivatives. Thus,
$E_{lin} (v,v_t)$ can be unambiguously and uniquely identified with a bilinear expression
in $(v,v_t)$.

For our normal form transformation we will show invertibility, as well as the perturbative source terms bounds
\begin{equation}\label{equiv-lin}
\begin{aligned}
    \| (v_{\NF} ,\partial_t v_{\NF}) - (v,v_t) \|_{H^1 \times L^2}
    \lesssim \
    \A_2
    \| v[t] \|_{H^1 \times L^2},
\end{aligned}
\end{equation}
\begin{equation}\label{nf-source-lin}
\begin{aligned}
    \| G_v \|_{L^2}
    \lesssim \
     \A_0
     \A_3
    \| v[t] \|_{H^1 \times L^2}.
\end{aligned}
\end{equation}
Then, by Theorem~\eqref{paraE-inhom} for \( s=1\) applied to $v_{NF}$, the cubic energy estimates for $v$ follow. 
It remains to prove the two bounds above, \eqref{equiv-lin}  and \eqref{nf-source-lin}.

\medskip

We would like to split the analysis into two parts, corresponding to the two components of the 
normal form transformation, associated to the high--low interactions,  respectively the high--high interactions.

This separation is directly linear in the case of 
\eqref{equiv-lin}. On the other hand for $G_v$
we write
\[
\begin{aligned}
G_v = & \ L_{KG}^{para} v_{NF} = \Lambda_{\geq 3} L_{KG}^{para} v_{NF}
\\
= & \ \Lambda_{\geq 3} N^{[3]}_{lin}(u) v 
\\
& + \Lambda_{\geq 3} L_{KG}^{para} ( A^1_{hl}(u,v) + B^1_{hl}(u_t,v_t) + C^1_{hl}(u_t,v) + D^1_{hl}(u,v_t))
\\
& + \Lambda_{\geq 3} L_{KG}^{para} ( A^1_{hh}(u,v) + B^1_{hh}(u_t,v_t) + C^1_{hh}(u_t,v) + D^1_{hh}(u,v_t)),
\end{aligned}
\]
where we carefully note that, to avoid ambiguities, the projections
$\Lambda_{\geq 3}$ apply to the above expressions 
viewed as multilinear in $(u,u_t)$ and $(v,v_t)$.
The bound for the first term in $G_v$ has been proved in Proposition~\ref{p:lin3}. So it remains 
to separately consider the contributions of the high--low and high--high terms.

\medskip

 \emph{High--low interactions:}
 The bounds corresponding to the high--low interactions follow from Lemma~\ref{nfl-invert} and Lemma~\ref{source}. 

We begin with the high-low component of the normal forms, which by \eqref{eq:nfl-linear}
have the form
  \[
 \begin{aligned}
    & A_{hl}^1(u,v)  := 2 A_{hl}(u,v) , &   B^1_{hl}(u_t,v_t)  := 2 B_{hl}(u_t,v_t)\\
    & C^1_{hl}(u_t,v) := C_{hl}(u_t,v) ,&  D^1_{hl}(u,v_t)  := C_{hl}(u,v_t) .
 \end{aligned}
 \]
By Lemma~\ref{nfl}, these symbols have  regularity
 \[
 \begin{aligned}
     &a^{1}_{hl} \in S^{2,1},  & \ \quad b_{hl}^1 \in S^{1,0} , \\
     &c^1_{hl} \in S^{1,1} ,  & \ \quad  d_{hl}^1 \in S^{2,0}.
 \end{aligned}
 \]

 To apply  Lemma~\ref{nfl-invert}, we need to write the equation for 
 $v$ in the form \eqref{quadratic-para-lin}, with a suitable source term $F$. Precisely, \(F\) consists of all terms in  \( N^{bal}_{lin}(u)v\) except for the bilinear high--low interactions:
 \[
 F: = 2 Q^{hh}_{11}(u,v)
+ 2Q^{hh}_{00}(u_t,v_t) + Q^{hh}_{10}(u,v_t) 
+  Q^{hh}_{01}(u_t,v)  + N^{lin,[3]}_{bal} (u)v .
 \]

Then, by Proposition~\ref{p:lin2} and \eqref{eq:n-lin-3},  \(F\) satisfies
\[
\| F \|_{L^2} \lesssim  \ \A_1 \| v[t] \|_{H^1 \times L^2},
\]
which yields the desired invertibility estimates.

 We now turn to the source term
 \[
 \begin{aligned}
     G_{v,hl} = & \ \Lambda_{\ge 3} L_{KG}^{para} \bigr(  A^1_{hl}(u,v) + B^1_{hl}(u_t,v_t) + C^1_{hl}(u_t,v) + D^1_{hl}(u,v_t) \bigl)
     ,
 \end{aligned}
 \]
for which we use  Lemma~\ref{source}. 
Then it remains to verify that \(F\)
satisfies the bounds~\eqref{F-sour-t}.

 The quadratic bound follows from using the high--high property in \(Q\). As an example, consider \(Q_{00}^{hh}(u_t,v_t)\):
 \[
 \begin{aligned}
     \|  \Lambda_2 \partial_t(Q_{00}^{hh}(u_t,v_t)   \|_{H^{-1}} 
     &
     \lesssim  \ \| \jD^{-1}  (Q^{hh}_{00}(\Lambda_1u_{tt},v_t) )  \|_{L^2} +\| \jD^{-1} (Q_{00}^{hh}(u_t, \Lambda_1 v_{tt}))   \|_{L^2}  
     \\
     &
     \lesssim \
     \A_1 \| v[t]\|_{H^1 \times  L^2} .
 \end{aligned}
 \]
 The cubic and higher bounds follow from \eqref{eq:n-lin-3-t}.

\medskip
\emph{High--High interactions:} 
Again, we use the normal form variables in Lemma~\ref{nfl} to describe the linearized normal form variables:
 \[
 \begin{aligned}
     A_{hh}^1(u,v)   &  := \ 2 A_{hh}(u,v) ,
      \qquad  B^1_{hh}(u_t,v_t)  := \ 2 B_{hh}(u_t,v_t)  \\
     C^1_{hh}(u_t,v)    & := \ C_{hh}(u_t,v) ,
     \qquad   D^1_{hh}(u,v_t)   := \ C_{hh}(u,v_t) .
 \end{aligned}
 \]
 Recalling the high--high properties in Lemma~\ref{nfl}, we can describe their regularities:
 \[
 \begin{aligned}
&a^1_{hh}\in (\xi_1+\xi_2)S^{2}+S^{2},
\qquad
b^1_{hh}\in (\xi_1+\xi_2)S^{0}+S^{0}, 
\\
& c^1_{hh}\in (\xi_1+\xi_2)S^{1}+S^{1},
\qquad
d^1_{hh}\in (\xi_1+\xi_2)S^{1}+S^{1}.
 \end{aligned}
\]

 For the high--high analysis it suffices to repeat  the argument in the proof of Theorem~\ref{t:eepropagation}. Specifically, we use the equation for both $v$ and $u$ and paraproduct estimates~\eqref{BMO-shift} to move one derivative from $v$ to $u$ and apply the  Moser estimates~\eqref{bmo-hs}.

We take the most difficult term as an example; we chose $B_1(u_t,v_t)$ because it contains the most time derivatives. We use \( B^1_{hh}(u_t,v_t)\) to denote the high-high portions of the bilinear operators $B_1(u_t,v_t)$.
\medskip

\noindent (i) \underline{ The invertibility bound }: 
We first begin by expanding
\[
\begin{aligned}
    \|  \partial_t B^1_{hh} (u_t,v_t)   \|_{L^2}
    & \lesssim \ 
    \Vert B_{hh}^1(u_{tt},v_t) \|_{L^2} + \Vert B_{hh}^1(u_{t},v_{tt}) \|_{L^2}  
    \\ 
    & \lesssim  \ 
    \| u_{tt}\|_{W^{1,\infty}} \|v_t \|_{L^2} + 
    \| u_{t}\|_{W^{2,\infty}} \|v_{tt} \|_{H^{-1}}
    \\
     & \lesssim  \ 
     \A_2 \| v[t] \|_{H^1 \times L^2} +  \| u_{t}\|_{W^{2,\infty}} \bigr(\|v_{tt}^{(-1)} \|_{H^{-1}} + 
     \| v_{tt}^{(0)} \|_{L^2} \bigl)
      \\
     & \lesssim_{\A_0} \ 
     \A_2 \| v[t] \|_{H^1 \times L^2}
    .
\end{aligned}
\]

The estimate for the first term follows by using \eqref{eq:p2-utt}.
For the second term, we use the expansion in \eqref{eq:vtt} to control \(v_{tt} \).

\medskip

\noindent(ii) \underline{The source term bound}:
We  expand the expression \(G_{v,hh}\) as follows:
 \[
 \begin{aligned}
     G_{v,hh} =
    &\ 
    \Lambda_{\geq 3} L_{KG}^{para} \bigr(A^1_{hh}(u,v) + B^1_{hh}(u_t,v_t) + C^1_{hh}(u_t,v) + D^1_{hh}(u,v_t)\bigl)  
    \\ 
    = &  \
     \Lambda_{\geq 3} \bigr[
     -T_{g^{\alpha \beta}} \partial_\alpha \partial_\beta A^1_{hh}(u,v)  
    - T_{\Tilde{F}^{\gamma,lin}}  \Tilde{\partial_{\gamma}} A^1_{hh}(u,v) + m  A^1_{hh}(u,v)  \\
    &
   \ \qquad  - T_{g^{\alpha \beta}} \partial_\alpha \partial_\beta B^1_{hh}(u_t,v_t)   - T_{\Tilde{F}^{\gamma,lin}}  \Tilde{\partial_{\gamma}} B^1_{hh}(u_t,v_t) + m  B^1_{hh}(u_t,v_t)  \\
     &
  \  \qquad   -T_{g^{\alpha \beta}} \partial_\alpha \partial_\beta C^1_{hh}(u_t,v)   - T_{\Tilde{F}^{\gamma,lin}} \Tilde{\partial_{\gamma}} C^1_{hh}(u_t,v)  + m  C^1_{hh}(u_t,v)
  \\
   &
  \  \qquad   -T_{g^{\alpha \beta}} \partial_\alpha \partial_\beta D^1_{hh}(u,v_t)   - T_{\Tilde{F}^{\gamma,lin}} \Tilde{\partial_{\gamma}} D^1_{hh}(u,v_t)  + m  D^1_{hh}(u,v_t)
     \bigl].
 \end{aligned}
 \]

 The $m$ terms are quadratic and can be removed. For 
the remaining terms, distributing derivatives 
by Leibniz rule, we have three scenarios:

\begin{enumerate}[label=(\alph*)]
\item  Expressions which depend only on $u$, $u_t$, \( v\), and \(v_t\),
which are estimated directly.

\item  Expressions which also depend on $u_{tt}$ and \( v_{tt}\), where
we need to first use the equations \eqref{kg1} and \eqref{para-KG-lin-re}.
\item  Expressions which also depend on $u_{ttt}$ and \( v_{ttt}\), where
we need to first use the equations \eqref{kg1} and \eqref{para-KG-lin-re}, possibly twice.
\end{enumerate}

  This can easily generate a large number of cases, but they are all similar so for simplicity we 
consider the worst term, $T_{g^{\alpha \beta}} \partial_\alpha \partial_\beta B^1_{hh}(u_t,v_t)$,
which potentially contains the largest number of time derivatives:
 \[
 \begin{aligned}
     \| \Lambda_{\ge 3 } \bigr(T_{g^{\alpha\beta}}
     \partial_\alpha \partial_\beta B_{hh}^1(u_t,v_t)\bigl) \|_{L^2}
     &
     \lesssim \
     \| \Lambda_{\ge 3 }
      B_{hh}^1(u_{tt},v_{tt})\|_{L^2} 
       +  \| \Lambda_{\ge 3 }
      B_{hh}^1(u_{t},v_{ttt})\|_{L^2} \\
      & \quad \ 
         +  
         \| \Lambda_{\ge 3 }
      B_{hh}^1(u_{ttt},v_{t})\|_{L^2} 
      +
    \| \Lambda_{\ge 3}\bigr( T_{g^{0 1}}  \partial_x B^1_{hh}(u_t,v_{tt}) \bigl) \|_{L^2}  
     \\
     & \quad \ +
     \| \Lambda_{\ge 3}\bigr( T_{g^{0 1}}  \partial_x B^1_{hh}(u_{tt},v_{t}) \bigl) \|_{L^2} 
 +
    \| \Lambda_{\ge 3}\bigr( T_{g^{11}} \partial_x^2 B^1_{hh}(u_t,v_t) \bigl) \|_{L^2}.
 \end{aligned}
 \]
 Here only the last term corresponds to case (a);
 the second and the third term are in case (c);
and the rest are all in case (b).
Using the high--high property of the paraproduct and the Moser estimates~\eqref{bmo-hs},
 we obtain the desired estimates for case (a):
 \[
 \begin{aligned}
     \| \Lambda_{\ge 3}\bigr( T_{g^{11}} \partial_x^2 B^1_{hh}(u_t,v_t) \bigl) \|_{L^2}
     &
     \ \lesssim  \ \|  \Lambda_{\ge 1}g^{11}\|_{L^{\infty}}
     \| \partial_x^2 B^1_{hh}(u_t,v_t)  \|_{L^2} 
     \\
     &\  \lesssim \ \A_0 \A_3 \| v[t] \|_{H^1 \times L^2}.
 \end{aligned}
 \]
 For case (b), there are three sub-cases: only \(u_{tt}\), only \(v_{tt}\), and both \(u_{tt}\) and \( v_{tt}\) show up. 

 The first two sub-cases are exactly the same  as in the proof of Theorem~\ref{t:eepropagation}. So, we only describe the sub-case where we use both the full and the linearized equation. Again, we separate them by homogeneity:
 \[
 \begin{aligned}
       \| \Lambda_{\ge 3}  B_{hh}(u_{tt},  v_{tt}) \|_{L^2} 
      &
    \lesssim \
    \| B_{hh}( \Lambda_{\ge 2}(u_{tt}), v_{tt}) 
    \|_{L^2} 
    + \| B_{hh}( u_{tt}, \Lambda_{\ge 2}(v_{tt})) 
    \|_{L^2} .
    \\
 \end{aligned}
 \]
For the first term, we invoke  \eqref{eq:p2-utt-2} and \eqref{eq:vtt} to bound it by
\[
\begin{aligned}
    \| \Lambda_{\ge 2}(u_{tt}) \|_{W^{2,\infty}}
    \| v_{tt} \|_{H^{-1}}  
    \lesssim \
   \A_0 \A_3
            \|v[t]\|_{H^1 \times L^2}.
\end{aligned}
\]
For the second term, we use the para-associativity in Lemma~\ref{l:para-assoc} together with bounds~\eqref{eq:p2-utt}:
\[
\begin{aligned}
   & \| B_{hh}( u_{tt}, \Lambda_{\ge 2}( L_{KG}^{para} v - v_{tt})  
    \|_{L^2} 
    + 
      \| B_{hh}( u_{tt},  N_{bal}^{lin} (u)v )  
    \|_{L^2}  \\
    &
    \lesssim \
    \| B_{hh}( u_{tt}, T_{\Lambda_{\geq 1} g^{\alpha 1} }\partial_x \partial_\alpha v  + T_{\Lambda_{\geq 1} \Tilde{F}^{\gamma,lin}}\Tilde{\partial}^{\gamma} v )
    \|_{L^2} 
    + \| u_{tt} \|_{W^{1,\infty}} \|  N_{bal}^{lin} (u)v\|_{L^2}
    \\
     &
    \lesssim \ \A_0  \A_3 \| v[t] \|_{H^1 \times L^2} .
\end{aligned}
\]
It remains to consider case (c), which we 
split into two subcases.

\bigskip
\noindent  \emph{The case (c).Sub-case: terms involving \(v_{ttt}\).}
For this case, we use the expansion in Proposition~\ref{p:infty} and then use the bounds~\eqref{eq:vttt} to estimate
\[
\begin{aligned}
   \| \Lambda_{\ge 3}  B_{hh}(u_{t},  v_{ttt}) \|_{L^2} 
      &  
      =  \|   B_{hh}(u_{t},  \Lambda_{\ge 2}v_{ttt}) \|_{L^2} 
      \\
      & 
      \lesssim
        \|   B_{hh}(u_{t},  \Lambda_{\ge 2}v_{ttt}^{(-2)}) \|_{L^2} 
        + \|   B_{hh}(u_{t},  \Lambda_{\ge 2}v_{ttt}^{(-1)}) \|_{L^2}  
        + \|   B_{hh}(u_{t},  \Lambda_{\ge 2}v_{ttt}^{(0)}) \|_{L^2}  
      \\
      &
       \lesssim \| u_t \|_{W^{3,\infty}} \| \Lambda_{\ge 2}v_{ttt}^{(-2)} \|_{H^{-2}}
       + 
         \| u_t \|_{W^{2,\infty}} \| \Lambda_{\ge 2}v_{ttt}^{(-1)} \|_{H^{-1}}
          + 
         \| u_t \|_{W^{1,\infty}} \| \Lambda_{\ge 2}v_{ttt}^{(0)} \|_{L^2}
        \\
      & \lesssim \A_3 \A_0 \| v[t] \|_{H^1 \times L^2} + \A_2 \A_1  \| v[t] \|_{H^1 \times L^2} + \A_1 \A_2 \| v[t] \|_{H^1 \times L^2}.
\end{aligned} 
\]
Using the interpolation inequality~\eqref{eq:n-inter}, we obtain the desired bounds.

\medskip

\noindent \emph{Sub-case: terms involving \(u_{ttt}\)}.
To bound the terms containing \(u_{ttt}\),  we use the estimates for \( u_{tt}\) in \(L^{\infty}\) in \eqref{eq:p2-utt-2}:
\[
\begin{aligned}
      \| \Lambda_{\ge 3}  B_{hh}(u_{ttt},  v_{t}) \|_{L^2} 
      & 
      = \| B_{hh}( \partial_t \Lambda_{\ge 2} u_{tt} ,v_t)  \|_{L^2}
      \\
      &
      \lesssim 
       \|  \partial_t \Lambda_{\ge 2} u_{tt}   \|_{W^{1,\infty}} \|
       v_t \|_{L^2} 
       \\
       &
       \lesssim \A_0 \A_3 \| v[t] \|_{H^1 \times L^2}.
\end{aligned}
\]
The proof of the theorem is concluded.

\end{proof}

\section{Enhanced \texorpdfstring{$\epsilon^{-2}$}{} lifespan via modified energy functionals }
\label{s:cubic lifespan}

Having established the cubic energy estimates, we now use them to derive the corresponding $\epsilon^{-2}$ lifespan bounds in Theorem~\ref{t:enhanced} and Theorem~\ref{t:weak-Lip}.
Although the argument is fairly standard, we include it here for the sake of completeness. Our argument relies on a bootstrap argument, using the cubic modified energy functionals and Sobolev embeddings.

\begin{proof}[Proof of Theorem~\ref{t:enhanced}]
For $s > \frac92$ we consider the equation~\eqref{kg1}
with initial data satisfying 
\begin{equation}
\|u[0]\|_{H^{s} \times H^{s-1}} \leq \epsilon \ll 1.    
\end{equation}
The local theory insures that a local solution $u$ exists for a short time. We need to expand the
interval of existence of the solutions.
We argue by a bootstrap argument using the cubic energy estimates.  
\medskip

Fix $T>0$ and suppose that on $[0,T]$ we have the apriori bound
\begin{equation}\label{boot}
  \|u[t]\|_{H^s \times H^{s-1}} \leq \ C\epsilon
  \qquad\text{for all } t\in[0,T],
\end{equation}
for some large constant $C\ge 1$ to be chosen later.
Then we want to show that the same bounds hold with better constants:
\[
\|u[t]\|_{H^s \times H^{s-1}} \leq  \ \frac{1}{2}C \epsilon 
\]
under the assumption $T \ll \epsilon^{-2}$.

By  part (i) of Theorem~\ref{t:eepropagation}
we have the norm equivalence
\[
E^{s,para}(u[t])\approx \ \|u[t]\|_{H^{s}\times H^{s-1}}^2.
\]
By  part (ii) of Theorem~\ref{t:eepropagation}, we have the cubic energy estimate 
\[
\frac{d}{dt}E^{s,para}(u[t])
  \lesssim \
\A_0 \A_3
           \,E^{s,para}(u[t]).
\]
Since $s>\tfrac92$, Sobolev embeddings yield
\[
\|\partial u(t)\|_{L^\infty}
  + \|\partial_x^3 \partial u(t)\|_{L^\infty}
  \lesssim \
  \|u[t]\|_{H^{s}\times H^{s-1}}^2. 
\]
Using the bootstrap assumption \eqref{boot}, we conclude that
\[
\A_0 \A_3
\leq \ C_0 C^2 \epsilon^2
\qquad\text{for } t\in[0,T]
\]
with a universal constant $C_0$ independent of $\epsilon$ and $T$.

 Gr\"onwall's lemma then gives
\[
E^{s,para}(u[T])
  \lesssim \ E^{s,para}(u[0])
          \exp\!\Big( C_0 C^2\epsilon^2 T \Big).
\]
 Using the smallness of the initial data,
\[
E^{s,para}(u[0])^{\frac{1}{2}} \lesssim \|u[0]\|_{H^{s}\times H^{s-1}} \lesssim \ \epsilon,
\]
we obtain
\[
\|u[t]\|_{H^{s}\times H^{s-1}} \lesssim E^{s,para}(u[T])^{\frac{1}{2}}
\lesssim \ \epsilon\,\exp\!\Big( \frac{1}{2}C_0 C^2\epsilon^2 T\Big).
\]

If we now restrict to times $T$ such that $ C_0 C^2 \epsilon^2 T\leq 1 $, that is
$T\ll C_0^{-1} C^{-2} \epsilon^{-2}$, we find
\[
E^{s,para}(u[T])^{\frac{1}{2}}
\lesssim \ \epsilon, 
\]
with a universal implicit constant.

Hence we obtain 
\[
E^{s,para}(u[T])^{\frac{1}{2}}
\leq \ \tfrac12 C\epsilon,
\]
provided $C$ is chosen large enough.  
Thus the bootstrap bound \eqref{boot} improves from $C\epsilon$ to
$\tfrac12 C\epsilon$, and a standard continuity argument closes the
bootstrap and yields the desired bound on $[0,T]$ with $T\ll \epsilon^{-2}$.
\end{proof}
Now we use the cubic energy estimates for the linearized equation to prove the Theorem~\ref{t:weak-Lip}.

\begin{proof}[Proof of Theorem~\ref{t:weak-Lip}]
    For \( s > \frac{9}{2}\), and let $u^1,u^2$ be two solutions to  equation~\eqref{kg1}, with initial data \( u^j[0] \) for \(j =1,2\).

    We can represent the difference of the two solutions as 
    \[
    u_2 - u_1 = \ \int_1^2 \frac{d}{dh} u_h \, dh,
    \]
    where \(\{ u_h\}_{h \in [1,2] }\) is a one-parameter family of solutions connecting \( u_1\) and \( u_2\). 
    In particular, \(\frac{d}{dh} u_h \) solves the linearized equation.
    It then suffices to obtain uniform estimates for the linearized flow.

By part (1) of Theorem~\ref{t:eepropagation-lin}, we have the norm equivalence
\begin{equation}
    E_{lin}(v[t]) \approx_{\A_2}  \Vert v[t]\Vert _{H^{1}\times L^2}^2.
\end{equation}
By part (2) of Theorem~\ref{t:eepropagation-lin}, we have the cubic energy estimates
\begin{equation}
\left|\frac{d}{dt}   E_{lin}(v[t])\right|
\lesssim_{ \ \A_2}  \
\A_0
\A_3 \,  E_{lin}(v[t]).
\end{equation}
Here, by a slight abuse of notations, the control parameters \( \A_k\) are taken uniformly over $h\in[1,2]$; namely, they are computed from
the family $\{u_h\}_{h\in[1,2]}$.
Using the apriori bounds from the  previous proof, we obtain
\[
\A_0 \A_3 \leq \ C_0 \epsilon^2
\qquad \text{for all }  t \in [0,T],
\]
with universal constant \( C_0 > 0\) independent of 
\(\epsilon\) and \(T \ll \epsilon^{-2}\).

Applying Gr\"onwall's lemma yields
\[
E_{lin}(v[T]) \leq \ E_{lin}(v[0]) \exp{(C_0\epsilon^2T)}.
\]
Hence, using  \(T \ll \epsilon^{-2}\),
 we conclude that
\[
\|(u^2-u^1,\partial_t(u^2-u^1))\|_{L_t^\infty(H^1\times L^2)}
\;\lesssim\; \|(u_0^2-u_0^1,u_1^2-u_1^1)\|_{H^1\times L^2},
\]
as needed.
\end{proof}


\section{\!\!\! Strichartz estimates and the \texorpdfstring{$\epsilon^{-4}$}{}  lifespan
}
\label{s:Strichartz}

The $\epsilon^{-2}$ lifespan bound proved in the previous section applies for the quasilinear Klein-Gordon models on both $\T$ and $\R$. This is likely sharp generically in the periodic setting, but it can be improved on $\R$. 
In this section we combine the cubic energy estimates in Theorem~\ref{t:eepropagation} with the Strichartz estimates associated to the linear problem  in order to obtain a lifespan of order $\epsilon^{-4}$ in $\R$. 
The
argument is intentionally crude: our use of Strichartz estimates is far
from optimal and forces us to control several additional derivatives of the solution
through the energy estimates.  The purpose of this section is therefore not to
reach the sharp Sobolev exponent, but rather to provide a model argument which
can serve as a prototype for more refined results in future work.

We begin by recalling standard Strichartz estimates for the
one–dimensional linear Klein--Gordon flow; see, for instance,
Keel--Tao~\cite{KeelTao1998-AJM} and Tao's monograph~\cite{Tao2006-CBMS}.
A straightforward application of normal forms could convert the quadratic source terms into the cubic and higher order ones.
We then estimate these cubic and higher-order terms using a suitable Strichartz norm of the solution $u$ and its derivatives, namely \(L_t^{4}L_x^{\infty}\), which pairs well with the cubic energy estimates. This yields an existence time of order \( \epsilon^{-4} \) for the quasilinear problem. Moreover, building on this result, we can run an additional bootstrap argument for the linearized equation and obtain the same \(\epsilon^{-4}\) time scale. In particular, this leads to weak-Lipschitz bounds on the \( \epsilon^{-4} \) time scale.

For convenience we state the relevant Strichartz estimates in the following lemma.

\begin{lemma} [Strichartz estimates]
Assume $u$ solves the linear Klein-Gordon equation in $[0,T ] \times \R$
\[
 L_{KG} u  = \  f.
\]
Then the following estimates hold
\[
\Vert \langle D \rangle^{-\frac{1}{4}}\partial  u \Vert_{L^{4}_tL^{\infty}_x}  
\leq  \
C (\Vert   u[0]\Vert_{H^1 \times L^2} 
+
\Vert f \Vert_{L^{1}_t L^2_x}
).
\]
    
\end{lemma}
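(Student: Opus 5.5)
We reduce to the homogeneous estimate via Duhamel and then prove the homogeneous bound by a dyadic $TT^*$ argument, which is the standard Keel--Tao route specialised to the one-dimensional Klein--Gordon dispersion relation $\langle \xi\rangle_m := (\xi^2+m)^{1/2}$.

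\emph{Reduction to half-waves.} Write the solution as
\[
u(t) = \cos(t\langle D\rangle_m) u_0 + \frac{\sin(t\langle D\rangle_m)}{\langle D\rangle_m} u_1 + \int_0^t \frac{\sin((t-s)\langle D\rangle_m)}{\langle D\rangle_m} f(s)\, ds .
\]
Since $\partial_x$ and $\partial_t$ applied to these propagators produce operators $\langle D\rangle_m^{-1}\partial$ of order zero composed with $e^{\pm it\langle D\rangle_m}$, it suffices to prove the homogeneous estimate
\[
\| \langle D\rangle^{-1/4} e^{\pm it\langle D\rangle_m} \phi\|_{L^4_t L^\infty_x(\R\times\R)} \lesssim \|\phi\|_{L^2}.
\]
Indeed, given this, the initial-data terms follow directly with $\phi=\langle D\rangle_m u_0$, respectively $\phi=u_1$. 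For the Duhamel term we apply Minkowski's inequality: write it as $\int_0^T \mathbf 1_{s<t}\, (\dots)\, ds$, bound the $L^4_tL^\infty_x$ norm of each $s$-slice by $\|f(s)\|_{L^2}$ using the homogeneous estimate, and integrate in $s$. Here we use the time-translation invariance of the free flow, together with the observation that the cutoff $\mathbf 1_{t>s}$ can only decrease the $L^4_t$ norm. No Christ--Kiselev lemma is needed, since we only claim an $L^1_tL^2_x$ source norm.

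\emph{Dispersive estimate and $TT^*$.} Decompose $\phi=\sum_k P_k\phi$. For each frequency localization we derive the dispersive bound
\[
\|e^{it\langle D\rangle_m} P_k\|_{L^1\to L^\infty} \lesssim \min\bigl(2^k,\ 2^{3k/2}|t|^{-1/2}\bigr).
\]
This follows from stationary phase in the oscillatory integral $\int e^{i(x\xi + t\langle\xi\rangle_m)}\chi_k(\xi)\,d\xi$, using $\partial_\xi^2\langle\xi\rangle_m = m\langle\xi\rangle_m^{-3}\sim 2^{-3k}$ on the support together with van der Corput's lemma. Combining this with $L^2$ conservation, the $TT^*$ argument for the admissible pair $(q,r)=(4,\infty)$ in one dimension (which satisfies $2/q = 1/2$) gives
\[
\|e^{it\langle D\rangle_m}P_k\phi\|_{L^4_tL^\infty_x} \lesssim 2^{3k/4}\cdot 2^{-k/2}\|P_k\phi\|_{L^2} = 2^{k/4}\|P_k\phi\|_{L^2}.
\]
To see this, apply Hardy--Littlewood--Sobolev in time to the kernel $2^{3k/2}|t-s|^{-1/2}$. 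The $L^{4/3}\to L^4$ HLS bound is valid for the kernel $|t|^{-1/2}$ in one dimension, and it yields the factor $2^{3k/2}$ on $TT^*$, hence $2^{3k/4}$ on $T$.

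\emph{Summation and main obstacle.} The gain $\langle D\rangle^{-1/4}$ cancels exactly the loss $2^{k/4}$, so each dyadic piece is bounded by $\|P_k\phi\|_{L^2}$. The main obstacle is summing over $k$ at the endpoint $r=\infty$, where Littlewood--Paley square function estimates are unavailable and naive summation in $k$ diverges logarithmically.

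Our first plan is to avoid dyadic summation entirely by running the $TT^*$ argument globally for the operator $\langle D\rangle^{-1/4}e^{it\langle D\rangle_m}$. The required kernel bound
\[
\Bigl|\int e^{i(x\xi+t\langle\xi\rangle_m)}\langle\xi\rangle^{-1/2}\,d\xi\Bigr| \lesssim |t|^{-1/2}
\]
should follow uniformly in $x$ by summing the dyadic stationary-phase contributions. Only those $k$ with $2^{k}\sim$ the stationary frequency $|x|/\sqrt{t^2-x^2}$ contribute the size $2^{-k}\cdot 2^{3k/2}|t|^{-1/2}\cdot 2^{-k/2} = |t|^{-1/2}$, and the nonstationary pieces decay geometrically after integration by parts. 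Near the light cone $|x|\approx|t|$, the near-stationary window in $k$ has bounded length at each scale, so we expect a clean bound there as well.

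If this global kernel bound proves delicate, the fallback is to accept an $\epsilon$-loss, proving the estimate with $\langle D\rangle^{-1/4-\delta}$, which is harmless for the applications given the surplus regularity $s\ge 6\tfrac14$.
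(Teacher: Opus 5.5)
Your reduction is fine. Duhamel plus Minkowski handles the $L^1_tL^2_x$ source, and both the dispersive bound $\min(2^k,2^{3k/2}|t|^{-1/2})$ and the Hardy--Littlewood--Sobolev step are correct. The failure is in the very next line. The $TT^*$ argument gives $\|e^{it\langle D\rangle_m}P_k\phi\|_{L^4_tL^\infty_x}\lesssim 2^{3k/4}\|P_k\phi\|_{L^2}$, as you say, but the extra factor $2^{-k/2}$ you then multiply in has no source. With the weight $\langle D\rangle^{-1/4}$, each dyadic piece is therefore bounded only by $2^{k/2}\|P_k\phi\|_{L^2}$. So the obstacle is not a logarithmic divergence in the $k$-summation but a power loss in every piece. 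The same spurious factor, now $2^{-k}$, sits in your global kernel computation. The stationary-phase contribution of frequency $2^k$ to the kernel of $\langle D\rangle^{-1/2}e^{it\langle D\rangle_m}$ has size $2^{-k/2}\cdot 2^{3k/2}|t|^{-1/2}=2^{k}|t|^{-1/2}$ (for $2^k\lesssim |t|$). Hence near the light cone $\sup_x|K(t,x)|$ grows like $|t|^{1/2}$ rather than decaying like $|t|^{-1/2}$.

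This cannot be repaired, because the homogeneous estimate you reduce to, and with it the lemma as stated, is false. Let $\hat\phi(\xi)=\chi(2^{-k}\xi)$ with $\chi\ge 0$ a bump supported in $[1,2]$, so $\|\phi\|_{L^2}\approx 2^{k/2}$. On this support the phase $t\langle\xi\rangle_m$ differs from its linearization at the center by at most $C|t|\,2^{-3k}2^{2k}=C|t|2^{-k}$. So for $|t|\le c2^k$ the function $\langle D\rangle^{-1/4}e^{it\langle D\rangle_m}\phi$ is, up to a unimodular factor and a small relative $L^\infty$ error, a translate of $\langle D\rangle^{-1/4}\phi$, whose supremum is $\approx 2^{3k/4}$. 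Hence
\[
\|\langle D\rangle^{-1/4}e^{it\langle D\rangle_m}\phi\|_{L^4([0,T];L^\infty_x)}\gtrsim \min(T,c2^k)^{1/4}\,2^{k/4}\,\|\phi\|_{L^2},
\]
which is unbounded as $k\to\infty$ for every fixed $T>0$. The real solution $u=\mathrm{Re}\,e^{it\langle D\rangle_m}\langle D\rangle_m^{-1}\phi$ of $L_{KG}u=0$ gives the same counterexample to the lemma itself. The reason is that high frequencies of 1D Klein--Gordon travel like 1D waves, without dispersion, for times shorter than their frequency. Your $\delta$-loss fallback therefore does not help either.

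The same example with $T\gtrsim 2^k$ shows that $2^{3k/4}$ is the sharp dyadic bound. Once the arithmetic is corrected, what your argument actually proves is
\[
\|\langle D\rangle^{-3/4-\delta}\partial u\|_{L^4_tL^\infty_x}\lesssim_\delta \|u[0]\|_{H^1\times L^2}+\|f\|_{L^1_tL^2_x},
\]
the standard one-dimensional Klein--Gordon Strichartz estimate at $(4,\infty)$. The paper gives no proof of the lemma, only a reference to Keel--Tao and Tao. That theory, fed with the frequency-localized dispersive bound $2^{3k/2}|t|^{-1/2}$, likewise yields a $3/4$ loss rather than $1/4$. The lemma should carry this weight, which costs about half a derivative more regularity in the $\epsilon^{-4}$ argument.
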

The key point is that the Strichartz norm \( L^4_t L^{\infty}_x\) can be combined with the cubic energy estimates.
We will work with the equation~\eqref{eq-full},
\begin{equation*}
- g^{\alpha \beta}(u,\partial u) \partial_\alpha
\partial_\beta u + m u  = \ f(u,\partial u).
\end{equation*} 
 Motivated by the normal form in Proposition~\ref{nfl}, we proceed as follows. We apply the full normal form transformation in Proposition~\ref{nfl},
 \[
 \mathbf{u}  = \ u +A(u,u) + B(u_t,u_t) +C(u_t,u).
 \]
 By construction, the quadratic terms are eliminated so this transforms the equation for $u$ into one  for $v$ with only cubic and higher-order nonlinearities,
  \[
L_{KG} \mathbf{u}=\  Q^3(u).
  \]
  To identify $Q^3$ we begin by   recalling the symbols of $A$ , $B$, and $C$. At leading order, $a \approx \xi_1^2 \xi_2 + \xi_1 \xi_2^2$ , $b \approx \xi_1 + \xi_2$, and $c \approx \xi_1\xi_2 + \xi_2^2$. Accordingly, we may write the three bilinear forms schematically as 
 \[
 L(\partial u,\partial u_x),
 \]
 where $L$ denotes a translation invariant operator of order zero, and  \( \partial \) stands for both \( \partial_x\) and \( \partial_t\). 
 
Then we obtain the cubic (and higher-order) source terms as 
\[
\begin{aligned}
   Q^3: = \ \Lambda_{\geq 3}(L_{KG} u)
   +
   \Lambda_{\geq 3}  L_{KG}  L(\partial u ,\partial_x \partial u).
\end{aligned}
\]
It remains to bound these source terms using the energy norm and the Strichartz norms.

\begin{lemma} [Source term estimates] \label{stri-sour}
The following estimates hold in $[0,T ] \times \mathbb{R}$, for $s \geq 7 \frac{1}{4}$
     \[
     \Vert Q^3 \Vert_{L^1_tH_x^{3\frac{1}{4}}} 
     \leq \ 
      C T^{1/2} \Vert \partial^{\leq4} u \Vert_{L^{4}_tL^{\infty}_x}^2 
      \sup_{t \in [0,T]} \| u[t]\|_{H^s\times H^{s-1}}.
     \]
    
\end{lemma}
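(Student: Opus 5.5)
The plan is to reduce the estimate to a fixed-time multilinear bound and then integrate in time. The fixed-time bound we aim for is
\[
\|Q^3(t)\|_{H^{3\frac14}} \lesssim_{\A_0} \|\partial^{\le 4}u(t)\|_{L^\infty_x}^2\, \|u[t]\|_{H^s\times H^{s-1}},
\]
where the structure of $Q^3$ makes it at least cubic. Given this, Hölder in time gives
\[
\int_0^T \|\partial^{\le4}u\|_{L^\infty}^2\,dt \le T^{1/2}\|\partial^{\le 4}u\|_{L^4_tL^\infty_x}^2,
\]
which is exactly the claimed factor $T^{1/2}$ with the energy norm pulled out as a supremum in time. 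The constant depending on $\A_0$ is harmless, since $\A_0\lesssim \|u[t]\|_{H^s\times H^{s-1}}\ll1$ by Sobolev embedding under the bootstrap.

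\textbf{Step 1: the term $\Lambda_{\ge3}(L_{KG}u)$.} This is just the cubic and higher part of $N(u)$. We Taylor expand $g^{\alpha\beta}$ and $f$ as in the proof of Proposition~\ref{p:full3}, so that each term is a product of at least three factors drawn from $U=(u,\partial u)$ and $\partial^2 u$, with a smooth coefficient. We estimate in $H^{3\frac14}$ using the algebra and Moser bounds \eqref{bmo-hs}. The highest derivative count lands on a single factor in $H^{3\frac14+2}\subset H^{s}$, and the remaining two factors are placed in $L^\infty$. Those $L^\infty$ factors involve at most $\partial^{\le 2}u$ at the lowest level, and by interpolation (as in \eqref{eq:n-inter}) at most $\partial^{\le4}u$.

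\textbf{Step 2: the term $\Lambda_{\ge3}L_{KG}L(\partial u,\partial_x\partial u)$.} We write
\[
L_{KG}L(\partial u,\partial_x\partial u)=2L(L_{KG}\partial u,\cdot)+\ldots ,
\]
expanding by the Leibniz rule exactly as in the proof of Proposition~\ref{nfl}. Only the terms in which $L_{KG}u$ (equivalently $u_{tt}$) is replaced by its nonlinear part survive $\Lambda_{\ge3}$. Explicitly, these have the form $L(\partial N(u),\partial_x\partial u)$, $L(\partial u,\partial_x\partial N(u))$, $L(\partial u_t,\partial_x \partial_t u)$ with one factor's $u_{tt}$ substituted, and similar. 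Every remaining time derivative is eliminated via the equation, using the bounds of Proposition~\ref{p:infty}, notably \eqref{eq:p2-utt-2} for $\Lambda_{\ge2}u_{tt}$ and $\Lambda_{\ge2}u_{ttt}$.

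Each resulting term is a trilinear (or higher) expression carrying at most about six derivatives in total. Using the Coifman--Meyer bound \eqref{CM}/\eqref{CM-BMO} and \eqref{bmo-hs} in $H^{3\frac14}$, we put the highest-frequency factor in $H^{s-1}$ or $H^s$ and the two lower ones in $W^{k,\infty}$. Because $L$ has order zero, we can always arrange the high factor to absorb the $3\frac14$ derivatives plus its own, at most $4\frac14+3=7\frac14$ in the worst case. This is where $s\ge 7\frac14$ comes from.

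\textbf{Main obstacle: derivative counting.} The hard part is verifying that the low-frequency factors never require more than four derivatives in $L^\infty$. The worst configuration is $u_{ttt}$, or $\partial_x\partial u$ sitting in the low slot with $N(u)$ differentiated. There we would use the frequency localization of the bilinear symbols: the high--high part lets derivatives be moved freely, and the low--high part carries the expansions \eqref{nf-hl}, whose low-frequency factors carry at most $\xi_1^{3}$ or $\xi_1^{4}$. Combining this with \eqref{eq:p2-utt} caps the low factors at $\partial^{\le4}u$ in $L^\infty$. If a term still puts five derivatives on a low factor, we would shift one derivative onto the high factor via \eqref{BMO-shift}, at the cost of raising its regularity, which is still within $H^{s}$ for $s\ge7\frac14$.
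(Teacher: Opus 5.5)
Your proposal is correct and follows essentially the same route as the paper. You Leibniz-expand $L_{KG}L(\partial u,\partial_x\partial u)$, substitute $u_{tt}$ via the equation and \eqref{eq:p2-utt-2}, bound each cubic term at fixed time in $H^{3\frac14}$ by fractional Leibniz and Moser (the top factor costing $7\frac14$ derivatives), and then use H\"older in time to obtain $T^{1/2}\|\partial^{\le4}u\|_{L^4_tL^\infty_x}^2$.

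The contingency in your last paragraph (appealing to \eqref{nf-hl} or \eqref{BMO-shift}) is not needed. With $L$ of order zero and the equation substituted, no single factor ever carries more than four derivatives, so the $L^\infty$ factors are automatically of the form $\partial^{\le4}u$.
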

\begin{proof}
To start with, we expand the nonlinearities:
\[
\begin{aligned}
   Q^3 &= \ \Lambda_{\geq 3}(L_{KG} u)
   +
   \Lambda_{\geq 3}  L_{KG}  L(\partial u ,\partial_x \partial u) \\
    & = \
     \Lambda_{\ge 3} L( \partial u, L_{KG}\partial_x \partial u )
    + \Lambda_{\ge 3}L(L_{KG} \partial u,\partial_x \partial u ) \\
    & 
    \quad \ +  2 \Lambda_{\ge 3}L(\partial_t \partial u,\partial_t \partial \partial_xu )
    + \Lambda_{\ge 3}(f+ g^{\alpha\beta} \partial_{\alpha} \partial_{\beta} u)  
    \\
    & = \ L( \partial u, \partial_x \partial \Lambda_{\ge 2}  u_{tt} )
    + L( \Lambda_{\ge 2} \partial u_{tt},\partial_x \partial u ) \\
    & 
    \quad \ 
    +  2 \Lambda_{\ge 3}L(\partial_t \partial u,\partial_t \partial \partial_xu )
    + \Lambda_{\ge 3}(f+ g^{\alpha\beta} \partial_{\alpha} \partial_{\beta} u) .
\end{aligned}
\]

The most delicate contribution comes from the term with the most unbalanced distribution of derivatives, namely, \( L(\partial u, \partial_x \partial  \Lambda_{\geq 2} u_{tt})\).  
 Applying the fractional Leibniz’ rule together with the Moser-type estimates~\eqref{bmo-hs} and the bounds for  \( u_{tt}\) in \eqref{eq:p2-utt-2}, we obtain, noting that the normalization \( g^{00}=-1\) eliminates the \( (\alpha,\beta) =(0,0)\) terms:
\[
\begin{aligned}
    \Vert   L( \partial u, \partial_x \partial \Lambda_{\ge 2}  u_{tt} )\Vert_{H_x^{3\frac14}}
    \lesssim \
    & \| \partial u \|_{L^{\infty}}
    \| \partial_x \partial \Lambda_{\geq 2} (g^{\alpha\beta} \partial_\alpha \partial_\beta u +f) \|_{H^{3 \frac{1}{4}}}
    \\
    & \quad \
    + 
    \|\partial_x \partial \Lambda_{\geq 2} u_{tt} \|_{L^{\infty}}
    \| \partial u \|_{H^{3 \frac{1}{4}}} 
    \\
    \lesssim \
    & \| \partial u \|_{L^{\infty}}
    \| \partial u \|_{L^{\infty}}
    \| \partial^{4} u \|_{H^{3 \frac{1}{4}}}
    + 
    \| \partial^{4} u \|_{L^{\infty}}
    \| \partial u \|_{L^{\infty}}
    \| \partial u \|_{H^{3 \frac{1}{4}}}
     \\
     \lesssim \
    &
     \Vert  \partial^{\leq 4} u \Vert_{L^{\infty}_x}^2
    \sup_{t \in [0,T]} \| u[t]\|_{H^s\times H^{s-1}} ,  \text{ for } s \ge 7\frac{1}{4}.
\end{aligned}
\]
We emphasize that this requires \(7\frac14\) derivatives from the energy.
The rest of the terms enjoy better bounds. 
For the second and the third terms, we estimate them similarly by using algebra properties together with Moser type estimates~\eqref{bmo-hs} and the bounds for  \( u_{tt}\) in \eqref{eq:p2-utt-2}, noting that the normalization \( g^{00}=-1\) eliminates the \( (\alpha,\beta) =(0,0)\) terms:
\[
\begin{aligned}
      \|L( \partial \Lambda_{\ge 2 } u_{tt},\partial_x \partial u ) \|_{H^{3\frac{1}{4}}_x}  
      & 
      \lesssim \
      \| \partial_x \partial u \|_{L^{\infty}} 
      \| \partial \Lambda_{\ge 2 } u_{tt} \|_{H^{3\frac{1}{4}}_x}  
      +     \| \partial \Lambda_{\ge 2 } u_{tt} \|_{L^{\infty}} 
      \|  \partial_x \partial u  \|_{H^{3\frac{1}{4}}_x} 
      \\
      &
       \lesssim \
        \| \partial^{\le 4 } u \|_{L^{\infty}} 
      \| \partial \Lambda_{\ge 2 }  \bigr( g^{\alpha\beta} \partial_\alpha \partial_\beta u +f \bigl)  \|_{H^{3\frac{1}{4}}_x}  
      \\
      & \quad \
      +     \| \partial \Lambda_{\ge 2 } u_{tt}\|_{L^{\infty}} 
      \|   \partial u  \|_{H^{4\frac{1}{4}}_x} 
      \\
        \lesssim \
    &
     \Vert  \partial^{\leq 4} u \Vert_{L^{\infty}_x}^2
    \sup_{t \in [0,T]} \| u[t]\|_{H^s\times H^{s-1}} ,  \text{ for } s \ge 7\frac{1}{4}.
\end{aligned}
\]

For the terms coming from source terms of the equation for \(u\), a simple Moser type estimate yields the bound:
\[
\begin{aligned}
   \|  \Lambda_{\ge 3}(f(u,\partial u) + g^{\alpha\beta} \partial_{\alpha} \partial_{\beta} u) \|_{H^{3\frac
    14}_x}
      \lesssim \
    &
     \Vert  \partial^{\leq 4} u \Vert_{L^{\infty}_x}^2
    \sup_{t \in [0,T]} \| u[t]\|_{H^s\times H^{s-1}} ,  \text{ for } s \ge 7\frac{1}{4}.
\end{aligned}
\]
To integrate in time, we apply Hölder’s inequality to gain a factor of \(T^{1/2}\):
\[
\begin{aligned}
     \Vert Q^3\Vert_{L_t^1H_x^{3\frac{1}{4}}}
 &
    = \ \int_0^T  \Vert Q^3 \Vert_{H_x^3} \, dt \\
 &
    \leq \ \int_0^T  C \Vert  \partial^{\leq 4} u \Vert_{L^{\infty}_x}^2
    \sup_{t \in [0,T]} \| u[t]\|_{H^s\times H^{s-1}}
  \, dt 
    \\
 &
   \leq \  C \sup_{t \in [0,T]} \| u[t]\|_{H^s\times H^{s-1}}
   \int_0^T   
   \Vert  \partial^{\leq 4} u \Vert_{L^{\infty}_x}^2
   \, dt 
   \\
   &
   \leq \ C \sup_{t \in [0,T]} \| u[t]\|_{H^s\times H^{s-1}}
  T^{1/2} 
    \Vert  \partial^{\leq 4} u \Vert_{L^4_t L^{\infty}_x}^2.
\end{aligned}
\]

\end{proof}

\begin{proof} [Proof of Theorem~\ref{stri}(i)]
    
We now perform a bootstrap argument to obtain a lifespan of order
$\epsilon^{-4}$.  We will make essential use of the cubic structure of
the energy.  Fix $s \ge 7\frac{1}{4}$ and assume that the initial data is small
$\| u[0] \|_{H^s \times H^{s-1}}\le\epsilon$.

Let $T>0$ and suppose that on $[0,T]\times\R$ we have the bootstrap
bounds, for some large constant $C_1$,
\begin{enumerate}
    \item Strichartz estimates,
    \begin{equation*}
          \Vert \partial^{\leq 4} u \Vert_{L^{4}_tL^{\infty}_x} 
     \leq \ C_1 \epsilon ,
    \end{equation*}
   \item energy estimates,
   \begin{equation*}
       \| u[t] \|_{H^s \times H^{s-1}}
     \leq \ C_1 \epsilon  \quad \text{ for  }  s \ge  7\frac{1}{4} \quad \text{ and all }   t \in [0,T] .
   \end{equation*}
\end{enumerate}

Our goal is to show that these bounds can be improved to

\begin{equation}\label{boot-improve}
  \left\{
    \begin{aligned}
         \Vert \partial^{\leq 4} u \Vert_{L^{4}_tL^{\infty}_x} 
    & \leq \ \frac{1}{2} C_1 \epsilon , \\
    \| u[t] \|_{H^s \times H^{s-1}}
    & \leq \ \frac{1}{2} C_1 \epsilon \quad \text{ \ \ \ for \ \  } s \ \ge \ 7\frac{1}{4} .
    \end{aligned}
    \right.
\end{equation}
   
Once \eqref{boot-improve} is established, a standard continuity argument
closes the bootstrap and yields the desired lifespan.

By part (i) of Theorem~\ref{t:eepropagation} we  have the norm equivalence:
\[
\| u[t] \|_{H^s \times H^{s-1}}^2 \approx_{\A_2}  \ E^{s,para}(u[t]).
\]
By part (ii) of Theorem~\ref{t:eepropagation} we  have the cubic energy estimates:
\[
\frac{d}{dt}E^{s,para}(u[t])
  \lesssim_{\A_2} \
  \A_0 \A_3
           \,E^{s,para}(u[t]).
\]

The first inequality in \eqref{boot-improve} follows from the Strichartz
estimate and the source term bounds in Lemma~\ref{stri-sour}. 
Indeed, we have

\[
\begin{aligned}
     \Vert \partial^{\leq 4} u  \Vert_{L^{4}_tL^{\infty}_x}
    &  \leq \
     \Vert \partial^{\leq 4} \mathbf{u}  \Vert_{L^{4}_tL^{\infty}_x}
     + \| \partial^{\leq 4}  L(\partial u , \partial u_x) \|_{L^{4}_tL^{\infty}_x},
\end{aligned}
\]
We bound the two terms separately.
For the first term,  we directly use the Strichartz estimates, combined with the source term bound in Lemma~\ref{stri-sour}. This gives
\[
\begin{aligned} 
     \Vert \partial^{\leq 4} \mathbf{u}  \Vert_{L^{4}_tL^{\infty}_x}
     & 
     \leq \
     C (\epsilon + \Vert \partial^{\leq 3} Q^3\Vert_{L^1_t L^2_x}
     ) 
     \\
     &
     \leq  \ C(\epsilon + C T^{1/2}\Vert \partial^{\leq4} u \Vert_{L^{4}_tL^{\infty}_x}^2 \sup_{t\in [0,T]} \| u[t]\|_{H^1 \times L^2} ),
     \end{aligned}
\]
where \( C > 0 \) is a universal constant independent of 
\( \epsilon \) and \(T\).

For the second term, we estimate using Leibniz' rule and Sobolev embeddings 
\[
\begin{aligned}
     \| \partial^{\leq 4}  L(\partial u , \partial u_x) \|_{L^{4}_tL^{\infty}_x} 
     &
     \leq \ C \big( \int_0^T \| \partial u \|_{W^{5,\infty}_x}^8 \, dt \bigl)^{1/4} 
     \\
     & 
     \leq \ C C_1^2 \epsilon^{2} T^{1/4},
\end{aligned}
\]
where in the last inequality we have used the bootstrap assumption for the energy.
Putting the above bounds together, we obtain
\[
  \|\partial^{\le4}u\|_{L_t^4L_x^\infty}
    \le \
    C\bigl(\epsilon + C T^{1/2} C_1^3 \epsilon^3 + 
     C_1^2 \epsilon^{2} T^{1/4}\bigr).
\]

The right–hand side is bounded by $\tfrac12 C_1\epsilon$ provided $C_1$
is chosen sufficiently large and $T\ll \epsilon^{-4}$.
For the energy bound we use the cubic energy inequality.  Denoting by
$C_2$ the constant in the energy estimate, we have
\[
\begin{aligned}
    \frac{d}{dt} E^{s,para} (u[t])
    &
    \leq  \
    C_2
     \A_0 \A_3
     E^{s,para}(u[t] ).
\end{aligned}
\]
By Gr\"onwall's inequality, we have
\[
\begin{aligned}
     E^{s,para}(u[T]) 
      \leq \
      &
      E^{s,para}(u[0]) \exp\!\Big( C_2  \int_0^T \Vert \partial^{\leq 4} u \Vert_{L^{\infty}}^2 \, dt \Big)  .
\end{aligned}
\]
To estimate the function on the exponent, we use Hölder's inequality,
\[
\begin{aligned}
    C_2  \int_0^T
    \Vert \partial^{\leq 4} u \Vert_{L^{\infty}}^2 \, dt 
    &\leq \
    C_2 T^{1/2}   \Vert \partial^{\leq 4} u \Vert_{L^4_tL^{\infty}_x}^2
    \\
&
    \leq \ C_2 T^{1/2} C^2_1 \epsilon^2.
\end{aligned}
\]
Hence, we have $T \lesssim \epsilon^{-4}$.

\end{proof}
Following the same strategy, we can carry out an analogous argument for the linearized equation. This yields Lipschitz difference estimates on the longer time scale \(\epsilon^{-4}\). For convenience, we recall the linearized equation~\eqref{eq-lin}: 
\begin{equation*} 
    - g^{\alpha \beta}(u,\partial u ) \partial_\alpha
\partial_\beta v  + m v
= \
F^{\gamma,lin} \partial_\gamma v + F^{lin} v,
\end{equation*}
where the coefficients on the right-hand side are given by
\[
\begin{aligned}
F^{\gamma,lin} =  \ g^{\alpha \beta}_{p_{\gamma}}(u,\partial u) \partial_\alpha \partial_\beta u + f_{p_{\gamma}}(u,\partial u),
\quad
 F^{lin} = \  g_{u}^{\alpha\beta}(u,\partial u) \partial_\alpha \partial_\beta  u + f_{u}(u,\partial u).
\end{aligned}
\]

\begin{proof} [Proof of Theorem~\ref{stri}(ii)]
Using the same argument as in the proof of Theorem~\ref{t:weak-Lip} reduces to estimate the bounds for the linearized flow.
Therefore, we show that the cubic energy estimate of Theorem~\ref{t:eepropagation-lin} for the linearized equation remains valid on the longer interval \( [ -c\epsilon^{-4},\,c\epsilon^{-4}]\).

Fix $s \ge 7\frac{1}{4}$ and assume that the initial data for $v$ satisfies
$\| v[0]\|_{H^1 \times L^2} \le\epsilon$.
By part (i) of Theorem~\ref{t:eepropagation-lin}, we have the norm equivalence:
\[
\begin{aligned}
    E_{lin}(v[t]) \approx_{\A_2} \  \Vert v[t]\Vert _{H^{1}\times L^2}^2 .
\end{aligned}
\]
By the cubic energy estimates in Theorem~\eqref{t:eepropagation-lin},
\[
\begin{aligned}
    \frac{d}{dt}   E_{lin}(v[t]) 
    \lesssim_{\A_2} \
    \A_0 \A_3
    E_{lin}(v[t]) .
\end{aligned}
\]
By Gr\"onwall's inequality, we have
\[
\begin{aligned}
     E_{lin}(v[T]) 
      \lesssim \
      &\;
       E_{lin}(v[0]) \exp\!\Big( \int_0^T  \A_0 \A_3 \, dt \Big) .
\end{aligned}
\]
The integral in the exponent is the same one estimated in the second part of the previous proof. In particular, for \(T\lesssim \epsilon^{-4}\) we obtain the bound
\[
 E_{lin}(v[t]) 
      \lesssim \ \epsilon,
\]
holding over the longer interval \( [ -c\epsilon^{-4},\,c\epsilon^{-4}]\), with $c>0$ a universal constant.
\end{proof}

\printbibliography
\end{document}